\documentclass{article}
\usepackage{lmodern}
\usepackage[utf8]{inputenc}
\usepackage[T1]{fontenc}
\usepackage{amssymb}
\usepackage{enumitem}
\usepackage{stmaryrd}
\usepackage{amsmath}
\usepackage{centernot}
\usepackage[nottoc]{tocbibind}
\usepackage{mathtools}
\usepackage{geometry}
\usepackage{dsfont}
\usepackage[dvipsnames, svgnames, x11names]{xcolor}
\usepackage{pgf}
\usepackage{tikz}
\usepackage[final]{pdfpages} 
\usetikzlibrary{arrows.meta, shapes,positioning, animations, shapes.geometric,decorations.pathmorphing, decorations.shapes, backgrounds}
\usepackage{float}
\usepackage{appendix}

\usepackage{framed}
\usepackage{array}
\usepackage{graphicx}
\usepackage{fullpage}
\usepackage{caption}
\usepackage{subcaption}
\usepackage{lipsum}
\usepackage[most]{tcolorbox}
\usepackage[breaklinks, pdfencoding=auto, unicode]{hyperref}
\usepackage{breakurl}
\usepackage{bbold}
\usepackage{amsthm}
\usepackage{thmtools}
\usepackage{adjustbox}
\usepackage{verbatim}
\usepackage[affil-it]{authblk}

\usepackage[
style=alphabetic,
sorting=nty,
block=ragged,
maxnames=50,
giveninits=true %--- (pour la cohérence, comme on a pas tous les prénoms)
]{biblatex}

\usepackage{cleveref}
\hypersetup{breaklinks=true}

\usepackage{soul}

\DeclareMathOperator{\Cat}{\mathrm{Cat}}
\DeclareMathOperator{\FCat}{\mathrm{FCat}}
\DeclareMathOperator{\Var}{\mathrm{Var}}
\DeclareMathOperator{\E}{\mathds{E}}
\DeclareMathOperator{\1}{\mathds{1}}
\renewcommand{\P}{\mathds{P}}

\renewcommand{\leq}{\leqslant}

\renewcommand{\geq}{\geqslant}
\definecolor{darkspringgreen}{rgb}{0.09, 0.45, 0.27}

\newcommand{\abs}[1]{
\left|#1\right|
}

\hypersetup{colorlinks, allcolors = {RoyalBlue3}}

\newcommand{\norm}[1]{\left\lVert#1\right\rVert}

\newcommand{\setEnvironmentQed}[2]{
  \AtBeginEnvironment{#1}{%
    \pushQED{\qed}\renewcommand{\qedsymbol}{#2}%
  }
  \AtEndEnvironment{#1}{\popQED}
}

\newtheoremstyle{MyTheo}
                {}          % Space above
                {}          % Space below
                {\normalfont}  % Body font
                {}          % Indent amount
                {\bfseries} % Head font
                {.}         % Punctuation after head
                { }         % Space after theorem head
                {}          % Theorem head spec

\theoremstyle{MyTheo}

\newtheorem{Theo}{Theorem}[section]
\newtheorem{Defi}[Theo]{Definition}
\newtheorem{Lemm}[Theo]{Lemma}
\newtheorem{Rema}[Theo]{Remark}
\newtheorem{Prop}[Theo]{Proposition}
\newtheorem{Coro}[Theo]{Corollary}

\newtheorem*{Theonn}{Theorem}
\newtheorem*{Propnn}{Proposition}
\newtheorem*{Coronn}{Corollary}

\setEnvironmentQed{Theo}{\ensuremath{\scalebox{0.5}{$\diamondsuit$}}}
\setEnvironmentQed{Defi}{\ensuremath{\scalebox{0.5}{$\diamondsuit$}}}
\setEnvironmentQed{Lemm}{\ensuremath{\scalebox{0.5}{$\diamondsuit$}}}
\setEnvironmentQed{Rema}{\ensuremath{\scalebox{0.5}{$\diamondsuit$}}}
\setEnvironmentQed{Prop}{\ensuremath{\scalebox{0.5}{$\diamondsuit$}}}
\setEnvironmentQed{Coro}{\ensuremath{\scalebox{0.5}{$\diamondsuit$}}}
\setEnvironmentQed{Exam}{\ensuremath{\scalebox{0.5}{$\diamondsuit$}}}
\setEnvironmentQed{Nota}{\ensuremath{\scalebox{0.5}{$\diamondsuit$}}}
\setEnvironmentQed{Theonn}{\ensuremath{\scalebox{0.5}{$\diamondsuit$}}}
\setEnvironmentQed{Propnn}{\ensuremath{\scalebox{0.5}{$\diamondsuit$}}}
\setEnvironmentQed{Coronn}{\ensuremath{\scalebox{0.5}{$\diamondsuit$}}}

\newtheoremstyle{MyProo}
                {}          % Space above
                {}          % Space below
                {\normalfont}  % Body font
                {}          % Indent amount
                {\sc} % Head font
                {:}         % Punctuation after head
                { }         % Space after theorem head
                {}          % Theorem head spec

\theoremstyle{MyProo}

\newtheorem*{Proo}{Proof}
\setEnvironmentQed{Proo}{\ensuremath{\scalebox{0.8}{$\blacksquare$}}}

\crefname{Theo}{Theorem}{Theorems}
\crefname{Defi}{Definition}{Definitions}
\crefname{Lemm}{Lemma}{Lemmas}
\crefname{Rema}{Remark}{Remarks}
\crefname{Prop}{Proposition}{Propositions}
\crefname{Coro}{Corollary}{Corollaries}
\crefname{Exam}{Example}{Examples}

\tikzset{decorate sep/.style 2 args=
{decorate,decoration={shape backgrounds,shape=circle,shape size=#1,shape sep=#2}}}
\begin{document}

\title{\vspace{-1cm}A parameterized family of balance indices\break for phylogenetic networks}

\author[1]{François Bienvenu}
\author[1]{Jean-Jil Duchamps}
\author[1]{Hadrien Maffioli}

\renewcommand\Affilfont{\itshape\small\renewcommand{\baselinestretch}{1}}
\affil[1]{Univ.\ Marie et Louis Pasteur, CNRS, LmB (UMR 6623), F-25000 Besançon, France.\vspace{1ex}}

\maketitle

\begin{abstract}
We introduce a new family of balance indices for phylogenetic networks:
the $H_\alpha$ indices, where $\alpha$ is a positive real number.
This family includes the $B_2$ index as a special case ($\alpha = 1$) and
provides a natural extension of the Sackin index to phylogenetic networks.
We show that the $H_\alpha$ indices share many structural properties
with the $B_2$ index, most notably a ``grafting property'' that makes it
possible to express the $H_\alpha$ index of a network in terms of the
$H_\alpha$ indices of its biconnected components.
These properties allow us to identify networks that minimize /
maximize $H_\alpha$
for various classes of phylogenetic networks, and to study
its distribution for several models of random trees and networks (in
particular, Galton--Watson trees and binary Markov branching trees, with a
focus on the Yule and PDA models).
Finally, we show how local limits can be used to analyze the asymptotic
behavior of $H_\alpha$ for large trees and networks, and we obtain
general results for the moments of $H_\alpha$ for a broad class of random
phylogenetic networks known as blowups of Galton--Watson trees.
\end{abstract}

\tableofcontents

\section{Introduction}
\subsection{Biological context}

Traditionally, the evolutionary relationships between organisms, species or
genes have been represented using phylogenetic trees.
To study such trees, phylogeneticists have at their disposal various
summary statistics that quantify specific aspects of tree structure.
Among those statistics, the class of \emph{balance indices} stands out as one
of the most important;
see \cite{fischerTreeBalanceIndices2023} for a comprehensive
survey of the diversity of tree balance indices.

Over the last decades, the recognition of phenomena such as hybridization or
horizontal gene transfer (often gathered under the general term
``reticulated evolution'') has challenged the exclusive use of trees
in phylogenetics and motivated the use of \emph{phylogenetic networks}
(see e.g.~\cite{husonPhylogeneticNetworksConcepts2010}
for an introduction to the topic).
Thus, in recent years, efforts have been made to extend
the definition of existing balance indices from trees to networks:
these have included the Sackin index
\cite{zhangSackinIndexSimplex2021,fuchsSackinIndicesLabeled2025}, the $B_2$
index \cite{bienvenuRevisitingShaoSokals2021,bienvenu$B_2$IndexGalled2024} and
the weighted total cophenetic index \cite{KnuverFischerHellmuthWicke2024}.

In this article, we introduce a new family of phylogenetic network
balance indices based on the statistical properties of random walks on
these networks, similarly as a possible definition of the $B_2$ index.
Formally, these indices $(H_\alpha)_{\alpha \geqslant 0}$ can be seen
as $q$-analogues of the $B_2$ index that use the structural
$\alpha$-entropy (also known as the Tsallis entropy, see below)
instead of the Shannon entropy. They have the advantage of admitting a
clear-cut interpretation as balance indices -- unlike, for instance,
existing extensions of the Sackin index (which are mathematically
natural but whose interpretation as balance indices is not intuitive).
The family $(H_\alpha)_{\alpha \geqslant 0}$ includes $B_2$, for
$\alpha = 1$, and, for $\alpha=2$, an analogue of the Gini--Simpson
index commonly used in biology to quantify
biodiversity~\cite{Simpson1949,Hurlbert1971}. Finally, the Sackin
index of a tree $T$ can be recovered from the behavior of
$H_\alpha(T)$ as $\alpha$ approaches 0. This provides a new, fairly
tractable extension of the Sackin index to phylogenetic networks.

We start our presentation by formally defining $H_\alpha$ and observing some
elementary properties of the functions $G \mapsto H_\alpha(G)$ and $\alpha
\mapsto H_\alpha(G)$; properties that we then use to study the range of
$H_\alpha$ over different classes of phylogenetic networks.
We continue with a detailed study of the distribution of~$H_\alpha$ for several
classic models of random trees, such as the Yule, PDA and Galton--Watson
models.
Next, we show how local limits can be used to study the asymptotic behavior of
$H_\alpha(G_n)$ as the size of $G_n$ goes to infinity.
Our primary focus is a class of models of random phylogenetic networks
known as blowups of Galton--Watson trees.
These include, among others, the PDA model, uniform leaf-labeled galled trees
and, more generally, uniform leaf-labeled level-$k$ networks
\cite{stuflerBranchingProcessApproach2022}.
Finally, we conclude with a proof-of-concept illustration
of the statistical power of $H_\alpha$ to distinguish between different
models of random trees.

\subsection{Setting and notation}

Let us start by recalling, informally, the idea behind the $B_2$ index.
Following \cite{bienvenuRevisitingShaoSokals2021}, 
picture water flowing from the root of a phylogenetic network
through its edges, splitting evenly among the children of each vertex it
reaches and accumulating in the leaves of the network.
The $B_2$ index measures the balance of the network as the uniformity
of the resulting distribution of water among the leaves~-- specifically, as its
Shannon entropy.

There are, however, other measures of uniformity that can be used.
For instance, in biology one such well-known
measure is the Gini--Simpson index, rooted in Gini's work on
statistical variability \cite{cerianiOriginsGiniIndex2012} and later adopted
as a standard measure of biodiversity~\cite{Simpson1949,Hurlbert1971}.
Both the Shannon entropy and the Gini--Simpson index are -- up to some
normalization -- special cases
of a family of measures of uncertainty/evenness known as
\emph{structural $\alpha$-entropies}.
First introduced by Havrda and Charvát
\cite{havrdaQuantificationMethodClassification1967a} in the context of
information theory, the structural $\alpha$-entropy is also known as the
\emph{Tsallis entropy}, following its reintroduction by Tsallis
in statistical physics, with a slightly different normalizing constant
\cite{tsallisPossibleGeneralizationBoltzmannGibbs1988}.

The definition of the $H_\alpha$ index is the same as that of the $B_2$ index,
but with the Shannon entropy replaced by the more general structural
$\alpha$-entropy.
To introduce it formally, we must first set up some terminology.

\begin{Defi}
A \emph{phylogenetic network} is a directed acyclic graph $G$ such that:
\begin{enumerate}
    \item the vertex set of $G$ is finite or countably infinite;
    \item $G$ is \emph{locally finite} (i.e.\ each vertex has a finite number
      of neighbors);
    \item $G$ is \emph{rooted} (i.e.\ there exists a unique vertex
      $\rho$ with in-degree 0, called the root) and every vertex can
      be reached from the root. \qedhere
\end{enumerate} 
\end{Defi}

Given a phylogenetic network $G$, the \emph{simple random walk on $G$} is
the stochastic process defined as follows:
start at the root at time $t=0$; then, conditional on the random walk being on
vertex~$v$ at time $t$, choose one of the children of $v$ uniformly at random
and move to that vertex at time $t+1$.
The process stops upon reaching a leaf.
    
On a finite network, the random walk ends in a leaf after a finite number of
steps.
Thus, if we denote by $\mathcal{L}_G$ the leaf set of $G$, the simple random
walk induces a probability distribution $(p_\ell)_{\ell \in \mathcal{L}_G}$ on
$\mathcal{L}_G$.
For any nonnegative real number $\alpha \neq 1$, we
define the $H_\alpha$ index of $G$ as
\begin{equation}
    H_\alpha(G) = \frac{1}{1-2^{1-\alpha}}
    \sum_{\ell \in \mathcal{L}_G} p_\ell \big(1 - p_\ell^{\alpha -1}\big)
    = \frac{1}{1-2^{1-\alpha}}
    \left(1 - \sum_{\ell \in \mathcal{L}_G} p_\ell^{\alpha}\right) .
    \label{defHalpha}
\end{equation}
It is straightforward to see that, as $\alpha \to 1$,
\begin{equation*}
  \lim_{\alpha \to 1} H_\alpha(G) =
    - \sum_{\ell \in \mathcal{L}_G}\! p_\ell \log_2(p_\ell) = B_2(G)\,, 
\end{equation*}
where $\log_2$ denotes the base-2 logarithm. Thus, we extend the
definition of $H_\alpha$ to $\alpha = 1$ by continuity and set $H_1 = B_2$. 
Also note that for $\alpha = 0$ we get
$H_0(G) = |\mathcal{L}_G| - 1$ and that for $\alpha = 2$ we recover twice the
Gini--Simpson index of $(p_\ell)_{\ell \in \mathcal{L}_G}$.

Although infinite networks may not seem relevant to phylogenetics, they can be
useful to study the properties of large networks through local
limits~\cite{stuflerBranchingProcessApproach2022,
bienvenuMathematicallyTractableModels2025}.
We thus extend the definition of $H_\alpha$ to infinite phylogenetic networks
using the approach of \cite{bienvenu$B_2$IndexGalled2024}, which
can be summarized as follows: the first step is to introduce a suitable
notion of \emph{boundary} for the network.
Informally, this boundary consists of the leaves (that is, the vertices in
which the random walk can get trapped) together with ``points at infinity'' to
which the random walk can escape (that is, the different directions in which it
can go to infinity).
The simple random walk induces a probability distribution $\mu$ on this
boundary, and we can therefore define the $H_\alpha$ index of an infinite
phylogenetic network as the structural $\alpha$-entropy of $\mu$.
We refer the reader to Appendix~\ref{AppInfiniteNetworks} for the technical
details.

\subsection[Basic properties of the \texorpdfstring{$H_\alpha$}{H-alpha} index]{%
Basic properties of the \texorpdfstring{$\boldsymbol{H_\alpha}$}{H-alpha} index} \label{basicProperties}

In this section we present a few straightforward but useful
properties of the $H_\alpha$ index.
Let us start with the announced relationship
with the Sackin index in the context of
binary trees:
for any finite phylogenetic network~$G$, the computation of the
derivative of $\alpha \mapsto H_\alpha(G)$ yields the following expansion
as $\alpha \downarrow 0$ :
\begin{equation*}
     H_\alpha(G) = (|\mathcal{L}_G| -1) + \left( 2\log(2)(|\mathcal{L}_G|-1) +
       \sum_{\ell \in \mathcal{L}_G} \log(p_\ell) \right) \alpha + o(\alpha),
\end{equation*}
where $\log$ is the natural logarithm.
Now note that, for binary trees, $p_\ell = 2^{-\delta_\ell}$ where
$\delta_\ell$ is the depth of the leaf $\ell$ (the number of edges
separating it from the root).
Thus, for a binary tree~$T$, $\sum_{\ell \in \mathcal{L}_T} \log(p_\ell) =
-\log(2)\, \mathrm{Sackin}(T)$, where
$\mathrm{Sackin}(T) = \sum_{\ell\in \mathcal{L}_T} \delta_\ell$
is the Sackin index of~$T$ \cite{ShaoSokal1990}.
Thus, by setting, for any phylogenetic network $G$, 
\begin{equation*}
    \mathrm{Sackin}(G) = 2(|\mathcal{L}_G| -1) -\log(2)^{-1} \cdot
       \frac{\partial H_\alpha(G)}{\partial \alpha}(0), 
\end{equation*}
we get an extension of the Sackin index to phylogenetic networks
that agrees with its usual definition for binary trees (note however that this
extension does not agree with the usual definition of the Sackin index for
multifurcating trees).

The next proposition provides a comparison between the $H_\alpha$ indices for
different values of $\alpha$.

\begin{Prop}
For any finite phylogenetic network $G$ such that $p_\ell \leqslant 1/2$ for
all $\ell \in \mathcal{L}_G$, $H_\alpha(G)$ is nonincreasing in $\alpha$.
Moreover, for any finite phylogenetic network $G$ and any $1 < \alpha < \beta$,
we have
\begin{equation*}
  H_\alpha(G) \leqslant \frac{1-2^{1-\beta}}{1-2^{1-\alpha}} H_\beta(G).
  \qedhere
\end{equation*}
\end{Prop}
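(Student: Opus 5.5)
The plan is to work leaf by leaf for the first claim and to use a direct power comparison for the second. From \eqref{defHalpha},
\begin{equation*}
H_\alpha(G)=\sum_{\ell\in\mathcal{L}_G} p_\ell\, f_\alpha(p_\ell),
\qquad f_\alpha(p)=\frac{1-p^{\alpha-1}}{1-2^{1-\alpha}},
\end{equation*}
with $f_1(p)=-\log_2 p$ obtained by continuity. Since the weights $p_\ell\geq 0$ do not depend on $\alpha$, the first claim follows once we show that, for each fixed $p\in(0,1/2]$, the map $\alpha\mapsto f_\alpha(p)$ is nonincreasing on $[0,\infty)$. Leaves with $p_\ell=0$ contribute nothing and can be ignored.

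To prove this monotonicity, I substitute $p=2^{-x}$ with $x\geq 1$; this is exactly where the hypothesis $p_\ell\leq 1/2$ is used. I then set $u=2^{1-\alpha}\in(0,\infty)$, so that $p^{\alpha-1}=2^{x(1-\alpha)}=u^x$. This gives
\begin{equation*}
f_\alpha(p)=\frac{1-u^x}{1-u}=\frac{u^x-1}{u-1},
\end{equation*}
which is the slope of the secant of the function $\varphi(u)=u^x$ between the points $1$ and $u$. The value $u=1$ corresponds to $\alpha=1$, where the secant slope is read as $\varphi'(1)=x=f_1(p)$, consistent with the continuous extension. Because $x\geq 1$, $\varphi$ is convex on $(0,\infty)$. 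Secant slopes of a convex function issued from a fixed point are nondecreasing in the other endpoint, so $u\mapsto (u^x-1)/(u-1)$ is nondecreasing. Finally, $u=2^{1-\alpha}$ is decreasing in $\alpha$, so $f_\alpha(p)$ is nonincreasing in $\alpha$, and summing over leaves gives the first claim.

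The only delicate point in this part is the point $u=1$ (that is, $\alpha=1$). It is handled by the convexity argument, which includes the derivative as the limiting slope, or simply by continuity of $\alpha\mapsto H_\alpha(G)$. I expect no real obstacle beyond this, and I would not attempt to differentiate in $\alpha$ directly, since the convexity argument makes that unnecessary.

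For the second claim, take $1<\alpha<\beta$. Both normalizing constants $1-2^{1-\alpha}$ and $1-2^{1-\beta}$ are positive, and
\begin{equation*}
(1-2^{1-\alpha})H_\alpha(G)=1-\sum_{\ell} p_\ell^\alpha,
\qquad
(1-2^{1-\beta})H_\beta(G)=1-\sum_{\ell} p_\ell^\beta .
\end{equation*}
Since $p_\ell\in[0,1]$ and $\beta>\alpha$, we have $p_\ell^\beta\leq p_\ell^\alpha$ for every leaf, hence $1-\sum_\ell p_\ell^\alpha\leq 1-\sum_\ell p_\ell^\beta$. Dividing by $1-2^{1-\alpha}>0$ gives the stated inequality. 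No assumption on the $p_\ell$ is needed here.
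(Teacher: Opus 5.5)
Your proposal is correct and follows the paper's proof: the same leaf-by-leaf decomposition $H_\alpha(G)=\sum_\ell p_\ell f_{p_\ell}(\alpha)$ for the first claim, and the same comparison $\sum_\ell p_\ell^\beta\leq\sum_\ell p_\ell^\alpha$ together with positivity of $1-2^{1-\alpha}$ for the second. Your substitution $p=2^{-x}$, $u=2^{1-\alpha}$, which turns $f_p(\alpha)$ into the secant slope $(u^x-1)/(u-1)$ of the convex map $u\mapsto u^x$, is a valid proof of the monotonicity of $\alpha\mapsto f_p(\alpha)$ for $p\leq 1/2$, a step the paper simply asserts.
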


\begin{Proo}
To show that $\alpha \mapsto H_\alpha(G)$ is nonincreasing
for any finite phylogenetic network $G$ such that $p_\ell \leqslant 1/2$ for
all leaf $\ell$, it suffices to note that
$H_\alpha(G) = \sum_{\ell \in \mathcal{L}_G} p_\ell  f_{p_\ell}(\alpha)$,
where 
\begin{equation*}
    f_{p_\ell} : \alpha \in \mathbb{R}_+^* \mapsto \frac{1-p_\ell^{\alpha
    -1}}{1-(1/2)^{\alpha-1}}
\end{equation*}
is nonincreasing whenever $p_\ell \leqslant 1/2$.
To prove the second inequality, note that
\begin{equation*}
 \sum_{\ell \in \mathcal{L}_G}\! p_\ell^\alpha
  \, \geqslant
 \sum_{\ell \in \mathcal{L}_G}\! p_\ell^\beta
\end{equation*}
whenever $\beta > \alpha$, because
$x \mapsto p_\ell^x$ is decreasing on $\mathbb{R}_+$
for all $p_\ell \in\;]0; 1[$; and that the normalizing
factor $1-2^{1-x}$ is positive for $x > 1$.
\end{Proo}

\begin{Rema}
The condition ``$p_\ell \leqslant 1/2$ for all $\ell \in \mathcal{L}_G$''
would not be necessary if we had used the
Tsallis normalization factor
$(\alpha -1)^{-1}$, instead of the
structural $\alpha$-entropy normalization factor
$(1-2^{1-\alpha})^{-1}$.
However, as will become apparent, normalizing by
$(1-2^{1-\alpha})^{-1}$ is particularly convenient when working with
binary trees, which are central in phylogenetics -- hence our choice.
Also note that the condition $p_\ell \leqslant 1/2$ for all $\ell \in
\mathcal{L}_G$ is always verified for \emph{trees} with more than one leaf,
but not necessarily for networks.
\end{Rema}

\begin{Rema} \label{remaClassement}
One of the main goals of balance indices is to compare
phylogenetic trees/networks. 
Formally, each balance index defined on a class of phylogenetic networks
induces a total preorder on that class.
As the following example demonstrates, different values of $\alpha$ can induce
different preorders; that is, for $\alpha \neq \beta$,
\begin{equation*}
    H_\alpha(G) \leqslant H_\alpha(G') \centernot \Longrightarrow H_\beta(G) \leqslant H_\beta(G').
\end{equation*}
To see this, consider the trees $T$ and $T'$ depicted in Fig.~\ref{contreEx}.
One can check that, for $\alpha = 0.2$ and $\beta = 0.9$, we have
$H_\alpha(T) \approx 5.37 > H_\alpha(T') \approx 5.23$
and
$H_\beta(T) \approx 2.57 < H_\beta(T') \approx 2.64$.
Thus, this example shows that, even when we restrict ourselves to binary trees,
the $H_\alpha$ indices do \emph{not} induce the same preorder.
This means that  different values of $\alpha$ capture different aspects of
the structure of phylogenetic networks, and that
there is genuinely more information about
a network $G$ in the family $(H_\alpha(G))_{\alpha \geqslant 0}$ than 
in $H_\alpha(G)$ for a single parameter $\alpha$.

Nevertheless, there is a simple sufficient condition to ensure that two
phylogenetic networks $G$ and $G'$ satisfy $H_\alpha(G) \leq H_\alpha(G')$ for
all values of $\alpha$: indeed, it is well-known that the structural
$\alpha$-entropy is a Schur-concave function.
As a result, if $G$ and $G'$ are two phylogenetic networks with $n$ leaves
such that $(p'_\ell)_{\ell\in \mathcal{L}_{G'}}$
is \emph{majorized} by $(p_\ell)_{\ell\in \mathcal{L}_G}$, then
$H_\alpha(G) \leqslant H_\alpha(G')$ for all $\alpha > 0$.
See e.g.\ \cite{ostrowskiQuelquesApplicationsFonctions1984a,
marshallSchurConvexFunctions2011} for more on Schur-convexity and majorization.
\end{Rema}

\begin{figure}[H]
\centering
\begin{subfigure}[t]{.4\textwidth}
\begin{tikzpicture}[main/.style = {draw, circle, minimum size = 7pt, inner sep =1pt}]
    \tikzstyle{dot}=[draw,circle,gray,
                  top color= white, text=gray,minimum size=7pt, inner sep = 1]
    \tikzstyle{phantom}=[opacity = 0, minimum size = 7pt]
    \node[main, label ={[font=\footnotesize] \o} ] (root) at (0, 0) {};
    \node[main] (1) at (-1, -1) {};
    \node[main] (2) at (1, -1) {};
    \node[main] (21) at (0, -2) {};
    \node[main] (22) at (2, -2) {};
    \node[main] (211) at (-0.5, -3) {};
    \node[main] (212) at (0.5, -3) {};
    \node[main] (221) at (1.5, -3) {};
    \node[main] (222) at (2.5, -3) {};
    \node[main] (2111) at (-0.75, -4) {};
    \node[main] (2112) at (-0.25, -4) {};
    \node[main] (2121) at (0.25, -4) {};
    \node[main] (2122) at (0.75, -4) {};
    \node[main] (2211) at (1.25, -4) {};
    \node[main] (2212) at (1.75, -4) {};
    \draw[->] (root) -- (1);
    \draw[->] (root) -- (2);
    \draw[->] (2) -- (21);
    \draw[->] (2) -- (22);
    \draw[->] (21) -- (212);
    \draw[->] (21) -- (211);
    \draw[->] (22) -- (222);
    \draw[->] (22) -- (221);
    \draw[->] (211) -- (2111);
    \draw[->] (211) -- (2112);
    \draw[->] (212) -- (2121);
    \draw[->] (212) -- (2122);
    \draw[->] (221) -- (2211);
    \draw[->] (221) -- (2212);
\end{tikzpicture}
    \caption{Representation of the tree $T$.}
    \label{T}
\end{subfigure}
~
\begin{subfigure}[t]{.4\textwidth}
\begin{tikzpicture}[main/.style = {draw, circle, minimum size = 7pt, inner sep =1pt}]
    \tikzstyle{dot}=[draw,circle,gray,
                  top color= white, text=gray,minimum size=7pt, inner sep = 1]
    \tikzstyle{phantom}=[opacity = 0, minimum size = 7pt]
    \node[main, label ={[font=\footnotesize] \o} ] (root) at (0, 0) {};
    \node[main] (1) at (-1, -1) {};
    \node[main] (2) at (1, -1) {};
    \node[main] (11) at (-0.5, -2) {};
    \node[main] (12) at (-1.5, -2) {};
    \node[main] (21) at (0.5, -2) {};
    \node[main] (22) at (1.5, -2) {};
    \node[main] (221) at (1, -3) {};
    \node[main] (222) at (2, -3) {};
    \node[main] (2221) at (1.5, -4) {};
    \node[main] (2222) at (2.5, -4) {};
    \node[main] (22221) at (2, -5) {};
    \node[main] (22222) at (3, -5) {};
    \node[main] (222221) at (2.5, -6) {};
    \node[main] (222222) at (3.5, -6) {};
    \draw[->] (root) -- (1);
    \draw[->] (root) -- (2);
    \draw[->] (2) -- (21);
    \draw[->] (2) -- (22);
    \draw[->] (1) -- (11);
    \draw[->] (1) -- (12);
    \draw[->] (22) -- (222);
    \draw[->] (22) -- (221);
    \draw[->] (222) -- (2221);
    \draw[->] (222) -- (2222);
    \draw[->] (2222) -- (22221);
    \draw[->] (2222) -- (22222);
    \draw[->] (22222) -- (222221);
    \draw[->] (22222) -- (222222);
\end{tikzpicture}
\caption{Representation of the tree $T'$.}
\label{T'}
\end{subfigure}
\caption{Two binary trees $T$ and $T'$ such that $H_\alpha(T)-H_\alpha(T')$ changes sign as $\alpha$ varies.}
\label{contreEx}
\end{figure}

The following proposition is the key to our study of the $H_\alpha$ index,
as it makes it possible to express the $H_\alpha$ index of a network in terms
of the $H_\alpha$ indices of its biconnected components, thereby
opening the door to recursive computations.

\begin{Prop}[Grafting property] \label{graftingproperty}
Let $G_1$ and $G_2$ be two phylogenetic networks. Let $G$ be the network
obtained by grafting $G_2$ on a leaf $\ell$ of $G_1$ (that is, by
identifying $\ell$ with the root of $G_2$).
Then, 
    \begin{equation*}
        H_\alpha(G) = H_\alpha(G_1) + p_{\ell, G_1}^\alpha H_\alpha(G_2),
    \end{equation*}
    where $p_{\ell, G_1}$ denotes the probability that the simple random walk on $G_1$ ends in $\ell$.
\end{Prop}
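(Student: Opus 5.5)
The plan is to first identify the leaf distribution on $G$ in terms of those on $G_1$ and $G_2$, and then substitute this into the defining formula \eqref{defHalpha}. The leaf set of $G$ is $\mathcal{L}_G = (\mathcal{L}_{G_1}\setminus\{\ell\}) \sqcup \mathcal{L}_{G_2}$. The random walk on $G$ started at the root of $G_1$ evolves exactly like the random walk on $G_1$ until it either stops at a leaf of $G_1$ other than $\ell$ or reaches $\ell$. This holds because every vertex of $G_1$ other than $\ell$ keeps the same set of children in $G$. Moreover, since $G_2$ is grafted only at $\ell$, once the walk is at $\ell$ (now the root of $G_2$) it can only move inside $G_2$. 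From there it evolves as the walk on $G_2$, and by the Markov property this part is independent of the past.

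This gives the two identities
\begin{equation*}
p_{m,G} = p_{m,G_1} \quad (m \in \mathcal{L}_{G_1}\setminus\{\ell\}), \qquad p_{m,G} = p_{\ell,G_1}\, p_{m,G_2} \quad (m \in \mathcal{L}_{G_2}).
\end{equation*}
To justify them formally, I would decompose each path from the root to a leaf of $G_2$ into a path in $G_1$ ending at $\ell$ followed by a path in $G_2$. The probability of each path is a product of inverse out-degrees, and these out-degrees are the same in $G$ as in $G_1$ or $G_2$, so the probabilities multiply.

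For $\alpha \neq 1$, the rest is algebra. We have
\begin{equation*}
1-\sum_{m\in\mathcal{L}_G} p_{m,G}^\alpha = 1 - \sum_{m\in\mathcal{L}_{G_1}} p_{m,G_1}^\alpha + p_{\ell,G_1}^\alpha\Big(1-\sum_{m\in\mathcal{L}_{G_2}} p_{m,G_2}^\alpha\Big),
\end{equation*}
which follows by adding and subtracting $p_{\ell,G_1}^\alpha$. Dividing by $1-2^{1-\alpha}$ gives the claim. The case $\alpha=1$ then follows by letting $\alpha\to1$ on both sides, since $p_{\ell,G_1}^\alpha \to p_{\ell,G_1}$ and each $H_\alpha$ converges to $B_2$ by definition.

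The main obstacle is the case of infinite networks, where $H_\alpha$ is defined in the appendix through the harmonic measure $\mu$ on the boundary. There I would use the same decomposition at the level of the boundary. The boundary of $G$ should be the disjoint union of the boundary of $G_1$ minus the atom at $\ell$ and the boundary of $G_2$. The measure $\mu_G$ should restrict to $\mu_{G_1}$ on the first piece and to $p_{\ell,G_1}\mu_{G_2}$ on the second, by the same strong Markov argument at the hitting time of $\ell$. Since the structural $\alpha$-entropy only involves the atoms of the measure, the computation above carries over term by term, with countable sums that converge because everything is nonnegative and bounded by 1. The boundary identification itself would rely on the appendix's construction.
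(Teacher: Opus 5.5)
Your argument for finite networks is correct and is the same computation the paper uses. The paper obtains the statement from Proposition~\ref{graftinggen}: the leaf distribution of $G$ is the distribution of $G_1$ with the block $\{\ell\}$ refined according to $p_{\ell,G_1}\,\mu_{G_2}$, and that proposition is proved by exactly your add-and-subtract step. Your path decomposition is the concrete check that this refinement is the right one.

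The infinite-network paragraph, however, has a step that fails as written for $0<\alpha\leqslant 1$.

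\emph{Atoms only.} The claim that the structural $\alpha$-entropy ``only involves the atoms'' is Proposition~\ref{atomicDecomposition}(i), which holds only for $\alpha>1$. For $\alpha\leqslant 1$, any diffuse mass forces $H_\alpha=+\infty$ by part~(ii). For instance, if $G_2$ is the infinite complete binary tree, its boundary distribution has no atoms and $H_\alpha(G_2)=+\infty$. An atoms-only formula would instead return $0$ at $\alpha=1$ and the negative number $1/(1-2^{1-\alpha})$ for $\alpha<1$.

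\emph{Convergence.} For $\alpha<1$ the series $\sum_m p_m^\alpha$ over countably many atoms can diverge even though $\sum_m p_m=1$; the same is true of the Shannon sum at $\alpha=1$. So manipulating $1-\sum p^\alpha$ is not meaningful there, and ``everything is nonnegative and bounded by $1$'' does not justify convergence.

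The repair is short.
\begin{itemize}
\item For $\alpha>1$ your computation stands, since $p^\alpha\leqslant p$.
\item For $\alpha\leqslant 1$, if the boundary distribution of $G_1$ or of $G_2$ has a diffuse part, both sides equal $+\infty$, because $p_{\ell,G_1}>0$.
\item Otherwise, write $H_\alpha=\sum_m f_\alpha(p_m)$ with $f_\alpha(x)=(x-x^\alpha)/(1-2^{1-\alpha})\geqslant 0$, and $f_1(x)=-x\log_2 x$. Use the termwise identity $f_\alpha(xy)=x^\alpha f_\alpha(y)+y\,f_\alpha(x)$ with $x=p_{\ell,G_1}$ and $y=p_{m,G_2}$. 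Then sum these nonnegative terms in $[0,+\infty]$, using $\sum_m p_{m,G_2}=1$.
\end{itemize}
This also covers $\alpha=1$ directly, so no limit in $\alpha$ is needed for infinite networks.
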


This proposition is a direct reformulation of the corresponding property for the
structural $\alpha$-entropy
(\cref{graftinggen} in \Cref{secStructuralEntropy});
we thus refer the reader to the appendix for its proof.

In particular, this proposition implies that $H_\alpha$ is a \textit{recursive tree-shape statistic} on the space of binary trees in the sense of F. Matsen in \cite{matsenOptimizationClassTree2007}.
The grafting property also shows that the structure of a phylogenetic network becomes progressively less important as one moves far away from the root. Moreover, the parameter $\alpha$ controls the speed at which this effect occurs : the smaller $\alpha$, the more importance is given to the structure far from the root.
This explains the inversion of the ordering of $T$ and $T'$ observed in
\cref{remaClassement}: indeed, the tree~$T$ is very imbalanced close to the
root and very balanced far away from it; whereas $T'$
is very balanced close to the root but very imbalanced far away from it.

Noting that the $H_\alpha$ index of a cherry (i.e.\ the binary
tree with two leaves) is equal to~1 for all $\alpha \geqslant 0$, we
immediately get the following two useful corollaries of \cref{graftingproperty}.

\begin{Coro} \label{graftingcherry}
Let $G$ be a phylogenetic network and let $G'$ be the network obtained by
grafting a cherry onto a leaf $\ell$ of $G$. Then,
    \begin{equation*}
        H_\alpha(G') = H_\alpha (G) + p_\ell^\alpha. \qedhere
    \end{equation*}
\end{Coro}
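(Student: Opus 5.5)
The plan is to apply the grafting property (\cref{graftingproperty}) with $G_1 = G$ and $G_2$ equal to the cherry $C$, so that $G'$ is exactly the network obtained by grafting $C$ on the leaf $\ell$ of $G$. The proposition then gives
\begin{equation*}
  H_\alpha(G') = H_\alpha(G) + p_{\ell}^\alpha H_\alpha(C),
\end{equation*}
where $p_\ell = p_{\ell,G}$ is the probability that the simple random walk on $G$ ends in $\ell$.

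What remains is to check that $H_\alpha(C) = 1$ for every $\alpha \geqslant 0$. In the cherry, the walk leaves the root to each of the two leaves with probability $1/2$. Hence, for $\alpha \neq 1$,
\begin{equation*}
  H_\alpha(C) = \frac{1 - 2\cdot 2^{-\alpha}}{1-2^{1-\alpha}} = 1 .
\end{equation*}
For $\alpha = 1$, we get $H_1(C) = B_2(C) = -2\cdot\tfrac12\log_2\tfrac12 = 1$, either directly or by continuity. Substituting this value gives the claimed identity.

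There is no genuine obstacle here. The one point to confirm is that the grafting property applies in whatever generality $G$ has, for instance if $G$ is infinite and $\ell$ is an actual leaf. The proposition is stated for arbitrary phylogenetic networks, so it covers this case, and the corollary follows at once.
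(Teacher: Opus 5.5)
Your proposal is correct and is exactly the paper's argument: apply the grafting property with $G_2$ equal to the cherry, and use the fact that the cherry has $H_\alpha$ index equal to $1$ for all $\alpha \geqslant 0$.
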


\begin{Coro} \label{graftingtwophylogeny}
Let $G'$ and $G''$ be two phylogenetic networks, and let $G$ be the
network obtained by grafting $G'$ and $G''$ onto the two leaves of the cherry,
one on each leaf.
Then, 
\begin{equation*}
    H_\alpha(G)= 1 + 2^{-\alpha} \left( H_\alpha(G') + H_\alpha(G'') \right).
    \qedhere
\end{equation*}
\end{Coro}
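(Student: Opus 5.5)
The plan is to apply the grafting property (\cref{graftingproperty}) twice, starting from a cherry $C$ with root $\rho$ and leaves $\ell_1,\ell_2$. For the simple random walk on $C$ we have $p_{\ell_1,C}=p_{\ell_2,C}=1/2$. Directly from \eqref{defHalpha}, for $\alpha\neq 1$,
\begin{equation*}
H_\alpha(C)=\frac{1-2\cdot 2^{-\alpha}}{1-2^{1-\alpha}}=1 ,
\end{equation*}
and $H_1(C)=B_2(C)=1$ by continuity. This is the only ingredient besides \cref{graftingproperty}.

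\textbf{First graft.} Let $G_1$ be the network obtained by grafting $G'$ onto $\ell_1$. \Cref{graftingproperty} with $G_1\leftarrow C$, $G_2\leftarrow G'$ and $\ell\leftarrow\ell_1$ gives
\begin{equation*}
H_\alpha(G_1)=H_\alpha(C)+2^{-\alpha}H_\alpha(G')=1+2^{-\alpha}H_\alpha(G').
\end{equation*}

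\textbf{Second graft.} The vertex $\ell_2$ is still a leaf of $G_1$, and grafting $G''$ onto it produces $G$. We need $p_{\ell_2,G_1}$. A walk on $G_1$ first moves from $\rho$ to $\ell_1$ or $\ell_2$ with probability $1/2$ each. From $\ell_2$ it stops, because $\ell_2$ has no children in $G_1$. From $\ell_1$ it continues inside $G'$ and can never reach $\ell_2$, since the only edges leading to $\ell_2$ come from $\rho$. Hence $p_{\ell_2,G_1}=1/2$. Applying \cref{graftingproperty} again yields
\begin{equation*}
H_\alpha(G)=H_\alpha(G_1)+2^{-\alpha}H_\alpha(G'')=1+2^{-\alpha}\bigl(H_\alpha(G')+H_\alpha(G'')\bigr).
\end{equation*}

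The only point needing care is the justification of $p_{\ell_2,G_1}=1/2$ above. If $G'$ is infinite, the walk on $G_1$ may escape to infinity inside $G'$ instead of ending at a leaf. In that case I would rely on \cref{graftingproperty} holding in the boundary-measure setting of Appendix~\ref{AppInfiniteNetworks}. There, the mass assigned to $\ell_2$ is still exactly the probability of the first step $\rho\to\ell_2$, namely $1/2$, so the same argument goes through.
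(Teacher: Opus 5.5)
Your proposal is correct and matches the paper, which obtains this corollary immediately from \cref{graftingproperty} together with the fact that the cherry has $H_\alpha = 1$ and $p_\ell = 1/2$ for each of its leaves. Your two successive grafts, with the check that $\ell_2$ still carries mass $1/2$ after grafting $G'$ onto $\ell_1$, simply spell out that argument.
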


We close this section with some examples of computations of $H_\alpha$ for a few
networks of interest.
First, let $\mathrm{CB}(h)$ be the \emph{complete binary tree with height $h$},
i.e.\ the tree depicted in Fig.~\ref{symtree}. Then, letting $n = 2^h$
denote the number of leaves of $\mathrm{CB}(h)$, 
\begin{equation*}
        H_\alpha(\mathrm{CB}(h)) = 
        \frac{1 - n^{1-\alpha}}{1-2^{1-\alpha}}.
    \end{equation*}

Second, let $\Cat(n)$ be the \emph{caterpillar with $n$ leaves} (sometimes also
known as the \emph{comb}), i.e.\ the tree depicted in Fig.~\ref{chenille}.
Since $\Cat(n)$ is obtained by grafting a cherry onto the deepest leaf
of $\Cat(n-1)$, one can use \cref{graftingcherry} to compute the
$H_\alpha$ index recursively.
This yields
\begin{equation*}
    H_\alpha(\Cat(n)) = \frac{1 - 2^{-\alpha (n-1)}}{1-2^{-\alpha}}.
\end{equation*}

Finally, let $\FCat(n)$ be the so-called \textit{fat caterpillar with $n$
leaves} (Fig.~\ref{fcat}).
As before, one can notice that $\FCat(n)$ is obtained by
grafting $\FCat(2)$ onto the $(n-1)$-th leaf of $\FCat(n -1)$.
We can thus
compute its $H_\alpha$ index recursively using \cref{graftingproperty} and
the fact that $H_\alpha(\FCat(2))= \frac{1}{1-2^{1-\alpha}}
\left(1-\frac{3^\alpha +1}{4^\alpha} \right)$; this yields, after
some simplifications:
\begin{equation*} 
H_\alpha(\FCat(n)) = H_\alpha(\FCat(2)) \cdot \left( \frac{1-4^{-\alpha (n-1)}}{1-4^{-\alpha}} \right).
\end{equation*}

\begin{figure}[H]
\centering
\begin{subfigure}[t]{.3\textwidth}
    \begin{tikzpicture}[main/.style = {draw, circle, minimum size = 7pt, inner sep =1pt}]
    \tikzstyle{dot}=[draw,circle,gray,
                  top color= white, text=gray,minimum size=7pt, inner sep = 1]
    \tikzstyle{phantom}=[opacity = 0, minimum size = 7pt]
    \node[main, label ={[font=\footnotesize] \o} ] (root) at (0, 0) {};
    \node[main] (11) at (-1, -1) {};
    \node[main] (12) at (1, -1) {};
    \node[dot] (21) at (-1.25, -2) {};
    \node[dot] (22) at (-0.75, -2) {};
    \node[dot] (23) at (0.75, -2) {};
    \node[dot] (24) at (1.25, -2) {};
    \node[phantom] (p1) at (-1.5, -3) {};
    \node[phantom] (p8) at (1.5, -3) {};
    \node[main, label = {[font=\scriptsize]below:1}] (1) at (-1.75, -4) {};
    \node[main,label = {[font=\scriptsize]below:2}] (2) at (-1, -4) {};
    \node[main,label ={[font=\scriptsize]below:$2^h-1$}] (n-1) at (1, -4) {};
    \node[main,label = {[font=\scriptsize]below:$2^h$}] (n) at (1.75, -4) {};
    \node[phantom] (3) at (-0.5, -4) {};
    \node[phantom] (n-2) at (0.5, -4) {};
    \node[phantom] (p2) at (-1.15, -3) {};
    \node[phantom] (p7) at (1.15, -3) {};
    \node[phantom] (p3) at (-0.8, -3) {};
    \node[phantom] (p6) at (0.8, -3) {};
    \node[phantom] (p4) at (-0.45, -3) {};
    \node[phantom] (p5) at (0.45, -3) {};
    \draw[dashed, ->] (21) -- (p2);
    \draw[dashed, ->] (22) -- (p3);
    \draw[dashed, ->] (22) -- (p4);
    \draw[dashed, ->] (23) -- (p5);
    \draw[dashed, ->] (23) -- (p6);
    \draw[dashed, ->] (24) -- (p7);
    \draw[->] (root) -- (11);
    \draw[->] (root) -- (12);
    \draw[->] (11) -- (21);
    \draw[->] (11) -- (22);
    \draw[->] (12) -- (23);
    \draw[->] (12) -- (24);
    \draw[dashed, ->] (21) -- (p1);
    \draw[dashed, ->] (24) -- (p8);
    \draw[->] (p1) -- (1);
    \draw[->] (p1) -- (2);
    \draw[->] (p8) -- (n-1);
    \draw[->] (p8) -- (n);
    \draw[decorate sep={0.5mm}{1mm},fill] (3) -- (n-2);
\end{tikzpicture}
    \caption{$\mathrm{CB}(h)$, the complete binary tree with $2^h$ leaves.}
    \label{symtree}
\end{subfigure}
~ 
\begin{subfigure}[t]{.3\textwidth}
    \begin{tikzpicture}[main/.style = {draw, circle, minimum size = 7pt, inner sep =1pt}]
    \tikzstyle{dot}=[draw,circle,gray,
                  top color= white, text=gray,minimum size=7pt, inner sep = 1]
    \tikzstyle{phantom}=[opacity = 0, minimum size = 7pt]
    \node[main, label = {[font=\footnotesize]\o}] (root) at (0, 0) {};
    \node[main] (1) at (0.5, -1) {};
    \node[main] (2) at (1, -2) {};
    \node[phantom] (3) at (1.5, -3) {};
    \node[main] (n-1) at (2, -4) {};
    \node[main, label = {[font=\scriptsize]below:$n$}] (n) at (2.5, -5) {};
    \node[main, label = {[font=\scriptsize]below left:1}] (l1) at (-0.5, -1) {};
    \node[main, label = {[font=\scriptsize]below left:2}] (l2) at (0, -2) {};
    \node[main, label = {[font=\scriptsize]below left:3}] (l3) at (0.5, -3) {};
    \node[dot] (4) at (1, -4) {};
    \node[main, label = {[font=\scriptsize]below:$n-1$}] (ln-1) at (1.5, -5) {};
    \draw[->] (root) -- (1);
    \draw[->] (1) -- (2);
    \draw[->] (root) -- (l1);
    \draw[->] (1) -- (l2);
    \draw[->] (2) -- (l3);
    \draw[->] (n-1) -- (ln-1);
    \draw[->] (n-1) -- (n);
    \draw[dashed, ->] (2) -- (n-1);
    \draw[dashed, ->] (3) -- (4);
\end{tikzpicture}
    \caption{$\Cat(n)$, the caterpillar tree with $n$ leaves.}
    \label{chenille}
\end{subfigure}
~
\begin{subfigure}[t]{.3\textwidth}
    \begin{tikzpicture}[main/.style = {draw, circle, minimum size = 7pt, inner sep =1pt}]
    \tikzstyle{dot}=[draw,circle,gray,
                  top color= white, text=gray,minimum size=7pt, inner sep = 1]
    \tikzstyle{phantom}=[opacity = 0, minimum size = 7pt]
    \node[main, label = {[font=\footnotesize]above left:\o}] (root) at (0, 0) {};
    \node[main] (b1) at (1, -0.25) {};
    \node[main] (b2) at (1.75, -0.5) {};
    \node[main] (b3) at (2.75, -0.75) {};
    \node[main] (b4) at (3.75, -1) {};
    \node[main] (b5) at (4.75, -1.25) {};
    \node[main, label = {[font=\scriptsize]below:$n$}] (n) at (5.75, -1.5) {};
    \node[main] (sb1) at (0.5, -1.5) {};
    \node[main] (sb2) at (2, -1.75) {};
    \node[main] (sb3) at (4, -2) {};
    \node[main, label = {[font=\scriptsize]below:1}] (1) at (0, -2.25) {};
    \node[main,label = {[font=\scriptsize]below:2}] (2) at (1.5, -2.5) {};
    \node[main,label = {[font=\scriptsize]below:$n-1$}] (n-1) at (3.5, -2.75) {};
    \draw[->] (root) -- (b1);
    \draw[->] (root) -- (sb1);
    \draw[->] (sb1) -- (1);
    \draw[->] (b1) -- (sb1);
    \draw[->] (b1) -- (b2);
    \draw[->] (b2) -- (b3);
    \draw[dashed] (b3) -- (b4);
    \draw[->] (b4) -- (b5);
    \draw[->] (b5) -- (n);
    \draw[->] (b2) -- (sb2);
    \draw[->] (b3) -- (sb2);
    \draw[->] (b4) -- (sb3);
    \draw[->] (b5) -- (sb3);
    \draw[->] (sb2) -- (2);
    \draw[->] (sb3) -- (n-1);
    \node[phantom] (p1) at (2.25, -2.175) {};
    \node[phantom] (p2) at (3.25, -2.275) {};
    \draw[decorate sep={0.5mm}{1mm},fill] (p1) -- (p2);
\end{tikzpicture}
    \caption{$\FCat(n)$, the fat caterpillar with $n$ leaves.}
    \label{fcat}
\end{subfigure}
\caption{Examples of phylogenetic trees and networks of special interest.}
\end{figure}

We close this section with a useful formula for the $H_\alpha$ index of binary
trees.

\begin{Prop} \label{HalphaBinaire}
For any binary tree $T$, 
\begin{equation*}
  H_\alpha(T) \;=\; 2^{\alpha -1}\!\!\!\!
  \sum_{v\in T\setminus\{ \rho\}}\!\!\!\! 2^{-\alpha \delta_v}
    \;=\; 2^{\alpha -1} \sum_{k \geqslant 1} 2^{-\alpha k} Z_k(T), 
\end{equation*}
where the first sum runs on all vertices of $T$ except its root;
$\delta_v$ denotes the depth of $v$, i.e.\ its distance to the root; and
$Z_k(T) = \#\{v \in T : \delta_v = k\}$.
\end{Prop}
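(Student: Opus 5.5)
We prove the formula by induction on the number of leaves of $T$, building every binary tree by successive cherry graftings and using \cref{graftingcherry}. A binary tree here means one in which every internal vertex has exactly two children. Every such tree with $n \geqslant 2$ leaves is obtained from a binary tree with $n-1$ leaves by grafting a cherry onto one of its leaves. Indeed, pick an internal vertex $u$ of maximal depth; both of its children are leaves, and deleting them turns $u$ into a leaf, yielding a binary tree $T^-$ with one fewer leaf.

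\emph{Base case.} The tree reduced to its root has a single leaf with $p_\rho = 1$, so $H_\alpha = 0$. The sum over $T\setminus\{\rho\}$ is empty, so the formula holds. As a sanity check, the cherry gives $2^{\alpha-1}\cdot 2 \cdot 2^{-\alpha} = 1$, consistent with the remark preceding \cref{graftingcherry}.

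\emph{Inductive step.} Let $T$ be obtained by grafting a cherry onto a leaf $\ell$ of $T^-$ at depth $\delta_\ell$. In a binary tree, $p_\ell = 2^{-\delta_\ell}$, since the walk must choose the correct child at each of the $\delta_\ell$ steps. So \cref{graftingcherry} gives
\begin{equation*}
H_\alpha(T) = H_\alpha(T^-) + 2^{-\alpha \delta_\ell}.
\end{equation*}
On the other side, $T\setminus\{\rho\}$ equals $T^-\setminus\{\rho\}$ plus two new vertices of depth $\delta_\ell + 1$. The right-hand side therefore increases by
\begin{equation*}
2^{\alpha-1}\cdot 2 \cdot 2^{-\alpha(\delta_\ell+1)} = 2^{-\alpha\delta_\ell},
\end{equation*}
which matches the increase of $H_\alpha$. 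The second equality in the statement follows by grouping vertices according to depth, which turns the sum into $\sum_k 2^{-\alpha k} Z_k(T)$.

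The argument is short, and no step is a genuine obstacle. The only points to check are the decomposition of binary trees via cherry graftings and the identity $p_\ell = 2^{-\delta_\ell}$. For $\alpha = 1$, the identity holds by continuity, or directly, since \cref{graftingcherry} is stated for all $\alpha$.
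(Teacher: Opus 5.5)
Your argument is correct for finite binary trees, and it takes a different route from the paper. You build $T$ bottom-up by cherry graftings, inducting on the number of leaves and using \cref{graftingcherry} together with $p_\ell = 2^{-\delta_\ell}$. The paper instead inducts on the height: it splits $T$ at the root into $T_1$ and $T_2$ and uses $H_\alpha(T) = 1 + 2^{-\alpha}\big(H_\alpha(T_1) + H_\alpha(T_2)\big)$ (i.e.\ \cref{graftingtwophylogeny}). That avoids computing any $p_\ell$ and mirrors the recursive structure of Markov branching and Galton--Watson trees. Both arguments are equally short. Yours shows directly that each cherry grafting contributes exactly $2^{-\alpha\delta_\ell}$, which is the quantity driving the cherry-transfer argument in the proof of \cref{binaryBound}.

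The gap is the infinite case. In this paper a binary tree may be infinite (countable and locally finite), with $H_\alpha$ defined through the boundary measure of the random walk (Appendix~\ref{AppInfiniteNetworks}). The proposition is used precisely there: the proof of \cref{exponentialMoment} applies it to supercritical binary Galton--Watson trees ($p > 1/2$), which are infinite with positive probability, including in its final coupling step. Your induction cannot reach such trees. An infinite tree has infinitely many vertices, so it is not obtained from the one-vertex tree by finitely many cherry graftings, and it has no internal vertex of maximal depth.

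You need one more step. The truncations $[T]_n$ are finite binary trees forming a stub sequence of $T$, so your finite result gives $H_\alpha([T]_n) = 2^{\alpha-1}\sum_{k=1}^{n} 2^{-\alpha k} Z_k(T)$. The left-hand side converges to $H_\alpha(T)$ by \cref{Stubsequence}, and the right-hand side increases to the full series by monotone convergence, so the identity holds in $[0,+\infty]$. This is exactly how the paper concludes its own proof.
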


\begin{Proo}
Let us proceed by induction on the height of $T$. If $T$ has
height~0, i.e.\ if $T$ is reduced to its root, then 
$H_\alpha(T) = 0 = \sum_{\emptyset}$.
Assume now that the result holds for any tree with height at most $n$, and
consider a binary tree $T$ with height $n+1$.
Since $T$ is binary, it is a cherry with a tree of height at most $n$ attached
to each leaf.
Denote these trees by $T_1$ and $T_2$.
By \cref{graftingproperty}, 
\begin{equation*}
    H_\alpha(T)= 1 + 2^{-\alpha} \big( H_\alpha(T_1) + H_\alpha(T_2) \big).
\end{equation*}
So, by the induction hypothesis,
\begin{align*}
    H_\alpha(T) &= 1 + 2^{-\alpha}
    \bigg( 2^{\alpha -1} \sum_{k \geqslant 1} 2^{-\alpha k}
    \big(Z_k(T_1) + Z_k(T_2)\big) \bigg) \\
    &= 1 + 2^{\alpha -1} \sum_{k \geqslant 1} 2^{-\alpha(k+1)} Z_{k+1}(T) \\
    &= 2^{\alpha -1} \sum_{k \geqslant 1} 2^{-\alpha k} Z_k(T),
\end{align*}
where the second equality comes from the fact that vertices at
depth $k+1$ in $T$ are exactly vertices at depth $k$ in $T_1$ or $T_2$, 
and the last equality comes from the fact that $Z_1(T) = 2$.
The result follows by induction for any binary tree with height at most
$n$; and by taking the limit $n\to\infty$ for infinite binary trees
(see \Cref{Stubsequence} in the Appendices).
\end{Proo}

\begin{Rema} \label{rem-d-ary-trees}
For $d$-ary trees (i.e.\ trees where each non-leaf vertex has exactly
$d$ children), one has the following generalization of \Cref{HalphaBinaire}:
\begin{equation*}
    H_\alpha(T) \;=\;
    \frac{d^{\alpha -1}-1}{1-2^{1-\alpha}} \!\!\!
    \sum_{v\in T\setminus\{ \rho\}}\!\!\!\! d^{-\alpha \delta_v}
    \;=\; \frac{d^{\alpha -1}-1}{1-2^{1-\alpha}}
    \sum_{k \geqslant 1} d^{-\alpha k}Z_k(T). \qedhere
\end{equation*}
\end{Rema}

\subsection[Range of the \texorpdfstring{$H_\alpha$}{H-alpha} index]{%
Range of the \texorpdfstring{$\boldsymbol{H_\alpha}$}{H-alpha} index} \label{secRange}

In this section, we derive bounds for the $H_\alpha$ indices over various 
classes of phylogenetic networks and, when possible, 
characterize the networks that attain those bounds.
Since the proofs of our results are straightforward adaptations of the
corresponding propositions in \cite{bienvenuRevisitingShaoSokals2021}, we
only state these bounds and refer the reader to \Cref{appRange} for the
proofs.

\begin{Prop} \label{generalBounds}
Let $\mathcal{G}_n$ be the class of phylogenetic networks with $n$ leaves. Then,
for $\alpha > 0$,
\begin{equation*}
    \inf_{G \in \mathcal{G}_n} H_\alpha (G) = 0
    \qquad \text{and} \qquad
    \max_{G \in \mathcal{G}_n} H_\alpha (G) =
    \frac{1-n^{1-\alpha}}{1-2^{1-\alpha}}.
\end{equation*}
Moreover, a network $G$ maximizes $H_\alpha$ if and only if
$p_\ell = p_{\ell'}$ for all leaves $\ell$ and $\ell '$ of $G$.
\end{Prop}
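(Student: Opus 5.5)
The two parts of the statement are of different nature: the maximum comes from concavity of $x\mapsto x^\alpha$ (or convexity, depending on $\alpha$), and the infimum from a construction. Write $c_\alpha = 1-2^{1-\alpha}$, which is positive for $\alpha>1$, negative for $\alpha<1$, and absent for $\alpha=1$, where $H_1=B_2$ is the Shannon entropy.

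\emph{Upper bound.} Fix $G\in\mathcal{G}_n$ with leaf distribution $(p_\ell)$, a probability vector with $n$ entries. For $\alpha>1$, $x\mapsto x^\alpha$ is strictly convex. Jensen's inequality (equivalently, the power-mean inequality) applied to the $n$ numbers $p_\ell$ gives
\[
\sum_\ell p_\ell^\alpha \geqslant n\,(1/n)^\alpha = n^{1-\alpha},
\]
with equality if and only if all $p_\ell$ are equal. Dividing $1-\sum_\ell p_\ell^\alpha$ by $c_\alpha>0$ yields $H_\alpha(G)\leqslant (1-n^{1-\alpha})/c_\alpha$. For $0<\alpha<1$, $x^\alpha$ is strictly concave, so the inequality reverses; since $c_\alpha<0$ as well, the bound on $H_\alpha$ has the same direction. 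For $\alpha=1$, the bound $B_2\leqslant\log_2 n$ is the classical maximal entropy, and it is also the limit of the formula. The bound is attained, for instance by the star tree with $n$ leaves, or by $\mathrm{CB}(h)$ when $n=2^h$. Hence the supremum is a maximum, and the equality cases of Jensen give exactly the characterization ``$p_\ell=p_{\ell'}$ for all leaves''.

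\emph{Infimum.} Since $H_\alpha$ is a (normalized) structural entropy, it is nonnegative: for $\alpha>1$ we have $\sum p_\ell^\alpha\leqslant\sum p_\ell=1$, and for $\alpha<1$ the reverse inequality holds. So $0$ is a lower bound. To show it is the infimum, I will construct networks whose leaf distribution concentrates on one leaf. Take the caterpillar-like network in which the root has $m+1$ children: one child is a vertex $u$ that carries all the structure, and the other $m$ children all point to a single vertex $w$ (this is legal in a DAG), with $w$ a leaf. Then $p_w = m/(m+1)$. Below $u$, hang any tree with $n-1$ leaves, so that these leaves share total mass $1/(m+1)$. As $m\to\infty$, the distribution tends to a point mass, so $\sum p_\ell^\alpha\to 1$ by continuity (finitely many terms, $\alpha>0$), and $H_\alpha\to0$. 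For $\alpha=1$, the same argument applies to Shannon entropy.

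The main obstacle is this construction: one must check that it is a valid phylogenetic network in the paper's sense (rooted, acyclic, with multiple parallel routes to $w$). If parallel edges are disallowed, I will instead insert $m$ distinct intermediate vertices, each with $w$ as its unique child. Nothing else here is delicate; the sign bookkeeping of $c_\alpha$ is routine.
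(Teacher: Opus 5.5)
Your argument is correct and is essentially the paper's, which simply invokes the standard Havrda--Charv\'at properties of the structural $\alpha$-entropy: nonnegativity, and maximality exactly at the uniform distribution, obtained from Jensen's inequality with the sign of $1-2^{1-\alpha}$ tracked as you do. Your explicit funnel network usefully supplies a step the paper leaves implicit, namely that leaf distributions arbitrarily close to a point mass are realized by networks with exactly $n$ leaves; note only that for $n=1$ the construction degenerates (there is no tree with $0$ leaves), but in that case $p_\ell=1$ and $H_\alpha=0$ trivially.
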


In practice, the class of all phylogenetic networks is often too broad to be
relevant.
We therefore provide results for two particularly relevant subclasses
of phylogenetic networks: binary trees and temporal tree-child networks.

\begin{Theo} \label{binaryBound}
    Let $T$ be a rooted binary tree with $n$ leaves. Then,
    \begin{equation*}
        H_\alpha(\Cat(n)) \leqslant H_\alpha(T) \leqslant H_\alpha(\mathrm{CB}(\lfloor \log_2(n) \rfloor)) + \left(n-2^{\lfloor \log_2(n)\rfloor} \right) \cdot 2^{-\alpha \lfloor \log_2(n) \rfloor},
    \end{equation*}
    where we recall that $\Cat(n)$ denotes the caterpillar tree with $n$ leaves
    and $\mathrm{CB}(h)$ denotes the complete binary tree with height $h$.
    Moreover, for $\alpha > 0$ these bounds are sharp and:
    \begin{enumerate}[label=(\roman*)]
        \item The caterpillar tree $\Cat(n)$ is the only rooted binary tree
          with $n$ leaves that minimizes $H_\alpha$.
        \item The rooted binary trees with $n$ leaves that maximize
          $H_\alpha$ are exactly the trees such that the difference
          between the height of any two leaves is at most $1$. \qedhere
    \end{enumerate}
\end{Theo}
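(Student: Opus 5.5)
The plan is to reduce both bounds to a rearrangement-type argument on the depth profile $(Z_k(T))_{k\geq 1}$, using \cref{HalphaBinaire}:
\[
H_\alpha(T) = 2^{\alpha-1}\sum_{k\geq1} 2^{-\alpha k} Z_k(T).
\]
For $\alpha>0$ the weights $w_k = 2^{-\alpha k}$ are strictly decreasing in $k$. Since $T$ is binary with $n$ leaves, it has $2n-2$ non-root vertices, so $\sum_k Z_k(T) = 2n-2$. Introduce the cumulative counts $S_m = \sum_{k\leq m} Z_k(T)$. Abel summation gives
\[
\sum_k w_k Z_k = \sum_{m\geq1} (w_m - w_{m+1}) S_m,
\]
where every coefficient $w_m-w_{m+1}$ is strictly positive. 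Hence $H_\alpha$ is a strictly increasing function of the sequence $(S_m)_m$, taken coordinatewise. So I only need pointwise lower and upper bounds on $S_m$ valid for all binary trees, and then I identify the trees attaining them.

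\textbf{Lower bound.} Let $h$ be the height of $T$. Each level $k\leq h$ contains at least one internal vertex at depth $k-1$, and that vertex has two children, so $Z_k\geq 2$. Hence $S_m \geq \min(2m, 2n-2)$ for all $m$: for $m\leq h$ this follows from $Z_k\ge 2$, and for $m> h$ we have $S_m=2n-2$. The caterpillar $\Cat(n)$ has $Z_k=2$ for $1\leq k\leq n-1$, so its sequence $(S_m)$ is exactly this lower bound, and the inequality $H_\alpha(\Cat(n))\leq H_\alpha(T)$ follows. For uniqueness: by strict monotonicity, equality forces $S_m=\min(2m,2n-2)$ for all $m$, i.e.\ $Z_k=2$ for $k\leq n-1$. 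Then each level has exactly one internal vertex, which characterizes the caterpillar. This also gives the closed form already computed for $H_\alpha(\Cat(n))$.

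\textbf{Upper bound.} Here $Z_k\leq 2^k$, so $S_m\leq \min(2^{m+1}-2,\,2n-2)$. Set $h=\lfloor\log_2 n\rfloor$. This bound is attained by the greedy profile $Z_k=2^k$ for $k\leq h$, together with $Z_{h+1}=2n-2-(2^{h+1}-2)=2(n-2^h)$. Plugging the greedy profile into the formula of \cref{HalphaBinaire}, the levels $k\leq h$ contribute exactly $H_\alpha(\mathrm{CB}(h))$ (again via \cref{HalphaBinaire}). The last level contributes $2^{\alpha-1}\cdot 2(n-2^h)2^{-\alpha(h+1)}=(n-2^h)2^{-\alpha h}$, which gives the stated bound.

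\textbf{Equality case, the main obstacle.} I need to show that a binary tree realizes the greedy profile if and only if all its leaf depths differ by at most $1$. In one direction: the greedy profile means levels $0,\dots,h-1$ are full, so no leaf lies at depth $<h$, and there are no vertices below depth $h+1$. Thus all leaves lie at depths $h$ or $h+1$. Conversely, suppose all leaves are at depths $d$ or $d+1$, and let $a$ and $b$ be the numbers of leaves at those depths. Then levels $<d$ are full, so $2^d\leq n$, and the leaves satisfy $a+b/2=2^d$ with $b/2\leq 2^d$. Hence $n=a+b=2^d+b/2<2^{d+1}$ unless $a=0$. I will handle the boundary case $a=0$ (all leaves at depth $d+1$, so $n=2^{d+1}$) by relabeling $d$, which gives $d=h$ in either case. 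Then levels up to $h$ are full and the remainder sits at depth $h+1$, i.e.\ the greedy profile. Strict monotonicity in $(S_m)$ for $\alpha>0$ then gives both sharpness and the characterization in (ii).
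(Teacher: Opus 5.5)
Your proof is correct, and it takes a genuinely different route from the paper's. The paper argues by local moves. By Corollary~\ref{graftingcherry}, transferring a cherry with parent $v$ onto a leaf $\ell$ changes $H_\alpha$ by $2^{-\alpha\delta_\ell}-2^{-\alpha\delta_v}$. Since any binary tree can be reached from any other by such transfers, minimizers are the trees with no leaf deeper than a cherry parent (only $\Cat(n)$). Maximizers are those with no leaf shallower than a cherry parent, namely $\mathrm{CB}(h)$, $h=\lfloor\log_2(n)\rfloor$, with cherries grafted on $n-2^h$ of its leaves; their value is then computed with Corollary~\ref{graftingcherry}.

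You instead combine Proposition~\ref{HalphaBinaire} with Abel summation. This writes $H_\alpha(T)$ as a combination of the cumulative counts $S_m$ with strictly positive coefficients $2^{\alpha-1}2^{-\alpha m}(1-2^{-\alpha})$. Both bounds then reduce to the pointwise estimates $\min(2m,2n-2)\leq S_m\leq\min(2^{m+1}-2,2n-2)$, and the equality cases follow from strict positivity, with no reachability-by-transfers step.

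Your route also gives a stronger statement: if $S_m(T)\leq S_m(T')$ for all $m$, then $H_\alpha(T)\leq H_\alpha(T')$ for every $\alpha>0$ at once. This explains why the extremal binary trees do not depend on $\alpha$. Consistently, the two trees of Figure~\ref{contreEx} must have crossing cumulative profiles, and they do. The price is that your argument rests on $p_\ell=2^{-\delta_\ell}$. It is therefore specific to binary trees (or, via Remark~\ref{rem-d-ary-trees}, $d$-ary trees), whereas local-modification arguments like the paper's are what the network results of Appendix~\ref{appRange} are built on.

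Two small things to tidy:
\begin{enumerate}
\item For sharpness of the upper bound, exhibit a tree with your greedy profile, e.g.\ $\mathrm{CB}(h)$ with cherries grafted on $n-2^h$ of its leaves.
\item Your Abel summation uses $2^{-\alpha M}\to 0$, so $\alpha=0$ must be handled separately. There every binary tree with $n$ leaves has $H_0=n-1$, and both bounds are equalities.
\end{enumerate}
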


In particular, \cref{binaryBound} shows that
the $H_\alpha$ indices are indeed balance indices in the sense of
\cite[Def.~2]{fischerTreeBalanceIndices2023}.

\begin{Theo} \label{boundsTemporal}
Assume that $\alpha \leqslant 1$.  
For every temporal tree-child network $N$ with $n$ leaves,
\begin{equation*}
        H_\alpha(\Cat(n)) \leqslant H_\alpha(N) \leqslant
        H_\alpha(\mathrm{CB}(\lfloor \log_2(n) \rfloor)) 
        + \left(n-2^{\lfloor \log_2(n)\rfloor} \right) \cdot 
        2^{-\alpha \lfloor \log_2(n) \rfloor}. \qedhere
\end{equation*}
\end{Theo}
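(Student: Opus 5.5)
The plan is to use the recursive construction of temporal tree-child networks (as in \cite{bienvenuRevisitingShaoSokals2021}). Every temporal tree-child network $N$ with $n$ leaves is obtained from the one-vertex network by a sequence of $n-1$ elementary operations, each adding exactly one leaf. There are two kinds:
\begin{enumerate}[label=(\alph*)]
\item grafting a cherry on a leaf of mass $p$;
\item a reticulation event on two leaves of masses $p,q$. Each of them gets one new leaf child, and they also get a common hybrid child carrying a new leaf. The leaf masses $p,q$ are thus replaced by $p/2,\ q/2,\ (p+q)/2$.
\end{enumerate}
By \cref{graftingcherry}, operation (a) increases $H_\alpha$ by exactly $p^\alpha$. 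A direct computation from \eqref{defHalpha} shows that operation (b) increases $H_\alpha$ by
\begin{equation*}
\Delta(p,q) = \frac{2^{-\alpha}(p+q)^\alpha - (1-2^{-\alpha})(p^\alpha+q^\alpha)}{2^{1-\alpha}-1}.
\end{equation*}
So $H_\alpha(N)$ is the sum of the $n-1$ increments along any construction sequence, and both bounds reduce to controlling these increments.

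\emph{Key lemma.} The first step is to show that for $0<\alpha\leqslant 1$ we have $\min(p,q)^\alpha \leqslant \Delta(p,q) \leqslant \max(p,q)^\alpha$. The case $\alpha=1$ follows by continuity. Write $q = tp$ with $t\in(0,1]$ by symmetry, divide by $p^\alpha$, and study the one-variable function
\begin{equation*}
t\mapsto \frac{2^{-\alpha}(1+t)^\alpha-(1-2^{-\alpha})(1+t^\alpha)}{2^{1-\alpha}-1}.
\end{equation*}
The lower comparison should use $t^\alpha\geqslant t$ and concavity of $x\mapsto x^\alpha$. The upper comparison should use subadditivity $(1+t)^\alpha\leqslant 1+t^\alpha$. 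Note that $2^{1-\alpha}-1>0$ exactly when $\alpha<1$, which is where the hypothesis $\alpha\leqslant1$ enters. The lemma says a reticulation is never more and never less informative than the better or worse of the two cherry graftings it could be replaced by.

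\emph{Lower bound.} By induction, every leaf mass after $k$ operations is at least $2^{-k}$, since each operation at most halves an existing mass and new masses are at least $\min(p,q)/2$. So the $(k+1)$-th increment is at least $(2^{-k})^\alpha$, using the lemma for reticulations. Summing over $k=0,\dots,n-2$ gives
\begin{equation*}
H_\alpha(N)\geqslant \sum_{k=0}^{n-2}2^{-\alpha k} = H_\alpha(\Cat(n)).
\end{equation*}

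\emph{Upper bound, the main obstacle.} Here the increments are not independent, because reticulations redistribute mass: $(p+q)/2$ can exceed both halves. I would prove by induction on $n$ a stronger statement: for the multiset of leaf masses of $N$, the sum of the $j$ largest is at least $\min(1, j\cdot 2^{-\lfloor\log_2 m\rfloor})$-type bounds matching the balanced tree. More precisely, I would aim to show that the leaf-mass vector of $N$ majorizes that of the balanced binary tree with the same number of leaves, hence the sum of $\alpha$-th powers is at least the balanced one. Then Schur-concavity (\cref{remaClassement}) gives the bound, and the right-hand side is exactly the $H_\alpha$ of a maximizing tree from \cref{binaryBound}. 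If that majorization turns out to fail, the fallback is a greedy exchange argument. By the lemma, each increment is at most $(\text{largest current mass})^\alpha$, and grafting a cherry on the largest leaf dominates step by step in the sense of majorization of the resulting mass vectors. Iterating from the root produces precisely the near-complete binary tree, whose value is the stated upper bound.
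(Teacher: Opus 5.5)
Your lower bound is correct, and it takes a genuinely different route from the paper's. Ordering the events along a temporal labeling does show that every binary temporal tree-child network is built by $n-1$ of your operations, and your formula for $\Delta(p,q)$ is right. The key lemma also holds, though not for the reasons you suggest. For $\alpha<1$ and $t=q/p\in(0,1]$, its two halves are equivalent to $(1+t)^\alpha-t^\alpha\geqslant 2^\alpha-1$ and $(1+s)^\alpha-s^\alpha\leqslant 2^\alpha-1$ with $s=1/t\geqslant 1$. Both follow from the fact that $x\mapsto(1+x)^\alpha-x^\alpha$ is nonincreasing; plain subadditivity is too weak for the upper half. The paper deletes reticulations one at a time, latest first, controls each deletion with the sign lemma and the function $h_\alpha$, and then appeals to the binary-tree bounds. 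Your per-step bound $2^{-\alpha k}$ is more direct. Since the lower half of the lemma also holds for $\alpha>1$, your argument does not even need $\alpha\leqslant 1$.

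The upper bound has a gap that cannot be closed, because the majorization claim is false and so is the inequality itself. Apply your operations in this order: a cherry at the root; a cherry on one of its two leaves (masses $\tfrac12,\tfrac14,\tfrac14$); a reticulation on the leaves of masses $\tfrac12$ and $\tfrac14$ (masses $\tfrac14,\tfrac14,\tfrac18,\tfrac38$); and a cherry on the leaf of mass $\tfrac38$. This gives a binary temporal tree-child network $N$ with five leaves of masses $(\tfrac14,\tfrac14,\tfrac3{16},\tfrac3{16},\tfrac18)$.

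Its three largest masses sum to $\tfrac{11}{16}<\tfrac34$, so it does not majorize the balanced vector $(\tfrac14,\tfrac14,\tfrac14,\tfrac18,\tfrac18)$. On the contrary, it is obtained from that vector by the transfer $(\tfrac14,\tfrac18)\mapsto(\tfrac3{16},\tfrac3{16})$, and
\[
H_\alpha(N)-\Big(H_\alpha(\mathrm{CB}(2))+4^{-\alpha}\Big)
=\frac{16^{-\alpha}\big(2\cdot 3^{\alpha}-4^{\alpha}-2^{\alpha}\big)}{2^{1-\alpha}-1}>0
\quad\text{for } 0<\alpha<1 .
\]
For $\alpha=1$ one gets $B_2(N)=\tfrac{11}{8}+\tfrac38\log_2\tfrac{16}{3}\approx 2.281>2.25$.

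So the right-hand inequality of the statement fails for $n=5$ and every $\alpha\in(0,1]$. Reticulations create leaf masses that are not powers of $\tfrac12$ and can be more even than anything a binary tree achieves. Neither majorization nor your greedy fallback can rescue it.

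The same network shows where the paper's own argument breaks. Every binary tree with five leaves has $H_\alpha$ at most that of the balanced one. Both ways of deleting the reticulation, with any regrafting of the pendant cherry, give at best that balanced tree. Hence the network $N''$ with $H_\alpha(N)\leqslant H_\alpha(N'')$ claimed in the proof does not exist. Only the lower bound of the statement is true, and your argument proves it.
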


\begin{Rema}
As for $\alpha > 1$, note that, by Proposition~\ref{generalBounds},
$H_\alpha(N)$ is bounded above by $(1-n^{1-\alpha})/(1-2^{1-\alpha})$, which
is uniformly bounded in $n$ when $\alpha > 1$.
\end{Rema}

\begin{Theo} \label{boundTreeChild}
Let $N$ be a tree-child network with $n$ leaves. Then,
\begin{equation*}
H_\alpha(N) \geqslant H_\alpha(\FCat(n)),
\end{equation*}
where $\FCat(n)$ is the fat caterpillar with $n$ leaves (see
Section~\ref{basicProperties}).
\end{Theo}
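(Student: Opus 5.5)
The plan is to reduce the statement to a majorization inequality and then invoke the Schur-concavity of the structural $\alpha$-entropy recalled in \cref{remaClassement}. Because that argument works for every $\alpha > 0$ at once, it suits a bound that is claimed for all $\alpha$.

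First I compute the leaf distribution of $\FCat(n)$. One checks directly that $\FCat(2)$ has leaf probabilities $(3/4, 1/4)$. The recursive description of $\FCat(n)$ as $\FCat(2)$ grafted onto the $(n-1)$-th leaf of $\FCat(n-1)$ then gives
\begin{equation*}
p_k = \tfrac{3}{4}\,4^{-(k-1)} \quad (1 \leqslant k \leqslant n-1), \qquad p_n = 4^{-(n-1)},
\end{equation*}
so that $p_1 + \dots + p_k = 1 - 4^{-k}$ for every $k < n$.

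By Schur-concavity, it therefore suffices to prove the following. For every tree-child network $N$ with $n$ leaves, if $q_{(1)} \geqslant \dots \geqslant q_{(n)}$ denote its leaf probabilities in decreasing order, then
\begin{equation*}
q_{(1)} + \dots + q_{(k)} \leqslant 1 - 4^{-k} \qquad \text{for all } 1 \leqslant k \leqslant n-1 .
\end{equation*}
Equivalently, any set of $n-k$ leaves of $N$ carries total mass at least $4^{-k}$. In particular, every single leaf must carry mass at least $4^{-(n-1)}$.

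I would prove this by induction on $n$, using the standard structural fact that a tree-child network with at least two leaves contains either a cherry or a reticulated cherry. Here a reticulated cherry means two leaves $a$ and $b$ whose parents $u$ and $v$ are such that $v$ is a reticulation with $u$ among its parents. First I would reduce to the binary case. Refining a multifurcating tree vertex into a binary structure only lengthens paths and concentrates mass, so it should not increase $H_\alpha$. I would check this using the grafting property, \cref{graftingproperty}, applied locally.

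In the inductive step I delete one leaf of the cherry or reticulated cherry and suppress the resulting degree-$2$ vertices. This yields a tree-child network $N'$ with $n-1$ leaves, to which the induction hypothesis applies. The key local computation is to compare the leaf probabilities of $N$ and $N'$. Deleting a leaf of a cherry merges two masses $q_a + q_b$ into one. Deleting the reticulation leaf of a reticulated cherry transfers the mass of $b$ to other leaves. In the worst case, which is exactly the $\FCat(2)$ gadget, the mass of the surviving leaf $a$ is split so that the new leaf receives at least a quarter of it.

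From this comparison I need to show that the partial-sum bounds for $N'$, with exponent $k$, imply those for $N$ with the appropriate shift. The intended mechanism is that inserting the deleted leaf can at most turn one block of mass $m$ into pieces $3m/4$ and $m/4$, which is exactly what creates the factor $4$ at each step.

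I expect the main obstacle to be this last inductive comparison. It has two parts. First, I must show that the reticulated-cherry reduction never lets the new leaf's mass fall below a quarter of the mass of the block it is carved from. This needs the tree-child condition: the reticulation's other parent must have a tree-path to some leaf, which should prevent the mass of $b$ from being arbitrarily diluted. Second, I must check that the partial-sum inequality survives re-sorting the probabilities after the split.

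If the majorization route fails in some configuration, my fallback is to argue directly on $H_\alpha$. Via \cref{graftingproperty} and \cref{graftingcherry}, I would show that replacing the structure around the chosen cherry or reticulated cherry by an $\FCat(2)$ gadget grafted at the deepest leaf does not increase $H_\alpha$. I would then conclude by the recursion that defines $\FCat(n)$.
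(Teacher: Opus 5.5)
\textbf{There is a genuine gap.} The decisive step, the inductive comparison for a reticulated cherry, is left open, and the mechanism you sketch for it is not what actually happens. Write $u$ for the parent of $a$, $v$ for the reticulation above $b$, $w$ for the other parent of $v$, and $x$ for the other child of $w$.

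Deleting $b,v$ and suppressing does not carve the mass of $b$ out of the block of $a$. Half of the mass of $w$ is drawn from the leaves below $x$. Moreover, if $w$ is an ancestor of $u$ (as in $\FCat(2)$, where $x=u$), the mass of $u$ itself changes. For instance, let the root have children $u,w$, with $u\to a$, $u\to v$, $w\to v$, $w\to c$, $v\to b$. Then $N'$ is a cherry $(1/2,1/2)$, while $N$ has $(p_a,p_b,p_c)=(1/4,1/2,1/4)$, so $b$ takes mass from two different leaves.

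The property you plan to prove, that the new leaf receives at least a quarter of a block, is also aimed in the wrong direction. It is a lower bound on $p_b$, which only matters for $k=n-1$. For $k<n-1$ you need an upper bound on how much mass the surviving leaves lose, and your plan supplies none.

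\textbf{The missing idea is a coupling.} Let $N''$ be $N$ with $b$ and $v$ removed but nothing suppressed. It has the same leaf distribution $p'$ as $N'$. The walk on $N$ is the walk on $N''$, diverted to $b$ with probability $1/2$ at each visit to $u$ or $w$; each of these is visited at most once. Hence $p_\ell\geqslant p'_\ell/4$ for $\ell\neq b$, and $p_b=\sum_{\ell\neq b}(p'_\ell-p_\ell)$. If $b$ is among the $k$ heaviest leaves of $N$ and $T$ denotes the $k-1$ others, then
\[
  p_b+\sum_{\ell\in T}p_\ell
  =\sum_{\ell\in T}p'_\ell+\sum_{\ell\notin T\cup\{b\}}\bigl(p'_\ell-p_\ell\bigr)
  \leqslant\tfrac34+\tfrac14\sum_{\ell\in T}p'_\ell
  \leqslant\tfrac34+\tfrac14\bigl(1-4^{-(k-1)}\bigr)=1-4^{-k}.
\]
The remaining cases are as follows.
\begin{itemize}
\item If $b$ is not among the $k$ heaviest leaves, those leaves are bounded entrywise by the corresponding entries of $p'$. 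This settles $k\leqslant n-2$.
\item For $k=n-1$, use $p_b\geqslant p'_a/4\geqslant 4^{-(n-1)}$, by the induction hypothesis on the lightest leaf of $N'$.
\item The cherry case is just the halving of one entry.
\end{itemize}

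With this repair your majorization route becomes a valid alternative to the paper's argument. The paper adapts the $B_2$ proof: it applies $H_\alpha$-decreasing local moves justified by the strict concavity of $f_\alpha$ (\Cref{sign,constructionMin}) until only $\FCat(n)$ remains. Your route would instead give the bound for all Schur-concave indices at once.

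Finally, drop the reduction to binary networks. The grafting property does not justify it, since it only acts at cut vertices. The statement must be read for binary tree-child networks anyway, because it fails with a reticulation of in-degree $3$. The network $\rho\to s$, $\rho\to r$, $s\to t$, $s\to r$, $t\to r$, $t\to\ell$, $r\to\ell'$ is tree-child with leaf masses $(7/8,1/8)$, which majorize $(3/4,1/4)$.
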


\section{The \texorpdfstring{$\boldsymbol{H_\alpha}$}{H-alpha} index of various models of random trees}

In this section, we study the distribution $H_\alpha$ (mostly through its first
moments) for two broad models of random trees: binary Markov branching trees
and Galton--Watson trees.
We then specialize these results to two phylogenetically important examples of
binary Markov branching trees, namely the Yule / ERM model and the PDA model.

\subsection{Binary Markov branching trees}

Binary Markov branching trees are a class of models of random trees
introduced by Aldous in \cite{aldousProbabilityDistributionsCladograms1996a}.
Since then, they have become classical in phylogenetics --
in great part thanks to the $\beta$-splitting model, a parameterized
family of Markov branching trees introduced by Aldous in that
same article that includes the Yule / ERM model and the
PDA model (for $\beta = 0$ and $\beta = -3/2$, respectively).

A Markov branching tree model is parameterized  by a familly
$\mathbf{q} = (q_n)_{n \geqslant 2}$
of probability distributions  such that $q_n$ is supported on
$\{1, \cdots, n-1\}$ and is symmetric
(meaning that $q_n(i) = q_n(n-i)$ for all~$i$).
These distributions are known as the \emph{root-split distributions}
of the model.
The model produces an ordered binary tree with $n$ leaves -- say $1, \ldots, n$
-- as follows:
start by choosing a random integer~$i$
according to $q_n$, then choose $i$ leaves uniformly at random from
$\{1, \ldots, n\}$.
Let the root have descendant subtrees (left and right) whose leaf sets are
the chosen $i$ leaves and the remaining $n-i$ leaves, respectively.  Repeat
this operation independently in the two subtrees, until every subtree has one
leaf.

Throughout this section, $T_n$ will denote a binary Markov branching tree with
$n$ leaves and root-split distributions~$\mathbf{q}$.
Let $(T'_k)_{k\geqslant 1}$ and $(T''_k)_{k\geqslant 1}$ be two independent
families of Markov branching trees with root-split
distribution $\mathbf{q}$, such that $T'_k$ and $T''_k$ have $k$ leaves,
and let $K_n$ be a random variable distributed according to $q_n$ and
independent of $(T'_k)_{k\geqslant 1}$ and $(T''_k)_{k\geqslant 1}$ .
Then, by definition, 
\begin{equation*}
  T_n \;\overset{d}{=}\; T'_{K_n} \oplus T''_{n-K_n},
\end{equation*}
where $\tau_1 \oplus \tau_2$ denotes the tree obtained
by grafting the trees $\tau_1$ and $\tau_2$ onto a cherry, i.e.\ by identifying
their roots with one leaf of the cherry each.
\Cref{graftingcherry} thus implies
\begin{equation} \label{graftingmarkov}
  H_\alpha (T_n) \;\overset{d}{=}\;
  1 + 2^{-\alpha} (H_\alpha(T_{K_n}') + H_\alpha(T_{n-K_n}'')).
\end{equation}
The independence of $K_n$, $(T'_k)_{k\geqslant 1}$ and $(T''_k)_{k\geqslant 1}$
and the fact that $K_n$ is distributed as $n-K_n$ readily imply
the following recurrence relations for the first and second moments of
$H_\alpha(T_n)$.

\begin{Theo} \label{reccurence}
Let $T_n$ be a Markov branching tree with $n$ leaves and root-split
distribution $\mathbf{q} = (q_n)$. Let $\mu_n = \mathds{E}[H_\alpha(T_n)]$,
$s_n = \mathds{E}[H_\alpha(T_n)^2]$ and $v_n = \mathrm{Var}(H_\alpha(T_n))$.
Then letting $K_n \sim q_n$, we have the following recurrence relations:
\begin{enumerate}[label=(\roman*)]
\item $\mu_n = 1 + 2^{1-\alpha} \mathds{E}[\mu_{K_n}]$,
\item $s_n = 1 + 2^{1-2\alpha} \left( \mathds{E}[s_{K_n}] + 
  \mathds{E}[\mu_{K_n} \mu_{n-K_n}] \right) +2^{2-\alpha} \E[\mu_{K_n}]$,
\item $v_n = 4^{-\alpha} \left(\mathrm{Var}(\mu_{K_n} +
  \mu_{n-K_n}) + 2 \mathds{E}[v_{K_n}] \right)$. \qedhere
\end{enumerate}
\end{Theo}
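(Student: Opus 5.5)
The plan is to start from the distributional identity \eqref{graftingmarkov}, $H_\alpha(T_n) \overset{d}{=} 1 + 2^{-\alpha}(H_\alpha(T'_{K_n}) + H_\alpha(T''_{n-K_n}))$, and compute moments by conditioning on $K_n$. The two facts doing the work are:
\begin{itemize}[label={}]
\item independence of $K_n$, $(T'_k)$ and $(T''_k)$, so that conditionally on $K_n=k$ the two summands are independent copies of $H_\alpha(T_k)$ and $H_\alpha(T_{n-k})$;
\item the symmetry $K_n \overset{d}{=} n-K_n$, which lets us replace any $\E[f(n-K_n)]$ by $\E[f(K_n)]$.
\end{itemize}

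For (i), take expectations and use the tower property with respect to $K_n$. This gives $\mu_n = 1 + 2^{-\alpha}(\E[\mu_{K_n}] + \E[\mu_{n-K_n}])$, and symmetry collapses it to $1 + 2^{1-\alpha}\E[\mu_{K_n}]$.

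For (ii), square the identity and write $X = H_\alpha(T'_{K_n})$ and $Y = H_\alpha(T''_{n-K_n})$. Expanding gives
\[
1 + 2^{1-\alpha}(X+Y) + 4^{-\alpha}(X^2+Y^2+2XY).
\]
Conditioning on $K_n$ gives $\E[X^2] = \E[s_{K_n}]$, and $\E[Y^2] = \E[s_{n-K_n}] = \E[s_{K_n}]$ by symmetry. Conditional independence gives $\E[XY] = \E[\mu_{K_n}\mu_{n-K_n}]$, and the linear term yields $2^{1-\alpha}\cdot 2\,\E[\mu_{K_n}] = 2^{2-\alpha}\E[\mu_{K_n}]$. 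Collecting the quadratic part, $4^{-\alpha}(2\E[s_{K_n}] + 2\E[\mu_{K_n}\mu_{n-K_n}]) = 2^{1-2\alpha}(\E[s_{K_n}] + \E[\mu_{K_n}\mu_{n-K_n}])$, which matches (ii).

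For (iii), I would not subtract $\mu_n^2$ from (ii). Instead I would use the law of total variance on $2^{-\alpha}(X+Y)$ conditionally on $K_n$; the additive constant $1$ does not affect the variance.
\begin{itemize}[label={}]
\item The conditional mean is $\mu_{K_n} + \mu_{n-K_n}$, so the variance-of-conditional-mean term is $\Var(\mu_{K_n}+\mu_{n-K_n})$.
\item By conditional independence, the conditional variance is $v_{K_n} + v_{n-K_n}$. Its expectation is $2\E[v_{K_n}]$ by symmetry.
\end{itemize}
Multiplying by $4^{-\alpha}$ gives (iii).

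There is no real obstacle here; the only delicate point is to justify conditional independence of $X$ and $Y$ given $K_n$. This follows directly from the independence of the families $(T'_k)$ and $(T''_k)$ and of $K_n$.
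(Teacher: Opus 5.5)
Your proof is correct and takes the same approach as the paper. The paper derives all three relations directly from \eqref{graftingmarkov}, using only the independence of $K_n$, $(T'_k)$, $(T''_k)$ and the symmetry $K_n \overset{d}{=} n-K_n$; your use of the law of total variance for (iii) simply spells out a step the paper calls ``readily'' implied.
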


\begin{Rema}
Point (i) in \cref{reccurence} can also be written as
\begin{equation*}
    \mu_n = 1 + b \cdot \sum_{k=1}^{n-1}q_n(k)\, \mu_k,
\end{equation*}
where $b = 2^{1-\alpha}$ and $\mu_1 = 0$.
This recursion can be iterated to yield
\begin{equation*}
    \mu_n = \sum_{j=0}^{n-2} c_{j, n} b^j,
\end{equation*}
where the coefficients $c_{j,n}$ are defined by $c_{0, n} = 1$ and
\begin{equation}
    c_{j,n} = 
    \sum_{k=j+1}^{n-1} q_n(k)\, c_{j-1, k} =
    \sum_{k_1 = j+1}^{n-1}q_n(k_1) \sum_{k_2 = j}^{k_1-1}q_{k_1}(k_2)
    \, \cdots\!\!
    \sum_{k_j = 2}^{k_{j-1}-1} q_{k_{j-1}}(k_j). \label{expressionofCjn}
\end{equation}
The coefficient $c_{j, n}$ has a simple interpretation as
the probability that the leftmost leaf in the tree has height $j+1$.
Indeed, by construction, the left subtree must have $k \geqslant j+1$ leaves
for one of them to be at height $j+1$; and then the left subtree of that
subtree with $k$ leaves must have its leftmost leaf at height $j$.
\end{Rema}

%\begin{Rema}
%    \begin{itemize}
%        \item It is easily checked that $c_{1, n} = 1-q_n(1)$ and since when
%          $j=n-2$ only one term remains in each sum in \cref{expressionofCjn}
%          (namely $k_1 = n-1,~ k_2=n-2, \cdots , k_{n-2} = 2$), one also has
%          $c_{n-2, n} = \displaystyle \prod_{k=2}^{n-1}q_{k+1}(k)$.
%        \item For instance, for Aldous model (i.e.\ the $\beta$-splitting model
%          when $\beta = -1$) one has
%        \begin{equation*}
%            c_{1, n} = 1- \frac{n}{2(n-1)h_{n-1}} \qquad
%            \text{and by telescoping} \qquad c_{n-2, n} =
%            \frac{n}{2^{n-1}\prod_{k=2}^{n-1}h_k},
%        \end{equation*}
%        where $h_k = \sum_{i = 1}^k \frac{1}{i}$ is the truncated harmonic
%        series. Though, the other coefficients doesn't seem to have such simple
%        expressions for Aldous model.
%    \end{itemize}
%\end{Rema}

Since the Yule / ERM and PDA models are special instances of Markov branching
trees, \cref{reccurence} can be used to compute the expectation and variance of
$H_\alpha$ for these models.
However, in these specific cases one can also use other,
less computation-oriented methods, as explained in \Cref{secYule,secPDA}.

\subsection{Galton--Watson trees} \label{secGW}

In this section, we provide necessary and sufficient conditions for the
existence of integer moments of the $H_\alpha$ index of Galton--Watson trees.
We also give explicit expressions for the expected value and variance.

Let us start by setting some notation.
For all $n \in \mathbb{N}$, let $\xi_n$ be an integer-valued random variable % denote by $\mathcal{D}_n$ its distribution,
and let $(\xi_n(i))_i$ be a family of independent copies of $\xi_n$.
The \emph{time-inhomogeneous Galton--Watson process with offspring distribution $\xi_n$ % $\mathcal{D}_n$
at generation $n$} is the stochastic process $(Z_{n})_{n\geqslant 0}$ defined by
\begin{equation*}
    \begin{cases}
    Z_0 = 1 \\
    Z_{n+1} = \displaystyle \sum_{i = 1}^{Z_n} \xi_n(i).
    \end{cases}
\end{equation*}
We denote by $T$ the random tree describing the genealogy associated
with this process.

\begin{Defi} \label{defTruncation}
Let $G$ be a phylogenetic network. 
The \emph{height} of a vertex $v$ is the number of edges of a shortest
directed path from the root of $G$ to $v$.
We refer to the subnetwork of $G$ induced by its vertices with height at most
 $k$ as the \emph{truncation at height $k$ of $G$} and we denote it by
$[G]_k$.
\end{Defi}

\begin{Prop} \label{inhomoGW}
Let $T$ be a time-inhomogeneous Galton--Watson tree with offspring distribution
$\xi_n$ at generation $n$.
Define
\begin{equation*}
    \kappa_{\alpha, n} = \E \left[ \xi_n^{1-\alpha}
    \mathds{1}_{\{\xi_n \neq 0 \}} \right]
    \qquad \text{and} \qquad
    \eta_{\alpha, n} =
    \frac{1}{1-2^{1-\alpha}} \E \left[ 
    (1-\xi^{1-\alpha}_n)\1_{\{\xi_n \neq 0\}}\right].
\end{equation*}
Then, 
\begin{equation*}
    \mathds{E} [H_\alpha([T]_k)] = \sum_{j=0}^{k-1} \eta_{\alpha, j}
    \prod_{m=0}^{j-1} \kappa_{\alpha, m}.\qedhere
\end{equation*}
\end{Prop}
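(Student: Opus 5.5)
We will prove the formula by induction on $k$, decomposing $[T]_{k+1}$ at the root with the grafting property (\cref{graftingproperty}). It is convenient to prove the slightly more general statement for every time-inhomogeneous Galton--Watson tree, since the subtrees hanging from the children of the root are again such trees, with offspring sequence shifted by one: $(\xi_{n+1})_{n\geqslant 0}$. Write $\kappa^{(1)}_{\alpha,m}=\kappa_{\alpha,m+1}$ and $\eta^{(1)}_{\alpha,j}=\eta_{\alpha,j+1}$ for the corresponding constants. For $k=0$ the truncation $[T]_0$ is reduced to the root, so $H_\alpha([T]_0)=0$, which matches the empty sum.

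For the inductive step, condition on $\xi_0$. If $\xi_0=0$, then $H_\alpha([T]_{k+1})=0$. If $\xi_0=d\geqslant 1$, then $[T]_{k+1}$ is obtained from the star with $d$ leaves by grafting onto each leaf $i$ the truncation $[T^{(i)}]_k$. Here the $T^{(i)}$ are i.i.d.\ copies of the shifted tree, independent of $\xi_0$. The $H_\alpha$ index of the star with $d$ leaves is $(1-d^{1-\alpha})/(1-2^{1-\alpha})$, since each $p_\ell=1/d$. Each leaf of the star has $p_\ell=1/d$, so applying \cref{graftingproperty} $d$ times (the leaves are disjoint, so the grafts do not interact) gives
\begin{equation*}
H_\alpha([T]_{k+1}) = \frac{1-\xi_0^{1-\alpha}}{1-2^{1-\alpha}}\1_{\{\xi_0\neq 0\}} + \xi_0^{-\alpha}\sum_{i=1}^{\xi_0} H_\alpha([T^{(i)}]_k).
\end{equation*}

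Take expectations, using independence and Wald's identity in the form $\E[\xi_0^{-\alpha}\sum_{i\leqslant \xi_0}X_i]=\E[\xi_0^{1-\alpha}\1_{\{\xi_0\neq0\}}]\,\E[X_1]$. This yields the recursion
\begin{equation*}
\E[H_\alpha([T]_{k+1})]=\eta_{\alpha,0}+\kappa_{\alpha,0}\,\E[H_\alpha([T^{(1)}]_k)].
\end{equation*}
By the induction hypothesis applied to the shifted tree,
\begin{equation*}
\E[H_\alpha([T^{(1)}]_k)]=\sum_{j=0}^{k-1}\eta_{\alpha,j+1}\prod_{m=0}^{j-1}\kappa_{\alpha,m+1}.
\end{equation*}
Multiplying by $\kappa_{\alpha,0}$ and reindexing $j\mapsto j+1$ gives exactly
\begin{equation*}
\E[H_\alpha([T]_{k+1})]=\sum_{j=0}^{k}\eta_{\alpha,j}\prod_{m=0}^{j-1}\kappa_{\alpha,m}.
\end{equation*}

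The only delicate point is integrability, since $\E[\xi_n^{1-\alpha}]$ may be infinite when $\alpha<1$. Everything here is nonnegative, however: $H_\alpha\geqslant 0$, and $(1-d^{1-\alpha})/(1-2^{1-\alpha})\geqslant 0$ for all $d\geqslant 1$. So the conditioning and Wald steps are justified by Tonelli, and the identity holds in $[0,\infty]$. For $\alpha=1$ the same argument works with $\eta_{1,n}=\E[\log_2\xi_n\,\1_{\{\xi_n\neq0\}}]$, by continuity or by directly using the $B_2$ grafting property. I expect this bookkeeping, together with checking that the inductive hypothesis is legitimately applied to the shifted tree, to be the main care needed; the algebra itself is routine.
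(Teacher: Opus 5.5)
Your proposal is correct and follows essentially the same route as the paper. The paper also decomposes $[T]_k$ at the root into a star with $\xi_0$ leaves, applies the grafting property to get $\E[H_\alpha([T]_k)]=\eta_{\alpha,0}+\kappa_{\alpha,0}\,\E[H_\alpha([\tilde T]_{k-1})]$ for the shifted tree, and concludes by induction from $H_\alpha([T]_0)=0$. Your nonnegativity/Tonelli justification and your explicit statement of the induction over all shifted offspring sequences are details the paper leaves implicit.
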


\begin{Proo}
We can construct a Galton--Watson tree
truncated at generation $k$ by the following procedure: first,
sample $\xi_0$. If $\xi_0 = 0$, then the tree is reduced
to its root and its $H_\alpha$ index is 0.
If $\xi_0 > 0$, then build a star tree with $\xi_0$ leaves and
sample $\xi_0$ independent Galton--Watson trees truncated at generation $k-1$.
Finally, graft those trees to the leaves of the star tree (one on each leaf).
Noting that the $H_\alpha$ index of a star tree with $n$
leaves is $\frac{1}{1-2^{1-\alpha}}(1-n^{1-\alpha})$,
\cref{graftingproperty} yields the following equality in distribution:
\begin{equation} \label{recurrenceGWdistri}
    \left( H_\alpha([T]_k)~|~ \xi_0 = i \right) \overset{d}{=} 
    \left( \frac{1}{1-2^{1-\alpha}}(1-i^{1-\alpha}) + i^{-\alpha}
    \sum_{j = 1}^{i} H_\alpha([\tilde{T}(j)]_{k-1}) \right),
\end{equation}
where $i$ is any positive integer and $(\tilde{T}(j))_{j \geqslant 1}$ are independent Galton--Watson trees
with offspring distributions
$(\xi_{n+1})_{n \geqslant 0}$ that
are also independent of $\xi_0$.
Thus, conditioning on $\xi_0$, taking expectation and using the independence of
$\xi_0$ and $(\tilde{T}(j))_{j \geqslant 1}$, one has
\begin{equation}
    \E[H_\alpha([T]_k)] = \frac{1}{1-2^{1-\alpha}} \E[(1-\xi_0^{1-\alpha})\1_{\{\xi_0 \neq 0\}}] + \E[\xi_0^{1-\alpha}\1_{\{\xi_0 \neq 0\}}] \cdot \E[H_\alpha([\tilde{T}]_{k-1})]. \label{recurrenceGW}
\end{equation}
Since $\E[H_\alpha([T]_0)] = 0$, we conclude by induction.
\end{Proo}

From now on, we only consider time-homogeneous Galton--Watson trees --
meaning that the random variables $\xi_n$ are identically distributed as $\xi$
for all $n\geqslant 0$.

\begin{Coro} \label{expecGW}
Let $T$ be a Galton--Watson tree whose offspring distribution $\xi$ 
satisfies $\mathds{P}(\xi = 1) < 1$.
Then,  $\mathds{E}[H_\alpha(T)]$ is finite if and only if
$\mathds{E}[\xi^{1-\alpha}\mathds{1}_{\{\xi \neq 0\}}] < 1$. In this case,
\begin{equation*}
  \mathds{E}[H_\alpha(T)] =
  \frac{1}{1-2^{1-\alpha}}\cdot
  \frac{ \mathds{E}[(1-\xi^{1-\alpha})
  \mathds{1}_{\{\xi \neq 0\}}]}{1-\mathds{E}[\xi^{1-\alpha}
  \mathds{1}_{\{\xi \neq 0\}}]}.\qedhere
\end{equation*}
\end{Coro}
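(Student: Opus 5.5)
The plan is to specialize \cref{inhomoGW} to the time-homogeneous case and pass to the limit $k\to\infty$. With $\xi_n \overset{d}{=} \xi$ for all $n$, write $\kappa = \E[\xi^{1-\alpha}\1_{\{\xi\neq 0\}}]$ and $\eta = \frac{1}{1-2^{1-\alpha}}\E[(1-\xi^{1-\alpha})\1_{\{\xi\neq 0\}}]$. Then \cref{inhomoGW} gives
\begin{equation*}
\E[H_\alpha([T]_k)] = \eta \sum_{j=0}^{k-1} \kappa^j .
\end{equation*}
Next, link $H_\alpha(T)$ to the truncations by showing that $H_\alpha([T]_k)$ is nondecreasing in $k$ and converges to $H_\alpha(T)$. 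Monotonicity follows from the grafting property (\cref{graftingproperty}): $[T]_{k+1}$ is obtained from $[T]_k$ by grafting star trees onto leaves at height $k$, and each graft adds $p_\ell^\alpha H_\alpha(\text{star}) \geqslant 0$. The convergence to $H_\alpha(T)$ for infinite $T$ is the appendix result (the one cited as \Cref{Stubsequence} in the proof of \cref{HalphaBinaire}). Monotone convergence then yields
\begin{equation*}
\E[H_\alpha(T)] = \lim_k \eta\sum_{j<k}\kappa^j .
\end{equation*}

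It remains to check signs and the equivalence.
\begin{itemize}[label=$\bullet$]
\item $\eta \geqslant 0$, because for every integer $i\geqslant 1$ the quantity $(1-i^{1-\alpha})/(1-2^{1-\alpha})$ is nonnegative. Also $\eta>0$ unless $\xi\in\{0,1\}$ almost surely.
\item If $\kappa<1$, the geometric series gives the stated formula $\eta/(1-\kappa)$.
\item If $\kappa\geqslant 1$ and $\eta>0$, the sum diverges, so the expectation is infinite.
\end{itemize}

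The main obstacle is the degenerate case $\eta=0$, i.e.\ $\xi\in\{0,1\}$ almost surely. Here $\kappa=\P(\xi=1)<1$ by hypothesis, so the condition $\kappa<1$ holds. The tree is then a finite path, its $H_\alpha$ is $0$, and this matches the formula with $\eta=0$. So the equivalence holds in all cases; the hypothesis $\P(\xi=1)<1$ exists exactly to exclude the pathological case $\kappa=1$, $\eta=0$ (the infinite path).

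A further check is that $\kappa<1$ does not interact badly with $\alpha<1$ versus $\alpha>1$. This needs no separate treatment, since only the sign of $\eta$ is used and it is nonnegative in all regimes, with the normalization sign absorbed. Finally, I would handle $\alpha=1$ by the same argument with $\eta=\E[\log_2 \xi\,\1_{\{\xi\neq0\}}]$ and $\kappa=\P(\xi\neq0)$, or by continuity.
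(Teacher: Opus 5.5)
Your proof is correct and is essentially the paper's argument: the paper writes $u_k = \E[H_\alpha([T]_k)]$ as the affine recursion $u_k = a + b\,u_{k-1}$ from \eqref{recurrenceGW}, of which your $\eta\sum_{j<k}\kappa^j$ is the closed form, and then passes to the limit using \Cref{Stubsequence} together with the monotonicity of $k\mapsto H_\alpha([T]_k)$. Your separate treatment of the degenerate case $\eta=0$ (i.e.\ $\xi\in\{0,1\}$ almost surely, where $\kappa=\P(\xi=1)<1$) is more careful than the paper, whose claim that $(u_k)$ converges if and only if $b<1$ tacitly assumes $a>0$.
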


\begin{Rema}
Note that if $\alpha$ is greater than $1$, then we always have
$\mathds{E}[\xi^{1-\alpha}\mathds{1}_{\{\xi \neq 0\}}] < 1$ because $\xi$
is $\mathbb{N}$-valued and we have excluded the trivial case 
$\mathds{P}(\xi = 1) = 1$ (in which case $T$ is an infinite
straight line and $\E[H_\alpha(T)] = 0$).
This is consistent with the fact that for $\alpha > 1$ the $H_\alpha$ index is
bounded above by $\frac{1}{1-2^{1-\alpha}}$.
\end{Rema}

\begin{Proo}
Let $u_k = \E[H_\alpha([T]_k)]$, and note that the recursion
Eq.~\eqref{recurrenceGW} is of the form
\begin{equation*}
  u_k =  a + b\,u_{k-1}, \quad
  \text{where }
    \begin{dcases}
    a =  \frac{1}{1-2^{1-\alpha}} \E[(1-\xi^{1-\alpha}) \1_{\{\xi \neq0\}}]  \\
    b = \E[\xi^{1-\alpha}\1_{\{\xi \neq 0\}}].
    \end{dcases}
\end{equation*}
Thus, the sequence $(u_k)$ converges if and only if
$b = \mathds{E}[\xi^{1-\alpha}\mathds{1}_{\{\xi \neq 0\}}] < 1$, in
which case its limit is the expression given in the corollary.
To conclude the proof, it remains to show that
$\mathds{E}[H_\alpha([T]_k)] \to \mathds{E}[H_\alpha(T)]$.
Anticipating our results on local limits, this follows from
\Cref{Stubsequence};
alternatively, this can be proved from \Cref{defentropy02}
(note that, in particular, \Cref{monotonicity} implies that
the sequence $(\mathds{E}[H_\alpha([T]_k)])_{k\geqslant 0}$ is nondecreasing).
\end{Proo}

\begin{Theo} \label{VarGW}
Let $T$ be a Galton--Watson tree with offspring distribution $\xi$, where
$\mathds{P}(\xi = 1) < 1$.
Then, $\Var(H_\alpha(T))$ is finite if and only if
$\mathds{E}[\xi^{1-\alpha}\mathds{1}_{\{\xi \neq 0\}}] < 1$ and
$\mathds{E}[\xi^{2(1-\alpha)}\mathds{1}_{\{\xi \neq 0\}}] < + \infty$.
In this case, 
\begin{equation*}
    \mathrm{Var}(H_\alpha(T)) = 
    \frac{\mathds{P}(\xi = 0)}{(1-2^{1-\alpha})^2
    \left(1-\mathds{E}\left[ \xi^{1-2\alpha}
    \mathds{1}_{\{\xi \neq 0\}} \right] \right)}
    \left(1+ \frac{\left( \mathds{E}\left[ \xi^{2(1-\alpha)}
    \mathds{1}_{\{\xi \neq 0\}} \right] -1 \right)
    \mathds{P}(\xi = 0)}{\left(1- \mathds{E}\left[ \xi^{1-\alpha}
    \mathds{1}_{\{\xi \neq 0\}} \right] \right)^2} \right). \qedhere
\end{equation*}
\end{Theo}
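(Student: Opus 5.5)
We work with the distributional identity \eqref{recurrenceGWdistri} from the proof of \cref{inhomoGW}, now in the time-homogeneous case. Write $c(i) = \frac{1-i^{1-\alpha}}{1-2^{1-\alpha}}$ for the $H_\alpha$ index of the star tree with $i$ leaves. We use the shorthand
\begin{equation*}
a = \E[c(\xi)\1_{\{\xi\neq 0\}}], \qquad b = \E[\xi^{1-\alpha}\1_{\{\xi\neq0\}}], \qquad b_2 = \E[\xi^{1-2\alpha}\1_{\{\xi\neq0\}}], \qquad d = \E[\xi^{2(1-\alpha)}\1_{\{\xi\neq0\}}].
\end{equation*}
Set $X_k = H_\alpha([T]_k)$, $m_k = \E[X_k]$ and $s_k=\E[X_k^2]$. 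The plan is to derive a linear recursion for $s_k$ from \eqref{recurrenceGWdistri}, solve it in the limit, and then pass from truncations to $T$ by monotone convergence.

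\textbf{Second-moment recursion.} Condition on $\xi_0=i\geq 1$ and square the right-hand side of \eqref{recurrenceGWdistri}. By independence of the $\tilde T(j)$, the sum $\sum_{j=1}^i X_{k-1}^{(j)}$ has second moment $i\,s_{k-1} + i(i-1)m_{k-1}^2$. Averaging over $\xi_0$ then gives
\begin{equation*}
s_k = \E[c(\xi)^2\1_{\{\xi\neq0\}}] + 2\,\E[c(\xi)\xi^{1-\alpha}\1_{\{\xi\neq0\}}]\,m_{k-1} + (d-b_2)\,m_{k-1}^2 + b_2\,s_{k-1},
\end{equation*}
with $s_0=0$. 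Two observations follow.

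First, $\xi^{1-2\alpha}\leq\xi^{1-\alpha}$ on $\{\xi\geq1\}$, so $b<1$ forces $b_2<1$.

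Second, $c(\xi)^2$ is integrable if and only if $d<\infty$. For $\alpha\geq 1$ both quantities are automatically finite. For $\alpha<1$ we have $c(i)\sim i^{1-\alpha}/(2^{1-\alpha}-1)$, which gives the equivalence.

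Now assume $b<1$ and $d<\infty$. By \cref{expecGW} and its proof, $m_k\uparrow m=a/(1-b)$. The recursion above is then of the form $s_k = A_{k-1}+b_2 s_{k-1}$ with $A_k\to A$, and since $b_2<1$ we get $s_k\to s = A/(1-b_2)$. By \cref{monotonicity}, $X_k$ is nondecreasing in $k$, and it converges to $H_\alpha(T)$ (\Cref{Stubsequence}). Monotone convergence therefore gives $\E[H_\alpha(T)^2]=s<\infty$, so the variance is finite and equals $s-m^2$.

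\textbf{Necessity.} Suppose $b\geq1$. Then $\E[H_\alpha(T)]=\infty$ by \cref{expecGW}, so $H_\alpha(T)\notin L^1$ and hence $H_\alpha(T)\notin L^2$; the variance is not finite. Suppose instead $b<1$ but $d=\infty$; this forces $\alpha<1$. Then $H_\alpha(T)\geq H_\alpha([T]_1)=c(\xi_0)\1_{\{\xi_0\neq0\}}$, which is not square-integrable. Since the mean is finite, the variance is infinite.

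\textbf{Main obstacle: algebraic simplification.} The remaining and most tedious step is to reduce $s-m^2$ to the stated closed form. To do this, write $c(\xi)=\frac{1-\xi^{1-\alpha}}{1-2^{1-\alpha}}$ on $\{\xi\neq0\}$. Express $\E[c(\xi)^2\1_{\{\xi\neq0\}}]$, $\E[c(\xi)\xi^{1-\alpha}\1_{\{\xi\neq0\}}]$ and $a$ in terms of $q=\P(\xi=0)$, $b$ and $d$, using $\E[\1_{\{\xi\neq0\}}]=1-q$. Then substitute $m=a/(1-b)$ and collect terms over the common denominator $(1-2^{1-\alpha})^2(1-b_2)(1-b)^2$. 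I expect the numerator to factor as
\begin{equation*}
q\big((1-b)^2+(d-1)q\big),
\end{equation*}
which yields exactly the announced expression. Tracking this cancellation carefully is where I expect most of the effort to go.
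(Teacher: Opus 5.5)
Your proposal is correct and follows the paper's proof essentially step for step. You use the same second-moment recursion obtained by squaring \eqref{recurrenceGWdistri}, the same observation that $b<1$ forces $b_2<1$, and the same passage from truncations to $T$; your necessity argument via $H_\alpha(T)\geq c(\xi_0)\mathds{1}_{\{\xi_0\neq 0\}}$ is a slightly more explicit version of what the paper asserts. The paper also leaves the final algebra as ``standard calculations'', and it closes as you predict: with $\gamma = 1-2^{1-\alpha}$ and $\gamma m = 1 - q/(1-b)$, one gets $\gamma^2(1-b_2)(s-m^2) = q + (d-1)q^2/(1-b)^2$.
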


The proof of this result consists in squaring both sides of
Eq.~\eqref{recurrenceGWdistri}, taking expectations and performing
routine computations to solve the resulting recursion.
The details can be found in \Cref{appGW}. 
In fact, the same method can be used to study higher moments;
however the computations quickly become tedious, so for $m\geq 3$
we content ourselves with a necessary and sufficient condition for the
existence of the $m$-th moment of $H_\alpha(T)$.
Again, the details are in \Cref{appGW}.

\begin{Theo} \label{momentoforderm}
Let $T$ be a Galton--Watson tree with offspring distribution $\xi$, where
$\mathds{P}(\xi = 1) < 1$.
Then, for any positive integer $m$,
$\mathds{E}[(H_\alpha(T))^m] < +\infty$ if and only if
$\mathds{E}[\xi^{(1-\alpha)m}\mathds{1}_{\{\xi \neq 0\}}] < + \infty$
and $\mathds{E}[\xi^{1-\alpha}\mathds{1}_{\{\xi \neq 0 \}}] < 1$.
\end{Theo}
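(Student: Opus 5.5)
We work with the truncations $[T]_k$ and the distributional recursion \eqref{recurrenceGWdistri}. Write $a_i = \frac{1-i^{1-\alpha}}{1-2^{1-\alpha}}$, which is nonnegative, and $b = \E[\xi^{1-\alpha}\1_{\{\xi\neq 0\}}]$. The plan has three parts: use monotone convergence to reduce the problem to bounds on $M_k = \E[H_\alpha([T]_k)^m]$, prove necessity by lower bounds, and prove sufficiency by induction on $m$.

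\emph{Reduction.} By \Cref{monotonicity}, $H_\alpha([T]_k)$ is nondecreasing in $k$. By \Cref{Stubsequence}, it converges to $H_\alpha(T)$. Monotone convergence then gives $\E[H_\alpha(T)^m] = \lim_k M_k$. So it suffices to show that $(M_k)_k$ is bounded exactly under the two stated conditions.

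\emph{Necessity.} Suppose $\E[H_\alpha(T)^m]<\infty$. Since $m\geq 1$, the first moment is finite, and \Cref{expecGW} gives $b<1$. For the other condition, note that $H_\alpha(T) \geq i^{-\alpha}\sum_{j=1}^i H_\alpha(\tilde T(j))$ on $\{\xi_0=i\}$, because $a_i \geq 0$.
\begin{itemize}[label=$\cdot$]
\item If $\alpha<1$: we have $H_\alpha(T)\geq a_{\xi_0}\geq c\,\xi_0^{1-\alpha}-C$ for constants $c,C>0$. Taking $m$-th moments yields $\E[\xi^{(1-\alpha)m}\1_{\{\xi\neq0\}}]<\infty$.
\item If $\alpha \geq 1$: $\xi^{(1-\alpha)m}\1_{\{\xi\neq0\}}\leq 1$, so the condition holds automatically.
\end{itemize}
When $\alpha>1$, the first condition is also automatic, since $H_\alpha$ is bounded.

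\emph{Sufficiency, by induction on $m$.} The case $m=1$ is \Cref{expecGW}. Assume $\sup_k \E[H_\alpha([T]_k)^r]<\infty$ for all $r<m$. Conditionally on $\xi_0=i$, raise \eqref{recurrenceGWdistri} to the $m$-th power and expand multinomially in $a_i$ and the $i$ summands $X_j = H_\alpha([\tilde T(j)]_{k-1})$, which are i.i.d.\ with law that of $H_\alpha([T]_{k-1})$. The pure terms $i^{-\alpha m}X_j^m$ contribute $i^{1-\alpha m}M_{k-1}$ in total.

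The remaining terms are products of powers of order $<m$. By independence and the induction hypothesis, each is bounded by $C\,a_i^{r_0}\,i^{-\alpha(m-r_0)}\,N_i(m-r_0)$, where $N_i(s)$ is the number of mixed monomials of total degree $s$ in $i$ variables. Since $N_i(s)\leq i^{s}$, and $a_i \lesssim i^{1-\alpha}$ when $\alpha<1$ (while $a_i$ is bounded when $\alpha\geq1$), the mixed terms are $O(i^{(1-\alpha)m}+1)$. After averaging over $\xi_0$, this gives
\[
M_k \;\leq\; A + \E[\xi^{1-\alpha m}\1_{\{\xi\neq0\}}]\,M_{k-1}, \qquad A<\infty,
\]
with $A$ finite precisely because $\E[\xi^{(1-\alpha)m}\1_{\{\xi\neq0\}}]<\infty$. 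Finally, $\xi^{1-\alpha m}\leq \xi^{1-\alpha}$ for $\xi\geq1$, so the coefficient is at most $b<1$, and $(M_k)_k$ is bounded.

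\emph{Main obstacle.} The delicate point is the count of mixed terms. A naive bound such as $i^{s}$ times $i^{-\alpha(m-r_0)}$ must be checked against $i^{(1-\alpha)m}$ for each exponent pattern. Terms whose product involves several distinct $X_j$ with low powers number on the order of $i^{\#\text{distinct}}$, and the number of distinct factors is at most $m-r_0$. Hence each such term is $O(i^{(1-\alpha)(m-r_0)})$, and multiplying by $a_i^{r_0}$ keeps it within $O(i^{(1-\alpha)m})$. When $\alpha>1$ these powers are $\leq 1$, so everything is bounded. I expect the bookkeeping for $\alpha<1$ versus $\alpha>1$ to need separate but straightforward care.
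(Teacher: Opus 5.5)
Your proof is correct and, for the sufficiency direction, follows the paper's argument. The paper also inducts on $m$, expands the $m$-th power of the recursion \eqref{recurrenceGWdistri} multinomially, isolates the pure terms to get $M_k\leq A+\E[\xi^{1-\alpha m}\1_{\{\xi\neq0\}}]\,M_{k-1}$ with coefficient at most $\E[\xi^{1-\alpha}\1_{\{\xi\neq0\}}]<1$, and bounds the mixed terms by lower moments. Your necessity argument via $H_\alpha(T)\geq H_\alpha([T]_1)=a_{\xi_0}$ and \Cref{expecGW} supplies the direction the paper does not write out. One small correction: $a_i$ is bounded only for $\alpha>1$, not $\alpha\geq1$, since at $\alpha=1$ one has $a_i=\log_2 i$. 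Indeed the statement itself fails at $\alpha=1$: taking $\P(\xi=0)>0$ and $\E[\log\xi\,\1_{\{\xi\neq0\}}]=\infty$ gives $\E[H_1(T)]=\infty$. So $\alpha\neq1$ must be assumed, as the paper's normalization $1/(1-2^{1-\alpha})$ tacitly does.
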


We now give a simple characterization of the existence of
exponential moments of \emph{binary} Galton--Watson trees.
Because its proof is slightly technical, it is again relegated to \Cref{appGW}.
Note that, as the proof relies on the formula for binary trees given in
\Cref{HalphaBinaire}, we do not see an immediate way to obtain
similar results for arbitrary Galton--Watson trees.

\begin{Prop} \label{exponentialMoment}
Let $T$ be a Galton--Watson tree with offspring distribution
$\xi \sim 2\,\mathrm{Ber}(p)$. The following are equivalent:
\begin{enumerate}[label=(\roman*)]
  \item $\E[H_\alpha(T)] < +\infty$.
  \item $p < 2^{\alpha -1}$.
  \item $H_\alpha(T)$ has exponential moments. \qedhere
\end{enumerate}
\end{Prop}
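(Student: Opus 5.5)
The equivalence of (i) and (ii) follows directly from \Cref{expecGW}. Here $\xi\in\{0,2\}$, so
\[
\E[\xi^{1-\alpha}\1_{\{\xi\neq 0\}}] = p\,2^{1-\alpha},
\]
and $\P(\xi=1)=0<1$. Hence $\E[H_\alpha(T)]<+\infty$ if and only if $p\,2^{1-\alpha}<1$, that is, $p<2^{\alpha-1}$. The implication (iii)$\Rightarrow$(i) is immediate, since an exponential moment $\E[e^{\theta H_\alpha(T)}]<\infty$ with $\theta>0$ dominates $\E[H_\alpha(T)]$. So the real work is (ii)$\Rightarrow$(iii).

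For (ii)$\Rightarrow$(iii) I would work with the truncations $[T]_k$ and their Laplace transforms $\varphi_k(\theta)=\E[e^{\theta H_\alpha([T]_k)}]$. Two ingredients are needed.
\begin{itemize}
\item The recursion \eqref{recurrenceGWdistri}, specialised to the binary case (equivalently \Cref{graftingtwophylogeny}), gives
\[
\varphi_k(\theta) = (1-p) + p\,e^{\theta}\,\varphi_{k-1}(2^{-\alpha}\theta)^2,\qquad \varphi_0\equiv 1 .
\]
\item The monotonicity property (\Cref{monotonicity}, or \Cref{HalphaBinaire}, which writes $H_\alpha$ as a sum of nonnegative terms over vertices) shows that $H_\alpha([T]_k)\uparrow H_\alpha(T)$.
\end{itemize}
By monotone convergence it then suffices to find $\theta_0>0$ and $C<\infty$ such that
\[
\varphi_k(\theta)\leq 1+C\theta \quad\text{for all } k \text{ and all } \theta\in[0,\theta_0].
\]

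I would prove this bound by induction on $k$. The case $k=0$ is trivial. For the inductive step, take $\theta\le\theta_0$; then $2^{-\alpha}\theta\le\theta_0$ as well, so the hypothesis applies and gives
\[
\varphi_k(\theta)\le (1-p)+p\,e^\theta\,(1+C2^{-\alpha}\theta)^2 .
\]
Expanding to first order, the right-hand side equals
\[
1+\theta\,\bigl(p+2^{1-\alpha}pC\bigr)+R(\theta),
\]
where $|R(\theta)|\le K_C\,\theta^2$ for $\theta\le 1$, with $K_C$ depending only on $p$, $\alpha$ and $C$. Because $p\,2^{1-\alpha}<1$ under (ii), I can choose $C$ strictly larger than $p/(1-p2^{1-\alpha})$, say so that $p+2^{1-\alpha}pC = C-\delta$ for some $\delta>0$. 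Then I pick $\theta_0\le 1$ with $K_C\theta_0\le\delta$. This gives $\varphi_k(\theta)\le 1+C\theta$, which closes the induction.

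The main obstacle is making sure the linear coefficient contracts, and this is exactly where $p<2^{\alpha-1}$ is used. One also has to keep the second-order remainder uniform in $k$; this is automatic because the induction hypothesis is only ever applied at the smaller argument $2^{-\alpha}\theta$, so $C$ and $\theta_0$ never change. Letting $k\to\infty$ then yields $\E[e^{\theta H_\alpha(T)}]\le 1+C\theta<\infty$ for all $\theta\in[0,\theta_0]$, which is (iii).
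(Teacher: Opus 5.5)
Your proof is correct, and for the substantive implication (ii)$\Rightarrow$(iii) it takes a genuinely different route from the paper's.

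The paper uses \Cref{HalphaBinaire} to write $H_\alpha(T)=2^{\alpha-1}\sum_{k\geqslant 1}(2^{1-\alpha}p)^k W_k$, where $W_k=Z_k/(2p)^k$ is the Galton--Watson martingale. It then:
\begin{itemize}
\item bounds $\E[H_\alpha(T)^r]$ by a multiple of $\sup_n\E[W_n^r]$ via Doob's maximal inequality;
\item controls $\|W_n\|_r$ in the supercritical case $p>1/2$ by adapting Janson's proof of the Bingham--Doney theorem (Marcinkiewicz--Zygmund and Khintchine constants);
\item sums the resulting moment bounds against $\theta^r/r!$;
\item reaches $p\leqslant 1/2$ by a monotone coupling.
\end{itemize}

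You instead close the bound $\varphi_k(\theta)\leqslant 1+C\theta$ directly on the moment generating functions of the truncations. This uses only the binary recursion and the contraction $\theta\mapsto 2^{-\alpha}\theta$ of the argument, which is what lets $C$ and $\theta_0$ stay fixed through the induction. The hypothesis $p\,2^{1-\alpha}<1$ enters transparently as contraction of the linear coefficient. No case split on criticality is needed, and you get the explicit bound $\E[e^{\theta H_\alpha(T)}]\leqslant 1+C\theta$ on $[0,\theta_0]$.

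Your route is also more robust. The paper's claim of a bound on $\|W_n\|_r$ uniform in both $n$ and $r$ cannot hold as written: for $1/2<p<1$ one has $\|W_n\|_\infty=p^{-n}$. The issue is that the moment-form constant should involve $B_r^r$ rather than $B_r$, so $C_r^{1/r}\asymp\sqrt r$. The corrected bound $\|W_n\|_r=O(r)$ still gives exponential moments, so the paper's conclusion survives.

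Because $\theta\xi^{-\alpha}\leqslant\theta$ whenever $\xi\geqslant 1$, your induction also appears to extend to general offspring distributions. The conditions would be $\E[\xi^{1-\alpha}\1_{\{\xi\neq 0\}}]<1$ together with exponential moments of $\xi^{1-\alpha}\1_{\{\xi\neq 0\}}$, which control the second-order remainder. The paper says it does not see how to obtain this beyond the binary case.

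What the paper's approach offers in return is an explicit link between the moments of $H_\alpha(T)$ and those of the martingale $W_n$.
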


Finally, we close this section by comparing the $H_\alpha$ index of two
Galton--Watson trees $T$ and $T'$ such that vertices ``tend to have more
children'' in $T'$ than in $T$.
As we will see, the comparison is subtle, and the function that maps an
offspring distribution $\xi$ to the distribution of the $H_\alpha$ index of
$T\sim \mathrm{GW}(\xi)$ is not monotone in the usual sense.

\begin{Defi}
Let $X$ and $Y$ be two nonnegative random variables. We say that:
\begin{itemize}
  \item \emph{$X$ is stochastically dominated by $Y$}, which we denote
    by $X \preccurlyeq Y$, if
    $\mathds{P}(X > t) \leqslant \mathds{P}(Y > t)$
    for all $t$.
    Equivalently, if
    $\E[f(X)] \leqslant \E[f(Y)]$ for every nondecreasing
    function $f$.  
  \item \emph{$X$ is smaller than $Y$ in the Laplace transform order}, 
    which we denote by $X \preccurlyeq_{\mathrm{LT}} Y$, if
    for all $t \geqslant 0$,
    $\E[\exp(-t Y)] \leqslant \E[\exp(-t X)]$ or equivalently, if
    $\E[f(Y)] \leqslant \E[f(X)]$ for every completely
    monotone function $f$. \qedhere
\end{itemize}
\end{Defi}

Stochastic domination is also known as the \emph{usual stochastic order}.
It is a very strong notion of order, with an intuitive interpretation:
indeed, $X \preccurlyeq Y$ if and only if it is possible to couple
$X$ and $Y$ such that $X\leqslant Y$ almost surely.
In comparison, the Laplace transform order -- which is also one of the
classic stochastic orders -- is a much weaker order; see
\cite{ShakedShanthikumar2007} for more on this.

\begin{Prop}
Let $T$ and $T'$ be two Galton--Watson trees with offspring distribution
$\xi$ and $\xi'$, respectively. Then, for all $\alpha \in \;]0; 1[$, 
\[
  \xi \preccurlyeq \xi'
  \implies
  H_\alpha(T) \preccurlyeq_{\mathrm{LT}} H_\alpha(T'). \qedhere
\]
\end{Prop}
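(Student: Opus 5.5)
The plan is to prove the Laplace-transform inequality on the truncations $[T]_k$ by induction on $k$, and then pass to the limit $k\to\infty$. For $t\geqslant 0$, set $\varphi_k(t)=\E[\exp(-tH_\alpha([T]_k))]$ and $\varphi'_k(t)=\E[\exp(-tH_\alpha([T']_k))]$. The claim to prove by induction is $\varphi'_k\leqslant\varphi_k$ pointwise on $\mathbb{R}_+$; the base case $k=0$ is trivial since both sides equal $1$.

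For the induction step I will use the distributional recursion \eqref{recurrenceGWdistri} in the homogeneous case. Write $c_i=\frac{1-i^{1-\alpha}}{1-2^{1-\alpha}}$ for the $H_\alpha$ index of the star tree with $i$ leaves. Conditioning on $\xi_0$ and using independence gives
\begin{equation*}
\varphi_k(t)=\E\big[g_{\varphi_{k-1},t}(\xi)\big],
\qquad
g_{\varphi,t}(0)=1,\quad g_{\varphi,t}(i)=e^{-tc_i}\,\varphi\big(t\,i^{-\alpha}\big)^{i}\ (i\geqslant 1),
\end{equation*}
and the same formula holds for $\varphi'_k$ with $\xi'$ and $\varphi'_{k-1}$. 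The map $\varphi\mapsto g_{\varphi,t}(i)$ is clearly nondecreasing, since $\varphi$ takes values in $]0,1]$. Hence the induction hypothesis gives
\begin{equation*}
\varphi'_k(t)=\E\big[g_{\varphi'_{k-1},t}(\xi')\big]\leqslant \E\big[g_{\varphi_{k-1},t}(\xi')\big].
\end{equation*}
Since $\xi\preccurlyeq\xi'$, it then suffices to show that $i\mapsto g_{\varphi_{k-1},t}(i)$ is nonincreasing on $\mathbb{N}$. Applying stochastic domination to the nondecreasing function $-g$ yields $\E[g(\xi')]\leqslant\E[g(\xi)]=\varphi_k(t)$.

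The main step is this monotonicity of $g$ in $i$. The passage from $i=0$ to $i=1$ is immediate, because $g(1)=\varphi(t)\leqslant 1=g(0)$, using $c_1=0$. For $i\geqslant 1$, I would write $\psi=-\log\varphi$, where $\varphi=\varphi_{k-1}$ is the Laplace transform of a nonnegative, almost surely finite variable. Then $\log\varphi$ is convex by Hölder, so $\psi$ is concave with $\psi(0)=0$ and $\psi\geqslant 0$. Consequently $s\mapsto\psi(s)/s$ is nonincreasing on $]0,\infty[$. Now
\begin{equation*}
-\log g(i)=t\,c_i+i\,\psi\big(t i^{-\alpha}\big)=t\,c_i+t\,i^{1-\alpha}\cdot\frac{\psi(s_i)}{s_i},\qquad s_i=t\,i^{-\alpha}.
\end{equation*}
Because $\alpha\in\,]0;1[$, three things hold. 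First, $c_i=\frac{i^{1-\alpha}-1}{2^{1-\alpha}-1}$ is nondecreasing in $i$. Second, $i^{1-\alpha}$ is nondecreasing. Third, $s_i$ is decreasing, so $\psi(s_i)/s_i$ is nondecreasing. Hence $-\log g(i)$ is nondecreasing, and $g$ is nonincreasing, as needed. The case $t=0$ is trivial. This is exactly where the restriction $\alpha<1$ enters: both the sign of $c_i$'s growth and the growth of $i^{1-\alpha}$ rely on it.

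Finally, I will let $k\to\infty$. By \Cref{monotonicity} and \Cref{Stubsequence}, $H_\alpha([T]_k)\uparrow H_\alpha(T)$ almost surely, with the value $+\infty$ allowed. Since $e^{-tx}\in[0,1]$, dominated convergence gives $\varphi_k(t)\to\E[e^{-tH_\alpha(T)}]$, with the convention $e^{-\infty}=0$, and likewise for $T'$. The pointwise inequality therefore passes to the limit, which gives $H_\alpha(T)\preccurlyeq_{\mathrm{LT}}H_\alpha(T')$.
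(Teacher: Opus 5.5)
Your proof is correct and follows the paper's argument essentially step for step. You use the same induction on the truncations via the Laplace-transform recursion, the same two ingredients (monotonicity of the recursion in $\varphi$, and monotonicity in $i$ coming from concavity of $\psi=-\log\varphi$ with $\psi(0)=0$), and the same passage to the limit. Your use of $s\mapsto\psi(s)/s$ nonincreasing is just a derivative-free version of the paper's tangent bound $u\,\psi'(u)\leqslant\psi(u)$.

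One small correction to your closing remark: $c_i$ is nondecreasing in $i$ for every $\alpha>0$ (it is the index of the star tree), so the only place where $\alpha<1$ is genuinely needed is the monotonicity of $i^{1-\alpha}$.
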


\begin{Proo}
Let $T$ be a Galton--Watson tree with offspring distribution $\xi$, and as in
Definition~\ref{defTruncation}
let $[T]_k$ denote the truncation of $T$ at height~$k$
(thus, $[T]_0$ is reduced to its root).
Letting $X_k = H_\alpha([T]_k)$, as we have already seen
$X_k \uparrow H_\alpha(T)$ as $k\to\infty$ and
$(X_k)_{k\geqslant 0}$ satisfies the recursive distributional equation
given by Eq.~\eqref{recurrenceGWdistri}. In particular, for $\xi$ independent of everything else,
\begin{align} \label{eqRec}
  X_k  \;\overset{d}{=}\;
  f(\xi) + \xi^{-\alpha} \sum_{i = 1}^\xi X_{k-1}^{(i)}, 
  \quad \text{ where }
  f\colon i \mapsto \frac{1 - i^{1-\alpha}}{1 - 2^{1-\alpha}}\,\1_{\{i > 0\}},
\end{align}
$X_{k-1}^{(i)}$, $i \geqslant 0$ are i.i.d.\ copies of $X_{k-1}$ and the usual convention $\xi^{-\alpha} \sum_{i=1}^\xi X_{k-1}^{(i)} = 0$ if $\xi = 0$.
With the initial condition $X_0 = 0$, this equation uniquely
determines the distribution of $(X_k)_{k\geqslant 0}$ -- and therefore of
$H_\alpha(T)$.
Let now $L_k \colon t \mapsto \E[\exp(-t X_k)]$ denote the Laplace 
transform of $X_k$. With this notation, Eq.~\eqref{eqRec} is equivalent to
\begin{equation} \label{eqOperator}
  L_{k+1} = \mathcal{T}  L_k\,, 
  \quad\text{where }\;
  \mathcal{T} \colon L \;\longmapsto\;
  \big(t\mapsto \E[e^{-t f(\xi)}L(t \xi^{-\alpha}\1_{\{\xi > 0\}})^\xi]\big),
\end{equation}
is defined on the space of Laplace transforms. Thus, from now on, $L$ will denote an arbitrary Laplace transform and so, with $\psi = -\log L$, the operator $\mathcal{T}$ can also be written
\[
  \mathcal{T}\colon L \;\longmapsto\; \big(t\mapsto \E[e^{- h_t(\xi)}]\big),
  \quad\text{where }
  h_t(x) = t f(x) +  x \psi(t x^{-\alpha}) ,
\]
with $h_t(0) = 0$ for all $t > 0$.

Let us show that $x \mapsto \exp(-h_t(x))$ is nonincreasing, i.e.\
that $h_t$ is nondecreasing: first, note that $f$ is nondecreasing.
Second, recall that, as a Laplace exponent, $\psi$ is
nondecreasing, concave, nonnegative and $\mathcal{C}^\infty$.
As a result, for all $u, v > 0$,
\[
  \psi(v) \leqslant \psi(u) + \psi'(u) (v-u).
\]
Taking $v = 0$ and noting that $\psi(0) = 0$, this yields:
$\forall u > 0$, $u\,\psi'(u)\leqslant \psi(u)$.
Therefore, letting $A_t\colon x \mapsto x\,\psi(t x^{-\alpha})$, since
$0 < \alpha < 1$ we have, for all $t > 0$, 
\[
  A_t'(x) = \psi(tx^{-\alpha}) - \alpha t x^{-\alpha} \psi'(t x^{-\alpha})
  \geqslant \psi(tx^{-\alpha}) - t x^{-\alpha} \psi'(t x^{-\alpha})
  \geqslant 0.
\]
This shows that $A_t$ is nondecreasing, concluding the proof
of the fact $x \mapsto \exp(-h_t(x))$ is nondecreasing.

Now, let $\xi$ and $\xi'$ be two offspring distributions such that
$\xi \preccurlyeq \xi'$, and let
\[
  \mathcal{T} \colon L \;\longmapsto\;
  \big(t\mapsto \E[e^{-h_t(\xi)}]\big)
  \quad\text{and}\quad
  \mathcal{T}' \colon L \;\longmapsto\;
  \big(t \mapsto \E[e^{-h_t(\xi')}]\big) \,.
\]
First, note that $\mathcal{T}'$ and $\mathcal{T}$ are nondecreasing.
Moreover, since $x \mapsto \exp(-h_t(x))$ is nonincreasing and $\xi \preccurlyeq \xi'$,
we have $\mathcal{T}'\leqslant \mathcal{T}$, in the sense that, for
all $L$ and $t$, 
\begin{equation}
    (\mathcal{T'}L)(t) \;\leqslant\;
  (\mathcal{T}L)(t) \,. \label{eqInequalityOperator}
\end{equation}
By induction, using the nondecrease of $\mathcal{T'}$ and \cref{eqInequalityOperator}, we get
\[
  \forall t\geqslant 0, \quad
  L'_k\;=\; \mathcal{T}'^{(k)}(\bar{1}) \;\leqslant\;
  \mathcal{T}^{(k)}(\bar{1}) \;= L_k\; \,,
\]
where $\bar{1} \colon x \mapsto 1$ denotes the Laplace
transform of $X_0 = X'_0 = 0$, and $L_k$ (resp.\ $L'_k$) denote the Laplace
transforms of $H_\alpha([T]_k)$ (resp.\ $H_\alpha([T']_k)$).
The proposition follows by taking the limit $k\to\infty$ in this inequality.
\end{Proo}

\begin{Rema}
Note that for any nonnegative random variables,
$X\preccurlyeq_{\mathrm{LT}} Y \implies \E[X] \leqslant \E[Y]$.
As a result, $\xi \preccurlyeq \xi' \implies
\E[H_\alpha(T)] \leqslant \E[H_\alpha(T')]$.
However, this comparison does not extend to higher moments, because
$x \mapsto x^p$ is not completely monotone for $p > 1$.
\end{Rema}

\subsection{The Yule / ERM model} \label{secYule}

In this section, we study the $H_\alpha$ index of a tree $T_n$ with $n$
leaves sampled according to the Yule model (also known as the ERM model).
We provide explicit expressions for the expectation and variance of
$H_\alpha(T_n)$ and describe its asymptotic behavior as $n\to\infty$.

The Yule model is the pure-birth process defined as follows:
starting from one lineage, each lineage lives for an exponential time with
mean~1, independently of the others, and then splits into two lineages.
The Yule tree with $n$ leaves is the tree describing
the genealogy of the lineages when we stop the process upon reaching $n$
lineages \cite{Yule1925,Harris1963}.
With this construction, the Yule tree has a natural embedding in time, i.e.\
branch lengths.
If we discard those lengths and see the tree as a purely combinatorial object,
a nested sequence $(T_n)_{n \geqslant 1}$ of Yule trees can be obtained
iteratively as follows: $T_1$ is the tree reduced to a single vertex, and $T_n$
is obtained from $T_{n-1}$ by sampling one of its leaves uniformly at random and
grafting a cherry onto it.
Also note that Yule trees are Markov branching trees with uniform root-split
distribution \cite{Harding1971,aldousStochasticModelsDescriptive2001};
in this context, the corresponding model is often referred to as the ERM model.
However, we will not use this viewpoint here.

The following proposition introduces a martingale on which the next results
are based.

\begin{Prop}
Let $(T_n)_{n\geqslant 1}$ be a sequence of Yule trees built by iterated
cherry grafting, as described above, and let
$\mathcal{F} = (\mathcal{F}_n)_{n\geqslant 1}$ be the associated natural
filtration -- that is, $\mathcal{F}_n = \sigma(T_n)$.
Let $X_{n, \alpha} = \sum_{\ell \in \mathcal{L}_{T_n}} p_\ell^\alpha $ and
\begin{equation*}
  Y_{n, \alpha} =
  X_{n, \alpha}\, \prod_{k=1}^n \left(1 + \frac{2^{1-\alpha} - 1}{k}\right)^{-1}
\end{equation*}
Then, $(Y_{n, \alpha})_{n \geqslant 1}$ is a $\mathcal{F}$-martingale.
\end{Prop}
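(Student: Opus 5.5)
The plan is a one-step conditional expectation computation based on the effect of a single cherry grafting on $X_{n,\alpha}$. Write $c = 2^{1-\alpha}-1$ and $\pi_n = \prod_{k=1}^n (1+c/k)$. Since $\pi_n$ is deterministic, $Y_{n,\alpha}$ is $\mathcal{F}_n$-measurable. It is integrable because $0 \leqslant X_{n,\alpha} \leqslant |\mathcal{L}_{T_n}|$ is bounded for fixed $n$. So it suffices to show $\E[X_{n+1,\alpha} \mid \mathcal{F}_n] = (1 + c/k_n)\, X_{n,\alpha}$, where $k_n$ is the number of leaves among which the grafting leaf is chosen at step $n \to n+1$. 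The martingale property is then exactly the statement that $k_n$ matches the new factor $1+c/(n+1)$ in $\pi_{n+1}/\pi_n$.

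\textbf{Step 1: effect of one grafting.} Let $\ell$ be the leaf of $T_n$ on which the cherry is grafted. In $T_{n+1}$ the leaf $\ell$ is replaced by two leaves, each reached with probability $p_\ell/2$, and all other $p_{\ell'}$ are unchanged. This is the distributional content of \cref{graftingcherry}, since $1-\sum p^\alpha$ increases by $(1-2^{1-\alpha})p_\ell^\alpha$. Hence
\begin{equation*}
X_{n+1,\alpha} = X_{n,\alpha} - p_\ell^\alpha + 2\,(p_\ell/2)^\alpha = X_{n,\alpha} + c\, p_\ell^\alpha .
\end{equation*}

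\textbf{Step 2: averaging over the uniform leaf.} Conditionally on $\mathcal{F}_n$, the leaf $\ell$ is uniform among the leaves of $T_n$. Therefore $\E[p_\ell^\alpha \mid \mathcal{F}_n] = X_{n,\alpha}/|\mathcal{L}_{T_n}|$, and
\begin{equation*}
\E[X_{n+1,\alpha}\mid\mathcal{F}_n] = \Big(1+\frac{c}{|\mathcal{L}_{T_n}|}\Big) X_{n,\alpha}.
\end{equation*}

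\textbf{Step 3: matching the normalisation.} Dividing by $\pi_{n+1}$, we get $\E[Y_{n+1,\alpha}\mid\mathcal{F}_n] = Y_{n,\alpha}$ if and only if $|\mathcal{L}_{T_n}| = n+1$. This bookkeeping is the main point to get right, and I flag it as the real obstacle.

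With the product running up to $k=n$ as stated, the index $n$ must count the number of cherry graftings (splits) performed, so that $T_n$ has $n+1$ leaves, $T_1$ is the cherry, and the $(n+1)$-th grafting chooses among $n+1$ leaves. I will therefore fix this indexing explicitly, with $T_0$ the single vertex, and carry out Steps 1--3 under it. This is the indexing under which the statement, as written, holds.

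If instead one insists that $T_n$ has $n$ leaves, Step 2 gives the factor $1+c/n$. The normalising product would then have to stop at $n-1$, and the statement with the product up to $n$ would fail. So the proof will begin by stating the leaf-count convention, noting that the paper's wording ``the Yule tree with $n$ leaves'' must be read with this shift for the product as written. The remaining steps are routine.
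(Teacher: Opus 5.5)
Your proof is correct and is the same one-step computation the paper uses: a cherry grafted on leaf $L$ changes $X$ by $(2^{1-\alpha}-1)\,p_L^\alpha$, and averaging over the uniformly chosen leaf gives the multiplicative factor. Your indexing concern is also justified. With the paper's convention that $T_n$ has $n$ leaves, its own computation yields the factor $1+(2^{1-\alpha}-1)/n$, and this matches a normalizing product stopping at $n-1$, consistent with $\E[X_{n,\alpha}]=\prod_{k=1}^{n-1}\bigl(1+\frac{2^{1-\alpha}-1}{k}\bigr)$ from \cref{expectYule}. So the upper limit $n$ in the statement is an off-by-one slip. Either of your two fixes repairs it: shift the index so that $T_n$ has $n+1$ leaves, or stop the product at $n-1$.
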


\begin{Proo}
It suffices to note that, by construction of the sequence
$(T_n)_{n \geqslant 1}$, letting $L$ denote the leaf of~$T_n$ onto which
a cherry is grafted to obtain $T_{n+1}$ and
$\mathcal{L}_n^* = \mathcal{L}_{T_n} \setminus \{L\}$ the set
of all other leaves of~$T_n$, we have
\begin{align}
    \E[X_{n+1, \alpha} ~|~ T_n] &= 
    \E \left[\sum_{\ell \in \mathcal{L}_n^*}
    p_\ell^\alpha \,+\, 2^{1-\alpha} p_L^\alpha ~\Bigg|~ T_n \right]
    \nonumber \\
    &= X_{n, \alpha} + (2^{1-\alpha} -1) \E \left[p_L^\alpha ~|~ T_n
    \right] \nonumber \\
    &= X_{n, \alpha} \left( 1+ \frac{2^{1-\alpha} -1}{n} \right). \label{cond}
\end{align}
Thus, $\E[Y_{n+1, \alpha} | \mathcal{F}_n] = Y_{n, \alpha}$, concluding
the proof.
\end{Proo} 

Taking expectations in Eq.~\eqref{cond} and noticing that $H_\alpha(T_n) =
\frac{1}{1-2^{1-\alpha}} (1-X_{n, \alpha})$ immediately yields the following
expression for the expected value of $H_\alpha(T_n)$ under the Yule model.
Note that the expression for $\alpha = 1$ can be obtained by taking
the limit as $\alpha \to 1$, and matches that of
\cite{bienvenuRevisitingShaoSokals2021}.

\begin{Prop} \label{expectYule}
Let $T_n$ be a Yule tree with $n$ leaves. Then, for $\alpha \neq 1$,
\begin{equation*}
    \E [H_\alpha(T_n)] =
    \frac{1}{1-2^{1-\alpha}} \left(1-\prod_{k=1}^{n-1}
    \left( 1 + \frac{2^{1-\alpha} -1}{k} \right) \right),
\end{equation*}
and, for $\alpha = 1$, $\E[H_1(T_n)] = \sum_{k = 1}^{n-1} k^{-1}$.
\end{Prop}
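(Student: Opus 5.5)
The plan is to take expectations in Eq.~\eqref{cond} and iterate; no further tools are needed. Write $X_{n,\alpha} = \sum_{\ell\in\mathcal{L}_{T_n}} p_\ell^\alpha$. Since $H_\alpha(T_n) = \frac{1}{1-2^{1-\alpha}}(1 - X_{n,\alpha})$ for $\alpha\neq 1$, it suffices to compute $\E[X_{n,\alpha}]$.

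The one step needing justification is the last equality of Eq.~\eqref{cond}. The leaf $L$ is chosen uniformly among the $n$ leaves of $T_n$, so
\[
\E[p_L^\alpha \mid T_n] = \frac{1}{n} X_{n,\alpha}.
\]
Replacing $L$ by a cherry replaces $p_L^\alpha$ by $2(p_L/2)^\alpha = 2^{1-\alpha}p_L^\alpha$. Taking expectations then gives the linear recursion
\[
\E[X_{n+1,\alpha}] = \left(1 + \frac{2^{1-\alpha}-1}{n}\right)\E[X_{n,\alpha}].
\]
The initial condition is $X_{1,\alpha} = 1$, since $T_1$ is a single vertex that is a leaf with $p=1$. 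A telescoping product then yields
\[
\E[X_{n,\alpha}] = \prod_{k=1}^{n-1}\left(1+\frac{2^{1-\alpha}-1}{k}\right),
\]
and substituting into the expression for $H_\alpha$ gives the claimed formula. Equivalently, one can use the martingale property of $Y_{n,\alpha}$, noting that the product in $Y_{n,\alpha}$ with index up to $n$ versus $n-1$ only shifts the normalization.

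For $\alpha = 1$, I would take the limit $\alpha\to1$. Set $\varepsilon = 2^{1-\alpha}-1$, so that $\varepsilon \to 0$ and $1-2^{1-\alpha} = -\varepsilon$. Expanding the product to first order gives
\[
\prod_{k=1}^{n-1}\left(1+\frac{\varepsilon}{k}\right) = 1 + \varepsilon\sum_{k=1}^{n-1}\frac1k + O(\varepsilon^2),
\]
so the expression tends to $\sum_{k=1}^{n-1}k^{-1}$.

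It remains to check that $\E[H_\alpha(T_n)]$ is continuous in $\alpha$ at $1$. This holds because $T_n$ ranges over finitely many trees, each $H_\alpha(T)$ is continuous in $\alpha$, and $H_1 = B_2$ by definition. The only mildly delicate point is the first-order expansion, and it is routine.
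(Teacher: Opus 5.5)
Your proof is correct and follows the paper's route exactly: take expectations in Eq.~\eqref{cond}, iterate from $X_{1,\alpha}=1$, substitute into $H_\alpha(T_n)=\frac{1}{1-2^{1-\alpha}}(1-X_{n,\alpha})$, and obtain the case $\alpha=1$ by letting $\alpha\to1$. Your first-order expansion and finite-support continuity argument supply details the paper leaves implicit. Your aside about $Y_{n,\alpha}$ is unnecessary. If you do invoke it, note that the normalization giving a genuine martingale is $\prod_{k=1}^{n-1}$: with $\prod_{k=1}^{n}$, the conditional expectation of $Y_{n+1,\alpha}$ is $Y_{n,\alpha}$ times the non-constant factor $\bigl(1+\frac{2^{1-\alpha}-1}{n}\bigr)\big/\bigl(1+\frac{2^{1-\alpha}-1}{n+1}\bigr)$, so the index shift is not merely cosmetic for that claim.
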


\begin{Coro} \label{asymptoticExpectYule}
Let $T_n$ be a Yule tree with $n$ leaves and assume that $\alpha \neq 1$.
Then, as $n\to\infty$,
\begin{equation*}
    \E[H_\alpha(T_n)] =
    \frac{1}{1 - 2^{1-\alpha}} \,+\, \frac{n^{2^{1-\alpha} -1}}{\Gamma(2^{1-\alpha})} \left(
    \frac{1}{(2^{1-\alpha} - 1)}  \,+\,
    2^{-\alpha}\, n^{-1} \,+\,
    O \Big( n^{-2} \Big) \right).
    \qedhere
\end{equation*}
\end{Coro}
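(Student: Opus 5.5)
The plan is to start from the exact formula of \cref{expectYule} and rewrite the product as a ratio of Gamma functions. Set $c = 2^{1-\alpha} - 1$, which is nonzero because $\alpha \neq 1$. Since $1 + c/k = (k+c)/k$ and $c > -1$, telescoping $\Gamma(x+1) = x\Gamma(x)$ gives
\begin{equation*}
  \prod_{k=1}^{n-1}\Big(1 + \frac{c}{k}\Big) = \frac{\Gamma(n+c)}{\Gamma(1+c)\,\Gamma(n)} .
\end{equation*}
We then use two identities: $1 - 2^{1-\alpha} = -c$, and $\Gamma(1+c) = \Gamma(2^{1-\alpha}) = c\,\Gamma(c)$. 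Substituting them into \cref{expectYule} gives the exact identity
\begin{equation*}
  \E[H_\alpha(T_n)] = \frac{1}{1-2^{1-\alpha}} + \frac{1}{c\,\Gamma(2^{1-\alpha})}\cdot\frac{\Gamma(n+c)}{\Gamma(n)} .
\end{equation*}
This already exhibits the constant term $\frac{1}{1-2^{1-\alpha}}$ and the prefactor $1/\Gamma(2^{1-\alpha})$. The leading term $\frac{n^{c}}{c\,\Gamma(2^{1-\alpha})}$ then follows once we know $\Gamma(n+c)/\Gamma(n) \sim n^c$.

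The remaining step is the classical expansion of a ratio of Gamma functions. It follows from Stirling's series, or from the Tricomi--Erdélyi expansion:
\begin{equation*}
  \frac{\Gamma(n+c)}{\Gamma(n)} = n^{c}\Big(1 + \frac{c(c-1)}{2n} + O(n^{-2})\Big), \qquad n\to\infty,
\end{equation*}
valid for fixed real $c$. I would derive it by writing $\log\Gamma(n+c) - \log\Gamma(n)$ via Stirling with remainder $\frac{1}{12x} + O(x^{-3})$. Expanding $(n+c-\tfrac12)\log(1+c/n) - c$ to order $n^{-1}$ gives $c\log n + \frac{c(c-1)}{2n} + O(n^{-2})$. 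Exponentiating, the uniform $O(n^{-2})$ control comes from the Stirling remainder bounds, since $c$ is fixed.

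Finally, multiply out:
\begin{equation*}
  \frac{1}{c}\cdot\frac{\Gamma(n+c)}{\Gamma(n)} = n^c\Big(\frac{1}{c} + \frac{c-1}{2}\,n^{-1} + O(n^{-2})\Big),
\end{equation*}
with $\frac{1}{c} = \frac{1}{2^{1-\alpha}-1}$.

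The main point of care is the $n^{-1}$ coefficient. With the normalization above, $\frac{c-1}{2} = 2^{-\alpha} - 1$. To match the stated coefficient $2^{-\alpha}$ exactly, I would need to check whether the paper's indexing effectively expands around a shifted variable such as $n-1$, or absorbs the $-1$ elsewhere. The plan is to recompute this coefficient carefully in the paper's convention, for instance by checking numerically at $\alpha = 2$, and present whichever constant the exact identity actually yields. The leading terms are robust either way, and the error term $O(n^{-2})$ inside the parentheses is unaffected.
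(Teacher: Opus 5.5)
Your derivation is correct, and it follows the paper's route: the paper's proof just invokes the Gamma-ratio expansion \eqref{asymp} and says ``straightforward calculations''. The discrepancy you found is real. Resolve it in favor of your own computation rather than looking for a convention that absorbs the $-1$.

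For a Yule tree with $n$ leaves, expanded in powers of $n$, the coefficient of $n^{-1}$ inside the parentheses is $\tfrac{c-1}{2} = 2^{-\alpha}-1$, not $2^{-\alpha}$. The printed value $2^{-\alpha} = \tfrac{c+1}{2}$ is what one gets by applying \eqref{asymp}, which is stated for $\prod_{k=1}^{n}$, to $\prod_{k=1}^{n-1}$ without replacing $n$ by $n-1$. So the printed formula is the correct expansion of $\E[H_\alpha(T_{n+1})]$, or equivalently of $\E[H_\alpha(T_n)]$ in powers of $n-1$. This is exactly your guess: since $(n-1)^c/c = n^c\bigl(1/c - n^{-1} + O(n^{-2})\bigr)$, the shift costs exactly $-1$ in that coefficient.

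Two quick checks confirm this.
\begin{itemize}
\item At $\alpha = 0$ you have $c=1$ and $\Gamma(2)=1$, and $H_0(T_n) = n-1$ deterministically. The printed expansion instead gives $-1 + n\bigl(1 + n^{-1} + O(n^{-2})\bigr) = n + O(n^{-1})$.
\item At $\alpha = 2$, $\E[H_2(T_n)] = 2 - 2\binom{2n-2}{n-1}4^{-(n-1)}$. Using $\binom{2m}{m}4^{-m} = (\pi m)^{-1/2}\bigl(1 - \tfrac{1}{8m} + O(m^{-2})\bigr)$ with $m = n-1$ gives $2 - \tfrac{n^{-1/2}}{\sqrt{\pi}}\bigl(2 + \tfrac34 n^{-1} + O(n^{-2})\bigr)$. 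The coefficient is therefore $-\tfrac34 = 2^{-2}-1$, not $\tfrac14$.
\end{itemize}

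So drop the hedge and state the corrected coefficient $2^{-\alpha}-1$. The constant term, the leading term $\frac{n^{2^{1-\alpha}-1}}{(2^{1-\alpha}-1)\Gamma(2^{1-\alpha})}$ and the $O(n^{-2})$ error are as in the statement.
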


\begin{Proo}
Recall (see e.g.\ \cite{erdélyiAsymptoticExpansionRatio1951a}) that,
for any fixed $z\in \mathbb{C}$, as $n\to\infty$,
\begin{equation}
    \prod_{k=1}^n \left( 1 + \frac{z}{k} \right) =
    \frac{\Gamma(z+n+1)}{\Gamma(n+1)\Gamma(z+1)} =
    \frac{n^z}{\Gamma(z+1)}
    \left( 1+\frac{z(z+1)}{2n} + O \left(n^{-2} \right) \right). \label{asymp}
\end{equation}
The result then follows from  straightforward calculations.
\end{Proo}

\begin{Rema} \label{remAsymExpanYule}
Note that, in the asymptotic expansion of $\E[H_\alpha(T_n)]$,
the leading term is:
\begin{itemize}
  \item $((2^{1-\alpha} - 1)\, \Gamma(2^{1-\alpha}))^{-1}\,n^{2^{1-\alpha}-1} $ for $\alpha < 1$;
  \item $\log(n)$, where $\log$ denotes the natural logarithm, for $\alpha = 1$;
  \item $(1 - 2^{1-\alpha})^{-1}$ for $\alpha > 1$.
\end{itemize}
Recalling that, for $\alpha > 1$, the $H_\alpha$ index is bounded above by
$(1-2^{1-\alpha})^{-1}$, we see that the Yule model produces very balanced
trees -- in fact, so much so that, for $\alpha > 1$ and large $n$,
the $H_\alpha$ index cannot distinguish them from complete binary trees.
This is consistent with our remark that large values of~$\alpha$ give more
weight to the structure next to the root: close to its root, the structure of a
large Yule tree is indeed the same as that of a complete binary tree.
However, the difference between Yule trees and complete binary trees is
already apparent for $\alpha = 1$ -- since the former is asymptotically
$\log(n)$ whereas the latter is $\log_2(n)$ -- and becomes more pronounced
as $\alpha$ decreases.
\end{Rema}

We now give an explicit expression for the variance of the $H_\alpha$ index of
a Yule tree with $n$ leaves.

\begin{Prop} \label{varianceYule}
Let $T_n$ be a Yule tree with $n$ leaves. Then, for $\alpha \neq 1$,
\begin{equation*}
    \Var(H_\alpha(T_n)) =
    \frac{1}{(1-2^{1-\alpha})^2} \left( \prod_{k=1}^{n-1} f_k \cdot
    \left( 1+ \sum_{m=1}^{n-1} \frac{g_m}{\prod_{k=1}^m f_k}  \right)
    - \prod_{k=1}^{n-1} \left( 1 + \frac{2^{1-\alpha}-1}{k} \right)^2 \right),
\end{equation*}
where $f_n = 1 + \frac{2(2^{1-\alpha}-1)}{n}$ and
$g_n = \frac{(2^{1-\alpha}-1)^2}{n} \cdot \prod_{k=1}^{n-1}
\left( 1+ \frac{2^{1-2\alpha}-1}{k} \right)$.
\end{Prop}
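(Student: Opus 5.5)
The plan is to work with $X_{n,\alpha} = \sum_{\ell \in \mathcal{L}_{T_n}} p_\ell^\alpha$ rather than with $H_\alpha$ directly. Since $H_\alpha(T_n) = \frac{1}{1-2^{1-\alpha}}(1-X_{n,\alpha})$, we have
\[
\Var(H_\alpha(T_n)) = \frac{1}{(1-2^{1-\alpha})^2}\bigl(\E[X_{n,\alpha}^2] - \E[X_{n,\alpha}]^2\bigr).
\]
Taking expectations in Eq.~\eqref{cond} and iterating from $X_{1,\alpha}=1$ gives
\[
\E[X_{n,\alpha}] = \prod_{k=1}^{n-1}\Bigl(1+\tfrac{2^{1-\alpha}-1}{k}\Bigr).
\]
This produces the subtracted squared product in the statement, so everything reduces to computing $s_n := \E[X_{n,\alpha}^2]$.

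For the second moment I would reuse the cherry-grafting construction from the martingale proof. Let $L$ be the uniformly chosen leaf of $T_n$ and write $b = 2^{1-\alpha}$. Then
\[
X_{n+1,\alpha} = X_{n,\alpha} + (b-1)\,p_L^\alpha .
\]
Squaring gives $X_{n+1,\alpha}^2 = X_{n,\alpha}^2 + 2(b-1)X_{n,\alpha}p_L^\alpha + (b-1)^2 p_L^{2\alpha}$. Conditioning on $T_n$ and using that $L$ is uniform among the $n$ leaves, we get $\E[p_L^\alpha \mid T_n] = X_{n,\alpha}/n$ and $\E[p_L^{2\alpha}\mid T_n] = X_{n,2\alpha}/n$. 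Taking expectations therefore yields
\[
s_{n+1} = \Bigl(1+\tfrac{2(b-1)}{n}\Bigr)s_n + \tfrac{(b-1)^2}{n}\,\E[X_{n,2\alpha}] .
\]
The expectation formula above, applied with $2\alpha$ in place of $\alpha$, gives $\E[X_{n,2\alpha}] = \prod_{k=1}^{n-1}(1+\frac{2^{1-2\alpha}-1}{k})$. So the recursion is exactly $s_{n+1} = f_n s_n + g_n$, with $f_n, g_n$ as in the statement.

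It remains to solve this first-order linear inhomogeneous recursion with initial condition $s_1 = 1$ (the single-vertex tree has $p_\rho = 1$). I would divide both sides by $\prod_{k=1}^{n} f_k$ to telescope. This gives
\[
s_n = \prod_{k=1}^{n-1} f_k\Bigl(1+\sum_{m=1}^{n-1}\frac{g_m}{\prod_{k=1}^m f_k}\Bigr),
\]
and combining it with the squared mean gives the claimed formula.

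The computation is routine. The main points needing care are the identity $\E[p_L^{2\alpha}\mid T_n]=X_{n,2\alpha}/n$, the fact that the first-moment formula is valid for the exponent $2\alpha$ as well, and the indexing of the products. One should also note that $f_k$ never vanishes: $b-1 > -1$ gives $f_k > 1 - 2/k$, so $f_1 > -1$ and $f_k > 0$ for $k \geqslant 2$, while $f_1 = 0$ would require $b = 1/2$, i.e.\ $\alpha = 2$. For $\alpha = 2$ the division by $\prod_{k=1}^m f_k$ fails. In that case one checks directly from the recursion that the unsimplified expression, with $\prod_{k=1}^{n-1} f_k$ cancelling into each summand, still holds, or one extends the formula to $\alpha = 2$ by continuity in $\alpha$.
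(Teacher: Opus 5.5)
Your proof is correct and is essentially the paper's argument: pass to $X_{n,\alpha}$, square the cherry-grafting identity $X_{n+1,\alpha}=X_{n,\alpha}+(2^{1-\alpha}-1)p_L^\alpha$, condition on $T_n$ to get $s_{n+1}=f_ns_n+g_n$ with $\E[X_{n,2\alpha}]$ in $g_n$, and solve from $X_{1,\alpha}=1$. Your $\E[X_{n,2\alpha}]$ in $g_n$ is what the statement has; the paper's proof writes the exponent-$\alpha$ product there, which is a typo. Your remark that $f_1=2^{2-\alpha}-1$ vanishes at $\alpha=2$ is a correct refinement the paper does not address: at $\alpha=2$ the displayed formula must be read in the division-free form $\prod_{k=1}^{n-1}f_k+\sum_{m=1}^{n-1}g_m\prod_{k=m+1}^{n-1}f_k$.
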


\begin{Proo}
First, note that
$\Var (H_\alpha(T_n)) = \frac{1}{(1-2^{1-\alpha})^2} \Var(X_{n, \alpha})$.
Thus, we only need to compute the second moment of $X_{n, \alpha}$.
Writing $\beta = 1 - 2^{1-\alpha}$,
\begin{align}
    \E[X_{n+1, \alpha}^2 ~|~T_n] &= 
    \E\big[(X_{n, \alpha} - \beta\, p_L^\alpha)^2 ~\big|~T_n\big]
    \nonumber \\
    &= X_{n, \alpha}^2 - 2\beta\,X_{n, \alpha}\,
    \E[p_L^\alpha ~|~T_n] + \beta^2 \E[p_L^{2\alpha} ~|~T_n] \nonumber \\
    &= X_{n, \alpha}^2 \left(1 - \frac{2\beta}{n} \right) +
    \frac{\beta^2}{n} X_{n, 2\alpha}. \label{reccurence2moment}
\end{align}
Thus,
\begin{equation*}
    \E \left[X_{n+1, \alpha}^2 \right] =
    \E \left[ X_{n, \alpha}^2 \right]
    \left(1 - \frac{2\beta}{n} \right) +
    \frac{\beta^2}{n} \E \left[ X_{n, 2\alpha} \right].
\end{equation*}
We recognize a first-order linear recurrence.
Therefore, since $X_{1, \alpha} = 1$ almost surely, we have:
\begin{equation}
    \E \left[ X_{n, \alpha}^2 \right] =
    \prod_{k=1}^{n-1} f_k \cdot
    \left(1 + \sum_{m=1}^{n-1} \frac{g_m}{\prod_{k=1}^m f_k} \right),
    \label{moment2}
\end{equation}
with $f_n = 1 - \frac{2\beta}{n}$ and
$g_n = \frac{\beta^2}{n} \prod_{k=1}^{n-1}
\left( 1- \frac{\beta}{k} \right)$.
This concludes the proof.
\end{Proo}

While the explicit expression in \Cref{varianceYule} is simple,
it is not immediately clear from it what the asymptotic behavior of the
variance is -- hence the following corollary.
The proof of this result being technical, we relegate it to \Cref{appYule}.

\begin{Coro} \label{asymptoticVariance}
Let $T_n$ be a Yule tree with $n$ leaves, and let
$c = -\log_{2}(1-\frac{\sqrt{2}}{2})\approx 1.7716$.
Then, for $\alpha \neq 1$, as $n\to\infty$,
\begin{equation*}
  \Var(H_{\alpha}(T_n)) \sim_{n \rightarrow + \infty}
    \begin{cases}
    Q_\alpha\, n^{2(2^{1-\alpha}-1)} & \text{if } \alpha < c, \\
    \dfrac{1}{\Gamma(2^{1-2\alpha})}\,\log(n)\, n^{\,2(2^{1-\alpha}-1)}
    & \text{if } \alpha = c, \\
    \dfrac{1}{\Gamma(2^{1-2\alpha})(1+2^{1-2\alpha}-2^{2-\alpha})}\;
      n^{\,2^{1-2\alpha}-1}
    & \text{if } \alpha > c.
    \end{cases}
\end{equation*}
where $Q_\alpha > 0$ is a constant.
\end{Coro}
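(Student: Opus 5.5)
We start from the exact formula of \Cref{varianceYule} and do singularity-free asymptotics term by term using Eq.~\eqref{asymp}. Set $a = 2^{1-\alpha}-1$ (so $a\neq 0$ since $\alpha\neq 1$) and $b = 2^{1-2\alpha}-1$. Then $f_k = 1+2a/k$ and $g_m = \frac{a^2}{m}\prod_{k=1}^{m-1}(1+b/k)$. By Eq.~\eqref{asymp} we get three estimates. First, $P_n := \prod_{k=1}^{n-1} f_k \sim n^{2a}/\Gamma(1+2a)$. Second, $g_m \sim a^2 m^{b-1}/\Gamma(1+b)$. Third, the squared mean satisfies $\prod_{k=1}^{n-1}(1+a/k)^2 \sim n^{2a}/\Gamma(1+a)^2$. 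These hold also when $a$ or $b$ is negative: the finitely many initial factors are nonzero constants, because $1+2a/k$ and $1+b/k$ never vanish for the relevant parameter ranges ($2a>-2$, $b>-1$). It follows that the summand $g_m/\prod_{k\le m} f_k$ behaves like $C\,m^{\,b-2a-1}$, with $C = a^2\Gamma(1+2a)/\Gamma(1+b)$. Everything hinges on the sign of $b-2a$. With $x = 2^{-\alpha}$ we have $b-2a = 2x^2-4x+1$, whose roots are $x = 1\pm\sqrt2/2$. Since $x\in\,]0,1[$, this gives $b<2a$ iff $x>1-\sqrt2/2$ iff $\alpha<c$, with equality iff $\alpha=c$.

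\emph{Case $\alpha>c$.} The series diverges, and by comparison with $\sum m^{b-2a-1}$ its partial sums satisfy $S_n \sim C\,n^{b-2a}/(b-2a)$. Hence $\E[X_{n,\alpha}^2] = P_n(1+S_n) \sim a^2 n^{b}/(\Gamma(1+b)(b-2a))$. The squared mean is $O(n^{2a}) = o(n^b)$, so it is negligible. Dividing by $(1-2^{1-\alpha})^2 = a^2$ gives the stated constant $1/(\Gamma(2^{1-2\alpha})(1+2^{1-2\alpha}-2^{2-\alpha}))$.

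\emph{Case $\alpha=c$.} Here $S_n\sim C\log n$, so $\E[X_{n,\alpha}^2]\sim a^2 n^{2a}\log n/\Gamma(1+b)$. The squared mean is again $O(n^{2a})$ and thus of lower order, and dividing by $a^2$ yields the claim.

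\emph{Case $\alpha<c$.} The series converges to some $S_\infty$, and the second moment and squared mean are both of exact order $n^{2a}$. This yields $\Var(H_\alpha(T_n))\sim Q_\alpha n^{2a}$ with $Q_\alpha = a^{-2}\big((1+S_\infty)/\Gamma(1+2a) - 1/\Gamma(1+a)^2\big)$. The main obstacle is proving $Q_\alpha>0$, since a cancellation cannot be ruled out from the formula. For this I would use the martingale $Y_{n,\alpha}$. Its normalizing product is $\prod_{k\le n}(1+a/k)\sim n^{a}/\Gamma(1+a)$, so $Q_\alpha = \lim_n \Var(Y_{n,\alpha})\cdot \Gamma(1+a)^{-2}a^{-2}$. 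Martingale increments are orthogonal, so $\Var(Y_{n,\alpha})$ is nondecreasing in $n$. It therefore suffices to exhibit one $n$ with $\Var(Y_{n,\alpha})>0$. Take $n=4$: $T_4$ is either $\mathrm{CB}(2)$ or $\Cat(4)$, each with positive probability. Their values $X_{4,\alpha}$ are $4\cdot 4^{-\alpha}$ and $2^{-\alpha}+4^{-\alpha}+2\cdot 8^{-\alpha}$, and these differ for every $\alpha>0$, for instance by strict Schur-concavity of $p\mapsto\sum p^\alpha$ with $\alpha \neq 1$. This gives $Q_\alpha\geq$ a positive constant times $\Var(Y_{4,\alpha})>0$.
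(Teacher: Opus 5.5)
Your proof is correct, and it takes a genuinely different route from the paper's.

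\textbf{How the paper proceeds.} The paper never compares $\E[X_{n,\alpha}^2]$ with $\E[X_{n,\alpha}]^2$ directly. Write $\beta=1-2^{1-\alpha}=-a$. The paper expresses $\Var(H_\alpha(T_n))$ as $\beta^{-2}\prod_{k\le n}(1-\beta/k)^2\,\Var(Y_{n,\alpha})$, and expands $\Var(Y_{n,\alpha})$ as a sum of squared martingale increments. It computes each increment exactly as a multiple of $\E[X_{k,2\alpha}]-\frac{k+2}{(k+1)^2}\E[X_{k,\alpha}^2]$. The three-regime asymptotics of $\E[X_{k,\alpha}^2]$, taken from the same closed form you use, serve only to show that the $\E[X_{k,2\alpha}]$ term dominates. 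The increments are then of exact order $k^{2^{1-2\alpha}-2^{2-\alpha}}$, and the trichotomy appears only when summing them. In that route $Q_\alpha$ is a convergent series of nonnegative, eventually positive terms, so its positivity is automatic.

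\textbf{How you proceed, and what it buys.} You read $\E[X_{n,\alpha}^2]$ and $\E[X_{n,\alpha}]^2$ off the closed forms separately. This is shorter: for $\alpha\ge c$ the squared mean is of lower order and no increment algebra is needed. The cost is a possible cancellation of leading constants when $\alpha<c$. You resolve it neatly with monotonicity of the variance of a square-integrable martingale plus the explicit check at $n=4$. As a by-product you get the closed expression $Q_\alpha=a^{-2}\bigl((1+S_\infty)/\Gamma(1+2a)-\Gamma(1+a)^{-2}\bigr)$.

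\textbf{Three small repairs.}

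First, $2a>-2$ only guarantees $f_k>0$ for $k\ge 2$. The factor $f_1=2^{2-\alpha}-1$ vanishes at $\alpha=2$, which lies in your regime $\alpha>c$. There $P_n=0$, $S_n$ involves division by zero, and $\Gamma(1+2a)=\Gamma(0)$. Use instead $\E[X_{n,\alpha}^2]=P_n+\sum_{m=1}^{n-1}g_m\prod_{k=m+1}^{n-1}f_k$ (equivalently, start the recursion at $n=2$); this gives the same constant. The paper's closed form in \Cref{varianceYule} has the same defect.

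Second, since $\E[X_{n,\alpha}]=\prod_{k=1}^{n-1}(1+a/k)$, the martingale is $X_{n,\alpha}\prod_{k=1}^{n-1}(1+a/k)^{-1}$. The product $\prod_{k=1}^{n}$ in the paper's definition is off by one: already $\E[Y_{2,\alpha}]\neq Y_{1,\alpha}$ when $\alpha\neq1$. Run your monotonicity argument with the corrected normalization. The two normalizations differ by a factor tending to $1$, so your formula for $Q_\alpha$ is unaffected.

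Third, $\sum_\ell p_\ell^\alpha$ is Schur-convex, not Schur-concave, for $\alpha>1$. Either way the two values at $n=4$ differ, by $2^{-\alpha}(2^{1-\alpha}-1)(1-2^{-\alpha})\neq 0$.
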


We close this section with a more precise description of the limiting behavior
of $H_\alpha(T_n)$.

\begin{Prop}
For any positive $\alpha \neq 1$,
there exists a random variable $Y_{\infty, \alpha}$ such
that, almost surely,
\begin{equation*}
    H_\alpha(T_n) = \begin{cases}
    \dfrac{Y_{\infty, \alpha}}{(2^{1-\alpha}-1)\Gamma(2^{1-\alpha})} n^{2^{1-\alpha} -1} + o\left(n^{2^{1-\alpha} -1} \right) & \text{if } \alpha < 1, \\
    \dfrac{1}{1-2^{1-\alpha}} + \dfrac{Y_{\infty, \alpha}}{(2^{1-\alpha}-1)\Gamma(2^{1-\alpha})} n^{2^{1-\alpha} -1} +o\left(n^{2^{1-\alpha} -1} \right) & \text{if } \alpha > 1.
    \end{cases}
\end{equation*}
\end{Prop}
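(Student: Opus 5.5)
The plan is to apply the martingale convergence theorem to $(Y_{n,\alpha})_{n\geqslant 1}$ and then unwind its normalization using the asymptotics in Eq.~\eqref{asymp}.

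\emph{Convergence of the martingale.} Since $p_\ell\in\,]0,1]$ and $\alpha>0$, we have $X_{n,\alpha}\geqslant 0$. The normalizing product $\prod_{k=1}^n(1+(2^{1-\alpha}-1)/k)^{-1}$ is positive, because every factor $1+(2^{1-\alpha}-1)/k$ is at least $2^{1-\alpha}>0$. Hence $Y_{n,\alpha}$ is a nonnegative martingale. By Doob's convergence theorem it converges almost surely to some random variable $Y_{\infty,\alpha}\in[0,\infty[$. No $L^2$ bound is needed for almost sure convergence. If one also wants $Y_{\infty,\alpha}$ to be nondegenerate, i.e.\ $\E[Y_{\infty,\alpha}]=1$, one can check $L^2$-boundedness using Eq.~\eqref{moment2}, at least for $\alpha<c$, but the statement does not require this.

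\emph{Asymptotics of the normalization.} Set $z=2^{1-\alpha}-1$. By Eq.~\eqref{asymp},
\[
\prod_{k=1}^n(1+z/k)\sim \frac{n^z}{\Gamma(z+1)}=\frac{n^{2^{1-\alpha}-1}}{\Gamma(2^{1-\alpha})}.
\]
Therefore, almost surely,
\[
X_{n,\alpha}=Y_{n,\alpha}\prod_{k=1}^n(1+z/k)=\frac{Y_{\infty,\alpha}}{\Gamma(2^{1-\alpha})}\,n^{2^{1-\alpha}-1}+o\big(n^{2^{1-\alpha}-1}\big).
\]
This holds because $(Y_{n,\alpha}-Y_{\infty,\alpha})\cdot O(n^z)=o(n^z)$, and because the $(1+o(1))$ correction in Eq.~\eqref{asymp} multiplies a bounded quantity.

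\emph{Translating to $H_\alpha$.} Recall that $H_\alpha(T_n)=\frac{1}{1-2^{1-\alpha}}(1-X_{n,\alpha})=\frac{1}{1-2^{1-\alpha}}+\frac{X_{n,\alpha}}{2^{1-\alpha}-1}$. Substituting the previous display gives the stated formula with the leading constant $\frac{Y_{\infty,\alpha}}{(2^{1-\alpha}-1)\Gamma(2^{1-\alpha})}$. We then split into two cases.
\begin{itemize}
\item For $\alpha>1$, the exponent $z$ is negative, so the constant term $\frac{1}{1-2^{1-\alpha}}$ dominates. We keep it explicitly and the formula is exactly the second case.
\item For $\alpha<1$, the exponent $z$ is positive, so $n^z\to\infty$ and the constant term is $o(n^z)$. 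It can therefore be absorbed into the error term, which gives the first case.
\end{itemize}

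The main subtlety is bookkeeping rather than difficulty. When $\alpha>1$ the remainder $o(n^z)$ is itself $o(1)$, so the constant term must be kept separately and not absorbed into the error. Nothing in the statement requires identifying the law of $Y_{\infty,\alpha}$.
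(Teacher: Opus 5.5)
Your proof is correct and follows the paper's argument: almost sure convergence of the nonnegative martingale $Y_{n,\alpha}$, then unwinding the normalization with Eq.~\eqref{asymp}, and absorbing the constant $\frac{1}{1-2^{1-\alpha}}$ into the error term when $\alpha<1$. One minor slip: for $\alpha<1$ the factors $1+(2^{1-\alpha}-1)/k$ are bounded below by $\min(1,2^{1-\alpha})$, not by $2^{1-\alpha}$, but positivity — which is all you use — still holds.
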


\begin{Proo}
Recall that $Y_{n, \alpha}$ is a positive martingale.
Thus, there exists a
random variable $Y_{\infty, \alpha}$ such that $Y_{n, \alpha}$ converges
to $Y_{\infty, \alpha}$ almost surely.
Since 
\begin{equation*}
    H_\alpha(T_n) =
    \frac{1}{\beta} \left( 1 - Y_{n, \alpha}
    \prod_{k=1}^n \left( 1 - \frac{\beta}{k} \right)  \right) \,, 
\end{equation*}
the announced result readily follows from Eq.~\eqref{asymp}.
\end{Proo}

%\begin{Rema}
%\begin{itemize}
%    \item For $\alpha > 1$, the $H_\alpha$ index converges almost surely
%    towards $\frac{1}{1-2^{1-\alpha}}$ which is the maximal theoretical value,
%    so the Yule tree is maximally balanced at the limit. Since the $H_\alpha$
%    index is bounded in this case, the convergence also hold in $L^p$ for all
%    $p \geqslant 1$.
%    \item For $\alpha < -\log_2 \left( 1-\frac{\sqrt{2}}{2} \right)$, we saw
%    that $Y_{n, \alpha}$ is bounded in $L^2$ so $Y_{\infty, \alpha}$ is in
%    $L^2$ and the convergence of $Y_{n, \alpha}$ also holds in this space.
%    \qedhere
%\end{itemize}
%\end{Rema}

\subsection{The PDA model} \label{secPDA}

First considered in phylogenetics by
\cite{rosenVicariantPatternsHistorical1978}, the PDA model corresponds to the
uniform distribution on the set of rooted binary trees with $n$ labeled leaves. 
The name PDA, which stands for ``proportional to distinguishable arrangements'',
can be traced back to \cite{GuyerSlowinski1991}.
This model is a special case of Aldous's $\beta$-splitting model, for
$\beta = -3/2$, and as such is a binary Markov branching tree model
\cite{aldousProbabilityDistributionsCladograms1996a}.

In this section, we give an explicit expression for the expectation of the
$H_\alpha$ index of a tree $T_n$ with $n$ leaves sampled from the PDA model.
This expression can be obtained from point (i) of \cref{reccurence},
but at the price of rather heavy computations.
Here, we instead use the definition of the PDA model as the
uniform distribution on the set of rooted binary trees with $n$ labeled leaves
to provide a simple bijective proof that involves almost no calculations.
We also use the fact that $T_n$ can be seen as a size-conditioned
Galton--Watson tree to study the asymptotic behavior of $H_\alpha(T_n)$
using branching process techniques, again with very few calculations.

\begin{Lemm} \label{lemGrandDyck}
Let $F(n, k)$ be the number of ordered pairs $(\tau, \ell)$, where
$\tau$ is an ordered binary tree with $n$ leaves and $\ell$ is a leaf of $\tau$
at distance $k$ from its root.
Then, 
\[
  F(n, k) = \frac{k\,2^k}{2(n-1)-k} \binom{2(n-1)-k}{n-1}  \,.
\]
This is also the number of grand Dyck paths of semilength $n-1$ with $k$
returns to the $x$-axis
(see sequence A108747 of the On-Line Encyclopedia of Integer Sequences
\cite{OEISA108747}).
\end{Lemm}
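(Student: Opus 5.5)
We count pairs $(\tau,\ell)$ with generating functions, decomposing $\tau$ along the path from the root to the marked leaf $\ell$. Let $C(x)=\sum_{n\geqslant 1} \mathrm{Cat}_{n-1}x^n$ be the generating function of ordered binary trees counted by leaves. It satisfies $C = x + C^2$, so $C(x) = (1-\sqrt{1-4x})/2$. If $\ell$ is at depth $k$, the root-to-$\ell$ path has $k$ internal vertices. At each of them, the path continues to the left or to the right child (2 choices), and a full ordered binary tree hangs off the other child. The leaf $\ell$ itself contributes a factor $x$. Hence
\[
  \sum_{n} F(n,k)\, x^n = x\,\big(2C(x)\big)^k ,
\]
so that $F(n,k) = 2^k [x^{n-1}] C(x)^k$.

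Next I extract the coefficient by Lagrange inversion. Write $C = x\,\phi(C)$ with $\phi(u) = 1/(1-u)$, which is equivalent to $C = x + C^2$. Lagrange inversion then gives
\[
  [x^m] C^k = \frac{k}{m}\,[u^{m-k}]\,(1-u)^{-m} = \frac{k}{m}\binom{2m-k-1}{m-k}.
\]
With $m = n-1$ this reads $\frac{k}{n-1}\binom{2(n-1)-k-1}{n-1-k}$. The last step is a routine identity turning this into $\frac{k}{2(n-1)-k}\binom{2(n-1)-k}{n-1}$. Both expressions equal $k\,(2m-k-1)!/\big(m!\,(m-k)!\big)$, which is checked by expanding the factorials. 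This gives the stated formula.

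For the grand Dyck interpretation, I would decompose a grand Dyck path of semilength $n-1$ with $k$ returns into $k$ nonempty primitive excursions. Each excursion is either above or below the axis (2 choices), and a primitive excursion of semilength $j$ is an up-step, a Dyck path of semilength $j-1$, and a down-step, so it is counted by $x\,D(x) = C(x)$, where $D(x)=\sum_{j\geqslant 0}\mathrm{Cat}_j x^j$ is the Dyck-path generating function. The generating function is therefore $(2C)^k$, which matches the one above after the shift by $x$; alternatively one can cite OEIS A108747.

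The only delicate points are the Lagrange inversion bookkeeping and the edge case $k = n-1$ (where the formula requires $k\leqslant n-1$). Both are minor, and I expect no real obstacle.
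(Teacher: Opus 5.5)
Your proof is correct, but it takes a different route from the paper's. The paper never computes $F(n,k)$ directly. It builds an explicit bijection between leaf-pointed ordered binary trees and grand Dyck paths, in four steps:
\begin{enumerate}
\item cut $\tau$ at the internal vertices of the root-to-$\ell$ path;
\item flip the pieces whose distinguished child is on the right;
\item encode each piece as a Dyck path with a single return;
\item reflect the flipped pieces below the axis and concatenate.
\end{enumerate}
It then takes the closed form from the known enumeration of grand Dyck paths by number of returns (OEIS A108747).

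Your spine decomposition $x\,(2C)^k$ is the generating-function shadow of that bijection. The $k$ factors $2C$ are the $k$ cut pieces, and the factor $2$ is the left/right choice that becomes above/below. Your excursion decomposition shows that the grand Dyck side has the same generating function $(2C)^k$.

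What your approach gains is self-containedness. Lagrange inversion with $\phi(u)=(1-u)^{-1}$ gives $[x^m]C^k=\frac{k}{m}\binom{2m-k-1}{m-k}=\frac{k}{2m-k}\binom{2m-k}{m}$. So the closed formula is proved rather than cited, and the grand Dyck interpretation becomes a corollary rather than a necessary step. What the paper's approach gains is an explicit correspondence between the two families, not merely equality of their counts.

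As a side remark, $k=n-1$ is not a genuine edge case: both sides equal $2^{n-1}$ there. The only degenerate case is $n=1$, $k=0$, which lies outside the range where the formula is meant to apply.
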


The definition of grand Dyck paths and the proof of \Cref{lemGrandDyck} can be
found in \Cref{appPDA}.
With this lemma, we readily get the following theorem.

\begin{Theo} \label{expecPDA}
Let $T_n$ be a PDA tree with $n \geqslant 2$. Then,
\begin{equation*}
    \E[H_\alpha(T_n)] =
    \frac{1}{1-2^{1-\alpha}} \left( 1- \frac{1}{C_{n-1}}
    \sum_{k=1}^{n-1} F(n, k)\, 2^{-\alpha k} \right),
\end{equation*}
where $C_n = \frac{1}{n+1} \binom{2n}{n}$ is the $n$-th Catalan number and
$F(n, k)$ is defined as in \Cref{lemGrandDyck}.
\end{Theo}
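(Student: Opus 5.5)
Since $T_n$ is a binary tree, every leaf $\ell$ at depth $\delta_\ell$ satisfies $p_\ell = 2^{-\delta_\ell}$. The definition \eqref{defHalpha} then gives
\[
  H_\alpha(T_n) = \frac{1}{1-2^{1-\alpha}}\Big(1 - \sum_{\ell\in\mathcal{L}_{T_n}} 2^{-\alpha\delta_\ell}\Big),
\]
and by linearity it suffices to compute $\E\big[\sum_{\ell} 2^{-\alpha \delta_\ell}\big] = \sum_{k\geq 1} 2^{-\alpha k}\, \E[N_k(T_n)]$, where $N_k(T_n)$ is the number of leaves of $T_n$ at depth $k$. Since $n\geq 2$, no leaf has depth $0$. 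A binary tree with $n$ leaves has height at most $n-1$, so $k$ ranges over $1,\dots,n-1$.

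The key step is to replace the labelled PDA tree by an ordered (plane) binary tree. A rooted binary tree with $n$ labelled leaves is the same as an ordered binary tree with $n$ leaves together with a labelling, modulo the swaps of children at each of the $n-1$ internal vertices. In a leaf-labelled tree, all leaves are distinguishable, so no nontrivial swap fixes a labelled object. The map (ordered tree, labelling) $\mapsto$ labelled tree is therefore exactly $2^{n-1}$-to-one. Consequently, the uniform labelled tree can be realised by taking $\tau$ uniform among the $C_{n-1}$ ordered binary trees with $n$ leaves, assigning labels uniformly, and forgetting the order. The depth profile $N_k$ does not depend on labels or on the order, so $T_n$ and a uniform ordered tree $\tau$ have the same law of $(N_k)_k$. (Alternatively, the number of labelled trees is $(2n-3)!! = n!\,C_{n-1}/2^{n-1}$, and one can check the fibre count directly.) I will state this reduction carefully, since it is the only genuinely non-computational point: one has to make sure that there are no symmetry issues, i.e.\ that the fibres are all of equal size.

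Given the reduction, we have
\[
  \E[N_k(T_n)] = \frac{1}{C_{n-1}}\sum_{\tau}\#\{\ell\in\mathcal{L}_\tau : \delta_\ell = k\} = \frac{F(n,k)}{C_{n-1}}
\]
by the very definition of $F(n,k)$ as the number of pairs $(\tau,\ell)$ in \Cref{lemGrandDyck}. Plugging this into the expression for $H_\alpha$ gives the formula stated in the theorem; the explicit binomial form of $F(n,k)$ is not even needed here, only its definition. The main obstacle is thus entirely contained in \Cref{lemGrandDyck}, which we are allowed to assume, together with the $2^{n-1}$-to-one argument above.
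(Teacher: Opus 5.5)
Your proposal is correct and follows essentially the same route as the paper. Both write $H_\alpha(T_n)$ via $\sum_\ell 2^{-\alpha\delta_\ell}$ and transfer the expectation from uniform leaf-labelled trees to uniform ordered trees, using the $2^{n-1}$-to-one (forget order) and $n!$-to-one (forget labels) projections from ordered labelled trees. Both then identify $\E[N_k(T_n)] = F(n,k)/C_{n-1}$ directly from the definition of $F(n,k)$.
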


\begin{Proo}
First, recall that the $H_\alpha$ index of a binary tree $T$ is
\[
    H_\alpha(T) = 
    \frac{1}{1-2^{1-\alpha}} \left(1 -
    \sum_{\ell \in \mathcal{L}_T}\! 2^{-\alpha \delta_\ell} \right),
\]
where $\delta_\ell$ is the depth of leaf~$\ell$, i.e.\ its distance to the
root of $T$. Thus, to conclude the proof it suffices to show that if
$T_n$ is a PDA tree with $n$ leaves, then
\[
  \E\left(\sum_{\ell \in \mathcal{L}_{T_n}}\!\! 2^{-\alpha \delta_\ell} \right)
  = \frac{1}{C_{n-1}} \sum_{k=1}^{n-1} F(n, k)\, 2^{-\alpha k}.
\]

Let $\mathcal{T}_n^l$ denote the set of rooted binary trees with
$n$ leaves labeled with the integers $\{1, \ldots, n\}$; $\mathcal{P}_n$ the
set of Catalan trees (i.e.\ ordered binary trees) with $n$ unlabeled leaves;
and $\mathcal{P}_n^l$ the set of Catalan trees with $n$ leaves labeled
$1, \ldots, n$.
Let $\pi_1\colon\mathcal{P}_n^l\to\mathcal{T}_n^l$ (resp.\
$\pi_2\colon \mathcal{P}_n^l\to\mathcal{P}_n$) be the canonical
projection obtained by discarding the ordering of the tree
(resp.\ the labels of the leaves).
Then, for all $\tau_1 \in \mathcal{T}_n^l$ and all $\tau_2 \in \mathcal{P}_n$,
\begin{equation*}
      |\pi_1^{-1}(\tau_1)| = 2^{n-1} \qquad \text{and} \qquad
      |\pi_2^{-1}(\tau_2)| = n!
\end{equation*} 
As a result, for any
$f \colon (\mathcal{P}_n^l\cup\mathcal{T}_n^l\cup\mathcal{P}_n) \to \mathbb{R}$
that depends neither on the ordering of trees 
%(that is, such that $f(\tau) = f(\pi_1(\tau))$ for all $\tau\in\mathcal{P}_n^l$)
nor on the labels of their leaves, 
%($f(\tau) = f(\pi_2(\tau))$ for all $\tau\in\mathcal{P}_n^l$), we have
\begin{equation*}
    \E_{\mathrm{Unif}(\mathcal{P}_n^l)}[f(T)] =
    \E_{\mathrm{Unif}(\mathcal{T}_n^l)}[f(T)] =
    \E_{\mathrm{Unif}(\mathcal{P}_n)}[f(T)] \,.
\end{equation*}
In particular, for $T_n \sim \mathrm{PDA}_n$,
\[
  \E\left(\sum_{\ell \in \mathcal{L}_{T_n}}\!\! 2^{-\alpha \delta_\ell} \right)
  =  \frac{1}{|\mathcal{P}_n|} \sum_{\tau \in \mathcal{P}_n}
  \sum_{\ell \in \mathcal{L}_\tau} 2^{-\alpha \delta_\ell} .
\]
Moreover,
\begin{align*}
  \frac{1}{|\mathcal{P}_n|} \sum_{\tau \in \mathcal{P}_n}
  \sum_{\ell \in \mathcal{L}_\tau} 2^{-\alpha \delta_\ell}
  &=  \frac{1}{|\mathcal{P}_n|} \sum_{\tau \in \mathcal{P}_n}
  \sum_{k = 1}^{n-1}
  |\{\ell \in \mathcal{L}_\tau : \delta_\ell = k\}| \, 2^{-\alpha k} \\
  &=  \frac{1}{|\mathcal{P}_n|} \sum_{k = 1}^{n-1} 
  |\{(\tau, \ell) : \tau \in \mathcal{P}_n, \ell\in \mathcal{L}_\tau,
  \delta_\ell = k\}| \, 2^{-\alpha k} \,.
\end{align*}
Since $|\mathcal{P}_n| = C_{n-1}$ and, by definition,
$F(n, k) = |\{(\tau, \ell) : 
\tau \in \mathcal{P}_n, \ell\in \mathcal{L}_\tau, \delta_\ell = k\}|$, 
we conclude the proof by \Cref{lemGrandDyck}.
\end{Proo}

We now study the asymptotic behavior of $H_\alpha(T_n)$ as $n \to\infty$.
For this, we use the connection between the PDA model and
conditioned Galton--Watson trees (see e.g.\
\cite[Prop.~1.3.4]{lambertProbabilisticModelsSubtrees2017}):
after discarding the labels of the leaves, a PDA tree with $n$ leaves is
distributed as a Galton--Watson tree with offspring
distribution $2\,\mathrm{Ber}(1/2)$ conditioned to have $n$ leaves
(after discarding the ordering of children of the vertices).
As a result, a special case of \cref{theoprincipal} from \Cref{secBlowup} -- where we recall the definition of Kesten trees -- yields the following proposition.

\begin{Prop} \label{asympPDA}
Let $T_\star$ be the Kesten tree associated with a Galton--Watson tree having a
$2\,\mathrm{Ber}(1/2)$ offspring distribution and let $T_n$ be a PDA tree
with $n$ leaves. 
We have the following convergence in distribution:
\begin{equation*}
  H_\alpha(T_n) \xrightarrow[n\to\infty]{d}
    H_\alpha(T_\star). \qedhere
\end{equation*}
\end{Prop}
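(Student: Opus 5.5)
We derive the statement from local convergence together with continuity of $H_\alpha$ under truncation, without going through \cref{theoprincipal}. After forgetting labels, $T_n$ is a Galton--Watson tree $T$ with offspring distribution $2\,\mathrm{Ber}(1/2)$ conditioned on having $n$ leaves. Since this law is critical and has finite variance, Kesten's theorem applied to conditioning on the number of leaves gives local convergence: for every fixed $k$, $[T_n]_k \to [T_\star]_k$ in distribution. Here $T_\star$ has a single infinite spine, each spine vertex has two children, and the non-spine child of each spine vertex roots an independent critical Galton--Watson tree (almost surely finite). The function $\tau \mapsto H_\alpha([\tau]_k)$ depends only on the finite truncation, so for every $k$ we get $H_\alpha([T_n]_k) \to H_\alpha([T_\star]_k)$ in distribution.

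We then use a standard approximation argument. For a bounded Lipschitz test function $\varphi$, write
\[
\bigl|\E\varphi(H_\alpha(T_n))-\E\varphi(H_\alpha(T_\star))\bigr| \leqslant \E\bigl|H_\alpha(T_n)-H_\alpha([T_n]_k)\bigr| + \bigl|\E\varphi(H_\alpha([T_n]_k))-\E\varphi(H_\alpha([T_\star]_k))\bigr| + \E\bigl|H_\alpha([T_\star]_k)-H_\alpha(T_\star)\bigr|.
\]
The middle term tends to $0$ by the first step. The last term tends to $0$ as $k\to\infty$ by the monotone convergence $H_\alpha([T_\star]_k)\uparrow H_\alpha(T_\star)$ (\Cref{Stubsequence}), provided $H_\alpha(T_\star)<\infty$ almost surely. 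This finiteness follows from \Cref{HalphaBinaire}, which gives $H_\alpha(T_\star)=2^{\alpha-1}\sum_j 2^{-\alpha j}Z_j(T_\star)$. For the size-biased tree, $\E[Z_j(T_\star)]=O(j)$ (the spine plus the critical subtrees grafted on it), so the series has finite expectation for every $\alpha>0$.

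The main obstacle is the first term: we need a tail bound that is uniform in $n$. By \Cref{HalphaBinaire},
\[
H_\alpha(T_n)-H_\alpha([T_n]_k)=2^{\alpha-1}\sum_{j>k}2^{-\alpha j}Z_j(T_n),
\]
so it suffices to show that $\E[Z_j(T_n)]\leqslant Cj$ uniformly in $n$. Our first approach is exact counting through \Cref{lemGrandDyck}. The expected number of leaves at depth $j$ is $F(n,j)/C_{n-1}$, and the internal vertices at depth $j$ are controlled by the leaves below them (each internal vertex at depth $j$ has, for instance, a leftmost-descending path), or we can count the pairs (tree, vertex at depth $j$) by a similar Dyck-path bijection. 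Stirling's formula applied to $\binom{2(n-1)-j}{n-1}/C_{n-1}$ gives a bound of order $j\,2^{-j}\cdot 2^{j}\cdot e^{-j^2/(4n)}\leqslant Cj$. If that becomes messy, we fall back on the classical fact that the profile of a conditioned critical Galton--Watson tree satisfies $\E[Z_j(T_n)]\leqslant C j$ uniformly in $n$. The first term is then at most $C\sum_{j>k}j2^{-\alpha j}$, which tends to $0$ as $k\to\infty$ uniformly in $n$. This concludes the proof.
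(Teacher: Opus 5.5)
Your proof is correct, but it takes a different route from the paper. The paper obtains this proposition as the special case of \Cref{theoprincipal} with the trivial blowup. It works with the spine truncations $T_n^{k}$ and $T_\star^{k}$, and uses $d_{\mathrm{TV}}(T_n^{k},T_\star^{k})=\Theta(k/\sqrt{n})$ (\Cref{distanceTV}, which needs a third moment) together with Borel--Cantelli to get $T_n^{k_n}=T_\star^{k_n}$ along subsequences, for $\log n\ll k_n\ll\sqrt{n}$. It then sandwiches $H_\alpha(T_n)$ between $H_\alpha(T_\star^{k_n})$ and $H_\alpha(T_\star^{k_n})$ plus $p_{(T_\star,v_{k_n})}^{\alpha}$ times a crude bound on the remaining part (of order $n^{1-\alpha}$ when $\alpha<1$), using the grafting property. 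That crude factor is killed by $p_{(T_\star,v_k)}=O(2^{-k})$.

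You instead use that, for binary trees, \Cref{HalphaBinaire} makes $H_\alpha$ a positive linear functional of the depth profile with geometric weights. The whole difficulty then reduces to the uniform profile bound $\E[Z_j(T_n)]\leqslant Cj$, and a standard approximation argument finishes. The paper's method is the one that survives for general critical offspring laws and for blowups, where $H_\alpha$ is no longer a linear functional of the profile; it also yields higher moments in the range $m(1-\alpha)<1/2$. Your method is specific to binary trees but more elementary. Because it controls the tails in $L^1$ uniformly in $n$, it also gives uniform integrability, hence $\E[H_\alpha(T_n)]\to\E[H_\alpha(T_\star)]$ for every $\alpha>0$, whereas \Cref{asympexpecPDA} is only stated for $\alpha>1/2$.

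One step of your first approach does not work as written. Sending each internal vertex at depth $j$ to its leftmost descendant leaf only bounds $Z_j$ by the number of leaves at depth greater than $j$, which is of order $n$, not $O(j)$. The right identity is $Z_j=\sum_{i\geqslant 0}2^{-i}L_{j+i}$, where $L_i$ is the number of leaves at depth $i$; it follows by iterating $Z_{i+1}=2(Z_i-L_i)$. Combine it with the exact formula of \Cref{lemGrandDyck}: $\E[L_j(T_n)]=F(n,j)/C_{n-1}=\frac{j\,2^j\,n}{2(n-1)-j}\prod_{i=0}^{j-1}\frac{n-1-i}{2(n-1)-i}$. Each factor of the product is at most $1/2$, so $\E[L_j(T_n)]\leqslant\frac{jn}{2(n-1)-j}\leqslant 2j$ for $1\leqslant j\leqslant n-1$. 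This gives $\E[Z_j(T_n)]\leqslant 4j+4$ with no Stirling estimate needed. Your fallback also works: it is the known uniform $O(j)$ bound on the expected profile of conditioned critical finite-variance Galton--Watson trees (Devroye and Janson). It applies here because conditioning on $n$ leaves is the same as conditioning on $2n-1$ vertices.
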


When $\xi\sim 2\,\mathrm{Ber}(1/2)$, the corresponding size-biased offspring
distribution $\hat{\xi}$ is equal to 2 almost surely.
As a result, the associated Kesten tree $T^\star$ has a very simple recursive
structure:
it is a cherry with a Galton--Watson tree grafted on one of the leaves and an
independent copy of $T^\star$ grafted on the other one.
Thus, \cref{graftingtwophylogeny} yields the following
equality in distribution:
\begin{equation}
    H_\alpha(T_\star) \;\overset{d}{=}\;
    \sum_{k \geqslant 0} 2^{-\alpha k} \big(1 + 2^{-\alpha} H_\alpha(T_k)\big),
    \label{recKesten}
\end{equation}
where $(T_k)_{k\geqslant 0}$ are i.i.d.\ Galton--Watson trees with
offspring distribution $\xi \sim 2\,\mathrm{Ber}(1/2)$.
Moreover, by \cref{expecGW},
\begin{equation} \label{expecFreePDA}
    \E[H_\alpha(T_0)] = \frac{1}{2(1-2^{-\alpha})}.
\end{equation}
After straightforward calculations, combining Eqs.~\eqref{recKesten}
and~\eqref{expecFreePDA} yields the following expressions for the limit of the
expected value / variance of $H_\alpha(T_n)$;
note that the expression of the expectation is a special case of \cref{expecKesten},
which gives the corresponding expressions for arbitrary Kesten trees.

\begin{Coro} \label{asympexpecPDA}
Let $T_n$ be a PDA tree with $n$ leaves.
Then, for all $\alpha > 1/2$,
\begin{equation*}
    \E[H_\alpha(T_n)]
    \xrightarrow[n\to\infty]{}
    \frac{1-2^{-(\alpha +1)}}{(1-2^{-\alpha})^2}. \qedhere
\end{equation*}
\end{Coro}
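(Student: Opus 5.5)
The plan is to pass to the limit directly in the exact formula of \Cref{expecPDA}, and then to identify the resulting series with the expectation computed from Eqs.~\eqref{recKesten} and~\eqref{expecFreePDA}. \Cref{asympPDA} alone is not enough, because convergence in distribution does not imply convergence of expectations; the explicit formula avoids having to prove uniform integrability of $H_\alpha(T_n)$. By \Cref{expecPDA}, it suffices to show that
\[
  S_n(\alpha) \;:=\; \frac{1}{C_{n-1}} \sum_{k=1}^{n-1} F(n,k)\, 2^{-\alpha k}
  \;\xrightarrow[n\to\infty]{}\; S(\alpha) := \sum_{k\geqslant 1} \frac{k}{2}\, 2^{-\alpha k},
\]
and then to check that $\frac{1}{1-2^{1-\alpha}}\big(1 - S(\alpha)\big)$ equals the announced constant.

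\textbf{Step 1: pointwise limit.} For fixed $k$, I will use \Cref{lemGrandDyck} together with Stirling's formula, in the form $C_{n-1}\sim 4^{n-1}/(\sqrt{\pi}\,n^{3/2})$ and $\binom{2m-k}{m}\sim 4^m 2^{-k}/\sqrt{\pi m}$ for $m = n-1$. This gives
\[
  \frac{F(n,k)}{C_{n-1}} \;=\; \frac{k\,2^k}{2(n-1)-k}\cdot\frac{\binom{2(n-1)-k}{n-1}}{C_{n-1}} \;\longrightarrow\; \frac{k}{2}.
\]
This limit is the expected number of leaves at depth $k$ in the Kesten tree $T_\star$, which is consistent with \Cref{asympPDA}.

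\textbf{Step 2: domination (main obstacle).} To exchange limit and sum, I need a bound $F(n,k)/C_{n-1}\leqslant c_k$, uniform in $n$, with $\sum_k c_k 2^{-\alpha k}<\infty$. I would first try to write $\binom{2m-k}{m}/\binom{2m}{m}=\prod_{i=0}^{k-1}\frac{m-i}{2m-i}$. Bounding this product crudely, and accounting for the prefactor $\frac{(m+1)k2^k}{2m-k}$, should give a bound of the form $c_k = C\,k\,2^{k/2}$ (possibly up to polynomial factors) over the whole range $1\leqslant k\leqslant n-1$. The dangerous region is $k$ comparable to $n$, where the factor $\frac{1}{2(n-1)-k}$ and the loss in the product are worst; for instance $F(n,n-1)=2^{n-1}$ while $C_{n-1}\approx 4^{n}$. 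Such a bound is summable against $2^{-\alpha k}$ exactly when $\alpha>1/2$, which is where I expect the hypothesis to enter. If the crude bound fails, I would split the sum at $k=\sqrt n$: for small $k$, use the uniform Stirling estimate $\frac{m-i}{2m-i}\leqslant \frac12$; for large $k$, bound each term directly, showing the tail is $o(1)$ when $\alpha>1/2$. Dominated convergence then yields $S_n(\alpha)\to S(\alpha)$.

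\textbf{Step 3: identification.} Since $\sum_k k x^k = x/(1-x)^2$ with $x=2^{-\alpha}$, we get $S(\alpha)=\frac{2^{-\alpha-1}}{(1-2^{-\alpha})^2}$. Routine algebra should reduce $\frac{1}{1-2^{1-\alpha}}\big(1-S(\alpha)\big)$ to $\frac{1-2^{-(\alpha+1)}}{(1-2^{-\alpha})^2}$, the step being to factor $1-2^{1-\alpha}$ out of the numerator $(1-2^{-\alpha})^2-2^{-\alpha-1}$. As a consistency check, take expectations in Eq.~\eqref{recKesten} and use Eq.~\eqref{expecFreePDA}:
\[
  \E[H_\alpha(T_\star)]=\frac{1}{1-2^{-\alpha}}\Big(1+\frac{2^{-\alpha-1}}{1-2^{-\alpha}}\Big)=\frac{1-2^{-\alpha-1}}{(1-2^{-\alpha})^2}.
\]
This is the same value, so the limit of the expectations coincides with the expectation of the distributional limit $H_\alpha(T_\star)$ from \Cref{asympPDA}.
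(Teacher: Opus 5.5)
Your argument is correct, and it takes a genuinely different route from the paper's.

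\textbf{The paper's route.} The paper deduces the corollary from \Cref{theoprincipal}(ii) with $m=1$. That result combines convergence in distribution to $H_\alpha(T_\star)$, obtained through the local limit, with uniform integrability. Uniform integrability comes from a moment bound: one couples the truncations $T_n^{k_n}$ and $T_\star^{k_n}$ (\Cref{distanceTV}) and uses the crude bound $H_\alpha(T_n)\leqslant n^{1-\alpha}/(2^{1-\alpha}-1)$ on the event where they differ. The requirement $m(1-\alpha)<1/2$ in that argument is exactly where $\alpha>1/2$ comes from. The limit is then evaluated as $\E[H_\alpha(T_\star)]$ from Eqs.~\eqref{recKesten} and~\eqref{expecFreePDA}, which is your ``consistency check''.

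\textbf{Your route.} You pass to the limit in the exact finite-$n$ formula of \Cref{expecPDA} by dominated convergence. Your Steps~1 and~3 are right: $F(n,k)/C_{n-1}\to k/2$, and $(1-x)^2-x/2=(1-2x)(1-x/2)$ with $x=2^{-\alpha}$.

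\textbf{Comparison.} The paper's route is general: it covers any blowup of a critical Galton--Watson tree and higher moments such as \Cref{asympvarPDA}, and it explains why the limit is $\E[H_\alpha(T_\star)]$. Yours depends on the explicit enumeration of \Cref{lemGrandDyck}, so it is specific to the PDA model. In exchange it is elementary and bypasses the local-limit and coupling machinery entirely.

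\textbf{Step~2 is easier than you fear.} The hypothesis $\alpha>1/2$ does not enter there. With $m=n-1$, the inequality $\frac{m-i}{2m-i}\leqslant\frac12$ that you quote holds for \emph{every} $i\geqslant 0$. Hence, for all $1\leqslant k\leqslant n-1$,
\[
  \frac{F(n,k)}{C_{n-1}}
  = \frac{k\,2^k(m+1)}{2m-k}\prod_{i=0}^{k-1}\frac{m-i}{2m-i}
  \leqslant \frac{k(m+1)}{2m-k}
  \leqslant \frac{k(m+1)}{m}
  \leqslant 2k .
\]
There is no dangerous region; your own example gives $F(n,n-1)/C_{n-1}$ of order $2^{-n}n^{3/2}$. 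There is no need to split at $\sqrt n$, and $2k\,2^{-\alpha k}$ is summable for every $\alpha>0$.

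Your proof therefore gives the convergence for all $\alpha>0$. The restriction $\alpha>1/2$ is an artifact of the uniform-integrability argument behind \Cref{theoprincipal}. Combined with \Cref{asympPDA} and $H_\alpha\geqslant 0$, your result even shows that $(H_\alpha(T_n))_{n}$ is uniformly integrable for every $\alpha>0$.

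\textbf{The case $\alpha=1$.} The formula of \Cref{expecPDA} is singular at $\alpha=1$, and the statement includes this value, so you need to treat it separately. Use $H_1(T)=\sum_{\ell}\delta_\ell 2^{-\delta_\ell}$ for binary trees, so that
\[
  \E[H_1(T_n)]=\frac{1}{C_{n-1}}\sum_{k}F(n,k)\,k\,2^{-k}.
\]
Each term is dominated by $2k^2 2^{-k}$, and the sum converges to $\sum_{k\geqslant1}\frac{k^2}{2}2^{-k}=3$. This is the value of the announced constant at $\alpha=1$.
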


\begin{Coro} \label{asympvarPDA}
Let $T_n$ be a PDA tree with $n$ leaves.
Then, for all $\alpha > 3/4$, 
\begin{equation*}
    \Var(H_\alpha(T_n))
    \xrightarrow[n\to\infty]{}
    \frac{2^{-(2\alpha +1)}}{(1-2^{-2\alpha})^2(1-2^{1-\alpha})^2}
    \left( 1 + \frac{2^{1-2\alpha}-1}{2(1-2^{-\alpha})^2} \right).
    \qedhere
\end{equation*}
\end{Coro}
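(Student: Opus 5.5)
The plan is to combine the convergence in distribution of \Cref{asympPDA} with a uniform integrability argument, so that $\Var(H_\alpha(T_n)) \to \Var(H_\alpha(T_\star))$. It then remains to compute $\Var(H_\alpha(T_\star))$ explicitly from the recursive description in Eq.~\eqref{recKesten} and from \Cref{VarGW}.

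\textbf{Step 1: the limit variance.} By Eq.~\eqref{recKesten}, $H_\alpha(T_\star)$ is a sum of independent terms $2^{-\alpha k}(1+2^{-\alpha}H_\alpha(T_k))$. These are square-summable in $L^2$ as soon as $\Var(H_\alpha(T_0))<\infty$, so
\[
\Var(H_\alpha(T_\star)) = \sum_{k\geq 0} 2^{-2\alpha k}\,2^{-2\alpha}\Var(H_\alpha(T_0)) = \frac{2^{-2\alpha}}{1-2^{-2\alpha}}\Var(H_\alpha(T_0)).
\]
For $\xi\sim 2\,\mathrm{Ber}(1/2)$ we have $\P(\xi=0)=1/2$, $\E[\xi^{1-\alpha}\1_{\{\xi\neq0\}}]=2^{-\alpha}<1$, $\E[\xi^{1-2\alpha}\1_{\{\xi\neq0\}}]=2^{-2\alpha}$ and $\E[\xi^{2(1-\alpha)}\1_{\{\xi\neq0\}}]=2^{1-2\alpha}$. \Cref{VarGW} therefore gives
\[
\Var(H_\alpha(T_0))=\frac{1/2}{(1-2^{1-\alpha})^2(1-2^{-2\alpha})}\left(1+\frac{2^{1-2\alpha}-1}{2(1-2^{-\alpha})^2}\right).
\]
Multiplying by $2^{-2\alpha}/(1-2^{-2\alpha})$ gives exactly the announced constant. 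This part is routine.

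\textbf{Step 2: convergence of the first two moments.} Since $H_\alpha(T_n)\to H_\alpha(T_\star)$ in distribution, it suffices to show that $\sup_n \E[H_\alpha(T_n)^{p}]<\infty$ for some $p>2$. This gives uniform integrability of $H_\alpha(T_n)^2$ and of $H_\alpha(T_n)$, hence convergence of both $\E[H_\alpha(T_n)^2]$ and $\E[H_\alpha(T_n)]$; the latter is consistent with \Cref{asympexpecPDA}.

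To bound these moments, I would use \Cref{HalphaBinaire}, which gives $H_\alpha(T_n)=2^{\alpha-1}\sum_{k\geq1}2^{-\alpha k}Z_k(T_n)$. Minkowski's inequality then yields
\[
\|H_\alpha(T_n)\|_p\leq 2^{\alpha-1}\sum_k 2^{-\alpha k}\|Z_k(T_n)\|_p.
\]
So the task reduces to a bound on the generation sizes of the size-conditioned critical Galton--Watson tree that is uniform in $n$ and polynomial in $k$, for instance $\E[Z_k(T_n)^p]\leq C_p k^{p}$. This is a known type of estimate for conditioned critical GW trees with finite variance (Janson's profile bounds), and with it the series converges for every $\alpha>0$.

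\textbf{Main obstacle.} The delicate step is the uniform moment bound in Step~2. If I could not import the profile estimate, I would instead follow the route that presumably underlies \cref{theoprincipal} and \cref{expecKesten} in \Cref{secBlowup}: decompose $T_n$ along a spine (the ancestral line of a uniform leaf), and compare the law of the pendant subtrees with free Galton--Watson trees, up to the usual local absolute-continuity ratio of conditioned GW trees. Each pendant subtree grafted at depth $k$ contributes $2^{-\alpha k}H_\alpha(\cdot)$ by \cref{graftingproperty}. Controlling the second moment of this sum, with the absolute-continuity factors growing like powers of $n/(n-m)$, is where a restriction on $\alpha$ naturally arises; the condition $\alpha>3/4$ would come from requiring summability of $2^{-2\alpha k}$ against the growth of these factors together with the tail of the spine. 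I would first try the direct profile-bound argument and fall back on the spine decomposition if needed.
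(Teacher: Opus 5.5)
Your proposal is correct, and your Step~1 is exactly the paper's computation (Eq.~\eqref{recKesten} with Theorem~\ref{VarGW}, using $\P(\xi=0)=1/2$). The constants you obtain check out.

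Where you depart from the paper is Step~2. The paper does not bound the profile. It gets $\E[H_\alpha(T_n)^m]\to\E[H_\alpha(T_\star)^m]$ for $m=1,2$ directly from Theorem~\ref{theoprincipal}, whose proof works as follows:
\begin{itemize}
\item it couples $T_n$ and $T_\star$ up to the spine vertex $v_{k_n}$, with $k_n=n^{\varepsilon/2}$ and failure probability $\Theta(k_n/\sqrt n)$;
\item on the failure event it uses the crude bound $H_\alpha(T_n)\le n^{1-\alpha}/(2^{1-\alpha}-1)$;
\item it therefore needs $m(1-\alpha)<1/2$, which for $m=2$ is exactly $\alpha>3/4$.
\end{itemize}
So your guess in the last paragraph about the origin of $3/4$ (absolute-continuity factors along the spine) is not what happens.

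Your route through Proposition~\ref{HalphaBinaire}, Minkowski, and Janson's uniform profile bound $\E[Z_k(T_n)^p]\le C_p k^p$ is valid here, with two small checks you should state:
\begin{itemize}
\item a binary tree with $n$ leaves has $2n-1$ vertices, so conditioning on leaves is the same as conditioning on vertices, which is the setting of Janson's result;
\item the moment hypothesis needed for that bound is trivially met since $\xi\le 2$.
\end{itemize}
With these, you get uniform $L^p$ bounds for every $p$ and every $\alpha>0$. This yields convergence of the variance, and indeed of all moments, for all $\alpha>0$, which is strictly more than the corollary. It shows the threshold $3/4$ is an artifact of the general method, not of the PDA model.

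The trade-off is this. Your approach relies on a nontrivial imported estimate and is specific to trees, where $H_\alpha$ is linear in the profile. The paper's argument uses only its own machinery and applies verbatim to arbitrary blowups (galled trees, level-$k$ networks), at the cost of a restricted range of $\alpha$.
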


\section{The \texorpdfstring{$\boldsymbol{H_\alpha}$}{H-alpha} index of blowups of Galton--Watson trees} \label{secBlowup}

In this section,  we study a class of random phylogenetic networks known as
\emph{blowups of Galton--Watson trees}, whose definition we recall below.
The two main interests of these models are:

\begin{enumerate}
    \item Blowups of Galton--Watson trees encompass models that are
      biologically relevant.
    Indeed, Stufler showed in \cite{stuflerBranchingProcessApproach2022} that
% level-$k$ networks are in bijection with a class of decorated trees and he deduces that
    the uniform distribution on leaf-labeled level-$k$ networks can be seen as
    specific blowups of Galton--Watson trees conditioned on their number of
    leaves.
    Other natural models of networks generated by a branching process with
    coalescence and mutation have also been identified as blowups of Galton--Watson
    trees \cite{bienvenuBranchingProcessCoalescence2024}.
    \item As we saw in \Cref{secGW}, the recursive structure of
      Galton--Watson trees makes their $H_\alpha$ indices tractable.
      Because the ``blowup'' procedure preserves this recursive structure,
      the $H_\alpha$ index of blowups of Galton--Watson trees can
      be studied using similar methods to those used for Galton--Watson trees.
\end{enumerate}
We study the asymptotic behavior of $H_\alpha$ for blowups of Galton--Watson
trees as their number of leaves goes to infinity.
Building on the local limit framework developed for the $B_2$ index
in~\cite{bienvenu$B_2$IndexGalled2024}, we extend the approach to the present
setting and derive new estimates tailored to this model.
Let us start by recalling the blowup construction and
introducing some notation.

\subsection{Blowups of trees and local limits: setting and
  notation} \label{sec:blowup-notation}

First, let us define random blowups of locally finite rooted trees.
Blowups are most conveniently defined for ordered trees, that is, trees $T$
in which, for each vertex $v\in T$, the (finite) set of children of~$v$ is
equipped with a total order; we can thus speak of the $i$-th child of~$v$.
If $T$ is an unordered tree, we call an ordered tree $T'$ an
\emph{ordering of $T$} if we recover $T$ by forgetting the order on the
children of each vertex of $T'$.

Throughout the rest of this section, for all $k \geq 1$ let $\nu_k$ be a
probability distribution on the space of finite phylogenetic networks with $k$
leaves labeled from $1$ to $k$, and assume that $\nu_k$ is invariant under
relabeling of the leaves of the network. Write $\nu=(\nu_k)_{k\geq 1}$.

\begin{Defi}
Let $T$ be a locally finite ordered tree, and let $\nu$ be a family
of distributions defined as above.
The \emph{blowup of} $T$ \emph{with respect to $\nu$} is the random phylogenetic
network obtained by (1) sampling an independent family of networks
$(\Gamma_v)_{v\in T\setminus \mathcal{L}_T}$,
with ${\Gamma_v \sim \nu_{d_+(v)}}$, where $d_+(v)$ is the number of children
of $v$; and (2) replacing each internal vertex
$v \in T\setminus \mathcal{L}_T$ with the network $\Gamma_v$, identifying
the $k$-th leaf of $\Gamma_v$ with the $k$-th child of $v$.
This procedure is illustrated in \Cref{Blowup_Procedure}.
\end{Defi}

If $T$ is a locally finite unordered tree, choose an arbitrary ordering $T'$
of $T$ and define the blowup of $T$ with respect to $\nu$ as the phylogenetic
network obtained by forgetting the ordering of the blowup of $T'$ with
respect to $\nu$ (note that the distribution of the resulting network
does not depend on the ordering of $T$ used in the construction).
When $T$ is a random tree, the blowup of $T$ with respect to $\nu$ is defined
using the procedure above \emph{conditional on} $T$.
This can be formalized without difficulty,
see~\cite{bienvenu$B_2$IndexGalled2024} for details.

\begin{figure}[H]

  \centering

  \begin{tikzpicture}[main/.style = {draw, circle, minimum size = 7pt, inner sep =1pt}]
    \tikzstyle{dot}=[draw,circle,gray,
                  top color= white, text=gray,minimum size=7pt, inner sep = 1]
    \tikzstyle{phantom}=[opacity = 0, minimum size = 7pt]
    \tikzstyle{leaf}=[draw,rounded corners=2pt,red,
                  top color= white, text=red,minimum size=7pt]
    \tikzstyle{naming}=[text = black, minimum size=7pt]
    \tikzstyle{thicknode}=[draw,line width=1pt, circle, minimum size=7pt, inner sep = 1]
    \node[thicknode, label ={[font=\footnotesize] \textbf{\o}}] (root) at (-1, -1) {};
    \node[main, label ={[font=\scriptsize]left :1}] (1) at (-2, -2) {};
    \node[main, label ={[font=\scriptsize]right :2}] (2) at (0, -2) {};
    \node[main, label ={[font=\scriptsize]left :21}] (21) at (-0.5, -3) {};
    \node[main, label ={[font=\scriptsize]right :22}] (22) at (0.5, -3) {};
    \node[main, label ={[font=\scriptsize]below :211}] (211) at (-1, -4) {};
    \node[main, label ={[font=\scriptsize]below :212}] (212) at (0, -4) {};
    \draw[->] (root) -- (1);
    \draw[->] (root) -- (2);
    \draw[->] (2) -- (21);
    \draw[->] (2) -- (22);
    \draw[->] (21) -- (212);
    \draw[->] (21) -- (211);
    \draw[{Hooks[right]}->] (1.5,-3.5) -- (7.5,-3.5);
    \node[thicknode, label ={[font=\footnotesize] \textbf{\o}}] (root2tree) at (9.5, -1) {};
    \node[thicknode] (1') at (8.5,-2) {};
    \node[thicknode] (2') at (9.5,-2) {};
    \node[thicknode] (3') at (10.5,-2) {};
    \node[thicknode] (21') at (9.5,-3) {};
    \node[thicknode, label ={[font=\scriptsize]below :1}] (211') at (8.5,-4) {};
    \node[thicknode, label ={[font=\scriptsize]right :2}] (212') at (10.5,-4) {};
    \node[main, label ={[font=\scriptsize]left :21}] (2121') at (9.5,-5) {};
    \node[main] (2122') at (11.5,-5) {};
    \node[main, label ={[font=\scriptsize]right :22}] (21221') at (11.5,-6) {};
    \node[main, label ={[font=\scriptsize]below :211}] (21211') at (8.5,-6) {};
    \node[main] (21212') at (9.5,-6) {};
    \node[main] (21213') at (10.5,-6) {};
    \node[main, label ={[font=\scriptsize]below :212}] (212121') at (10,-7) {};
    \draw[->, line width=1pt] (root2tree) -- (1');
    \draw[->, line width=1pt] (root2tree) -- (2');
    \draw[->, line width=1pt] (root2tree) -- (3');
    \draw[->, line width=1pt] (1') -- (21');
    \draw[->, line width=1pt] (2') -- (21');
    \draw[->, line width=1pt] (3') -- (21');
    \draw[->, line width=1pt] (21') -- (211');
    \draw[->, line width=1pt] (21') -- (212');
    \draw[->] (212') -- (2121');
    \draw[->] (212') -- (2122');
    \draw[->] (2122') -- (21221');
    \draw[->] (2121') -- (21211');
    \draw[->] (2121') -- (21212');
    \draw[->] (2121') -- (21213');
    \draw[->] (21212') -- (212121');
    \draw[->] (21213') -- (212121');
    \node[main, label ={[font=\footnotesize] \o}] (rootGamma2) at (3, -4.5) {};
    \node[main] (1Gamma2) at (2, -5.5) {};
    \node[main] (2Gamma2) at (4, -5.5) {};
    \node[main] (21Gamma2) at (4, -6.5) {};
    \draw[->] (rootGamma2) -- (1Gamma2);
    \draw[->] (rootGamma2) -- (2Gamma2);
    \draw[->] (2Gamma2) -- (21Gamma2);
    \draw[] (4.5, -4.25) -- (4.5, -6.75);
    \node[main, label ={[font=\footnotesize] \o}] (rootGamma21) at (6, -4.5) {};
    \node[main] (1Gamma21) at (5, -5.5) {};
    \node[main] (2Gamma21) at (6, -5.5) {};
    \node[main] (3Gamma21) at (7, -5.5) {};
    \node[main] (21Gamma21) at (6.5, -6.5) {};
    \draw[->] (rootGamma21) -- (1Gamma21);
    \draw[->] (rootGamma21) -- (2Gamma21);
    \draw[->] (rootGamma21) -- (3Gamma21);
    \draw[->] (2Gamma21) -- (21Gamma21);
    \draw[->] (3Gamma21) -- (21Gamma21);
    \node[main, label ={[font=\footnotesize] \o}] (rootGammaRoot) at (4.5, 0) {};
    \node[main] (1GammaRoot) at (3.5, -1) {};
    \node[main] (2GammaRoot) at (4.5, -1) {};
    \node[main] (3GammaRoot) at (5.5, -1) {};
    \node[main] (21GammaRoot) at (4.5, -2) {};
    \node[main] (211GammaRoot) at (3.5, -3) {};
    \node[main] (212GammaRoot) at (5.5, -3) {};
    \draw[->] (rootGammaRoot) -- (1GammaRoot);
    \draw[->] (rootGammaRoot) -- (2GammaRoot);
    \draw[->] (rootGammaRoot) -- (3GammaRoot);
    \draw[->] (1GammaRoot) -- (21GammaRoot);
    \draw[->] (2GammaRoot) -- (21GammaRoot);
    \draw[->] (3GammaRoot) -- (21GammaRoot);
    \draw[->] (21GammaRoot) -- (211GammaRoot);
    \draw[->] (21GammaRoot) -- (212GammaRoot);
    \node[naming] (nameGammaRoot) at (3.5,0) {$\boldsymbol{\Gamma_{\textbf{\o}}}$};
    \node[naming] (nameGamma2) at (2,-4.5) {$\Gamma_{2}$};
    \node[naming] (nameGamma21) at (5,-4.5) {$\Gamma_{21}$};
    \node[naming] (nameT) at (-2,-1) {$T$};
    \node[naming] (nameG) at (8.5,-1) {$G$};
  \end{tikzpicture}
  \caption{Representation of the blowup procedure. On the left: $T$, an ordered tree labeled according to the Neveu--Ulam--Harris notation. In the center: $\Gamma_{\text{\o}}, \Gamma_2$ and $\Gamma_{21}$, the random networks associated to the internal vertices of $T$. On the right: $G$, the network resulting from the blowup procedure where the corresponding vertices of $T$ can be traced via the Neveu--Ulam--Harris labeling. For instance the root \textbf{\o} of $T$ has been replaced in $G$ by the network shown in bold, corresponding to $\boldsymbol{\Gamma_{\textbf{\o}}}$, where the two children of \textbf{\o}, namely $1$ and $2$, have been identified with the two leaves of $\boldsymbol{\Gamma_{\textbf{\o}}}$.}
  \label{Blowup_Procedure}
\end{figure}

A key property of the blowup construction is that it only involves
\emph{local} modifications of the underlying tree $T$.
As a result, it plays nicely with the notion of
local limits of graphs (also known as the
\textit{Benjamini--Schramm} limit
\cite{benjaminiRecurrenceDistributionalLimits2011}).
In our setting, this notion can be defined as follows.

\begin{Defi}
Given a sequence of phylogenetic networks $(G_n)_{n\geq 1}$, if there exists
a network $G$ such that for each fixed $k\geq 1$,  $[G_n]_k = [G]_k$ for all
$n$ large enough we say that $G$ is the \emph{local limit} of the sequence
$(G_n)_{n\geq 1}$.
\end{Defi}

It is standard (we refer to~\cite{vanderhofstadRandomGraphsComplex}) that this
notion of convergence is associated to a topology known as the \emph{local
topology}, and that the space of locally finite rooted graphs equipped with
the local topology is Polish. This makes it possible to speak of
\emph{convergence in distribution for the local topology} (or simply
\emph{local convergence in distribution}) for random phylogenetic networks.

Informally, \cref{graftingproperty} suggests that the $H_\alpha$ index
of a network depends in great part on its structure close to the root. Thus,
it is a natural question to ask whether $H_\alpha$ is continuous for
the local topology. In \Cref{appNonContinuity}, we will see that this is
not the case: if $G$ is an infinite network such that
$H_\alpha(G)<\infty$, then $H_\alpha$ is discontinuous at $G$.
However, we identify some
conditions for continuity of $H_\alpha$ along specific sequences, see
\Cref{appCondContinuity}.

The local limits as $n\to\infty$ of Galton--Watson trees conditioned to have $n$
leaves are well understood.
When the offspring distribution has a finite mean, we obtain the so-called
Kesten trees as limits and we refer the reader to
e.g.~\cite{abrahamLocalLimitsConditioned2014} for an overview of the subject. 
If, moreover, the offspring distribution has mean 1, the structure of the associated Kesten tree can easily be described.
\begin{Defi}
  Let $T$ be a critical Galton--Watson tree with offspring
  distribution $\xi$. The Kesten tree associated with $\xi$ and
  denoted by $T_\star$ is a multi-type Galton--Watson tree, that is a
  Galton--Watson tree where vertices carry extra information
  referred to as a \emph{type}, on which the offspring distribution
  may depend. More precisely, $T_\star$ is a two-type (spine/normal)
  Galton--Watson tree distributed as follows:
    \begin{itemize}
        \item The root of $T_\star$ is a \emph{spine} individual.
        \item \emph{Normal} individuals reproduce according to the
          offspring distribution $\xi$ and all of their children are
          \emph{normal}.
        \item \emph{Spine} individuals reproduce according to the
          size--biaised distribution $\hat{\xi}$ (that is
          $\mathds{P}(\hat{\xi} = k) = k \mathds{P}(\xi = k)$ for all
          $k \geqslant 0$) and exactly one of their children, chosen
          uniformly at random, is a \emph{spine} individual, while the
          others are \emph{normal} individuals. \qedhere
    \end{itemize}
\end{Defi}

The blowup procedure only changes the structure of a network on a local scale.
Thus, it is not hard to see that if a sequence of rooted trees $(T_n)_{n\geq
1}$ converges locally to $T$, then the associated sequence of blowups
$(G_n)_{n\geq 1}$ converges locally to $G$, where $G$ is the blowup of $T$.
As for Galton--Watson trees, one can study of the $H_\alpha$ indices of a Kesten tree using its simple recursive structure.
For instance, letting $T_\star$ be the Kesten tree associated with a critical
Galton--Watson tree with offspring distribution $\xi$, and letting $\hat{\xi}$
be the associated size-biased distribution, one has the following result, 
whose proof is deferred to~\Cref{appHalphaKesten}.

\begin{Theo} \label{expecKesten}
  Assume that $\mathds{E}[\xi]=1$ and $\mathds{P}(\xi = 1) < 1$.
  Then, $\mathds{E}[H_\alpha(T_\star)]$ is finite if and only if
  $\mathds{E}[\hat{\xi}^{1-\alpha}] < + \infty$. In this case, we have
  the following expression:
\begin{equation*}
\mathds{E}[H_\alpha(T_\star)] = \frac{\frac{1}{1-2^{1-\alpha}}(1-\mathds{E}[\hat{\xi}^{1-\alpha}])+\mathds{E}[\hat{\xi}^{-\alpha}(\hat{\xi}-1)] \cdot \mathds{E}[H_\alpha(T)]}{1- \mathds{E}[\hat{\xi}^{-\alpha}]},
\end{equation*}
where $\mathds{E}[H_\alpha(T)]$ is explicit in view of \cref{expecGW}.
\end{Theo}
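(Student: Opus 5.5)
We use the recursive structure of $T_\star$ together with the grafting property, exactly as in the proof of \cref{expecGW}. The root of $T_\star$ has $\hat\xi$ children, and $T_\star$ is a star tree with $\hat\xi$ leaves (note $\hat\xi\geq 1$ a.s.). One leaf, chosen uniformly, carries an independent copy $T_\star'$ of $T_\star$. The other $\hat\xi-1$ leaves carry independent Galton--Watson trees $T^{(1)},\dots,T^{(\hat\xi-1)}$ with offspring distribution $\xi$. Each leaf of the star has $p_\ell=\hat\xi^{-1}$, so iterating \cref{graftingproperty} over the $\hat\xi$ leaves gives the distributional identity
\begin{equation*}
H_\alpha(T_\star) \overset{d}{=} \frac{1-\hat\xi^{1-\alpha}}{1-2^{1-\alpha}} + \hat\xi^{-\alpha}\Big(H_\alpha(T_\star') + \sum_{i=1}^{\hat\xi-1} H_\alpha(T^{(i)})\Big).
\end{equation*}
To make this rigorous, we work with truncations. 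Let $u_k=\E[H_\alpha([T_\star]_k)]$. The same decomposition yields
\[
u_k = a + \E[\hat\xi^{-\alpha}]\,u_{k-1} + \E[\hat\xi^{-\alpha}(\hat\xi-1)]\,\E[H_\alpha([T]_{k-1})],
\]
where $a=\frac{1}{1-2^{1-\alpha}}(1-\E[\hat\xi^{1-\alpha}])$. This uses that $\hat\xi$ is independent of the subtrees, so the conditional expectation factorizes.

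We then solve this linear recursion. Since $\hat\xi\geq1$ and $\P(\hat\xi=1)=\P(\xi=1)<1$ (using $\E[\xi]=1$), we have $\E[\hat\xi^{-\alpha}]<1$ for $\alpha>0$. By \cref{expecGW}, $\E[H_\alpha([T]_{k-1})]$ increases to $\E[H_\alpha(T)]$, which is finite in the critical case. To check this, use $\E[\xi^{1-\alpha}\1_{\xi\neq0}]<1$, which follows from Jensen for $\alpha<1$ ($\E[\xi^{1-\alpha}]\leq \E[\xi]^{1-\alpha}=1$, with strict inequality since $\xi$ is nondegenerate) and is immediate for $\alpha\geq1$. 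Letting $k\to\infty$ then gives the announced formula, provided $a$ is finite.

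It remains to justify the passage $u_k\to\E[H_\alpha(T_\star)]$ and to verify the finiteness criterion. The truncations increase monotonically by \Cref{monotonicity}, so monotone convergence handles the limit. Each term in the recursion is nonnegative: $\hat\xi\geq1$ gives $1-\hat\xi^{1-\alpha}\geq0$ with the sign of $1-2^{1-\alpha}$. Hence $\E[H_\alpha(T_\star)]\geq a$, and $a$ is finite if and only if $\E[\hat\xi^{1-\alpha}]<\infty$. This condition matters only for $\alpha<1$; for $\alpha\geq 1$ it is automatic and $a$ is bounded. Conversely, if the condition holds, all coefficients are finite and the limit is finite. The main obstacle is exactly this bookkeeping: showing the identity in distribution survives truncation, and the case $\alpha=1$. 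For $\alpha=1$ we take the limit by continuity, or redo the argument with $\log_2$ in place of the power.
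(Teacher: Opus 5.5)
Your proposal is correct and follows essentially the same route as the paper: the grafting decomposition of $[T_\star]_k$ at the root, the linear recursion for $u_k=\E[H_\alpha([T_\star]_k)]$ with contraction factor $\E[\hat\xi^{-\alpha}]<1$, the lower bound $\E[H_\alpha(T_\star)]\geq a$ for the ``only if'' direction, and monotone convergence. One caveat concerns your side remark that for $\alpha\geq 1$ the term $a$ is automatically bounded, which is false at $\alpha=1$: there $a=\E[\log_2\hat\xi]$, which is infinite whenever $\E[\xi\log\xi]=\infty$, so ``taking the limit by continuity'' is not justified, and redoing the argument with $\log_2$ gives the criterion $\E[\log\hat\xi]<\infty$ rather than the vacuous $\E[\hat\xi^{0}]<\infty$ (the paper's statement has the same blind spot at $\alpha=1$).
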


In the next section, we use the structure of the Kesten tree to study the
associated blowups and retrieve information about the limiting behavior of
blowups of Galton--Watson trees.

\subsection[Continuity of the \texorpdfstring{$H_\alpha$}{H-alpha} index for blowups of Galton--Watson trees]{%
Continuity of the \texorpdfstring{$\boldsymbol{H_\alpha}$}{H-alpha} index for blowups of Galton--Watson trees}

Let us start with some notation.
Let $T$ be a critical Galton--Watson tree with offspring distribution $\xi$.
For all $n$ such that the event $A_n = \{\abs{\mathcal{L}_T} = n\}$ has a
non-zero probability, let $T_n$ be a tree distributed as $T$ conditioned
on~$A_n$.
Let $T_\star$ be the local limit of $T_n$ i.e.\ the Kesten tree associated
with~$T$.
Finally, consider $(G_n)_{n\geq 1}$ the sequence of blowups of
$(T_n)_{n\geq 1}$ with respect to some fixed family of distribution~$\nu$, and
$G_*$ the blowup of $T_\star$ with respect to $\nu$.

We now have all the necessary tools to
study the limiting behavior of $(H_\alpha(G_n))_{n\geq 1}$.
The following theorem, which is a generalization of
\cite[Thm~3.7]{bienvenu$B_2$IndexGalled2024}, allows one to compute the limit
of moments of the $H_\alpha$ index of a Galton–Watson tree blowup using its
local limit.
In particular, this theorem is useful when studying the asymptotic behavior of
the $H_\alpha$ index of a tree built under the PDA model; see \Cref{secPDA}.

\begin{Theo} \label{theoprincipal}
  With the notation above, assuming
  that $\mathds{E}[\xi]=1$ and $\mathds{P}(\xi = 1) < 1$, we have:
\begin{itemize}[label = \textbullet]
\item For all $\alpha > 1$,
\begin{enumerate}[label = (\roman*)]
\item $H_\alpha(G_n) \rightarrow H_\alpha(G_\star)$ in distribution and,
\item for all $ m \geq 1, ~ \mathds{E}[H_\alpha(G_n)^m] \rightarrow \mathds{E}[H_\alpha(G_\star)^m]$, and all these moments are finite.
\end{enumerate}
\item For all $\alpha \leq 1$, if $\xi$ has a finite third moment,
\begin{enumerate}[label = (\roman*)]
\item $H_\alpha(G_n) \rightarrow H_\alpha(G_\star)$ in distribution and,
\item for all $m \geq 1$ such that $m ( 1-\alpha) < 1/2, ~\mathds{E}[H_\alpha(G_n)^m] \rightarrow \mathds{E}[H_\alpha(G_\star)^m]$, and all these moments are finite. \qedhere
\end{enumerate}
\end{itemize}
\end{Theo}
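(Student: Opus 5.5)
The approach combines convergence in distribution with uniform integrability. The main tool is a truncation argument that exploits the local convergence $G_n \to G_\star$. Fix a height $k$ and write $H_\alpha(G) = H_\alpha([G]_k) + R_k(G)$. Applying the grafting property (\cref{graftingproperty}) to every subnetwork hanging below height $k$ gives
\[
R_k(G) = \sum_{v} p_v^\alpha H_\alpha(G^{(v)}) \;\geq\; 0,
\]
where the sum runs over the vertices $v$ of $G$ at the cut level that are roots of pendant blown-up subtrees, $p_v$ is the probability that the walk passes through $v$, and $G^{(v)}$ is the subnetwork rooted at $v$. Because the blowup is local, $[G_n]_k \to [G_\star]_k$ in distribution, and $H_\alpha([G]_k)$ depends only on the finite truncation. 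Hence $H_\alpha([G_n]_k) \to H_\alpha([G_\star]_k)$ in distribution, by Skorokhod coupling, since $[G_n]_k=[G_\star]_k$ eventually almost surely. For the limit object, $H_\alpha([G_\star]_k)\uparrow H_\alpha(G_\star)$ by the monotonicity from the appendix. Convergence in distribution (i) then reduces to the uniform tail estimate
\[
\lim_{k\to\infty}\limsup_{n\to\infty} \mathds{P}\big(R_k(G_n) > \varepsilon\big) = 0,
\]
which I would obtain from Markov's inequality and a first-moment bound $\limsup_n \mathds{E}[R_k(G_n)] \to 0$.

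For (ii), the argument is the same once uniform integrability is available. It suffices to show that $\sup_n \mathds{E}[H_\alpha(G_n)^{m'}]<\infty$ for some $m'>m$, and that the moments of the limit are finite. The finiteness on the limit side follows from the recursive spine structure of $G_\star$, as in \cref{expecKesten} and \cref{momentoforderm}. Each spine block contributes a bounded star-like term, and the normal subtrees attached to it are unconditioned blowups of Galton--Watson trees.

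The case $\alpha>1$ is easy. The argument of \cref{generalBounds} extends to the boundary measure and gives $H_\alpha \le (1-2^{1-\alpha})^{-1}$ deterministically, so all moments are uniformly bounded and uniform integrability is free. The tail estimate reduces to showing that the walk spends little mass deep in the network in a controlled way. Here $R_k(G_n) \le C\sum_v p_v^\alpha$, and since $\alpha>1$ we have $\sum_v p_v^\alpha\le \max_v p_v^{\alpha-1}$. This maximum is small for large $k$ because each nontrivial split (recall $\mathds{P}(\xi=1)<1$) reduces the mass by a factor of at least $1/2$. In the conditioned tree, the number of nontrivial splits along every path of length $k$ goes to infinity with $k$, uniformly in probability in $n$, again by local convergence.

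The main obstacle is the case $\alpha\le1$, where $H_\alpha$ is unbounded. There I would bound $\mathds{E}[R_k(G_n)^{m'}]$ by conditioning on $[T_n]_k$. Given this truncation, the subtrees below the cut are independent Galton--Watson trees conditioned on their leaf counts $n_v$, with $\sum_v n_v \le n$. This requires a bound of the form $\mathds{E}[H_\alpha(G_j)^{m'}] \le C j^{m'(1-\alpha)}$; the crude bound $H_\alpha\le (j^{1-\alpha}-1)/(2^{1-\alpha}-1)$ from \cref{generalBounds} already gives this for $\alpha<1$, and gives $\log j$ for $\alpha=1$. Combined with $p_v$, this becomes a question about how many leaves the spine subtree retains. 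Under a finite third moment, local limit estimates give $\mathds{P}(A_j)\sim c\,j^{-3/2}$, and the spine subtree at height $k$ has $n_v$ comparable to $n$ with weight $p_v^\alpha$ that decays in $k$. The off-spine subtrees have leaf counts whose law has tail $j^{-1/2}$. The moments $\mathds{E}[n_v^{m'(1-\alpha)}]$ of these off-spine sizes stay bounded exactly when $m'(1-\alpha)<1/2$, which explains the hypothesis. I expect the delicate point to be the spine contribution, which carries $n^{m'(1-\alpha)}$. It should be killed by the geometric decay of $\mathds{E}[p_v^{\alpha m'}]$ along the spine together with $\mathds{P}(n_v\text{ large})$ estimates; I would first try to prove this via a ratio-limit estimate $\mathds{P}(A_{n-j})/\mathds{P}(A_n)\to1$ uniformly for $j=o(n)$.
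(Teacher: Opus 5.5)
Your treatment of $\alpha>1$ works in substance. One claim in it is false for blowups, though: a nontrivial split need not divide the mass by at least $2$, because a block $\Gamma_v$ can send mass arbitrarily close to $1$ to one of its leaves. What you actually need is that $\sum_v p_v^\alpha$ over generation-$k$ vertices is a bounded function of the truncation, is nonincreasing in $k$, and tends to $0$ in $G_\star$ because the walk is absorbed in a leaf almost surely.

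The genuine gap is the case $\alpha\le 1$. For a \emph{fixed} level $k$, the remainder $R_k(G_n)$ contains the subnetwork hanging from the generation-$k$ vertex that carries $n-O_{\mathds{P}}(1)$ leaves. The only a priori control on its $H_\alpha$ index is the crude bound $Cn^{1-\alpha}$ (or $C\log n$ when $\alpha=1$). Any better bound uniform in $n$ is exactly the tightness or moment bound you are trying to prove, so invoking it would be circular. With the crude bound, the weight $\mathds{E}[p_{v_k}^{\alpha m'}]\approx\zeta^k$ is a fixed positive constant once $k$ is fixed, so it cannot beat $n^{m'(1-\alpha)}$. Your bound therefore has $\limsup_n=+\infty$ for every $k$. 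The same obstruction blocks the first-moment/Markov route to (i).

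Estimates on $\mathds{P}(n_v\text{ large})$ or a ratio limit $\mathds{P}(A_{n-j})/\mathds{P}(A_n)\to 1$ cannot help, since $n_v$ is of order $n$ with probability tending to one. Moreover, routing (i) through first moments would require $\sup_n\mathds{E}[H_\alpha(G_n)]<\infty$ for every $\alpha\le 1$. The theorem asserts nothing about moments when $\alpha\le 1/2$, yet claims (i) there.

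The missing idea is to let the truncation depth grow with $n$ and to compare with the limit quantitatively. The paper truncates along the path to a uniformly chosen leaf at its $k_n$-th vertex, keeping all off-path subtrees whole. Then $G_n$ is $G_n^{k_n}$ with a \emph{single} network, with at most $n$ leaves, grafted at $v_{n,k_n}$. It then uses $d_{\mathrm{TV}}(T_n^{k_n},T_\star^{k_n})=\Theta(k_n/\sqrt n)$ for $k_n=o(\sqrt n)$, which is where the third moment enters.

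On the coupling event, the grafting property gives $H_\alpha(G_\star^{k_n})\le H_\alpha(G_n)\le H_\alpha(G_\star^{k_n})+p_{(G_\star,v_{k_n})}^\alpha\, n^{1-\alpha}/(2^{1-\alpha}-1)$. Since $p_{(G_\star,v_k)}=O(\mathds{P}(\xi\neq 0)^k)$ almost surely, the error term vanishes as soon as $k_n\gg\log n$. A Skorokhod coupling plus a Borel--Cantelli/subsequence argument then yields (i) for all $\alpha\le1$ without any moment bound.

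For (ii), only the crude bound is available off the coupling event. It contributes $d_{\mathrm{TV}}\cdot n^{m(1-\alpha)}=O(k_n\,n^{m(1-\alpha)-1/2})$, which vanishes for $k_n=n^{\varepsilon/2}$ exactly when $m(1-\alpha)<1/2$. This is a second source of the hypothesis, alongside the finiteness of $\mathds{E}[|\mathcal{L}_T|^{m(1-\alpha)}]$ for the normal subtrees of $G_\star$, which you correctly identified.

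Two smaller points. The height truncation $[G]_k$ of a blowup cuts through blocks and is not a stub, so you must cut at generation-$k$ vertices of the underlying tree. And $H_\alpha(\Gamma_{v_k})$ is not bounded but only $O(\hat\xi_k^{1-\alpha})$, so the spine blocks of $G_\star$ require a moment of $\hat\xi$.
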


We prove this result in \Cref{appBlowups}. Note that in the $B_2$ case
(when $\alpha=1$ in our setting) this result was already proven
in~\cite{bienvenu$B_2$IndexGalled2024}; the condition
$m(1-\alpha)<1/2$ is then always satisfied, so all moments converge.

\section{Simulations: statistical power of the \texorpdfstring{$\boldsymbol{H_\alpha}$}{H-alpha} index}

To conclude this article, let us return briefly to the biological motivations.
One of the main uses of balance indices in practice is
to compare random tree models with one another or against real data;
see e.g.\
\cite{aldousStochasticModelsDescriptive2001,
agapowPowerEightTree2002, blumWhichRandomProcesses2006, kirkpatrickSearchingEvolutionaryPatterns1993,
mooersInferringEvolutionaryProcess1997}.
In this context, a ``good'' balance index is a balance index that discriminates
effectively between trees generated by different models.

Recently, Kersting, Wicke and Fischer published an overview providing
a systematic comparison of the statistical power of a multitude of
balance indices \cite{kerstingTreeBalancePhylogenetic2025}. Here, the
term \emph{power} refers to the ability of an index to distinguish
alternative models from a null model. In practice, it results from the following test: given a balance index $I$, build an acceptance region at a
chosen confidence level for $I$ under the null model (such regions are typically built numerically using simulations). Then,
compute $I$ for a large number of trees generated independently under
an alternative model. For each such tree $T$, if $I(T)$ falls outside
of the acceptance region, we say that $T$ is rejected. The power of
$I$ is the proportion of trees rejected by the test. The higher the
power, the better the balance index is at distinguishing the
alternative model from the null model. Of course, this statistical
power is not an intrinsic property of a balance index, as it depends
on the specific choice of both the null and alternative models, on the
confidence level and on the chosen construction of the
acceptance region; nevertheless, it is possible to compare these
powers for different models to look for general trends.

Our goal here is not to provide an exhaustive comparison between
the $H_\alpha$ indices and all existing tree balance indices, as
this would constitute a research project of its own.
Rather, we compile a few illustrative results about this family and describe
how it fits into the current landscape of tree balance indices (recall, however,
that $H_\alpha$ is not merely a \emph{tree} balance index).
To do so, we use the R packages \texttt{poweRbal} and \texttt{treebalance}
\cite{kerstingPoweRbal2026, fischerTreebalance2021}.
In particular, this means that the acceptance region used to compute the following results is the one built in \cite{kerstingTreeBalancePhylogenetic2025} for the confidence level of 5\%.

In what follows, we compare the $H_\alpha$ indices to the Sackin index, the
$\hat{s}$-Shape index \cite{blumWhichRandomProcesses2006}, and the number
of cherries \cite{mckenzieDistributionsCherriesTwo2000}.
This choice is somewhat arbitrary. It is, however, motivated by
the fact that, in the systematic study
\cite{kerstingTreeBalancePhylogenetic2025}, the $\hat{s}$-Shape index was
usually among the best-performing indices -- if not the best -- often
followed by the Sackin index; by contrast, the number of cherries was
frequently one of the worst-performing indices.
We thus chose these three indices to see how the $H_\alpha$ family would
compare to one of the ``best'' indices (the $\hat{s}$-shape index);
to a well-established index with good performance (the Sackin index); and to
one of the ``worst'' indices (the number of cherries).
Note however that the terms ``best'' and ``worst'' should be interpreted with
caution, as the conclusion of \cite{kerstingTreeBalancePhylogenetic2025} is
precisely that no index consistently outperforms all others.
    
Regarding the $H_\alpha$ indices, we will limit ourselves to five values
of $\alpha$: $\{0.01, 0.25, 0.5, 1, 2 \}$.
This is motivated by the following two points.
\begin{enumerate}
    \item As we saw throughout this paper, the behavior of
      $H_\alpha$ is very different for $\alpha < 1$ and $\alpha > 1$.

      When $\alpha$ is lower than 1, the $H_\alpha$ index is not bounded
      unlike when $\alpha$ is greater than 1 and, as we saw throughout this
      paper, this dichotomy has numerous consequences.
      Thus, it seems wise to consider parameters both lower and greater than
      one.

      For the one greater than one, we might as well look at $\alpha= 2$ to
      provide results about the Gini-Simpson balance index we defined.  We will
      also see that $H_\alpha$ indices for $\alpha$ lower than one seem to
      perform better on average (see \Cref{CompAlpha,OptimalAlphaNotConstant})
      so we include the values $0.25$ and $0.5$ in our study.
      We will also provide results for $\alpha = 1$ as it corresponds to the
      $B_2$ index.
    \item As we saw in \Cref{basicProperties}, when $\alpha$ is close to zero,
      the variations of the $H_\alpha$ index of a binary tree can be described
      using only the number of leaves and the Sackin index.
      Thus, when the number of leaves is fixed (hypothesis that we are destined
      to make regardless) the Sackin index solely explains those variations.
      This observation is confirmed by simulations : the power of the
      $H_\alpha$ index when $\alpha$ is close to zero is always very close to
      the power of the Sackin index regardless of the considered null and
      alternative models (see for instance the results obtained for $H_{0.01}$
      and Sackin in \Cref{YulevsMundo} when the null model is the Yule model). 
\end{enumerate}

In the literature (see e.g.\ \cite{agapowPowerEightTree2002, blumWhichRandomProcesses2006,
  kirkpatrickSearchingEvolutionaryPatterns1993}), the Yule model is
often chosen as null model and the first alternative model to be
considered is the PDA model. For this classic comparison, the
$H_\alpha$ indices are not as good as the best index in this situation
: the $\hat{s}$-Shape statistic nor even as good as the Sackin index
when we consider trees with 30 leaves (see \Cref{YulevsMundo}).
However, as soon as the number of leaves increases $H_{0.25}$ and
$H_{0.5}$ perform nearly optimally in the sense that they come close
to Sackin and $\hat{s}$-Shape (see \Cref{FunctionOfLeaves}). When
comparing the Yule model to some of the various alternative models
presented in \cite{kerstingTreeBalancePhylogenetic2025}, we observe
that the $H_\alpha$ performances for values of $\alpha$ lower than one
half are on average less good than the $\hat{s}$-Shape statistic
performances and as good as the ones of the Sackin index. Once again,
this affirmation is to be nuanced since the results are largely
influenced by the number of leaves of the trees we consider. The
alternative models we consider are again taken from
\cite{kerstingTreeBalancePhylogenetic2025}; the reader is therefore
referred to that article for a description of these models. Our
selection includes the most widespread models, such as the
\emph{Yule}, \emph{PDA} and the \emph{$\beta$-splitting with
  $\beta = -1$} models as well as the following less
popular models which nevertheless provide useful points of comparison:
\emph{Equiprobable-types-models}; \emph{alternative
  birth-death} with parameters $\lambda_0 = 1$ and
$\mu_0 = 0.5$, \emph{Simple Brownian}
with parameter $\sigma = 1$; \emph{Linear Brownian}
with parameters $\sigma_x = 1$ and
$\sigma_\lambda = 0.5$; \emph{Direct-children-only}
with parameter $\zeta = 1$ and \emph{Inherited
  fertility} with parameter $\zeta = 1$.

\begin{figure}[H]
    \centering
    \includegraphics[width=0.9\linewidth]{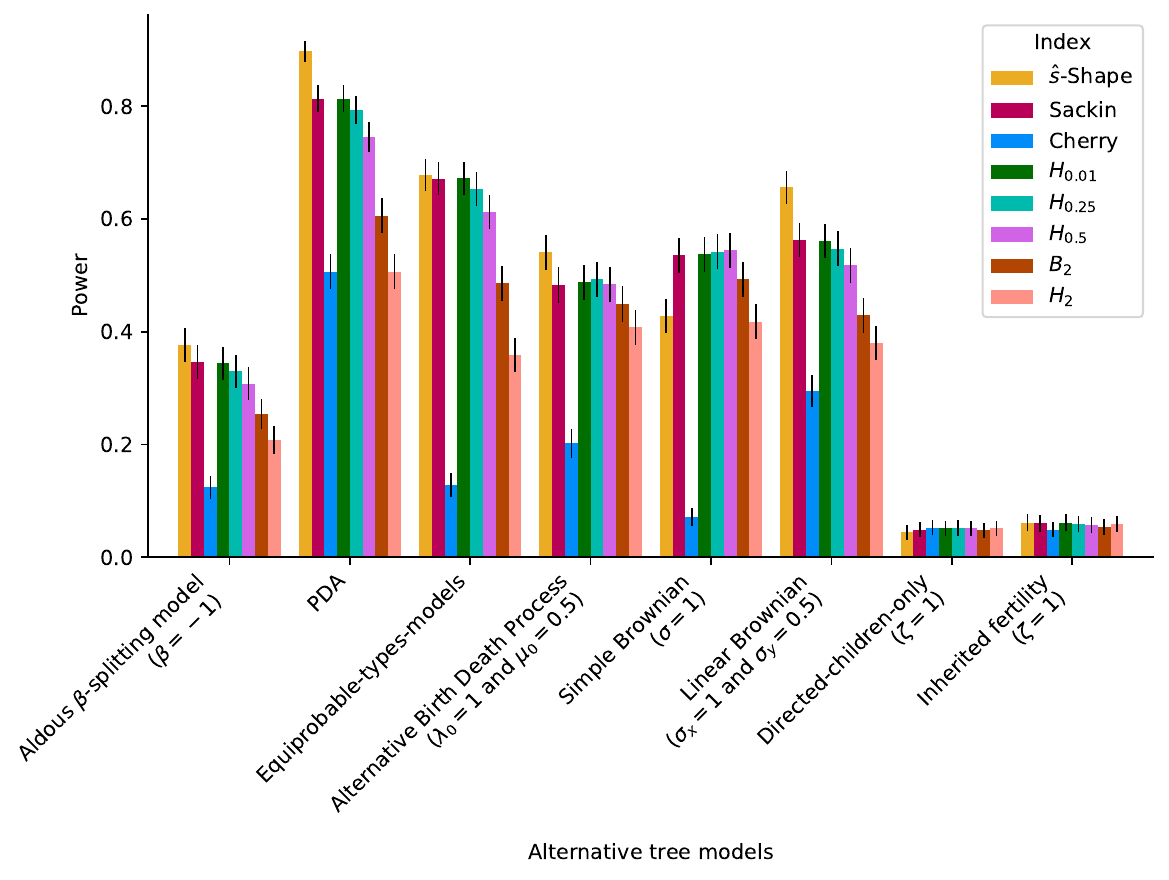}
    \caption{Powers of the selected balance indices, when comparing
      each alternative model to the Yule null model. The number of
      leaves of each simulated tree is 30 for these simulations.}
    \label{YulevsMundo}
\end{figure}

However, if one changes the model used in the null hypothesis of the test, the
order induced by the indices performances can be totally different.
The $H_\alpha$ indices for $\alpha$ lower than one will still be comparable to
the Sackin index in terms of power but, when taking the \emph{Simple Brownian} model with parameter 1, the $\hat{s}$-Shape statistic
performs less well than the $H_\alpha$ indices when considering for instance
the alternative models \emph{Yule}, \emph{Direct-children-only} with
parameter 1 or \emph{Inherited fertility} with parameter 1 (see
\Cref{SBSvsMundo}).

\begin{figure}[H]
    \centering
    \includegraphics[width=0.9\linewidth]{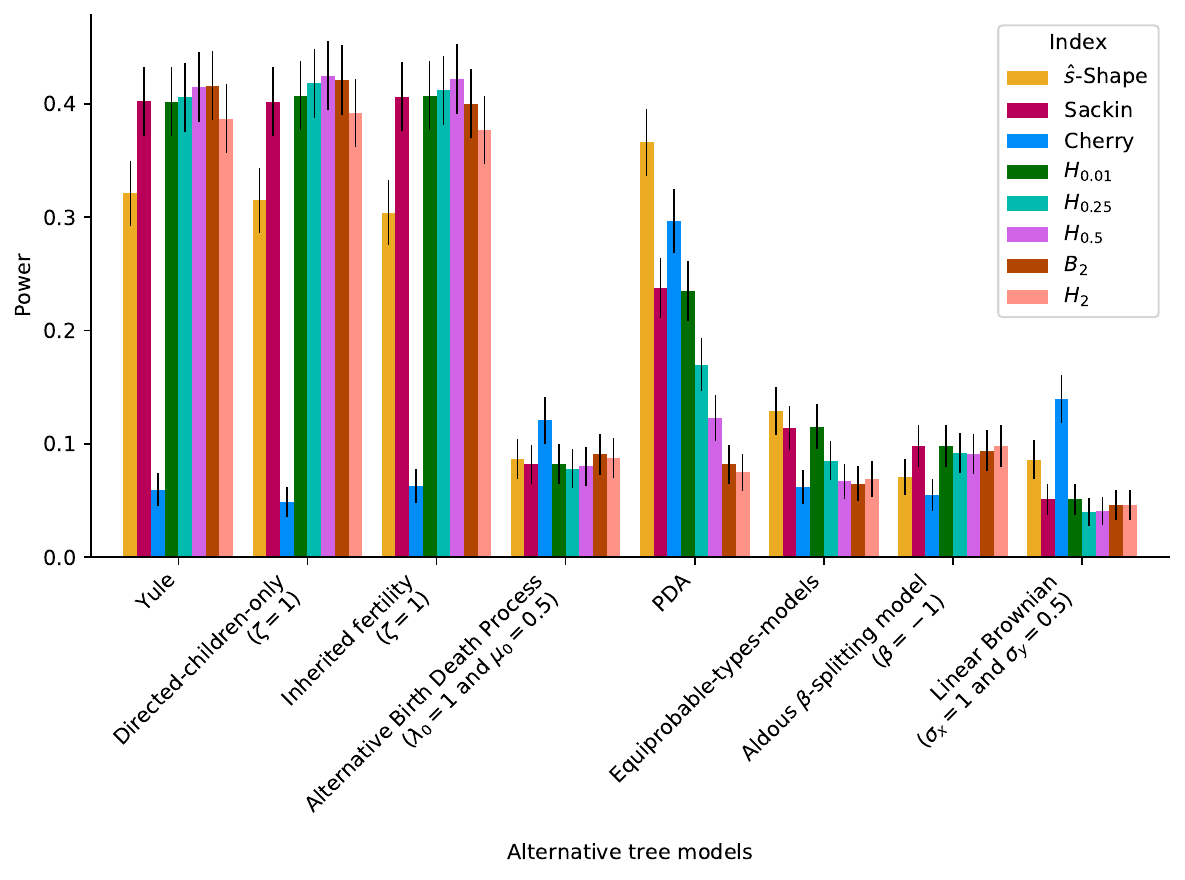}
    \caption{Powers of the selected balance indices, when comparing
      each alternative model to the symmetric simple Brownian (with
      parameter 1) null model. The number of leaves of each simulated
      tree is 30 for these simulations.}
    \label{SBSvsMundo}
\end{figure} 

To compare the power of these different indices as a function of the number of
leaves, we plotted the corresponding curves in a few situations (see
\Cref{FunctionOfLeaves}).
Since the $H_{0.01}$ power is almost identical to the
one of the Sackin index and the $H_{0.5}$ power is often close to the power of
$H_{0.25}$ we chose not to present them to emphasize the following results.
First, we see that the more leaves are considered, the better is the power of
any index, which is consistent with the observations in
\cite{kerstingTreeBalancePhylogenetic2025}.
Moreover, the speed at which this improvement occurs depends on the considered
models.
Finally it seems important to notice that some curves can intersect meaning
that the order induced by the indices power can depend on the number of leaves
(see for instance the graphs of the Cherry index and the $B_2$ index in
\Cref{YulevsPDAnbLeaves}).

\begin{figure}[H]
    \centering
    \begin{subfigure}{0.48\linewidth}
        \centering
        \includegraphics[width=\linewidth]{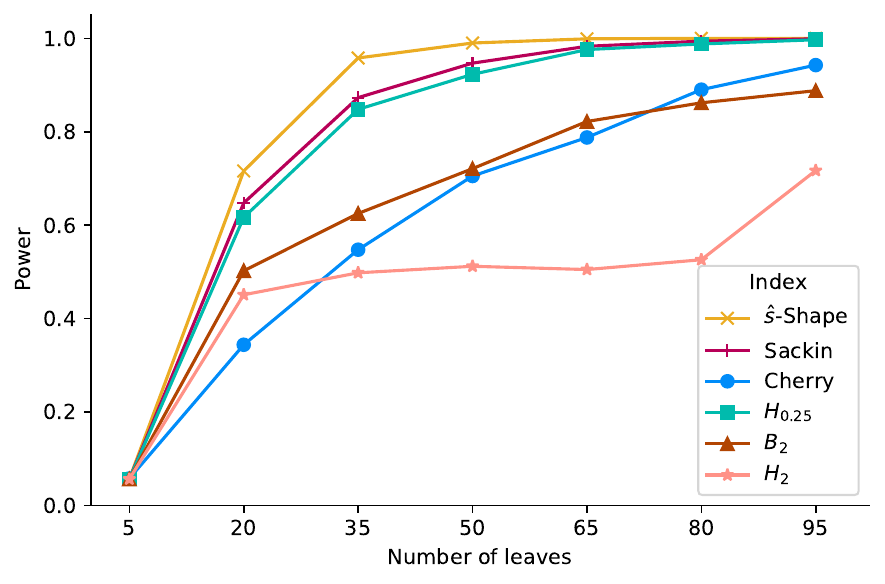}
        \caption{The null model is Yule and the alternative is PDA.}
        \label{YulevsPDAnbLeaves}
        \end{subfigure} \hfill
    \begin{subfigure}{0.48\linewidth}
    \centering
        \includegraphics[width=\linewidth]{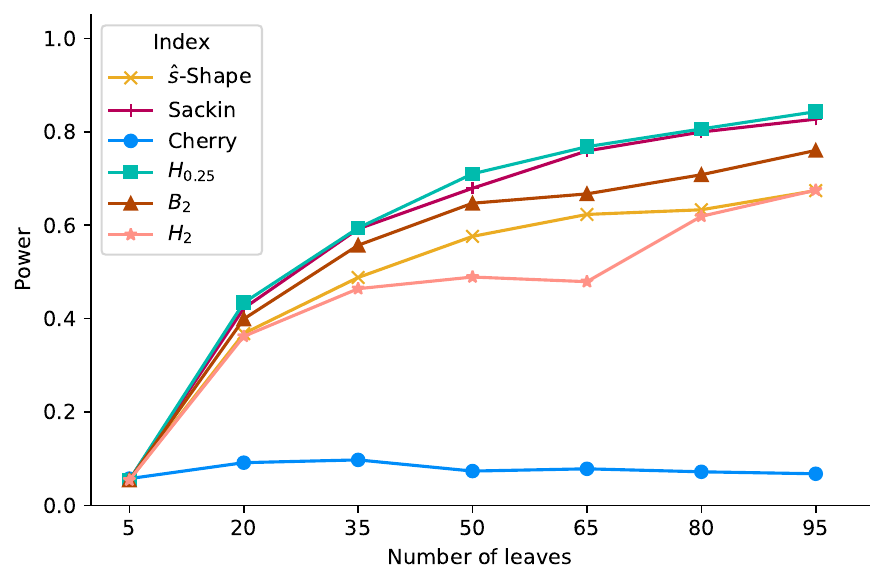}
        \caption{The null model is Yule and the alternative is Simple Brownian (with parameter $1$).}
    \end{subfigure}
    \caption{Comparison between the power of different indices as a function of the number of leaves.}
    \label{FunctionOfLeaves}
\end{figure}

To end this study, let us take an interest in the power of the $H_\alpha$
indices as a function of $\alpha$.
In many situations, the corresponding curve is non-increasing in $\alpha$ (see
\Cref{decreaseInAlpha}) meaning that in such cases the power of the $H_\alpha$
indices is always lower than the power of the Sackin index (but, as we saw,
comes close for $\alpha$ small enough).
For other models however, we observe a slight improvement up to some $\alpha$
between 0 and 1 before the decrease. In such situations, the maximum is represented by an orange dot (see \Cref{increaseInAlpha,OptimalAlphaNotConstant}).
For now, we cannot predict which pair of models falls into one situation or the
other nor how to estimate the value of the optimal $\alpha$ parameter.
Moreover, this optimal value can also depend on the number of leaves we
consider (see \Cref{OptimalAlphaNotConstant}).
Finally, to be exhaustive we also observe on rare occasions (see e.g
\Cref{OptimalAlpha2}) that the power curve can be non-decreasing in $\alpha$.
Note that in this type of situation, the power of any index is relatively low but the $H_\alpha$ indices seem to perform best for larger $\alpha$ (see for instance $H_2$ in \Cref{SBSvsMundo}).

\begin{figure}[H]
    \centering
     \begin{subfigure}{0.48\linewidth}
        \centering
        \includegraphics[width = \linewidth]{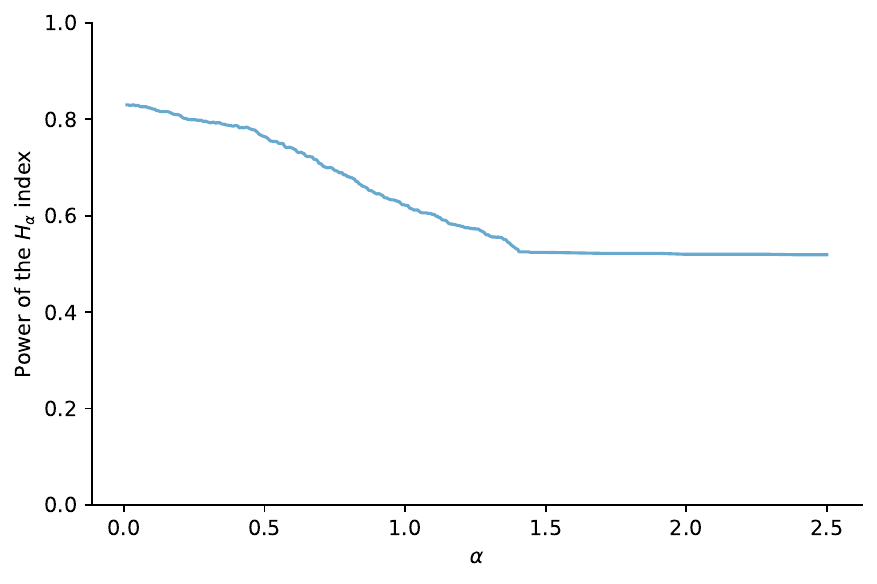}
        \caption{The null model is Yule and the alternative is PDA. The number of leaves of each simulated tree is 30.}
        \label{decreaseInAlpha}
    \end{subfigure} \hfill
    \begin{subfigure}{0.48\linewidth}
        \centering
        \includegraphics[width=\linewidth]{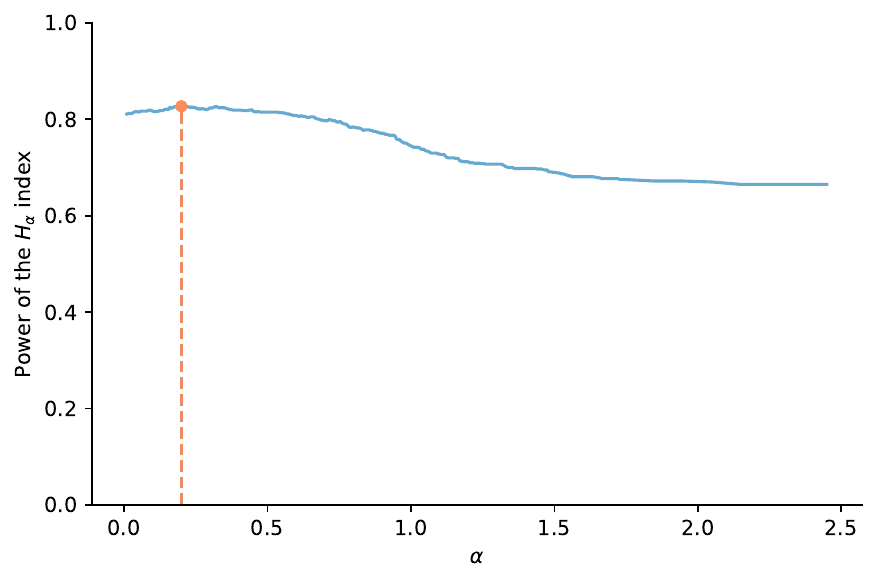}
        \caption{The null model is Yule and the alternative is Simple Brownian (with parameter 1). The number of leaves of each simulated tree is 100.}
        \label{increaseInAlpha}
    \end{subfigure}
    \caption{A case where the power decreases in $\alpha$ and a case where it first increases.}
    \label{CompAlpha}
\end{figure} 
\begin{figure}[H]
    \centering
    \begin{subfigure}{0.48\linewidth}
    \centering
        \includegraphics[width=\linewidth]{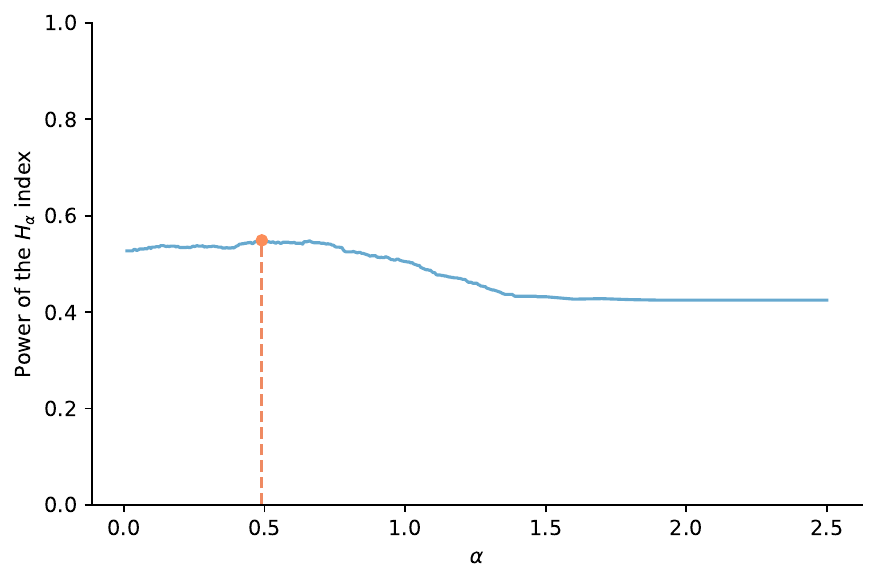}
        \caption{The null model is Inherited fertility and the alternative is Simple Brownian (both parameters equal~1). The number of leaves of each simulated tree is 30.}
    \end{subfigure} \hfill
    \begin{subfigure}{0.48\linewidth}
        \centering
        \includegraphics[width=\linewidth]{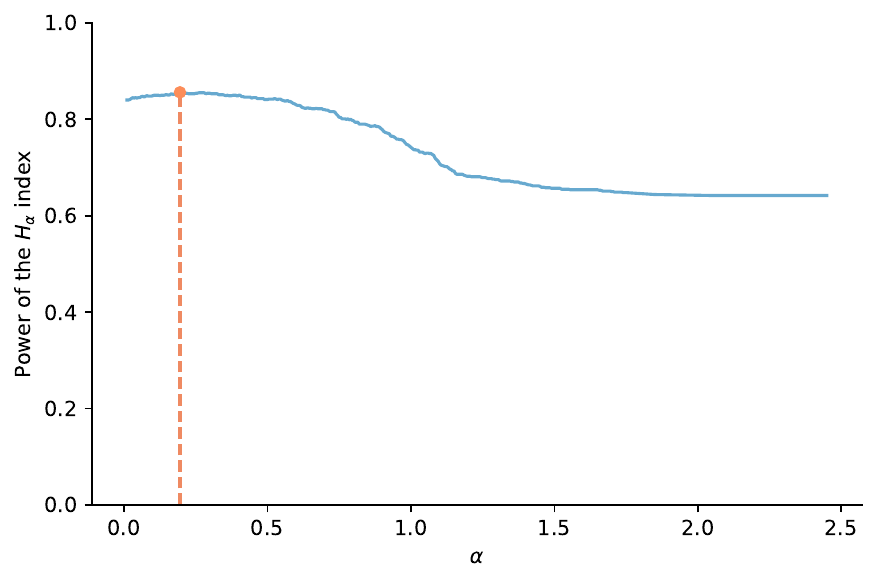}
        \caption{Same models but the simulated trees now have 100 leaves.}
    \end{subfigure}
    \caption{Some cases where the optimal $\alpha$ is not constant for the number of leaves.}
    \label{OptimalAlphaNotConstant}
\end{figure}
\begin{figure}[H]
    \centering
    \includegraphics[width=0.5\linewidth]{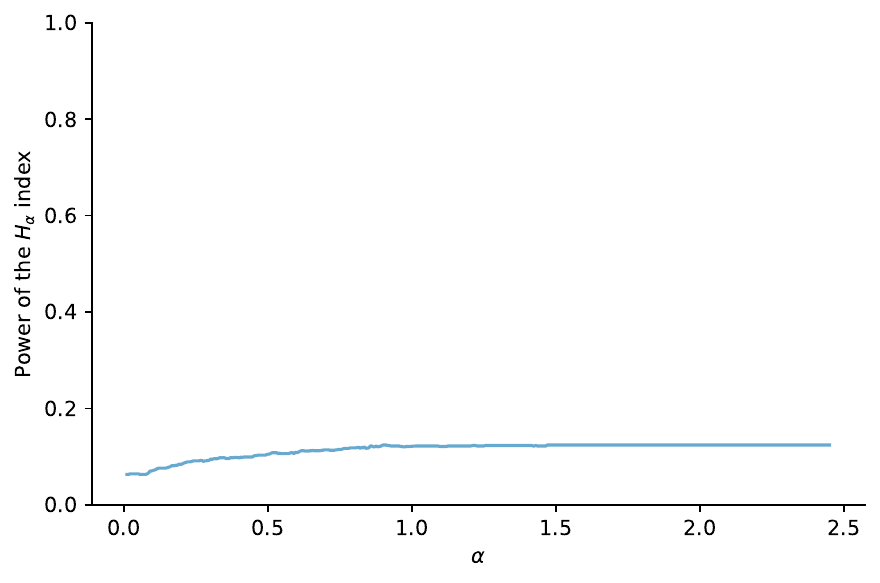}
    \caption{The null model is $\beta$-splitting with $\beta =-1$ and the alternative is Simple Brownian (with parameter 1). The number of leaves of each simulated tree is 30. The power increases with $\alpha$.}
    \label{OptimalAlpha2}
\end{figure}

To summarize, the results obtained here are in the continuity of the
conclusions of Kersting et al.\ in \cite{kerstingTreeBalancePhylogenetic2025}:
it seems that there is no ''best'' index generally speaking and that
performance depends on the choice of the null and alternative models.
Even the Cherry index, originally chosen for its rather poor performances, can
outperform every other index in some situations (e.g.\ with null model \emph{Simple
Brownian} and alternative models \emph{Linear Brownian} or \emph{Alternative
birth-death}).
Nevertheless, some indices perform better than other on average and it appears
that choosing $\alpha$ around 0.25 often produces near optimal results
regarding the $H_\alpha$ indices. With this choice of parameter, the power of
$H_{\alpha}$ is on average close to the power of the Sackin index  but usually
lower than the one of the $\hat{s}$-Shape statistic; although once again, in
some situations $H_{0.25}$ can outperform $\hat{s}$-Shape (for instance in the
comparison \emph{Yule vs Simple Brownian}).

Note that since we adopted the methodology presented in
\cite{kerstingTreeBalancePhylogenetic2025} we compared toy models between them
and never with real data.
However, the many differences among available data
sets seem to affect quite largely and for now in an unpredictable manner, the
indices' power (see \cite{bienvenuRevisitingShaoSokals2021}).
For instance the $B_2$ index seems to be one of the best index in this type of
situation (see e.g.\ \cite{khuranaLimitsConstantrateBirth2024,
kerstingTreeBalancePhylogenetic2025}).
    
To go one step further and differentiate at best trees produced by different
models, one could consider pairs of indices.
In this scenario, the power of a pair could be obtained by looking at $2D$
acceptance regions as in
\cite{bienvenuRevisitingShaoSokals2021}.
Thus, it could be interesting to see by carrying out a systematic study as in
\cite{kerstingTreeBalancePhylogenetic2025} which pair of indices presents the
best power on average, but this goes beyond our scope.

\paragraph{Acknowledgments}
We acknowledge funding through the ANR project GARP (ANR-24-CE40-7154).

\newpage

\phantomsection{}
\addcontentsline{toc}{section}{References}
\printbibliography[title={REFERENCES}]

\newpage

\appendix
\appendixpage
\addappheadtotoc
\crefalias{section}{appendix}
\crefalias{subsection}{appendix}

\section{The \texorpdfstring{$\boldsymbol{H_\alpha}$}{H-alpha} index of infinite phylogenetic networks}
\label{AppInfiniteNetworks}

The goal of this appendix is to extend the definition of the $H_\alpha$
index to infinite networks.
This extension is necessary for the study of
Galton--Watson trees and their blowups using local limits.
It involves two steps: first, defining a suitable
notion of boundary of a phylogenetic network; and then
showing that the simple random walk induces a probability
distribution on this boundary.
The $H_\alpha$ index of a (possibly infinite) network can then be defined as
the structural $\alpha$-entropy of this probability distribution.
In order to make this article self-contained (and because many sources
only define the structural $\alpha$-entropy for probability distributions over
a finite set), we start by recalling the definition of the structural
$\alpha$-entropy and the properties that are relevant to this article.

\subsection[Properties of the structural \texorpdfstring{$\alpha$}{alpha}-entropy]{%
Properties of the structural \texorpdfstring{$\boldsymbol{\alpha}$}{alpha}-entropy} \label{secStructuralEntropy}

The \textit{structural $\alpha$-entropy} was first
introduced in the context of information theory by Havrda and Charvát
\cite{havrdaQuantificationMethodClassification1967a}.
For a discrete probability distribution $\{p_i\}$
(meaning that $p_i \in [0; 1]$ and $\sum_i p_i = 1$), it is defined as  
\begin{equation}
    H_\alpha(\{p_i\}) =
    \frac{1}{1-2^{1-\alpha}} \left( 1 - \sum_i p_i^\alpha \right).
    \label{originalentropy}
\end{equation}
To show that this definition admits a natural extension to arbitrary
probability measures, we need to introduce the notion of $\alpha$-entropy
with respect to a countable partition.

\begin{Defi} \label{defentropy}
Let $(\Omega, \mathcal{A}, \mu)$ be a probability space, and let $\pi$ be a
countable measurable partition of $\Omega$.
For any positive real number $\alpha \neq 1$, the \emph{$\alpha$-entropy of
$\mu$ with respect to $\pi$} is defined as
\begin{equation*}
H_\alpha(\mu ~|~ \pi) = \frac{1}{1-2^{1-\alpha}} 
  \left(1 - \sum_{A \in \pi} \mu(A)^{\alpha}\right).
\end{equation*}
The function $\alpha \mapsto H_\alpha(\mu ~|~ \pi)$ can be
continuously extended at $\alpha = 1$, where it corresponds to the
Shannon entropy of $\mu$ with respect to $\pi$, and so we extend it by
continuity and set
\begin{equation*}
H_1(\mu~|~\pi) = -\sum_{A \in \pi} \mu(A) \log_2 \mu(A). \qedhere
\end{equation*}
\end{Defi}

Definition~\ref{defentropy} has a natural extension
to arbitrary measurable partitions.
Recall that a partition $\pi$ is said to be \emph{finer} than a
partition $\pi'$ if for all $B \in \pi$ there exists $B' \in \pi'$ such that $B
\subset B'$.
We write $\pi \preccurlyeq \pi'$ to indicate that the partition $\pi$
is finer than the partition $\pi'$.

\begin{Defi} \label{defentropy02}
Let $(\Omega, \mathcal{A}, \mu)$ be a probability space, and let $\pi$ be a
measurable partition of $\Omega$. The $\alpha$-entropy of $\mu$ with respect
to $\pi$ is
\begin{equation*}
  H_\alpha(\mu~|~ \pi) = 
  \sup\{H_\alpha(\mu~|~ \pi') : \pi'
  \text{ countable measurable partition of } 
  \Omega \text{ s.t. } \pi \preccurlyeq \pi' \}.
\end{equation*}
The $\alpha$-entropy of the probability distribution $\mu$ is then defined as 
\begin{equation*}
  H_\alpha(\mu) = 
  \sup \{ H_\alpha (\mu~|~ \pi) : \pi 
  \text{ measurable partition of } \Omega \}. \qedhere
\end{equation*}
\end{Defi}

Definition~\ref{defentropy02} is indeed a legitimate extension of
the structural $\alpha$-entropy of a discrete probability distribution, because 
(1) if the partition is countable, this definition is equivalent to
\Cref{defentropy}; and (2) in view of
Proposition~\ref{monotonicity} below, if the support of the probability distribution
is countable, it is equivalent to Eq.~\eqref{originalentropy}.

\begin{Prop}[Monotonicity of the structural $\alpha$-entropy] \label{monotonicity}
Let $\pi$ and $\pi'$ be two countable measurable partitions. 
If $\pi$ is finer than $\pi'$, then
$H_\alpha(\mu~|~\pi)~\geqslant~H_\alpha(\mu~|~\pi')$.
\end{Prop}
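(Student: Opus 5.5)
The inequality reduces to a statement about how each cell of $\pi'$ splits into cells of $\pi$. Since $\pi$ is finer than $\pi'$, every $B \in \pi$ is contained in some $B' \in \pi'$. Countable partitions consist of disjoint sets, so each $B'\in\pi'$ is the disjoint union of the countably many $B \in \pi$ contained in it (up to the sets actually chosen; every point of $B'$ lies in some cell of $\pi$, and that cell must lie in $B'$). By countable additivity,
\[
  \mu(B') = \sum_{B \in \pi,\, B\subset B'} \mu(B).
\]
Summing over $B'$ then gives
\[
  \sum_{B\in\pi}\mu(B)^\alpha = \sum_{B'\in\pi'} \sum_{B\subset B'} \mu(B)^\alpha,
\]
which lets us compare $\mu(B')^\alpha$ with the inner sum one cell at a time.

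For $\alpha>1$, we use $x^\alpha \leq x$ on $[0,1]$ together with superadditivity of $x\mapsto x^\alpha$ on nonnegative reals, in its countable form. Writing $s=\sum_i x_i \leq 1$, each $x_i^\alpha = x_i\, x_i^{\alpha-1} \leq x_i s^{\alpha-1}$, so $\sum_i x_i^\alpha \leq s^\alpha$. Hence $\sum_{B\in\pi}\mu(B)^\alpha \leq \sum_{B'}\mu(B')^\alpha$. Since $1-2^{1-\alpha}>0$, it follows that $H_\alpha(\mu\mid\pi)\geq H_\alpha(\mu\mid\pi')$.

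For $0<\alpha<1$, the same computation with the inequality reversed shows $\sum_i x_i^\alpha \geq s^\alpha$, because $x_i^{\alpha-1}\geq s^{\alpha-1}$ whenever $x_i>0$. Here both the normalizing constant $1-2^{1-\alpha}$ and $1-\sum\mu^\alpha$ change sign, so the inequality goes in the correct direction again. One point needs care: the sums may be infinite when $\alpha<1$. In that case $H_\alpha(\mu\mid\pi)=+\infty$, which is consistent with the definition as written, and the inequality holds trivially or by the same termwise comparison in $[0,\infty]$.

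For $\alpha=1$, we use $-x\log x$ and the countable subadditivity-type inequality $-\sum x_i\log_2 x_i \geq -s\log_2 s$. This follows from $\log x_i \leq \log s$, so each term satisfies $-x_i\log x_i\geq -x_i\log s$. Alternatively, one can pass to the limit $\alpha\to1$ in the previous two cases.

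The only real subtlety is bookkeeping: sets of measure zero, ensuring that the inner decomposition is exactly a partition of $B'$, and handling infinite values for $\alpha\leq1$. The core inequality is the elementary power-sum comparison above.
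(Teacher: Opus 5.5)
Your proof is correct and follows the paper's route. You decompose each block $B'\in\pi'$ into the blocks of $\pi$ it contains, compare $\mu(B')^\alpha$ with $\sum_{B\subset B'}\mu(B)^\alpha$ via the super/subadditivity of $x\mapsto x^\alpha$ (which the paper phrases as Jensen's inequality), and conclude from the sign of $1-2^{1-\alpha}$. Your explicit handling of $\alpha=1$ and of possibly infinite sums when $\alpha<1$ fills in details the paper leaves implicit.
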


\begin{proof}
Decompose each block of $\pi'$ into blocks of $\pi$; the result then
follows from Jensen's inequality applied to $x \mapsto x^\alpha$, combined
with the sign of the normalizing factor $1 - 2^{1-\alpha}$.
\end{proof}

Although Definition~\ref{defentropy02} presents the natural way to extend
the definition of the structural $\alpha$-entropy to arbitrary probability
distributions, in practice this definition takes a much simpler form, as the
next proposition shows.

\begin{Prop} \label{atomicDecomposition}
Let $(\Omega, \mathcal{A}, \mu)$ be a probability space. Then,
\begin{enumerate}[label=(\roman*)]
\item If $\alpha > 1$, writing $\Omega_\infty$ for the set of atoms of $\mu$,
  we have
\begin{equation*}
  H_\alpha(\mu) = 
  \frac{1}{1-2^{1-\alpha}} \left( 1 - \sum_{\omega \in \Omega_\infty}
  \mu(\omega)^\alpha \right) .
\end{equation*}
\item If $0 < \alpha \leqslant 1$ and $\mu$ has a non-atomic part
  of positive mass, then $H_\alpha(\mu) = + \infty$. \qedhere
\end{enumerate}
\end{Prop}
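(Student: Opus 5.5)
We prove the two points separately, working directly from Definition~\ref{defentropy02} and using Proposition~\ref{monotonicity} to compare countable partitions. Write $\Omega_\infty = \{\omega_1, \omega_2, \ldots\}$ for the (countable) set of atoms of $\mu$. Set $a = \sum_{\omega\in\Omega_\infty}\mu(\omega)$, so that $1-a$ is the mass of the non-atomic part $\mu_c = \mu - \sum_i \mu(\omega_i)\delta_{\omega_i}$. We first reduce to countable partitions. For any measurable partition $\pi$, the quantity $H_\alpha(\mu\,|\,\pi)$ is a supremum over countable partitions coarser than $\pi$. Hence $H_\alpha(\mu)$ equals the supremum of $H_\alpha(\mu\,|\,\pi')$ over all countable measurable partitions $\pi'$. (The trivial partition $\pi=\{\{\omega\}\}$ is not always measurable, but every countable $\pi'$ is coarser than some measurable partition, for instance $\pi'$ itself.)

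\textbf{Point (i), $\alpha>1$.} Here $1-2^{1-\alpha}>0$, so we must compute $\inf_{\pi'} \sum_{A\in\pi'}\mu(A)^\alpha$ over countable measurable partitions and show it equals $\sum_i \mu(\omega_i)^\alpha$.

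For the lower bound, fix a countable $\pi'$. Each block $A$ satisfies $\mu(A) \geq \sum_{\omega_i\in A}\mu(\omega_i)$. Superadditivity of $x\mapsto x^\alpha$ on $\mathbb{R}_+$ (valid for $\alpha\geq 1$) then gives $\mu(A)^\alpha \geq \sum_{\omega_i\in A}\mu(\omega_i)^\alpha$. Summing over blocks yields $\sum_A \mu(A)^\alpha \geq \sum_i\mu(\omega_i)^\alpha$.

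For the upper bound, we need countable partitions whose sum approaches $\sum_i\mu(\omega_i)^\alpha$. This requires chopping the non-atomic part into small pieces. The step I expect to be the main obstacle is that $\mu$ lives on an abstract space, so "small pieces" must come from a Sierpiński-type argument. Since $\mu_c$ is non-atomic, Sierpiński's theorem says that for every $\varepsilon>0$ the set $\Omega\setminus\Omega_\infty$ can be partitioned into finitely many measurable sets $B_1,\dots,B_N$ with $\mu_c(B_j)\leq\varepsilon$. (One repeatedly extracts subsets of any prescribed mass $\leq\varepsilon$; measurability of $\Omega_\infty$ holds provided atoms are measurable singletons. Otherwise one works with atoms as sets of positive mass that admit no proper splitting, and the argument is identical.) Take $\pi'_\varepsilon = \{\{\omega_i\}\}_i\cup\{B_j\}_j$. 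Then
\[
\sum_{A\in\pi'_\varepsilon}\mu(A)^\alpha = \sum_i\mu(\omega_i)^\alpha + \sum_j \mu(B_j)^\alpha \leq \sum_i\mu(\omega_i)^\alpha + \varepsilon^{\alpha-1}(1-a),
\]
and the error term tends to $0$ as $\varepsilon\to 0$, since $\alpha>1$. Combining both bounds gives the formula in (i).

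\textbf{Point (ii), $0<\alpha\leq1$.} Assume $1-a>0$ and use the same partitions $\pi'_\varepsilon$. Choose the pieces $B_j$ with $\mu(B_j)\leq\varepsilon$. Then for $\alpha<1$, where $1-2^{1-\alpha}<0$,
\[
\sum_j\mu(B_j)^\alpha \geq \varepsilon^{\alpha-1}\sum_j\mu(B_j) = \varepsilon^{\alpha-1}(1-a)\longrightarrow\infty .
\]
Since $\frac{1}{1-2^{1-\alpha}}\bigl(1-\sum_A\mu(A)^\alpha\bigr) = \frac{1}{2^{1-\alpha}-1}\bigl(\sum_A\mu(A)^\alpha - 1\bigr)$, this forces $H_\alpha(\mu\,|\,\pi'_\varepsilon)\to+\infty$.

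For $\alpha=1$, we have $-\sum_j\mu(B_j)\log_2\mu(B_j) \geq (1-a)\log_2(1/\varepsilon)\to\infty$. The atom blocks contribute nonnegative terms in every case. Hence $H_\alpha(\mu)=+\infty$.
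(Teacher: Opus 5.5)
Your proof is correct and follows essentially the same route as the paper: superadditivity of $x\mapsto x^\alpha$ gives a bound that holds for every countable partition. Chopping the non-atomic part into pieces of small mass gives the matching bound for $\alpha>1$ and the divergence for $\alpha\leqslant 1$; you invoke Sierpi\'nski's theorem, which is the one-dimensional case of the Lyapunov theorem the paper cites. The only minor differences are that you treat $\alpha=1$ directly instead of citing the earlier $B_2$ result, and that in (ii) you keep the atoms as singleton blocks where the paper lumps them into the single block $\Omega_\infty$.
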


\begin{Proo}
  For the first point, by the decomposition of a probability measure
  into its atomic and diffuse parts, we may write
  $\mu = \nu_{\text{diffuse}} + \nu_{\text{atoms}}$, where
  $\nu_{\text{atoms}} = \sum_{\omega \in \Omega_\infty} \mu(\omega)
  \delta_\omega $ and $\nu_{\text{diffuse}}$ has no $\mu$-atoms. Note
  that $\Omega_\infty$ is a countable (possibly finite, possibly
  empty) set. Next, consider a partition~$\pi_n$ of $\Omega$ whose
  restriction to $\Omega_\infty$ is the partition into singletons, and
  whose restriction to $\Omega \setminus \Omega_\infty$ consists of
  $n$ measurable subsets $A_1, \cdots, A_n$ such that, for all
  $i \leqslant n$,
  $\mu(A_i) = \nu_{\text{diffuse}}(A_i) \leqslant 1/n$ -- note that it
  is possible to find such a family by Lyapunov's theorem on nonatomic
  measures (see e.g.~\cite{Art90}). This partition $\pi_n$ yields
\begin{equation*}
  H_\alpha(\mu~|~ \pi_n) \geqslant 
  \frac{1}{1-2^{1-\alpha}} \left(1- \left( \sum_{\omega \in \Omega_\infty} 
  \!\! \mu(\omega)^\alpha + n^{1- \alpha} \right) \right).
\end{equation*}
Since $\alpha > 1$, this implies 
\begin{equation*}
  H_\alpha(\mu) \geqslant \frac{1}{1-2^{1-\alpha}}
  \left( 1 - \sum_{\omega \in \Omega_\infty}\!\! \mu(\omega)^\alpha \right).
\end{equation*}
Moreover, $f_\alpha : x \mapsto x^\alpha$ is a convex function
such that $f_\alpha(0) = 0$, so it is superadditive on the positive
reals.
Thus, for every measurable subset $A \in \mathcal{A}$,
\begin{equation*}
  \mu(A)^\alpha \geqslant \left( \sum_{\omega \in A \cap \Omega_\infty} \!\!\!\!
  \mu(\omega) \right)^\alpha \geqslant \sum_{\omega \in A \cap \Omega_\infty}
  \!\!\!\!\! \mu(\omega)^\alpha,
\end{equation*}
and, therefore, for all measurable partition $\pi$, 
\begin{equation*}
  H_\alpha(\mu~|~ \pi) \leqslant \frac{1}{1-2^{1-\alpha}} \left( 1 -
  \sum_{\omega \in \Omega_\infty}\!\! \mu(\omega)^\alpha \right),
\end{equation*}
hence the equality.

The second point is a straightforward adaptation of
\cite[Prop.~A.11]{bienvenu$B_2$IndexGalled2024}, which
covers the case $\alpha = 1$. Let $c>0$ be the mass of the non-atomic
part. Using again Lyapunov's theorem, for each $n\geq 1$, partition
$\Omega \setminus \Omega_\infty$ into $n$ measurable subsets
$A_1,\ldots,A_n$ of equal mass $c/n$. Then consider the partition
$\pi_n=\{A_1,\ldots,A_n,\Omega_\infty\}$. For $\alpha<1$,
\[
  H_\alpha(\mu\mid\pi_n) =
  \frac{\left(c^\alpha n^{1-\alpha}+(1-c)^\alpha\right) -
    1}{2^{1-\alpha} - 1}.
\]
Since $1-\alpha>0$, this tends to $+\infty$ as $n\to\infty$, and this
concludes the proof.
\end{Proo}

We now list several properties of the structural $\alpha$-entropy that
are used throughout this document.
The next proposition is the key to the tractability of $H_\alpha$.

\begin{Prop} \label{graftinggen}
Let $(\Omega, \mathcal{A}, \mu)$ be a probability space, and let $\pi$ be a
measurable partition of $\Omega$. Assume that $\pi'$ is obtained from $\pi$
by fragmenting one of its blocks $B$ such that $\mu(B) > 0$, and let $\pi_B$
denote the corresponding partition of $B$. Then, 
\begin{equation*}
H_\alpha(\mu~|~\pi') = H_\alpha(\mu~|~\pi) + \mu(B)^\alpha H_\alpha(\mu_B~|~\pi_B),
\end{equation*}
where $\mu_B$ denotes the conditional probability distribution induced on $B$ by $\mu$.
\end{Prop}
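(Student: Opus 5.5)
We first treat the case where $\pi$ is countable, then pass to general measurable partitions through \Cref{defentropy02}.

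\emph{Countable case.} Suppose $\pi$ and $\pi_B$ are countable; then so is $\pi' = (\pi\setminus\{B\})\cup\pi_B$. For $C\in\pi_B$ we have $\mu(C)=\mu(B)\,\mu_B(C)$, and $\sum_{C\in\pi_B}\mu_B(C)=1$. Hence
\begin{align*}
(1-2^{1-\alpha})\big(H_\alpha(\mu\mid\pi')-H_\alpha(\mu\mid\pi)\big)
&= \mu(B)^\alpha-\sum_{C\in\pi_B}\mu(C)^\alpha\\
&= \mu(B)^\alpha\Big(1-\sum_{C\in\pi_B}\mu_B(C)^\alpha\Big),
\end{align*}
which is exactly the announced identity. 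For $\alpha=1$, we either pass to the limit $\alpha\to1$ in this identity or redo the computation with $-x\log_2 x$, using $\log_2\mu(C)=\log_2\mu(B)+\log_2\mu_B(C)$. All series involved have nonnegative terms, so rearranging them is legitimate, including when some entropies are infinite for $\alpha\le1$.

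\emph{General case.} Here we use the sup-definition. The key observation is that countable coarsenings $\sigma'$ of $\pi'$ are not all of the form ``a coarsening of $\pi$ with $B$ fragmented'', because a block of $\sigma'$ may merge pieces of $B$ with pieces outside $B$. We handle this with \Cref{monotonicity}.

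For the inequality $\le$, take any countable $\sigma'\succcurlyeq\pi'$. Refine it to $\tilde\sigma=\{A\cap B, A\setminus B : A\in\sigma'\}$. This partition is still countable and coarser than $\pi'$, since $B$ is a union of $\pi'$-blocks. By monotonicity, $H_\alpha(\mu\mid\sigma')\le H_\alpha(\mu\mid\tilde\sigma)$. Now $\tilde\sigma$ is obtained from the countable partition $\sigma=\{A\setminus B: A\in\sigma'\}\cup\{B\}$ by fragmenting $B$ into $\tau=\{A\cap B\}$. Moreover $\sigma\succcurlyeq\pi$ and $\tau\succcurlyeq\pi_B$. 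The countable case then gives
\[
H_\alpha(\mu\mid\tilde\sigma)=H_\alpha(\mu\mid\sigma)+\mu(B)^\alpha H_\alpha(\mu_B\mid\tau)\le H_\alpha(\mu\mid\pi)+\mu(B)^\alpha H_\alpha(\mu_B\mid\pi_B).
\]

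For the inequality $\ge$, take countable $\sigma\succcurlyeq\pi$ and $\tau\succcurlyeq\pi_B$ (on $B$). Again by monotonicity, we may assume that $B$ is a block of $\sigma$: splitting the block of $\sigma$ containing $B$ into $B$ and its complement only increases the entropy and keeps $\sigma\succcurlyeq\pi$, since $B\in\pi$. Then fragmenting $B$ by $\tau$ yields a countable partition coarser than $\pi'$. The countable identity gives $H_\alpha(\mu\mid\pi')\ge H_\alpha(\mu\mid\sigma)+\mu(B)^\alpha H_\alpha(\mu_B\mid\tau)$, and taking suprema over $\sigma$ and $\tau$ independently concludes.

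The main obstacle is this bookkeeping of suprema in the general case, namely reducing to coarsenings that respect $B$. The remaining steps are the direct computation of the countable case.
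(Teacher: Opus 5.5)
Your proposal is correct and follows the paper's route: the same direct computation when $\pi'$ is countable (with $\alpha=1$ handled by continuity or directly), then passage to the supremum over countable coarsenings in Definition~\ref{defentropy02}. The paper settles the general case in one line, whereas you spell out both inequalities. Using \Cref{monotonicity}, you reduce to coarsenings in which $B$ is a union of blocks (for $\le$) or a single block (for $\ge$), which is exactly the bookkeeping that one line omits.
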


\begin{Proo}
The proof only involves straightforward calculations, but due to the importance
of the result for our study, we give the details.
First, assume that $\pi'$ is countable.
For $\alpha\neq1$, by definition of $\pi'$,
\begin{align*}
  H_\alpha(\mu~|~\pi')
  &= \frac{1}{1-2^{1-\alpha}}
  \left( 1-\sum_{A\in\pi\setminus\{B\}}\mu(A)^\alpha-\sum_{C\in\pi_B}\mu(C)^\alpha \right) \\
  &= H_\alpha(\mu~|~\pi) + \frac{1}{1-2^{1-\alpha}}
  \left( \mu(B)^\alpha-\sum_{C\in\pi_B}\mu(C)^\alpha \right) \\
  &= H_\alpha(\mu~|~\pi)
  + \frac{\mu(B)^\alpha}{1-2^{1-\alpha}}
  \left( 1-\sum_{C\in\pi_B}\left(\frac{\mu(C)}{\mu(B)}\right)^\alpha \right) \\
  &= H_\alpha(\mu~|~\pi) +
  \frac{\mu(B)^\alpha}{1-2^{1-\alpha}}
  \left( 1-\sum_{C\in\pi_B}\mu_B(C)^\alpha \right) \\
  &= H_\alpha(\mu~|~\pi)+\mu(B)^\alpha H_\alpha(\mu_B~|~\pi_B).
\end{align*}
The case $\alpha=1$ follows by continuity.
Finally, the equality for arbitrary measurable partitions
follows by taking the supremum over countable measurable coarsenings in
Definition~\ref{defentropy02}.
\end{Proo}

This proposition shows that the contribution of a refinement of a block $B$ is
weighted by $\mu(B)^\alpha$, so that finer partitions are given relatively more
weight when $\alpha$ is small.
One therefore expects convergence under successive refinements to become harder
to obtain as $\alpha$ approaches 0.
The following proposition formalizes this idea.

\begin{Prop} \label{cvgHalpha}
Let $(\Omega, \mathcal{A}, \mu)$ be a probability space and let $\pi$ be a
countable measurable partition of~$\Omega$. Let $(\mu_i)_{i \geqslant 1}$ be
a sequence of probability measures on $(\Omega, \mathcal{A})$ satisfying: 
\begin{equation*}
\forall A \in \pi, \quad \mu_i(A) \underset{i \rightarrow \infty}{\longrightarrow} \mu(A).
\end{equation*}
Then:
\begin{enumerate}[label = \roman*.]
\item For any $\alpha > 1$, $H_\alpha(\mu_i~|~\pi) \rightarrow H_\alpha(\mu~|~\pi)$.
\item For any  $0 < \alpha < \alpha ' \leqslant 1$, if $H_\alpha(\mu_i~|~\pi) \rightarrow H_\alpha(\mu~|~\pi)$ then $H_{\alpha'}(\mu_i~|~\pi) \rightarrow H_{\alpha'}(\mu~|~\pi)$. \qedhere
\end{enumerate}
\end{Prop}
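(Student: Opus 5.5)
Set $x_i(A)=\mu_i(A)$ and $x(A)=\mu(A)$ for $A\in\pi$. Since $\pi$ is a countable partition, $\sum_{A\in\pi}x_i(A)=\sum_{A\in\pi}x(A)=1$, and $x_i(A)\to x(A)$ for every $A$. Proving the statement therefore amounts to showing convergence of $S_\beta(x_i)=\sum_{A\in\pi}x_i(A)^\beta$ to $S_\beta(x)$. The plan is to reduce this to a uniform-integrability (tail) condition on the sums, via a Scheffé-type argument.

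The first step is to upgrade pointwise convergence to $\ell^1$ convergence. By Scheffé's lemma for counting measure on $\pi$ (both families are nonnegative with total mass $1$), $\sum_A|x_i(A)-x(A)|\to0$. Consequently, for every $\varepsilon>0$ there is a finite $F\subset\pi$ with $\sum_{A\notin F}x(A)<\varepsilon$, and then $\limsup_i\sum_{A\notin F}x_i(A)<\varepsilon$.

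For (i), take $\alpha>1$ and fix such an $F$. The finite sum over $F$ converges termwise. On the tail, $x_i(A)\le1$ and $\alpha>1$ give $\sum_{A\notin F}x_i(A)^\alpha\le\sum_{A\notin F}x_i(A)$, whose limsup is below $\varepsilon$; the same bound holds for $x$. Hence $\limsup_i|S_\alpha(x_i)-S_\alpha(x)|\le2\varepsilon$, and (i) follows after dividing by the positive normalizing constant.

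For (ii), with $0<\alpha<\alpha'\le1$, the sums $S_\alpha$ may be infinite. Note that $H_\alpha(\mu|\pi)$ may equal $+\infty$; in that case I expect the hypothesis to force tails to blow up as well. I would treat this case separately via the monotonicity of Proposition above, or Hölder. I will focus on the finite case, where the hypothesis says $S_\alpha(x_i)\to S_\alpha(x)<\infty$. Since the head sum over any finite $F$ converges, subtracting it yields tail control:
\[
\lim_i\sum_{A\notin F}x_i(A)^\alpha=\sum_{A\notin F}x(A)^\alpha,
\]
and the right-hand side can be made smaller than $\varepsilon$ by enlarging $F$. Now use $x^{\alpha'}\le x^\alpha$ for $x\in[0,1]$: it bounds the $\alpha'$-tails by the $\alpha$-tails, so $\limsup_i\sum_{A\notin F}x_i(A)^{\alpha'}\le\varepsilon$, and similarly for $x$. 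Combined with termwise convergence on $F$, this gives $S_{\alpha'}(x_i)\to S_{\alpha'}(x)$. The case $\alpha'=1$ needs separate treatment, since $S_1$ is trivially $1$ and the entropy is Shannon's. Here I would use $-x\log_2 x\le C_\alpha x^\alpha$ on $[0,1]$, with $C_\alpha=\sup_{x\in[0,1]} x^{1-\alpha}|\log_2 x|<\infty$, which again dominates the tails by the $\alpha$-tails.

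The main obstacle is the infinite case in (ii). If $H_\alpha(\mu|\pi)=+\infty$ and $H_\alpha(\mu_i|\pi)\to+\infty$, the claimed convergence for $\alpha'$ need not hold, since $H_{\alpha'}(\mu|\pi)$ could be finite while the $H_{\alpha'}(\mu_i|\pi)$ fail to converge. So I would read the statement as assuming finite limits, or check whether the paper's intended meaning is restricted to finite $H_\alpha(\mu|\pi)$. Apart from that, the argument is simply the tail-domination inequality $x^{\alpha'}\le x^\alpha$ on $[0,1]$.
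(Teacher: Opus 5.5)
Your argument is correct, but it is a somewhat different route from the paper's. Under the same implicit finiteness assumption, it proves exactly what the paper's proof proves.

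\textbf{Part (i).} The paper does not truncate. It applies Fatou's lemma twice:
\begin{itemize}
\item once to the nonnegative terms $f_\alpha(\mu_i(A))$, with $f_\alpha(x)=x(1-x^{\alpha-1})/(1-2^{1-\alpha})$, which gives $H_\alpha(\mu\mid\pi)\le\liminf_i H_\alpha(\mu_i\mid\pi)$;
\item once to $\mu_i(A)^\alpha$, which gives the matching $\limsup$ bound.
\end{itemize}
Your finite-head/small-tail argument with $x^\alpha\le x$ is the dominated-convergence counterpart of that sandwich. Your first step does not even need Scheff\'e: for finite $F$ you directly have $\sum_{A\notin F}\mu_i(A)=1-\sum_{A\in F}\mu_i(A)\to\sum_{A\notin F}\mu(A)$.

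\textbf{Part (ii).} The paper applies Scheff\'e's lemma to the sequences $(\mu_i(A)^\alpha)_{A\in\pi}$ to get $\sum_A|\mu_i(A)^\alpha-\mu(A)^\alpha|\to0$. It then transfers this $\ell^1$ convergence through the Lipschitz maps $t\mapsto t^{\alpha'/\alpha}$ and $t\mapsto -t^{1/\alpha}\log_2(t^{1/\alpha})$ on $[0,1]$. This yields the quantitative bound $\sum_A|\mu_i(A)^{\alpha'}-\mu(A)^{\alpha'}|\le L\sum_A|\mu_i(A)^\alpha-\mu(A)^\alpha|$.

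You instead extract uniform smallness of the $\alpha$-tails from the hypothesis. You then use only the pointwise dominations $x^{\alpha'}\le x^\alpha$ and $-x\log_2 x\le C_\alpha x^\alpha$. This is slightly more elementary, since it needs no Lipschitz constants, but it gives no rate. The two arguments are the usual equivalent faces of Scheff\'e versus uniform summability.

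\textbf{The infinite case.} Your caveat is right, and it applies to the paper's proof too. Scheff\'e's lemma requires $\sum_A\mu(A)^\alpha<\infty$, so the paper silently works with finite limits.

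However, you should drop the earlier suggestion that monotonicity or H\"older might handle $H_\alpha(\mu\mid\pi)=+\infty$. The extended-real statement is simply false. Here is a counterexample.
\begin{itemize}
\item Take $\pi=\{A_0,A_1,\dots\}$ with $\mu(A_0)=1/2$ and $\mu(A_n)\propto n^{-3/2}$ for $n\ge1$, and set $\alpha=1/2$, $\alpha'=9/10$.
\item For $i\ge2$, let $\mu_i$ move mass $1/i$ from $A_0$ evenly onto $A_{i+1},\dots,A_{i+i^9}$.
\item Then $\mu_i(A)\to\mu(A)$ for every $A\in\pi$.
\item The untouched tail has $\sum_{n>i+i^9}\mu(A_n)^{1/2}=+\infty$, so $H_{1/2}(\mu_i\mid\pi)=H_{1/2}(\mu\mid\pi)=+\infty$ for all $i$.
\item Each of the $i^9$ receiving blocks has mass at least $i^{-10}$, so $\sum_A\mu_i(A)^{9/10}\ge\sum_A\mu(A)^{9/10}+1-o(1)$.
\item Hence $H_{9/10}(\mu_i\mid\pi)\not\to H_{9/10}(\mu\mid\pi)$.
\end{itemize}
Reading (ii) as assuming $H_\alpha(\mu\mid\pi)<\infty$ is therefore the correct interpretation, and under it your proof is complete.
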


\begin{Proo}
Let us start with the first point.
Let $\alpha > 1$ and set
$f_\alpha : x \in [0;1] \mapsto \frac{x(1-x^{\alpha -1})}{1-2^{1-\alpha}}$.
Note that $f_\alpha$ is continuous and nonnegative.
Thus, by Fatou's lemma we have
\begin{eqnarray*}
  H_\alpha(\mu~|~\pi) &=& \sum_{A \in \pi} \liminf_i f_\alpha(\mu_i(A)) \\
  &\leqslant & \liminf_i H_\alpha(\mu_i~|~\pi).
\end{eqnarray*} 
Moreover, since $\alpha$ is greater than $1$, $1/ (1-2^{1-\alpha})$ is positive
and thus, once again using Fatou's lemma,
\begin{eqnarray*}
 H_\alpha(\mu~|~\pi) &=& \frac{1}{1-2^{1-\alpha}} \left(1- \sum_{A \in \pi}
  \liminf_i \mu_i(A)^\alpha \right) \\ &\geqslant &
  \limsup_i H_\alpha(\mu_i~|~\pi),
\end{eqnarray*}
Hence the fact that $H_\alpha(\mu_i~|~\pi) \rightarrow H_\alpha(\mu~|~\pi)$. 

Let us now turn to the second point.
Let $0 < \alpha < \alpha' < 1$, and assume that
\begin{equation*}
\frac{1}{1-2^{1-\alpha}} \left( 1- \sum_{A \in \pi} \mu_i(A)^\alpha \right)
\underset{i \rightarrow \infty}{\longrightarrow}
\frac{1}{1-2^{1-\alpha}} \left( 1- \sum_{A \in \pi} \mu(A)^\alpha \right).
\end{equation*}
Since $\pi$ is countable, we can write $\pi = \{A_0, A_1, \cdots \}$. Thus, for
all $i \geqslant 1$, we can write $u_i = (u_i^{(n)})_{n \in \mathds{N}}$
where $u_i^{(n)} = \mu_i(A_n)$ and, similarly, $u = (u^{(n)})_n$ where
$u^{(n)} = \mu(A_n)$. Thus, our assumptions can be re-written as:
\begin{itemize}
\item[\textbullet] $ \displaystyle \sum_{n \geqslant 0} (u_i^{(n)})^\alpha
  \underset{i \rightarrow \infty}{\longrightarrow}
  \sum_{n \geqslant 0} (u^{(n)})^\alpha$;
\item[\textbullet] $\forall n \geqslant 0, \quad u_i^{(n)} \underset{i
  \rightarrow \infty}{\longrightarrow} u^{(n)}$.
\end{itemize}
By Scheffé's lemma, this is equivalent to
\begin{equation*}
\sum_{n \geqslant 0} \big|(u_i^{(n)})^\alpha - (u^{(n)})^\alpha\big|
\underset{i \rightarrow \infty}{\longrightarrow} 0.
\end{equation*}

Now set $\beta = \alpha'/\alpha > 1$. Since
$x \mapsto x^\beta$ is Lipschitz on $[0,1]$,
there exists a constant $L_\beta > 0$ such that for all $x,y \in [0,1]$,
\begin{equation*}
|x^\beta - y^\beta| \leqslant L_\beta |x-y|.
\end{equation*}
Applying this with $x = (u_i^{(n)})^\alpha$ and $y = (u^{(n)})^\alpha$, we obtain
\begin{equation*}
\big|(u_i^{(n)})^{\alpha'} - (u^{(n)})^{\alpha'}\big|
=
\big|((u_i^{(n)})^\alpha)^\beta - ((u^{(n)})^\alpha)^\beta\big|
\leqslant
L_\beta \big|(u_i^{(n)})^\alpha - (u^{(n)})^\alpha\big|.
\end{equation*}
Summing over $n$ gives
\begin{equation*}
\sum_{n \geqslant 0} \big|(u_i^{(n)})^{\alpha'} - (u^{(n)})^{\alpha'}\big|
\leqslant
L_\beta
\sum_{n \geqslant 0} \big|(u_i^{(n)})^\alpha - (u^{(n)})^\alpha\big|
\underset{i \rightarrow \infty}{\longrightarrow} 0.
\end{equation*}
Thus,
\begin{equation*}
\sum_{n \geqslant 0} (u_i^{(n)})^{\alpha'}
\underset{i \rightarrow \infty}{\longrightarrow}
\sum_{n \geqslant 0} (u^{(n)})^{\alpha'},
\end{equation*}
which implies that
\begin{equation*}
H_{\alpha'}(\mu_i~|~\pi)
\underset{i \rightarrow \infty}{\longrightarrow}
H_{\alpha'}(\mu~|~\pi).
\end{equation*}

Now, all that is left to prove is that if $\alpha < 1$ and
$H_\alpha(\mu_i~|~\pi) \rightarrow H_\alpha(\mu~|~\pi)$, then
$H_1(\mu_i~|~\pi) \rightarrow H_1(\mu~|~\pi)$.
Reasoning as before, define
\begin{equation*}
\phi_\alpha : t \in [0,1] \mapsto
- t^{1/\alpha} \log_2\!\big(t^{1/\alpha}\big),
\end{equation*}
with the convention $\phi_\alpha(0)=0$.
Since $0<\alpha<1$, the function $\phi_\alpha$  is Lipschitz on $[0,1]$, so
there exists $L_\alpha>0$ such that for all $s,t \in [0,1]$,
\begin{equation*}
|\phi_\alpha(s)-\phi_\alpha(t)| \leqslant L_\alpha |s-t|.
\end{equation*}
Now note that $- x \log_2(x) = \phi_\alpha(x^\alpha)$ for all $x \in [0,1]$.
Therefore, for all $n \geqslant 0$,
\begin{equation*}
\big|-u_i^{(n)} \log_2(u_i^{(n)}) + u^{(n)} \log_2(u^{(n)})\big|
=
\big|\phi_\alpha((u_i^{(n)})^\alpha)-\phi_\alpha((u^{(n)})^\alpha)\big|
\leqslant
L_\alpha \big|(u_i^{(n)})^\alpha-(u^{(n)})^\alpha\big|.
\end{equation*}
Summing over $n$, we get
\begin{equation*}
\sum_{n \geqslant 0}
\big|-u_i^{(n)} \log_2(u_i^{(n)}) + u^{(n)} \log_2(u^{(n)})\big|
\leqslant
L_\alpha
\sum_{n \geqslant 0} \big|(u_i^{(n)})^\alpha-(u^{(n)})^\alpha\big|
\underset{i \rightarrow \infty}{\longrightarrow} 0.
\end{equation*}
Hence $H_1(\mu_i~|~\pi) \to H_1(\mu~|~\pi)$ as $i\to\infty$,
concluding the proof.
\end{Proo}

We end this section by stating a result about the convergence of refining
sequences of partitions.
Recall that, given a sequence $(\pi_n)_{n \geqslant 0}$ of partitions of a set
$E$, we say that $\pi_n$ converges to $\pi$ if, for
all $x, y \in E$, there exists $N$ such that for all $n \geq N$,
$x \sim_{\pi_n} y$ if and only if $x \sim_\pi y$ (where $x \sim_\pi
y$ indicates that $x$ and $y$ are in the same block in $\pi$).

\begin{Lemm} \label{cvgSingleton}
Let $(\pi_n)$ be a sequence of measurable partitions on a probability
space $(\Omega, \mathcal{A}, \mu)$. If $(\pi_n)$ converges to a partition
$\pi$ such that $\pi \preccurlyeq \pi_n$ for all $n \geqslant 0$, then $\pi$
is measurable and 
\begin{equation*}
H_\alpha(\mu~|~\pi_n) \underset{n \rightarrow \infty}{\longrightarrow} H_\alpha(\mu~|~\pi).
\end{equation*}
In particular, if $\pi$ is the partition into singletons, the limit is $H_\alpha(\mu)$.
\end{Lemm}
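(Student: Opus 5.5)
The proof rests on an integral representation of the $\alpha$-entropy with respect to a partition. For $t\in[0,1]$, set $g_\alpha(t)=\frac{1-t^{\alpha-1}}{1-2^{1-\alpha}}$, with $g_1(t)=-\log_2 t$. This function is nonnegative and continuous on $]0;1]$. At $0$ it takes the value $g_\alpha(0)=+\infty$ for $\alpha\leqslant 1$ and $g_\alpha(0)=\frac{1}{1-2^{1-\alpha}}$ for $\alpha>1$. Write $[x]_\sigma$ for the block of $x$ in a partition $\sigma$. The formula to establish is
\begin{equation*}
  H_\alpha(\mu~|~\sigma) \;=\; \int_\Omega g_\alpha\big(\mu([x]_\sigma)\big)\,\mathrm{d}\mu(x)
\end{equation*}
for every measurable partition $\sigma$. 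For countable $\sigma$ this is just \Cref{defentropy} rewritten. For general $\sigma$, split $\Omega$ into the union of the blocks of positive mass (countably many) and the union of the null blocks. By the argument of \Cref{atomicDecomposition}, the null part contributes $+\infty$ when $\alpha\leqslant1$ and vanishes from the sum when $\alpha>1$. I will use Lyapunov's theorem for the lower bound and superadditivity of $x\mapsto x^\alpha$ for the upper bound, exactly as in that proof, but carried out blockwise relative to $\sigma$. This is the step I expect to require the most care, since the sup in \Cref{defentropy02} ranges over countable coarsenings and the measurable structure of null blocks must be handled.

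Measurability of $\pi$ comes next. Convergence of $\pi_n$ to $\pi$ means that, for fixed $x$ and every $y$, $\mathbb{1}_{\{y\in[x]_{\pi_n}\}}\to\mathbb{1}_{\{y\in[x]_\pi\}}$. Hence
\begin{equation*}
  [x]_\pi=\bigcup_N\bigcap_{n\geqslant N}[x]_{\pi_n},
\end{equation*}
which is measurable. Dominated convergence applied to these indicators also gives $\mu([x]_{\pi_n})\to\mu([x]_\pi)$ for every $x$.

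For the upper bound, $\pi\preccurlyeq\pi_n$ implies that every countable coarsening of $\pi_n$ is a countable coarsening of $\pi$. So \Cref{defentropy02}, together with \Cref{monotonicity}, yields $H_\alpha(\mu~|~\pi_n)\leqslant H_\alpha(\mu~|~\pi)$ for all $n$. For the lower bound, apply Fatou's lemma to the representation above. The integrands $g_\alpha(\mu([x]_{\pi_n}))$ are nonnegative and converge pointwise to $g_\alpha(\mu([x]_\pi))$, by continuity of $g_\alpha$ on $[0,1]$ with values in $[0,+\infty]$. Fatou therefore gives
\begin{equation*}
  \liminf_n H_\alpha(\mu~|~\pi_n)\geqslant H_\alpha(\mu~|~\pi).
\end{equation*}
Combining the two bounds gives the convergence, including the case where the limit is $+\infty$.

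Finally, when $\pi$ is the partition into singletons, every measurable partition is a coarsening of $\pi$. Monotonicity then shows that $H_\alpha(\mu~|~\pi)$ equals the supremum in \Cref{defentropy02}, that is, $H_\alpha(\mu)$.
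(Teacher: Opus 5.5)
The step you flagged is where the proof breaks. For a measurable partition $\sigma$ with uncountably many blocks, the representation $H_\alpha(\mu~|~\sigma)=\int_\Omega g_\alpha(\mu([x]_\sigma))\,\mathrm{d}\mu(x)$ is false in general.

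The Lyapunov argument of \Cref{atomicDecomposition} gives you small-mass measurable subsets of the null part $N$. However, a coarsening of $\sigma$ may only use $\sigma$-saturated sets, and the restriction of $\mu$ to the saturated measurable subsets of $N$ can be a single atom. Here is an example. On $\Omega=\mathbb{R}/\mathbb{Z}$ with Lebesgue measure, let $\sigma$ be the partition into orbits of the rotation by an irrational $\beta$. Its blocks are countable, hence Borel and null. Every saturated measurable set is invariant under an ergodic rotation, so it has measure $0$ or $1$. Hence every countable coarsening has one block of full mass, and $H_\alpha(\mu~|~\sigma)=0<g_\alpha(0)=\int g_\alpha(\mu([x]_\sigma))\,\mathrm{d}\mu$.

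What does hold for every measurable $\sigma$ is only the inequality $H_\alpha(\mu~|~\sigma)\leqslant\int g_\alpha(\mu([x]_\sigma))\,\mathrm{d}\mu$. This is because blocks only grow under coarsening and $g_\alpha$ is nonincreasing. Equality holds when $\sigma$ is countable.

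That inequality is all you need for $\pi$ in your Fatou step. For the $\pi_n$, however, you need the reverse inequality $H_\alpha(\mu~|~\pi_n)\geqslant\int g_\alpha(\mu([x]_{\pi_n}))\,\mathrm{d}\mu$, and this is exactly what fails. The failure is not an artifact of your method. Take $\pi_n$ to be the orbit partition of the rotation by $2^n\beta$; the subgroups $\{k2^n\beta : k\in\mathbb{Z}\}$ decrease to $\{0\}$. Then $\pi_n$ converges to the partition $\pi$ into singletons, and $\pi\preccurlyeq\pi_n$ for all $n$. Yet $H_\alpha(\mu~|~\pi_n)=0$ for every $n$, while $H_\alpha(\mu)>0$.

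So if ``measurable partition'' only means a partition into measurable blocks, the lemma needs an extra hypothesis on the $\pi_n$. Two options work:
\begin{itemize}
\item the $\pi_n$ are countable;
\item each $\pi_n$ is separated by countably many $\pi_n$-saturated measurable sets.
\end{itemize}
In the second case, the equality for $\pi_n$ follows by monotone convergence along the finite partitions generated by the first $K$ such sets, with no need for Lyapunov. Under either hypothesis, your proof goes through as written: your measurability argument for $\pi$, your upper bound via inclusion of the sets of coarsenings, and Fatou combined with the easy inequality for $\pi$.
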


This lemma is a straightforward adaptation of
\cite[Lemma A.14]{bienvenu$B_2$IndexGalled2024}, therefore we omit the proof.

\subsection{The boundary and \texorpdfstring{$H_\alpha$}{H-alpha} index of an infinite phylogenetic network}

We now return to the definition of the $H_\alpha$ index of an infinite
phylogenetic network.
To do so, the first step is to define a suitable notion of \textit{boundary}
for phylogenetic networks.
This is done in~\cite{bienvenu$B_2$IndexGalled2024}, to which we refer
for a formal definition and technical details; here we only recall the idea
behind the definition.
To make things more concrete, consider the infinite phylogenetic
network~$G$ represented in Fig.~\ref{figInfinitePitchfork}.
Since the simple random walk ``escapes to infinity'' along either the left or
the right infinite path (call such infinite paths \textit{rays}), the
$H_\alpha$ index of this network should be the same as that of the cherry --
namely the structural $\alpha$-entropy of the probability distribution
$(1/2, 1/2)$.
In this simple example, the boundary is clear: it should consist of
two ``points at infinity'' that correspond to each of the two rays.

\begin{figure}[H]
    \centering
    \begin{tikzpicture}[main/.style = {draw, circle, minimum size = 7pt, inner sep =1pt}]
    \tikzstyle{dot}=[draw,circle,gray,
                  top color= white, text=gray,minimum size=7pt, inner sep = 1]
    \tikzstyle{phantom}=[opacity = 0, minimum size = 7pt]
    \node[main, label ={[font=\footnotesize] \o} ] (root) at (0, 0) {};
    \node[main, label ={[font=\scriptsize]left:1}] (1) at (-1, -1) {};
    \node[main, label ={[font=\scriptsize]right:1'}] (1bis) at (1, -1) {};
    \node[main, label ={[font=\scriptsize]left:2}] (2) at (-1, -2) {};
    \node[main, label ={[font=\scriptsize]right:2'}] (2bis) at (1, -2) {};
    \node[phantom] (n) at (-1, -4) {};
    \node[phantom] (nbis) at (1, -4) {};
    \draw[->] (root) -- (1);
    \draw[->] (root) -- (1bis);
    \draw[->] (1) -- (2);
    \draw[->] (1bis) -- (2bis);
    \draw[dashed, ->] (2) -- (n);
    \draw[dashed, ->] (2bis) -- (nbis);
\end{tikzpicture}
    \caption{An infinite phylogenetic network $G$ with no leaves and
    two infinite paths along which the simple random walk can escape to
    infinity.}
    \label{figInfinitePitchfork}
\end{figure}

Now suppose we add a directed edge between vertex 1 and 1':
the points at infinity where the simple random walk can
escape do not change, so neither should the boundary
(although the probability distribution on this boundary would differ).
More generally, if two rays are such that the simple random walk
can always switch from one to the other, no matter how far away it has gone
on either of them, then those two rays correspond to the same
point at infinity and should be identified.

Formally, we quotient the space of rays by identifying any two rays that are
intersected infinitely many times by a third (not necessarily different) ray.
Together with the leaves, these equivalent classes of rays form
the \emph{boundary} $\partial G$ of the network.
Moreover, $\partial G$ can be embedded in a suitable compact metric space,
and the limit of the simple random walk is a
well-defined random variable $X_\infty$ taking values in $\partial G$.
In consequence, the $H_\alpha$ index of an arbitrary
phylogenetic network can be defined as the structural $\alpha$-entropy
of the distribution of $X_\infty$.
Finally, because -- by the same argument as for the $B_2$ index --
the resulting $H_\alpha$ index can be expressed as a
pointwise limit of measurable functions on the space 
$\mathbb{G}$ of phylogenetic networks endowed with the local distance,
$H_\alpha$ is measurable from $\mathbb{G}$ to $\mathbb{R}\cup\{+\infty\}$.
As a result, if $G$ is a random phylogenetic network, $H_\alpha(G)$ is a
well-defined random variable.
Again, we refer to \cite[App.~A.2--4]{bienvenu$B_2$IndexGalled2024}
for technical details.

\section{Range of the \texorpdfstring{$\boldsymbol{H_\alpha}$}{H-alpha} index: proofs} \label{appRange}

This appendix is devoted to the proofs of the results listed in \Cref{secRange}.
\Cref{generalBounds} is a well-known result for the structural
$\alpha$-entropy (see
e.g.~\cite[Thm.~2]{havrdaQuantificationMethodClassification1967a}) -- only
rephrased in the context of the probability distribution
$(p_\ell)_{\ell\in\mathcal{L}_G}$ induced by the simple random walk.
The other results in that section are adaptations and generalizations of the
results proved for the $B_2$ index in \cite{bienvenuRevisitingShaoSokals2021}.
Here, we therefore summarize the main ideas of the proof and emphasize
the key differences that arise when replacing the Shannon entropy by
the structural $\alpha$-entropy for $\alpha \neq 1$.

\subsection{Preliminary lemmas} \label{appPrelim}

For any positive $\alpha \neq 1$, set
$f_\alpha: x \mapsto \frac{1}{1-2^{1-\alpha}} x(1-x^{\alpha-1})$ and note
that for any finite phylogenetic network $G$, we have
$H_\alpha(G) = \sum_{\ell \in \mathcal{L}_G} f_\alpha(p_\ell)$.
Also note that $f_\alpha$ is strictly concave for any $\alpha$:
this will be the key to the generalization of
\cite[Prop.~1.13, Lem.~2.9 \&~2.12]{bienvenuRevisitingShaoSokals2021}.

\begin{Lemm} \label{sign}
Let $G$ and $G'$ be two phylogenetic networks that have the
same leaf set and, except for two fixed leaves $\ell_1$ and $\ell_2$, 
satisfy $p_\ell = p'_\ell$ for any leaf $\ell\notin\{\ell_1, \ell_2\}$,
where $p_\ell$ (resp.\ $p_\ell'$) is the probability that the simple random
walk on $G$ (resp.\ $G'$) reaches leaf $\ell$).
Then, 
    \begin{equation*}
        \mathrm{sgn} ( H_\alpha(G') - H_\alpha(G)) = \mathrm{sgn}( (p_{\ell_1}-p_{\ell_1}')(p_{\ell_1}-p_{\ell_2}')),
    \end{equation*}
where $\mathrm{sgn}(x) \in \{-1; 0; 1\}$ denotes the sign of $x$.
\end{Lemm}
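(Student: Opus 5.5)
The plan is to reduce the statement to a one-variable concavity argument, using the representation $H_\alpha(G)=\sum_{\ell\in\mathcal{L}_G} f_\alpha(p_\ell)$ from \Cref{appPrelim}. Since $(p_\ell)$ and $(p'_\ell)$ are both probability distributions on the same leaf set, and they agree outside $\{\ell_1,\ell_2\}$, we get
\[
p_{\ell_1}+p_{\ell_2} \;=\; p'_{\ell_1}+p'_{\ell_2} \;=:\; s .
\]
All terms with $\ell\notin\{\ell_1,\ell_2\}$ cancel. Setting $g(x)=f_\alpha(x)+f_\alpha(s-x)$ on $[0,s]$, we therefore obtain
\[
H_\alpha(G')-H_\alpha(G) \;=\; g(p'_{\ell_1})-g(p_{\ell_1}).
\]

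First I would verify that $f_\alpha$ is strictly concave on $[0,1]$. For $\alpha\neq1$ we have $f_\alpha''(x)=-\alpha(\alpha-1)x^{\alpha-2}/(1-2^{1-\alpha})$. The factors $\alpha-1$ and $1-2^{1-\alpha}$ have the same sign, so $f_\alpha''<0$ on $]0,1]$. For $\alpha=1$, $f_1(x)=-x\log_2 x$ has $f_1''=-1/(x\log 2)<0$. Consequently $g$ is strictly concave on $[0,s]$, and it is symmetric about $c=s/2$.

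Next I would show the key fact: a strictly concave function symmetric about $c$ satisfies $g(x)>g(y)$ if and only if $|x-c|<|y-c|$, with equality when $|x-c|=|y-c|$. The equality case follows from symmetry. If $|x-c|<|y-c|$, then $x$ lies strictly between $y$ and $2c-y$, where $g$ takes the same value, so strict concavity gives $g(x)>g(y)$.

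Finally I would translate this criterion into the stated product. We have
\[
\mathrm{sgn}\big(g(p'_{\ell_1})-g(p_{\ell_1})\big)=\mathrm{sgn}\big((p_{\ell_1}-c)^2-(p'_{\ell_1}-c)^2\big)=\mathrm{sgn}\big((p_{\ell_1}-p'_{\ell_1})(p_{\ell_1}+p'_{\ell_1}-s)\big).
\]
Since $s-p'_{\ell_1}=p'_{\ell_2}$, the last factor equals $p_{\ell_1}-p'_{\ell_2}$, which gives the claimed formula.

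None of these steps is a real obstacle. The only points that need care are the sign bookkeeping in the concavity of $f_\alpha$ for $\alpha<1$ versus $\alpha>1$, and the strictness needed to match the sign exactly rather than only as an inequality.
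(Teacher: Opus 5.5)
Your proof is correct and follows essentially the same route as the paper. Both cancel every leaf other than $\ell_1,\ell_2$ using $p_{\ell_1}+p_{\ell_2}=p'_{\ell_1}+p'_{\ell_2}$, and both conclude from the strict concavity of $f_\alpha$. The paper writes the difference as $\bigl(f_\alpha(p_{\ell_1}+\Delta)-f_\alpha(p_{\ell_1})\bigr)-\bigl(f_\alpha(p'_{\ell_2}+\Delta)-f_\alpha(p'_{\ell_2})\bigr)$ with $\Delta=p'_{\ell_1}-p_{\ell_1}$, and uses that increments of a strictly concave function are monotone, which implicitly requires splitting on the sign of $\Delta$. Your symmetric concave function $g$ handles both signs and all equality cases in one stroke.
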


\begin{Proo}
To obtain this result, it suffices to note that, as in
\cite{bienvenuRevisitingShaoSokals2021}:
\begin{equation*}
     H_\alpha(G') - H_\alpha(G) = (f_\alpha(p_{\ell_1} + \Delta) - f_\alpha(p_{\ell_1})) - (f_\alpha(p_{\ell_2}' + \Delta) - f_\alpha(p_{\ell_2}')),
\end{equation*}
where $\Delta = p_{\ell_1}' - p_{\ell_1} = p_{\ell_2} - p_{\ell_2}'$,
and then use the strict concavity of $f_\alpha$. 
\end{Proo}

\begin{Lemm} \label{constructionMin}
Let $N$ be a phylogenetic network and let $u, v$ and $w$ be three vertices of
$N$ such that $u$ is a parent of $v$ and $w$, and neither of these two
vertices $v$ and $w$ is an ancestor of the other.
Denote by $N'$ and $N''$ the networks
obtained by adding an edge between $\overset{\rightarrow}{uv}$ and
$\overset{\rightarrow}{uw}$, in one direction for $N'$ and in the other one
for $N''$, as shown in Fig.~\ref{procedure}.
Then, 
\begin{equation*}
H_\alpha(N') + H_\alpha(N'') \leqslant 2 H_\alpha(N).
\end{equation*}
In particular, $\min \{H_\alpha(N') , H_\alpha(N'') \} \leqslant H_\alpha(N).$
Moreover, these inequalities are strict if and only if there exists a leaf
$\ell$ such that $P_v(\ell) \neq P_w(\ell)$, where $P_x(\ell)$ denotes the
probability that the simple random walk started from $x$ ends in $\ell$.
\end{Lemm}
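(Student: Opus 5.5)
The plan is to write the leaf distributions of $N'$ and $N''$ as symmetric perturbations of that of $N$, and then conclude from the strict concavity of $f_\alpha$ (\Cref{appPrelim}), exactly as in the $B_2$ case of \cite{bienvenuRevisitingShaoSokals2021}. I read the construction of Fig.~\ref{procedure} as follows: subdivide $\overset{\rightarrow}{uv}$ by a new vertex $a$ and $\overset{\rightarrow}{uw}$ by a new vertex $b$. Then add the edge $a\to b$ to get $N'$, or $b\to a$ to get $N''$. Since neither of $v,w$ is an ancestor of the other, both networks are acyclic. The new vertices are not leaves, so $N$, $N'$ and $N''$ have the same leaf set and $H_\alpha(G)=\sum_{\ell} f_\alpha(p_\ell)$ applies to all three.

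\textbf{Step 1 (affine parametrization).} Let $d$ be the out-degree of $u$ in $N$. Let $r>0$ be the probability that the walk visits $u$; this is positive because every vertex is reachable, and $u$ is visited at most once since $N$ is acyclic. Decompose $p_\ell$ according to whether the walk passes through $u$ and, if so, which child it takes. Since the modification only affects the two edges out of $u$ toward $v$ and $w$, the contribution of all other trajectories is the same in the three networks. In $N$, the walk leaves $u$ toward $v$ or $w$ with probability $1/d$ each.

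In $N'$, the walk goes from $u$ to $a$ with probability $1/d$. From $a$ it reaches $v$ or $b$ with probability $1/2$ each, and $b$ leads to $w$. So it ends up at $v$ with probability $\tfrac{1}{2d}$ and at $w$ with probability $\tfrac{3}{2d}$. In $N''$ these two values are exchanged. After reaching $v$ or $w$, the walk behaves as in $N$. Hence, with $x_\ell = r\,(P_w(\ell)-P_v(\ell))$ and $\delta=\tfrac{1}{2d}$,
\begin{equation*}
p'_\ell = p_\ell + \delta x_\ell, \qquad p''_\ell = p_\ell - \delta x_\ell \qquad\text{for every leaf } \ell .
\end{equation*}

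\textbf{Step 2 (concavity).} For each leaf, strict concavity of $f_\alpha$ on $[0,1]$ gives
\begin{equation*}
f_\alpha(p_\ell+\delta x_\ell)+f_\alpha(p_\ell-\delta x_\ell)\leqslant 2f_\alpha(p_\ell),
\end{equation*}
with equality if and only if $x_\ell=0$. Summing over leaves yields $H_\alpha(N')+H_\alpha(N'')\leqslant 2H_\alpha(N)$, and the bound on the minimum follows immediately. The inequality is strict if and only if some $x_\ell\neq 0$. Since $r>0$, this holds exactly when $P_v(\ell)\neq P_w(\ell)$ for some leaf $\ell$.

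\textbf{Main obstacle.} The only delicate point is the bookkeeping in Step 1, namely checking that trajectories avoiding $u$ are unaffected. If $N$ is allowed to be infinite, the same computation must also be run on the boundary measure rather than on leaves. In that case I would apply the argument to the truncations $[N]_k$ and pass to the limit, or restrict to finite networks as in the intended applications.
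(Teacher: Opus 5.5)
Your proof is correct and follows the same route as the paper, which observes that $p'_\ell + p''_\ell = 2p_\ell$ for every leaf and concludes by the (strict) concavity of $f_\alpha$. Your explicit formula $p'_\ell - p_\ell = p_\ell - p''_\ell = \tfrac{r}{2d}\,(P_w(\ell)-P_v(\ell))$ is a worthwhile refinement, since it is exactly what justifies the stated equality case, which the paper leaves implicit.
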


\begin{figure}[H]
    \centering
    \begin{tikzpicture}[main/.style = {draw, circle, minimum size = 7pt, inner sep =1pt}]
    \tikzstyle{dot}=[draw,circle,gray,
                  top color= white, text=gray,minimum size=7pt, inner sep = 1]
    \tikzstyle{phantom}=[opacity = 0, minimum size = 7pt]
    \node[main, label ={[font=\scriptsize] $u$}] (u) at (-6.5, 0) {};
    \node[main, label ={[font=\scriptsize]below: $v$}] (v) at (-7.5, -2) {};
    \node[main, label ={[font=\scriptsize]below: $w$}] (w) at (-5.5, -2) {};
    \draw[->] (u) -- (v);
    \draw[->] (u) -- (w);
    \node[main, label ={[font=\scriptsize] $u$}] (u1) at (0, 0) {};
    \node[main, label ={[font=\scriptsize]below: $v$}] (v1) at (-1, -2) {};
    \node[main, label ={[font=\scriptsize]below: $w$}] (w1) at (1, -2) {};
    \node[dot] (a) at (-0.5, -1.25) {};
    \node[dot] (b) at (0.5, -0.75) {};
    \draw[->] (u1) -- (a);
    \draw[->] (u1) -- (b);
    \draw[->] (a) -- (v1);
    \draw[->] (b) -- (w1);
    \draw[->] (b) -- (a);
    \node[main, label ={[font=\scriptsize] $u$}] (u2) at (4, 0) {};
    \node[main, label ={[font=\scriptsize]below: $v$}] (v2) at (3, -2) {};
    \node[main, label ={[font=\scriptsize]below: $w$}] (w2) at (5, -2) {};
    \node[dot] (c) at (3.5, -0.75) {};
    \node[dot] (d) at (4.5, -1.25) {};
    \draw[->] (u2) -- (c);
    \draw[->] (u2) -- (d);
    \draw[->] (c) -- (v2);
    \draw[->] (d) -- (w2);
    \draw[->] (c) -- (d);
    \node[phantom] (depart) at (-5, -1) {};
    \node[phantom] (arrivee) at (-1.5, -1) {};
    \draw[dashed, ->] (depart) -- (arrivee);
    \node at (2, -1) {and};
\end{tikzpicture}
\caption{Insertion of an edge from
$\vec{uw}$ to $\vec{uv}$ (left) and from $\vec{uv}$ to $\vec{uw}$ (right).}
\label{procedure}
\end{figure}

\begin{Proo}
Letting $p_\ell$, $p'_\ell$ and $p_\ell''$ be the probabilities of reaching a
fixed leaf $\ell$ in $N$, $N'$ and $N''$, respectively, we have
\begin{equation*}
    p_\ell' + p_\ell'' = 2 p_\ell.
\end{equation*}
Using the concavity of $f_\alpha$ and summing these inequalities over the set
of leaves yields
\begin{equation*}
    H_\alpha(N') + H_\alpha(N'') \leqslant 2 H_\alpha(N),
\end{equation*}
and the equality case follows from the strict concavity of $f_\alpha$.
\end{Proo}

Finally, the next result is a direct adaptation of
\cite[Lem.~2.12]{bienvenuRevisitingShaoSokals2021}; since the proof is
exactly the same, we do not recall it.

\begin{Lemm}
Let $N$ be a tree-child network with $n$ leaves. If $N$ has a reticulation whose child is not a leaf, then there exists a tree-child network $N^\star$ with $n$ leaves such that $H_\alpha(N^\star) < H_\alpha(N)$.
\end{Lemm}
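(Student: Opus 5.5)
The plan is to follow the proof of \cite[Lem.~2.12]{bienvenuRevisitingShaoSokals2021}, with the Shannon-entropy computation replaced by the concavity argument of \Cref{constructionMin} and \Cref{sign}.

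Let $r$ be a reticulation of $N$ whose unique child $c$ is not a leaf, and let $u_1,u_2$ be the two parents of $r$. By the tree-child property, every non-leaf vertex has a child that is a tree vertex or a leaf. Hence $c$ is a tree vertex, and it has at least one non-reticulation child. The idea is to ``push the reticulation down'': I will modify $N$ locally around $r$ and $c$ so that the number of leaves and the tree-child property are both preserved.

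First, consider the networks obtained by moving the reticulation below $c$. Write $c_1$ and $c_2$ for the children of $c$ (in the binary case). Let $N'$ be the network in which $u_1$ is attached directly to the edge $c\to c_1$, and $N''$ the network in which it is attached to the edge $c\to c_2$. Symmetrically, $u_2$ is reattached so that the walk coming from $u_1,u_2$ is redistributed. The key point, as in \Cref{constructionMin}, is that the probabilities $p_\ell$ of reaching each leaf in $N$ are the average of the corresponding probabilities in $N'$ and $N''$. This holds because, in $N$, the mass arriving at $c$ splits evenly between $c_1$ and $c_2$, whereas in $N'$ and $N''$ the mass from one parent is sent preferentially one way or the other.

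Strict concavity of $f_\alpha$ then gives $H_\alpha(N')+H_\alpha(N'')\leqslant 2H_\alpha(N)$. This inequality is strict unless $P_{c_1}(\ell)=P_{c_2}(\ell)$ for every leaf $\ell$. Consequently $\min\{H_\alpha(N'),H_\alpha(N'')\}<H_\alpha(N)$ in the strict case, and I take $N^\star$ to be the minimizer.

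Second, I will check that both $N'$ and $N''$ are tree-child networks with $n$ leaves. This is the combinatorial bookkeeping of the original lemma, and it carries over verbatim because it does not involve $\alpha$.

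The main obstacle is the degenerate case where $P_{c_1}=P_{c_2}$ as distributions on leaves. In a tree-child network this should be impossible: each of $c_1,c_2$ has a tree path leading to a leaf that it reaches with positive probability. I would show that such a leaf reached from $c_1$ is not reachable from $c_2$, or at least that the two probabilities differ. The fallback, if this fails, is to use \Cref{sign} on a pair of leaves whose probabilities are exchanged.
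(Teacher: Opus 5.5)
Your analytic step is sound if $N'$ and $N''$ move only the head of $u_1\to r$ and suppress $r$, so that $u_2\to c$ survives. Reattaching $u_2$ as well would leave $c$ without a parent. With that reading, $p'_\ell+p''_\ell=2p_\ell$ and $p'_\ell-p''_\ell=\tfrac{p_{u_1}}{2}\big(P_{c_1}(\ell)-P_{c_2}(\ell)\big)$. Strict concavity of $f_\alpha$ is exactly the $\alpha$-dependent input the paper says is needed; the paper gives no argument of its own and simply defers to \cite[Lem.~2.12]{bienvenuRevisitingShaoSokals2021}. The degenerate case you single out is also not the real difficulty. When $c_1$ is a tree vertex or a leaf, the leaf at the end of a tree path from $c_1$ can only be reached through $c_1$, hence through $c$, so $P_{c_2}$ gives it mass $0$.

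The genuine gap is the step you dismiss as bookkeeping that ``carries over verbatim'': for your construction it is false. The tree-child condition only forces \emph{one} child of $c$, say $c_2$, to be a tree vertex or a leaf; the other child $c_1$ may be a reticulation. In that case $N'$ and $N''$ both fail to be tree-child, whichever parent of $r$ you move:
in $N'$, the new reticulation on $c\to c_1$ has the reticulation $c_1$ as its only child;
in $N''$, the vertex $c$ has two reticulation children.
A better choice of $r$ does not avoid this. Take the network with edges $\rho\to u_1$, $\rho\to u_2$, $u_1\to r$, $u_1\to\ell_1$, $u_2\to r$, $u_2\to x$, $x\to c_1$, $x\to\ell_2$, $r\to c$, $c\to c_1$, $c\to\ell_3$, $c_1\to\ell_4$. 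There $r$ is the only reticulation with a non-leaf child, and its child $c$ has the reticulation child $c_1$. So your argument only covers the case where both children of $c$ are tree vertices or leaves; the remaining case needs a different idea. The natural tree-child-preserving local move in that case is to delete $r$ and $c$ and add $u_1\to c_1$, $u_2\to c_2$ (or the swapped version). This does average to $(p_\ell)$, but the two networks differ by $\tfrac{p_{u_1}-p_{u_2}}{2}(P_{c_1}-P_{c_2})$, which vanishes in the example above since $p_{u_1}=p_{u_2}=1/2$. There the strict decrease has to be found elsewhere, for instance by an edge insertion at the root as in \Cref{constructionMin}.
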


\subsection{Proofs of \Cref{binaryBound,,boundsTemporal}}

We begin with the proof of \Cref{binaryBound}, which we recall here
for convenience.

\begin{Theonn}
    Let $T$ be a rooted binary tree with $n$ leaves. Then,
    \begin{equation*}
        H_\alpha(\Cat(n)) \leqslant H_\alpha(T) \leqslant H_\alpha(\mathrm{CB}(\lfloor \log_2(n) \rfloor)) + \left(n-2^{\lfloor \log_2(n)\rfloor} \right) \cdot 2^{-\alpha \lfloor \log_2(n) \rfloor},
    \end{equation*}
    where we recall that $\Cat(n)$ denotes the caterpillar tree with $n$ leaves
    and $\mathrm{CB}(h)$ denotes the complete binary tree with height $h$.
    Moreover, for $\alpha > 0$ these bounds are sharp and:
    \begin{enumerate}[label=\roman*.]
        \item The caterpillar tree $\Cat(n)$ is the only rooted binary tree
          with $n$ leaves that minimizes $H_\alpha$.
        \item The rooted binary trees with $n$ leaves that maximize
          $H_\alpha$ are exactly the trees such that the difference
          between the height of any two leaves is at most $1$. \qedhere
    \end{enumerate}
\end{Theonn}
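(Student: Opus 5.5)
The plan is to work throughout with the depth representation of \Cref{HalphaBinaire}: for a binary tree $T$ with $n$ leaves, $p_\ell = 2^{-\delta_\ell}$, so
\[
H_\alpha(T) = \frac{1}{1-2^{1-\alpha}}\Bigl(1-\sum_{\ell} 2^{-\alpha\delta_\ell}\Bigr).
\]
Equivalently, by \cref{graftingcherry}, $H_\alpha(T)=\sum_{v} p_v^\alpha$, where the sum runs over the $n-1$ internal vertices $v$ and $p_v=2^{-\delta_v}$. This holds because $T$ is built from the single vertex by $n-1$ successive cherry graftings, and each grafting adds $p_v^\alpha$, where $v$ is the leaf being expanded. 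Both bounds then become optimization problems over the multiset of internal-vertex depths.

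For the lower bound, I would show that, for every $k$, the $k$-th smallest internal depth is at most $k-1$. Indeed, the internal vertices form a rooted subtree containing the root, so its vertices can be ordered so that each one's parent comes earlier; the $k$-th vertex in this order has depth at most $k-1$. Hence, after sorting,
\[
H_\alpha(T)=\sum_{v} 2^{-\alpha\delta_v}\geq \sum_{k=0}^{n-2}2^{-\alpha k}=H_\alpha(\Cat(n)).
\]
Equality forces the internal depths to be exactly $0,1,\dots,n-2$, so the internal vertices form a path, which means $T=\Cat(n)$; this gives (i) for $\alpha>0$. As a cross-check, one could also argue by induction: replace $T$ by a caterpillar through local moves, using \cref{graftingtwophylogeny} together with the induction hypothesis on the two subtrees, so that the first proof does not need to be relied on blindly.

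For the upper bound, I would maximize using strict concavity of $f_\alpha$ through \Cref{sign}. Suppose $T$ has leaves $\ell_1,\ell_2$ with $\delta_{\ell_1}\geq \delta_{\ell_2}+2$, and choose $\ell_1$ together with its sibling leaf at maximal depth $d$. Remove this cherry (so its parent becomes a leaf at depth $d-1$) and graft a cherry onto $\ell_2$. The leaf multiset changes from $\{d,d,\delta_2\}$ to $\{d-1,\delta_2+1,\delta_2+1\}$. The rearrangement can also be written as swapping subtrees with probabilities exchanged; by \Cref{sign}, or directly from \cref{graftingcherry}, the change in $H_\alpha$ is
\[
2^{-\alpha\delta_2}-2^{-\alpha(d-1)}>0.
\]
This computation is the cleanest route: removing a cherry at parent depth $d-1$ subtracts $2^{-\alpha(d-1)}$, and adding one at $\ell_2$ adds $2^{-\alpha\delta_2}$. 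Thus any maximizer has all leaf depths within $1$ of each other. Conversely, all such trees share the same depth multiset, namely $2^{h+1}-n$ leaves at depth $h$ and $2(n-2^h)$ at depth $h+1$, where $h=\lfloor\log_2 n\rfloor$. Hence all of them have the same value, which is obtained from $\mathrm{CB}(h)$ by grafting $n-2^h$ cherries at depth $h$, giving $H_\alpha(\mathrm{CB}(h))+(n-2^h)2^{-\alpha h}$.

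The main obstacle is making the ordering and equality-case arguments airtight. In particular, it must be justified that the improving move strictly increases $H_\alpha$ for every $\alpha>0$, which needs $d-1>\delta_2$. It must also be shown that a finite number of moves terminates at a balanced tree; for this I would use the potential $\sum_\ell \delta_\ell^2$, which strictly decreases under each move.
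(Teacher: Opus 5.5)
Your proof is correct. The upper-bound half is essentially the paper's argument. The paper also moves a cherry with parent $v$ onto a leaf $\ell$, reads off the change $2^{-\alpha\delta_\ell}-2^{-\alpha\delta_v}$ from \cref{graftingcherry}, and concludes that maximizers have no leaf shallower than the parent of a cherry. Two of your side remarks there are not needed. The appeal to \Cref{sign} does not literally apply, since the move changes three leaf probabilities rather than two. The potential $\sum_\ell\delta_\ell^2$ can be replaced by noting that the class is finite, so a maximizer exists.

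For the lower bound you take a genuinely different route. The paper runs the same local move in the other direction: a minimizer cannot contain a leaf deeper than the parent of a cherry. Only $\Cat(n)$ has this property, since in any tree with two cherries, moving the shallower cherry onto a leaf of the other strictly decreases $H_\alpha$. You instead use $H_\alpha(T)=\sum_{v\ \mathrm{internal}}2^{-\alpha\delta_v}$ together with the global termwise bound $d_{(k)}\le k-1$ on the sorted internal depths. This gives the inequality and the uniqueness of the equality case at once, without invoking existence of extremizers or the fact that any binary tree can be reached from any other by cherry transfers. It also shows more: $\Cat(n)$ minimizes $\sum_{v\ \mathrm{internal}} g(\delta_v)$ for every nonincreasing $g$.

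The same viewpoint would also give the upper bound without any local moves. At most $2^j$ internal vertices lie at depth $j$, so $d_{(k)}\ge\lfloor\log_2 k\rfloor$. Summing $2^{-\alpha\lfloor\log_2 k\rfloor}$ over $1\le k\le n-1$ gives exactly $H_\alpha(\mathrm{CB}(h))+(n-2^h)2^{-\alpha h}$ with $h=\lfloor\log_2 n\rfloor$. Equality holds precisely when internal levels $0,\dots,h-1$ are full, that is, for the trees whose leaf depths differ by at most $1$. This would make both bounds a single sorted-depth comparison.
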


\begin{Proo}
The proof is a direct adaptation of that of
\cite[Thm.~2.3]{bienvenuRevisitingShaoSokals2021}.
Indeed, in a binary tree $T$ with $n$ leaves, consider a cherry (formed by two
leaves and their parent $v$) and a leaf $\ell$ that is not in the cherry.
Denoting by $T'$ the tree resulting from transferring the cherry to $\ell$,
by \Cref{graftingcherry},
\begin{equation*}
     H_\alpha(T') - H_\alpha(T) =
     2^{- \alpha \delta_\ell} - 2^{- \alpha \delta_v}.
\end{equation*}
Since we can turn any binary tree into another by successively transferring
cherries, we deduce that:
\begin{itemize}
    \item Trees minimizing $H_\alpha$ are trees that do not have a leaf
      whose depth is greater than that of the parent of a cherry; only the
      caterpillar tree satisfies that condition.
    \item Trees maximizing $H_\alpha$ are trees that do not have
      a leaf whose depth is smaller than that of the parent of a cherry.
\end{itemize}
Note that trees that match the second condition are trees that are built as the
complete binary tree with $2^{\lfloor \log_2(n) \rfloor}$ leaves where
cherries are grafted on $n - 2^{\lfloor \log_2(n) \rfloor}$ of those leaves.
Thus, \Cref{graftingcherry} yields the computation of the upper bound.
\end{Proo}

We now recall the statement of \Cref{boundsTemporal}.
\begin{Theonn}
    Assume that $\alpha \leqslant 1$. 
    For every temporal tree-child network $N$ with
    $n$ leaves, \begin{equation*} H_\alpha(\Cat(n)) \leqslant H_\alpha(N)
      \leqslant H_\alpha(\mathrm{CB}(\lfloor \log_2(n) \rfloor)) +
      \left(n-2^{\lfloor \log_2(n)\rfloor} \right) \cdot 2^{-\alpha \lfloor
      \log_2(n) \rfloor}. \qedhere \end{equation*}
\end{Theonn}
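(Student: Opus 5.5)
The plan is to adapt the proof of the corresponding result for $B_2$ in \cite{bienvenuRevisitingShaoSokals2021}, using the preliminary lemmas of \Cref{appPrelim}. A temporal tree-child network with $n$ leaves can be obtained from a binary tree with $n$ leaves by inserting reticulation edges, each between two edges $\overset{\rightarrow}{uv}$ and $\overset{\rightarrow}{uw}$ leaving a common ``time slice'' as in Fig.~\ref{procedure}. Equivalently, we can decompose $N$ by successively removing reticulations from the bottom. The argument therefore has two parts: a lower bound reducing to \Cref{binaryBound}, and an upper bound by induction on the number of reticulations.

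\emph{Lower bound.} Take a minimizer $N$ of $H_\alpha$ over temporal tree-child networks with $n$ leaves; the class is finite up to isomorphism, since the number of reticulations is bounded in terms of $n$.
\begin{itemize}
\item By the last lemma of \Cref{appPrelim}, every reticulation of $N$ has a leaf as its child.
\item Such a reticulation $r$ with leaf child $\ell$ is fed by two parents. Using \Cref{constructionMin} in reverse, or \Cref{sign} applied to the leaves whose probabilities change, I would show that deleting one of the two incoming edges yields a network with no larger $H_\alpha$. The deletion should be followed by suppressing the resulting degree-two vertices and adjusting so that the network stays temporal tree-child with $n$ leaves.
\item Iterating removes all reticulations and leaves a binary tree, to which \Cref{binaryBound} gives $H_\alpha \geq H_\alpha(\Cat(n))$.
\end{itemize}
The delicate point is that removing an edge must not increase $H_\alpha$. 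I expect this to follow from the strict concavity of $f_\alpha$ exactly as in the $B_2$ case, because only the signs given by \Cref{sign} matter, and those are identical for every concave $f_\alpha$.

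\emph{Upper bound.} This is where $\alpha\le 1$ is needed. For $\alpha \le 1$, the quantity $H_\alpha(N)-$ (upper bound) should be controlled through \Cref{HalphaBinaire}-type depth expansions. I would show that in a temporal tree-child network each leaf $\ell$ satisfies $p_\ell \geq 2^{-h_\ell}$ for a suitable depth $h_\ell$, with $\sum_\ell 2^{-h_\ell}\le 1$. The temporal structure then lets us compare the vector $(p_\ell)$ with the leaf distribution of the maximally balanced binary tree through majorization, as in \Cref{remaClassement}. Concretely, I would proceed by induction on the number of reticulations: resolving a bottom-most reticulation into a cherry-like configuration whose two parents lie at the same time level changes $(p_\ell)$ by a transfer of mass between two leaves. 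By \Cref{sign}, such a transfer moves the distribution toward uniformity, hence increases $H_\alpha$. We eventually reach a binary tree, and the upper bound of \Cref{binaryBound} applies.

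The main obstacle is this upper-bound step. One must verify that the transfer goes in the ``equalizing'' direction, and this is the place where the condition $\alpha\le1$ enters, presumably through the comparison $2^{-\alpha k}$ versus $p^\alpha$ being convex-favourable only when $1-2^{1-\alpha}\le 0$ or when the normalization is unbounded. I would first try to prove the required inequality for a single reticulation whose child is a leaf by direct computation with $f_\alpha$, then extend it by the grafting property \Cref{graftingproperty} to reticulations higher in the network.
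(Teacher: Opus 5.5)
\textbf{The reticulation-removal step fails as stated, and it carries the whole argument.}

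\Cref{sign} only handles a change in which exactly two leaf probabilities move. Deleting a reticulation edge instead changes the mass entering a whole pendant subnetwork. Write $r$ for the reticulation, $a,b$ for its parents, $u,v$ for its siblings under $a,b$, $w$ for its child, and $p_w=p_u+p_v$. Deleting $b\to r$ doubles the mass entering $N_v$ and lowers the mass entering $N_w$ to $p_u$. By \Cref{graftingproperty}, the change in $H_\alpha$ is $\Delta_{\mathrm{remove}}+(2^\alpha-1)p_v^\alpha H_\alpha(N_v)-(p_w^\alpha-p_u^\alpha)H_\alpha(N_w)$. Only the stub term $\Delta_{\mathrm{remove}}$ is governed by \Cref{sign}.

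The middle term is nonnegative and can dominate even when $w$ is a leaf. Take a root with children $a,b$; let $a$ have children $u,r$ and $b$ have children $v,r$; let $r$ have a leaf child; let $u,v$ be roots of cherries. This network is temporal tree-child (times $0$ at the root, $1$ at $a,b,r$, and so on). Its leaf distribution is $(1/2,1/8,1/8,1/8,1/8)$. Deleting either reticulation edge gives $(1/4,1/4,1/4,1/8,1/8)$, which is majorized by the original, so $H_\alpha$ strictly increases for every $\alpha>0$ ($B_2$ goes from $2$ to $9/4$). Your upper-bound induction rests on the same ``two-leaf transfer'' and is in any case only sketched.

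Separately, you cannot reduce to leaf-child reticulations via the last lemma of \Cref{appPrelim}. That lemma concerns tree-child networks, and its $N^\star$ need not be temporal, so it does not constrain a minimizer within the temporal class. Temporality has to be used somewhere: $\FCat(n)$ is tree-child but not temporal, and already $H_\alpha(\FCat(2))<1=H_\alpha(\Cat(2))$.

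\textbf{The missing idea is a crosswise regrafting.} Remove reticulations in decreasing order of temporal label, so that $N_u,N_v,N_w$ are pendant trees. Assume $p_u\le p_v$. If needed, swap $N_w$ and $N_v$ so that $H_\alpha(N_w)\le H_\alpha(N_v)$; this cannot increase $H_\alpha$, since $p_w\ge p_v$. Then ungraft $N_v$ and $N_w$, delete $b\to r$, and regraft $N_w$ in the slot of mass $2p_v$ and $N_v$ in the slot of mass $p_u$.

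\Cref{sign} gives $\Delta_{\mathrm{remove}}\le 0$. The grafting term is $\Delta_{\mathrm{graft}}=H_\alpha(N_w)(2^\alpha p_v^\alpha-p_w^\alpha)-H_\alpha(N_v)(p_v^\alpha-p_u^\alpha)$. Using $p_w\le 2p_v$, showing $\Delta_{\mathrm{graft}}\le 0$ reduces to $h_\alpha(p_w/p_v)\le 0$, where $h_\alpha(x)=(x-1)^\alpha-x^\alpha+2^\alpha-1$. For $\alpha\le1$, $h_\alpha$ is nondecreasing on $[1,2]$ and vanishes at $2$. This is where $\alpha\le1$ enters the paper's proof, not through a majorization comparison for the upper bound.

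Exchanging the roles of $u$ and $v$ gives the companion move for the upper bound. Iterating produces binary trees $T',T''$ with $H_\alpha(T')\le H_\alpha(N)\le H_\alpha(T'')$, and \Cref{binaryBound} concludes.
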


\begin{Proo}
This proof is a direct adaptation of the proof of
\cite[Thm.~2.7]{bienvenuRevisitingShaoSokals2021}, which covers the
case $\alpha = 1$.
The idea is to show that for any temporal tree-child network $N$ one can find
two binary trees $T'$ and $T''$ such that
\[
  H_\alpha(T') \leqslant H_\alpha(N) \leqslant H_\alpha(T'').
\]
To do so, we remove each reticulation successively, starting from the ones
with maximal temporal labeling and working our way up towards the root.
At each step, there are two ways to remove the reticulation, depending
on which reticulated edge is deleted: these correspond to
two temporal tree-child networks $N'$ and $N''$. To conclude the proof,
it suffices to show that $N'$ and $N''$ can be chosen such
that $H_\alpha(N') \leqslant H_\alpha(N) \leqslant H_\alpha(N'')$.

Let us start by building $N'$.
Let $r$ be a reticulation and denote the siblings of $r$
by $u$ and $v$, and its child by~$w$. Write $N_u$, $N_v$ and $N_w$ for the
trees subtended by $u$, $v$ and $w$.
Finally, let $p_u, p_v$ and $p_w = p_u +p_v$ be the probabilities that the
simple random walk goes through $u$, $v$ and $w$, respectively.
Assume without loss of generality that $p_u \leqslant p_v$.
Also assume without loss of generality that
$H_\alpha(N_w) \leqslant H_\alpha(N_v)$
(if not, swap $N_w$ and $N_v$: by the grafting property, this
yields a network $\tilde{N}$ such that
$H_\alpha(\tilde{N}) \leqslant H_\alpha(N)$, and we can carry on with
$\tilde{N}$ instead of $N$ in the rest of the proof).

Now, build $N'$ from $N$ by: ungrafting $N_v$ and $N_w$; removing the
reticulation; and regrafting $N_v$ and $N_w$.
By \Cref{graftingproperty}, we have
\begin{equation*}
  H_\alpha(N') - H_\alpha(N) = H_\alpha (G') - H_\alpha(G) +
  H_\alpha(N_w) (2^\alpha p_v^\alpha - p_w^\alpha) - H_\alpha(N_v) (p_v^\alpha
  - p_u^\alpha),
\end{equation*}
where $G$ (resp. $G'$) is the subnetwork of $N$ (resp. $N'$) that remains
unchanged under the grafting operations. Write
$\Delta_{\text{remove}} = H_\alpha(G') - H_\alpha(G)$  and
$\Delta_{\text{graft}} =
H_\alpha(N_w) (2^\alpha p_v^\alpha - p_w^\alpha) - H_\alpha(N_v) (p_v^\alpha -
  p_u^\alpha) $.
Since, by \Cref{sign}, {$\Delta_{\text{remove}}
\leqslant 0$}, we need to show that $\Delta_{\text{graft}} \leqslant 0$. 
Now, because $2p_v \geqslant p_w$, we have that
$H_\alpha(N_w)(2^\alpha p_v^\alpha -
p_w^\alpha) \leqslant H_\alpha(N_v)(2^\alpha p_v^\alpha - p_w^\alpha)$.
Thus,
\begin{equation*}
  p_v^\alpha-p_u^\alpha \geqslant 2^\alpha p_v^\alpha -p_w^\alpha
  \;\implies\;
  H_\alpha(N_w)(2^\alpha p_v^\alpha - p_w^\alpha) \leqslant
  H_\alpha(N_v)(p_v^\alpha - p_u^\alpha).
\end{equation*}
Recalling that $p_w = p_u +p_v$, 
\begin{eqnarray*}
  p_v^\alpha - (p_w -p_v)^\alpha \geqslant 2^\alpha p_v^\alpha - p_w^\alpha &
  \iff & p_v^\alpha (1-2^\alpha) \geqslant (p_w-p_v)^\alpha - p_w^\alpha \\&
  \iff & 1-2^\alpha \geqslant \left( \frac{p_w}{p_v} -1 \right)^\alpha -
  \left( \frac{p_w}{p_v} \right)^\alpha \\ & \iff & \left(\frac{p_w}{p_v} -1
  \right)^\alpha - \left(\frac{p_w}{p_v} \right)^\alpha + 2^\alpha -1
  \leqslant 0.
\end{eqnarray*}
Now, let $h_\alpha : x \mapsto (x-1)^\alpha - x^\alpha + 2^\alpha -1$, and note
that $h_\alpha$ is increasing for $\alpha < 1$.
Since $1 < \frac{p_w}{p_v} \leqslant 2$ (because $0<p_v<p_w$ and $p_w
\leqslant 2p_v$) and $h_\alpha(2) =0$, we have that $h_\alpha \leqslant 0$
on $[1; 2]$. This proves that
$\Delta_{\text{graft}} \leqslant 0$, and therefore that
$H_\alpha(N') \leqslant H_\alpha(N)$ when $\alpha < 1$.

The network $N''$ such that $H_\alpha(N) \leqslant H_\alpha(N'')$
is obtained by the same construction,
with the roles of  $u$ and $v$ exchanged.
This concludes the proof.
\end{Proo}

We do not present the proof of \Cref{boundTreeChild}, since it is identical to
that of \cite[Thm.~2.8]{bienvenuRevisitingShaoSokals2021}.

%\begin{Theonn}
%Let $N$ be a tree-child network with $n$ leaves. Then,
%\begin{equation*}
%H_\alpha(N) \geqslant H_\alpha(\FCat(n)),
%\end{equation*}
%where $\FCat(n)$ is the fat caterpillar with $n$ leaves
%(see Section~\ref{basicProperties}).
%\end{Theonn}
%
%\begin{proof}
%We end this section by giving a short explanation of this result. Once again it
%is a straightforward adaptation of . Indeed all the key elements for the
%proof were given in \Cref{appPrelim}. As in
%\cite{bienvenuRevisitingShaoSokals2021} we can show that $N$ minimizes
%$H_\alpha$ under the condition of not having two reticulations $r$ and $r'$
%such that one parent of $r$ is an ancestor of $r'$ and vice versa. Only the Fat
%Caterpillar with $n$ leaves $\FCat(n)$ satisfies this condition, concluding the
%proof.
%\end{proof}

\section{Galton--Watson trees: proofs} \label{appGW}

In this section, we prove the results that were stated without proof
in \Cref{secGW} -- namely, \Cref{VarGW}, \Cref{momentoforderm} and
\Cref{exponentialMoment}.

\subsection[Variance and \texorpdfstring{$m$-th}{m-th} moment of \texorpdfstring{$H_\alpha$}{H-alpha}]{%
Variance and \texorpdfstring{$\boldsymbol m$-th}{m-th} moment of \texorpdfstring{$\boldsymbol H_{\boldsymbol \alpha}$}{H-alpha}}

Let us start with \Cref{VarGW}, whose statement we recall here for convenience. 

\begin{Theonn}
Let $T$ be a Galton--Watson tree with offspring distribution $\xi$, where
$\mathds{P}(\xi = 1) < 1$.
Then, $\Var(H_\alpha(T))$ is finite if and only if
$\mathds{E}[\xi^{1-\alpha}\mathds{1}_{\{\xi \neq 0\}}] < 1$ and
$\mathds{E}[\xi^{2(1-\alpha)}\mathds{1}_{\{\xi \neq 0\}}] < + \infty$.
In this case, 
\begin{equation*}
    \mathrm{Var}(H_\alpha(T)) = 
    \frac{\mathds{P}(\xi = 0)}{(1-2^{1-\alpha})^2
    \left(1-\mathds{E}\left[ \xi^{1-2\alpha}
    \mathds{1}_{\{\xi \neq 0\}} \right] \right)}
    \left(1+ \frac{\left( \mathds{E}\left[ \xi^{2(1-\alpha)}
    \mathds{1}_{\{\xi \neq 0\}} \right] -1 \right)
    \mathds{P}(\xi = 0)}{\left(1- \mathds{E}\left[ \xi^{1-\alpha}
    \mathds{1}_{\{\xi \neq 0\}} \right] \right)^2} \right). \qedhere
\end{equation*}
\end{Theonn}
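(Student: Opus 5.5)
The plan is to avoid squaring the recursion by brute force. Instead, I would apply the law of total variance to the recursive distributional equation Eq.~\eqref{recurrenceGWdistri}, first on the truncations $X_k = H_\alpha([T]_k)$ and then in the limit. Throughout, write $\beta = 1-2^{1-\alpha}$, $b = \E[\xi^{1-\alpha}\1_{\{\xi\neq0\}}]$, $b_2 = \E[\xi^{1-2\alpha}\1_{\{\xi\neq 0\}}]$ and $m = \E[H_\alpha(T)]$. By \cref{expecGW}, $m$ is finite if and only if $b<1$.

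\emph{Necessity.} By Eq.~\eqref{eqRec} the sequence $(X_k)$ is nondecreasing with $X_k \uparrow H_\alpha(T)$, so monotone convergence gives $\E[X_k^2]\uparrow \E[H_\alpha(T)^2]$.
\begin{itemize}
\item If $b\geq 1$, the mean is already infinite, hence so is the second moment.
\item If $\alpha<1$ and $\E[\xi^{2(1-\alpha)}\1_{\{\xi\neq0\}}]=\infty$, then $\E[X_1^2] = \E[f(\xi)^2] = \infty$, because $f(i)\asymp i^{1-\alpha}$. Monotonicity then gives $\E[H_\alpha(T)^2]=\infty$.
\item If $\alpha>1$, the moment condition holds automatically since $\xi^{2(1-\alpha)}\le 1$ on $\{\xi\ge1\}$.
\end{itemize}

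\emph{Sufficiency.} Assume both conditions hold. Since $\xi^{1-2\alpha}\leq \xi^{1-\alpha}$ on $\{\xi\geq1\}$, we get $b_2\leq b<1$. Squaring Eq.~\eqref{eqRec} and taking expectations, using the independence of the copies, gives
\[
\E[X_k^2] = \E[f(\xi)^2] + 2\E[f(\xi)\xi^{1-\alpha}]\,m_{k-1} + b_2\,\E[X_{k-1}^2] + \E[\xi^{1-2\alpha}(\xi-1)\1_{\{\xi\neq0\}}]\,m_{k-1}^2 ,
\]
where $m_{k-1} = \E[X_{k-1}]\le m$. Every coefficient is finite under our assumption: each term is bounded by a multiple of $\E[\xi^{2(1-\alpha)}\1_{\{\xi\neq0\}}]$ plus constants. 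This is a linear recursion with contraction factor $b_2<1$ and bounded inhomogeneous term. Hence $\sup_k \E[X_k^2]<\infty$, and by monotone convergence $\E[H_\alpha(T)^2]<\infty$.

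\emph{Computing the variance.} With finiteness established, the same decomposition now holds for $H_\alpha(T)$ itself. The infinite tree satisfies the distributional fixed point of Eq.~\eqref{recurrenceGWdistri}: this follows by letting $k\to\infty$, or directly from \cref{graftingproperty} applied at the root. Conditionally on $\xi=i\geq1$, we have $H_\alpha(T)\overset{d}{=} f(i)+i^{-\alpha}\sum_{j=1}^i H_j$ with $H_j$ i.i.d. Its conditional mean is $g(i)=f(i)+i^{1-\alpha}m$ and its conditional variance is $i^{1-2\alpha}v$, where $v=\Var(H_\alpha(T))$. For $\xi=0$ both equal $0$, and we set $g(0)=0$. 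The law of total variance yields
\[
v = b_2\, v + \Var(g(\xi)), \qquad\text{so}\qquad v=\frac{\Var(g(\xi))}{1-b_2}.
\]

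To evaluate $\Var(g(\xi))$, I would insert the expression for $m$ from \cref{expecGW}. Using $a=(\P(\xi\neq0)-b)/\beta$, one gets $m-1/\beta = -\P(\xi=0)/(\beta(1-b))$. Hence, for $i\geq1$,
\[
g(i)=\frac1\beta\Big(1-\frac{\P(\xi=0)}{1-b}\, i^{1-\alpha}\Big),
\]
and $g(0)=0$. Computing $\E[g(\xi)^2]-\E[g(\xi)]^2$ produces terms in $\P(\xi=0)$, $b$ and $\E[\xi^{2(1-\alpha)}\1_{\{\xi\neq0\}}]$. After routine simplification this should collapse to $\frac{\P(\xi=0)}{\beta^2}\big(1+\frac{(\E[\xi^{2(1-\alpha)}\1]-1)\P(\xi=0)}{(1-b)^2}\big)$, giving the stated formula.

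I expect the main obstacle to be making this last algebraic collapse come out cleanly. The analytic part is unproblematic: finiteness and the interchange of limits follow directly from monotonicity and the bound $b_2\le b<1$.
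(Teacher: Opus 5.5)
Your proof is correct. It follows the paper for finiteness and takes a different, cleaner route for the variance formula.

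\textbf{Finiteness.} This part matches the paper. You square the recursion \eqref{recurrenceGWdistri} and get a linear recursion for $\E[H_\alpha([T]_k)^2]$ with contraction factor $\E[\xi^{1-2\alpha}\1_{\{\xi\neq0\}}]\leq\E[\xi^{1-\alpha}\1_{\{\xi\neq0\}}]<1$, then conclude by monotone convergence. Your necessity argument is more explicit than the paper's, which only asserts that the recursion converges ``if and only if'' its coefficients are finite. You use $H_\alpha([T]_1)=f(\xi)\leq H_\alpha(T)$ together with $f(i)\asymp i^{1-\alpha}$ for $\alpha<1$. One small citation slip: the monotonicity $X_k\uparrow H_\alpha(T)$ comes from the grafting property and \Cref{Stubsequence}, not from Eq.~\eqref{eqRec}, which is only a distributional identity.

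\textbf{The variance formula.} Here the routes differ. The paper solves for the limit of the second-moment recursion and then subtracts $\E[H_\alpha(T)]^2$ by ``standard calculations.'' You apply the law of total variance to the fixed-point equation, which gives $v=\mathrm{Var}(g(\xi))/(1-\E[\xi^{1-2\alpha}\1_{\{\xi\neq0\}}])$ in one step. Expanding the paper's limit and subtracting $m^2$ produces exactly $\E[g(\xi)^2]-\E[g(\xi)]^2$ over the same denominator, so the two computations are algebraically identical. Yours has the advantage of explaining the shape of the formula: the prefactor is the conditional-variance contribution, and the bracket is the variance of the conditional mean.

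\textbf{The final algebra does collapse.} Write $p_0=\P(\xi=0)$, $c_2=\E[\xi^{2(1-\alpha)}\1_{\{\xi\neq0\}}]$ and $\gamma=p_0/(1-b)$. Then $\beta^2\mathrm{Var}(g(\xi))=p_0(1-p_0)-2\gamma b\,p_0+\gamma^2(c_2-b^2)$. Divide by $p_0$ and put everything over $(1-b)^2$. The numerator becomes $(1-b)^2-p_0\big((1-b)+b\big)^2+p_0c_2=(1-b)^2+p_0(c_2-1)$. This is exactly the bracket $1+(c_2-1)p_0/(1-b)^2$ in the statement, so you can drop the hedge ``should collapse.''
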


\begin{Proo}
Recall from Eq.~\eqref{recurrenceGWdistri} that,
letting $\xi$ denote the number of children of the root, one has
\begin{equation*}
  H_\alpha([T]_k) \;\overset{d}{=}\;
  \left(\frac{1}{1-2^{1-\alpha}}(1-\xi^{1-\alpha}) +
  \xi^{-\alpha} \sum_{j = 1}^{\xi} H_\alpha([T(j)]_{k-1}) \right)
  \1_{\{\xi \neq 0\}},
\end{equation*}
where $(T(j))_{j \geqslant 1}$ are independent copies of $T$ that are also
independent of $\xi$.
Thus,
\begin{align*}
    H_\alpha([T]_k)^2 \;\overset{d}{=}\;& \;
    \frac{1}{(1-2^{1-\alpha})^2} (1-\xi^{1-\alpha})^2 \1_{\{\xi \neq 0\}} +
    \frac{2}{1-2^{1-\alpha}} (1-\xi^{1-\alpha})\xi^{-\alpha}\1_{\{\xi \neq 0\}}
    \sum_{j=1}^\xi H_\alpha([T(j)]_{k-1}) \\
    & +\xi^{-2\alpha}\1_{\{\xi \neq 0\}} \left( \sum_{j=1}^\xi
    H_\alpha([T(j)]_{k-1})^2 + \sum_{j_1 \neq j_2} H_\alpha([T(j_1)]_{k-1})
    \cdot H_\alpha([T(j_2)]_{k-1}) \right).
\end{align*}
Hence, taking expectations and using the independence,
\begin{align*}
    \E[H_\alpha([T]_k)^2] =& \frac{1}{(1-2^{1-\alpha})^2}
    \E[(1-\xi^{1-\alpha})^2 \1_{\{\xi \neq 0\}}] + \frac{2}{1-2^{1-\alpha}}
    \E[(1-\xi^{1-\alpha})\xi^{1-\alpha}\1_{\{\xi \neq 0\}}] \cdot
    \E[H_\alpha([T]_{k-1})] \\
    & + \E[\xi^{1-2\alpha}(\xi -1)\1_{\{\xi \neq 0\}}] \cdot
    \E[H_\alpha([T]_{k-1})]^2 + \E[\xi^{1-2\alpha}\1_{\{\xi \neq 0\}}] \cdot
    \E[H_\alpha([T]_{k-1})^2].
\end{align*}
Writing $u_k = \E[H_\alpha([T]_k)]$ and $v_k = \E[H_\alpha([T]_k)^2]$,
this recurrence is of the form
\[
  v_k = a\, v_{k-1} + b\, u_{k-1} + c\, u_{k-1}^2 + d.
\]
Thus, the sequence $(v_k)_{k\geqslant 0}$ converges if and only if:
\begin{enumerate}
    \item the coefficients $a$, $b$, $c$ and $d$ are finite, i.e.\ 
      $\mathds{E}[\xi^{2(1-\alpha)}\mathds{1}_{\{\xi \neq 0\}}] < + \infty$;
    \item $u_k = \E[H_\alpha([T]_{k})]$ converges;
    \item $a = \E[\xi^{1-2\alpha}\1_{\{\xi \neq 0\}}] < 1$;
\end{enumerate}
By \Cref{expecGW}, $u_k$ converges if and only if
$\E[\xi^{1-\alpha}\1_{\{\xi \neq 0\}}] < 1$, which implies
$\E[\xi^{1-2\alpha}\1_{\{\xi \neq 0\}}] < 1$.
Hence the first part of the theorem.
After standard calculations, we get:
\begin{align*}
    \Var(H_\alpha(T)) &= \E[H_\alpha(T)^2] - \E[H_\alpha(T)]^2 \\
    &= \frac{\mathds{P}(\xi = 0)}{(1-2^{1-\alpha})^2 \left(1-\mathds{E}\left[
      \xi^{1-2\alpha} \mathds{1}_{\{\xi \neq 0\}} \right] \right)} \left(1+
    \frac{\left( \mathds{E}\left[ \xi^{2(1-\alpha)} \mathds{1}_{\{\xi \neq 0\}}
    \right] -1 \right) \mathds{P}(\xi = 0)}{\left(1- \mathds{E}\left[
      \xi^{1-\alpha} \mathds{1}_{\{\xi \neq 0\}} \right] \right)^2} \right),
\end{align*}
concluding the proof.
\end{Proo}

Let us now prove \Cref{momentoforderm}, that is:

\begin{Theonn}
Let $T$ be a Galton--Watson tree with offspring distribution $\xi$, where
$\mathds{P}(\xi = 1) < 1$.
Then, for any positive integer $m$,
$\mathds{E}[(H_\alpha(T))^m] < +\infty$ if and only if
$\mathds{E}[\xi^{(1-\alpha)m}\mathds{1}_{\{\xi \neq 0\}}] < + \infty$
and $\mathds{E}[\xi^{1-\alpha}\mathds{1}_{\{\xi \neq 0 \}}] < 1$.
\end{Theonn}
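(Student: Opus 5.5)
We would argue by induction on $m$, working with the truncations $X_k = H_\alpha([T]_k)$. Since $X_k \uparrow H_\alpha(T)$ (\Cref{monotonicity}, \Cref{Stubsequence}), monotone convergence gives $\E[H_\alpha(T)^m] = \lim_k \E[X_k^m]$. It therefore suffices to show that $M_k^{(m)} := \E[X_k^m]$ is bounded in $k$ exactly under the two stated conditions. The case $m=1$ is \Cref{expecGW}, and $m=2$ is \Cref{VarGW}; these serve as the template.

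For the recursion, we raise Eq.~\eqref{recurrenceGWdistri} to the power $m$. Write $a(\xi) = \frac{1-\xi^{1-\alpha}}{1-2^{1-\alpha}}\1_{\{\xi\neq0\}}$ and $S = \sum_{j=1}^\xi X_{k-1}^{(j)}$. Then
\[
X_k^m \overset{d}{=} \sum_{r=0}^m \binom{m}{r} a(\xi)^{m-r}\xi^{-\alpha r} S^r .
\]
Expand $S^r$ by the multinomial formula and take expectations conditionally on $\xi$, using independence. The only term involving $M_{k-1}^{(m)}$ is the diagonal part of $r=m$, namely $\E[\xi^{1-\alpha m}\1_{\{\xi\neq0\}}]\, M_{k-1}^{(m)}$. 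Every other term is a product of moments $M_{k-1}^{(i)}$ with $i<m$, multiplied by a coefficient $\E[a(\xi)^{m-r}\xi^{-\alpha r}\,N_r(\xi)]$. Here $N_r(\xi)=O(\xi^{r})$ counts the index tuples of a given pattern. Since $|a(\xi)|\le C(1+\xi^{1-\alpha})$ when $\alpha<1$, and $a$ is bounded when $\alpha>1$, each coefficient is at most $C\,\E[\xi^{(1-\alpha)m}\1_{\{\xi\neq0\}}]+C$. This gives a linear recursion
\[
M_k^{(m)} = c_m M_{k-1}^{(m)} + R_{k-1}, \qquad c_m = \E[\xi^{1-\alpha m}\1_{\{\xi\neq 0\}}],
\]
where $R_{k-1}$ is a polynomial in the lower moments with coefficients finite under the $\xi^{(1-\alpha)m}$ condition.

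\textbf{Sufficiency.} By induction, the lower moments converge; this uses $\E[\xi^{(1-\alpha)i}\1_{\{\xi\neq0\}}]<\infty$ for $i<m$, which follows from the case $i=m$ by Jensen or Hölder, together with $\xi\ge1$ on $\{\xi\neq0\}$ when $\alpha<1$. Hence $R_k$ is bounded. We also need $c_m \le \E[\xi^{1-\alpha}\1_{\{\xi\neq0\}}]<1$, which holds because $\xi^{1-\alpha m}\le \xi^{1-\alpha}$ for $\xi\ge1$. The linear recursion then stays bounded.

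\textbf{Necessity.} The condition $\E[\xi^{1-\alpha}\1_{\{\xi\neq0\}}]<1$ is needed because a finite $m$-th moment implies a finite mean, which is \Cref{expecGW}. For the other condition, assume $\alpha<1$; it is trivial for $\alpha\ge1$. On the event $\{\xi=n\}$, all terms in the expansion are nonnegative. Indeed, $a(\xi)\ge0$ for $\alpha<1$ because $1-2^{1-\alpha}<0$ and $1-\xi^{1-\alpha}\le0$. Keeping only the pure term $a(\xi)^m$ gives $\E[X_k^m]\ge \E[a(\xi)^m]\ge c\,\E[\xi^{(1-\alpha)m}\1_{\{\xi\ge 2\}}]$. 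So finiteness of the $m$-th moment forces the moment condition.

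The main obstacle will be the bookkeeping of mixed terms in the multinomial expansion, together with checking that the counting factor times $\xi^{-\alpha r}$ is dominated by $\xi^{(1-\alpha)m}$ when combined with $a(\xi)^{m-r}$. The worst case is the pattern with $r$ distinct indices, which contributes roughly $\xi^{r}\xi^{-\alpha r}\xi^{(1-\alpha)(m-r)}=\xi^{(1-\alpha)m}$. This is exactly why the threshold $(1-\alpha)m$ appears. We would handle the general case by bounding crudely: with $S\ge0$, the sum of products is at most $\xi^{r}$ times the maximal product of moments, so no exact combinatorics is needed.
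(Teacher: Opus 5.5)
Your proposal is correct and follows essentially the paper's argument: induction on $m$, raising \eqref{recurrenceGWdistri} to the $m$-th power, isolating the diagonal term $\E[\xi^{1-\alpha m}\1_{\{\xi\neq0\}}]\,\E[X_{k-1}^m]$, bounding all remaining terms by lower moments times $\xi^{(1-\alpha)m}$, and using $\xi^{1-\alpha m}\le \xi^{1-\alpha}$ on $\{\xi\ge1\}$ to obtain a contracting linear recursion. Your necessity argument, via $H_\alpha(T)\ge X_1=a(\xi)$ and $a(\xi)\ge 2^{\alpha-1}\xi^{1-\alpha}$ on $\{\xi\ge2\}$ when $\alpha<1$, is also sound; it supplies the converse direction, which the paper's proof does not spell out.
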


\begin{Proo}
Let us prove the result by induction on $m$. The case $m =1$ is dealt with by
\Cref{expecGW}.
Since $\mathds{E}[\xi^{(1-\alpha)m}\mathds{1}_{\{\xi \neq
0\}}] < + \infty$, we have
$\mathds{E}[\xi^{(1-\alpha)k}\mathds{1}_{\{\xi \neq 0\}}] < + \infty$
for all $k < m$; thus, by the induction hypothesis,
$\mathds{E}[H_\alpha(T)^k]$ is finite.
Let $C$ be such that $\mathds{E}[H_\alpha(T)^k] \leqslant C$ for all
$k = 1, \ldots, m-1$.
By Eq.~\eqref{recurrenceGWdistri}, for any fixed $n \geq 1$, 
\begin{equation} \label{recrelorderm}
  H_\alpha([T]_n)^m \;\overset{d}{=}\;
  \left( \frac{1}{1-2^{1-\alpha}} (1-\xi^{1-\alpha}) +
  \xi^{-\alpha}  \sum_{k=1}^{\xi} H_\alpha\big([T(k)]_{n-1}\big) \right)^m
  \mathds{1}_{\{\xi \neq 0\}},
\end{equation}
where $(T(k))_{k \geqslant 1}$ is a family of independent random
trees distributed as $T$ that is independent of $\xi$.
Let $A = \frac{1}{1-2^{1-\alpha}} (1-\xi^{1-\alpha})$ and
$B= \xi^{-\alpha} \sum_{k=1}^{\xi} H_\alpha([T(k)]_{n-1})$.
Eq.~\eqref{recrelorderm} then gives:
\begin{equation*}
  H_\alpha([T]_n)^m \;\overset{d}{=}\;
  B^m \mathds{1}_{\{\xi \neq 0\}} + \left(
  \sum_{k=0}^{m-1} \binom{m}{k} B^k A^{m-k} \right) \mathds{1}_{\{\xi \neq 0\}}.
\end{equation*}
Thus,
\begin{equation} \label{eqproofintegermoments01}
  \mathds{E}[H_\alpha([T]_n)^m] = \mathds{E}\left[ \left( \sum_{k=1}^\xi
  H_\alpha([T(k)]_{n-1})\xi^{-\alpha} \right)^m \mathds{1}_{\{\xi \neq 0\}}
  \right] + \sum_{k=0}^{m-1} \binom{m}{k} \mathds{E}[B^k A^{m-k}
  \mathds{1}_{\{\xi \neq 0\}}].
\end{equation}
To start, let us rewrite the first term of
Eq.~\eqref{eqproofintegermoments01}:
\begin{eqnarray*}
& &\mathds{E}\left[ \left( \sum_{k=1}^\xi H_\alpha\big([T(k)]_{n-1}\big)
  \xi^{-\alpha}
  \right)^m \mathds{1}_{\{\xi \neq 0\}} \right] \\
& = & \mathds{E} \left[ \xi^{-\alpha m}\, \mathds{1}_{\{\xi \neq 0\}}\,
  \mathds{E} \left[ \sum_{\substack{k_1, \cdots, k_\xi \geqslant
  0 \\ k_1 + \cdots + k_\xi = m}} \!\! \binom{m}{k_1, \cdots, k_\xi}\, 
  H_\alpha\big([T(1)]_{n-1}\big)^{k_1} \cdots
  H_\alpha\big([T(\xi)]_{n-1}\big)^{k_\xi} ~\Bigg|~ \xi \right] \right] \\
& = & \mathds{E} \left[ \xi^{-\alpha m}\, \mathds{1}_{\{\xi \neq 0\}}\!\!\!
  \sum_{\substack{k_1, \cdots, k_\xi \geqslant 0\\ k_1 + \cdots + k_\xi =m }}
  \!\!\!\binom{m}{k_1, \cdots, k_\xi} \prod_{i =1}^\xi \mathds{E} \left[
    H_\alpha\big([T]_{n-1}\big)^{k_i} \right] \right] \\
& =& \mathds{E}\Big[\xi^{- \alpha m}\, 
  \mathds{1}_{\{ \xi \neq 0\}} \cdot \xi \cdot
  \mathds{E}\big[H_\alpha\big([T]_{n-1}\big)^m\big]\Big] \\
  & & +\; \mathds{E} \left[ \xi^{-\alpha m} \,
  \mathds{1}_{\{\xi \neq 0\}} \!\!\!
  \sum_{\substack{k_j \geqslant 0, k_j \neq m\\
  k_1 + \cdots + k_\xi = m}} \!\!\! \binom{m}{k_1, \cdots, k_\xi}
  \prod_{i =1}^\xi \mathds{E} \left[
    H_\alpha\big([T]_{n-1}\big)^{k_i} \right] \right].
\end{eqnarray*}
Now, note that, in the second term of this last equation,
since $k_j < m$ for all $j\in \{1, \ldots, \xi\}$,
$\mathds{E} \left[ H_\alpha([T]_{n-1})^{k_i} \right]\leqslant
\mathds{E} \left[H_\alpha(T)^{k_i} \right] \leqslant C^{k_i}$.
Thus, we can bound this term by
\begin{equation}
  \sum_{\substack{k_j \geqslant 0, k_j \neq m\\ k_1 + \cdots + k_\xi = m}}
  \!\!\! \binom{m}{k_1, \cdots, k_\xi}
  \prod_{i=1}^\xi C^{k_i} \;\leqslant
  \sum_{\substack{k_1, \cdots, k_\xi \geqslant 0\\ k_1 + \cdots + k_\xi =m }}
  \!\!\! \binom{m}{k_1, \cdots, k_\xi}
  \prod_{i=1}^\xi C^{k_i} = (C \xi)^m, 
\end{equation} 
and as a result, 
\begin{align} \label{eqproofintegermoments02}
  & \mathds{E} \left[ \left( \sum_{k=1}^ \xi H_\alpha([T(k)]_{n-1})\xi^{-\alpha}
  \right)^m \mathds{1}_{\{ \xi \neq 0\}} \right] \nonumber \\
 &\qquad\qquad \qquad \leqslant\;\; 
  \mathds{E} \big[\xi ^{1-\alpha m} \mathds{1}_{\{ \xi \neq 0\}}\big]
  \cdot
  \mathds{E}\big[H_\alpha([T]_{n-1})^m\big] + \mathds{E}\big[\xi ^{-\alpha m}
  \mathds{1}_{\{\xi \neq 0\}} (C \xi)^m\big] \nonumber \\
 & \qquad\qquad \qquad \leqslant\;\;
  \mathds{E}\big[\xi^{1-\alpha} \mathds{1}_{\{\xi \neq 0\}}\big] \cdot
  \mathds{E}\big[H_\alpha([T]_{n-1})^m\big] +
  C^m \mathds{E}\big[\xi ^{(1-\alpha) m}
  \mathds{1}_{\{\xi \neq 0\}}\big]. 
\end{align}
Now let us turn to the second term of Eq.~\eqref{eqproofintegermoments01}.
Using the same argument:
\begin{eqnarray} \label{eqproofintegermoments03}
\mathds{E}[B^k A^{m-k} \mathds{1}_{\{\xi \neq 0\}}] &=&
  \mathds{E}\left[\left(\xi^{-\alpha} \sum_{k=1}^\xi H_\alpha([T]_{n-1})
  \right)^k \cdot \left( \frac{1}{1-2^{1-\alpha}} (1-\xi^{(1-\alpha)})
  \right)^{m-k} \mathds{1}_{\{\xi \neq 0\}} \right] \nonumber \\
&=& \mathds{E} \left[ \xi^{-\alpha k} \left( \frac{1}{1-2^{1-\alpha}}
  \big(1-\xi^{(1-\alpha)}\big) \right)^{m-k} \mathds{1}_{\{\xi \neq 0\}}\, 
  \mathds{E} \left[ \left(\sum_{k=1}^\xi H_\alpha([T]_{n-1}) \right)^k ~
  \Bigg |~ \xi \right] \right] \nonumber \\
& \leqslant & \frac{C^k}{(1-2^{1-\alpha})^{m-k}}\,
  \mathds{E}\big[\xi^{(1-\alpha)k}
  (1- \xi^{(1-\alpha)})^{m-k} \mathds{1}_{\{\xi \neq 0\}}\big].
\end{eqnarray}
Finally, plugging Eqs.~\eqref{eqproofintegermoments02} and
\eqref{eqproofintegermoments03} in~\eqref{eqproofintegermoments01}, we get
\begin{align*}
\mathds{E}\big[H_\alpha([T]_n)^m\big] \leqslant&
  ~\mathds{E}\big[\xi^{1-\alpha}
  \mathds{1}_{\{\xi \neq 0\}}\big] \cdot
  \mathds{E}\big[H_\alpha([T]_{n-1})^m\big] + C^m
  \mathds{E}\big[\xi ^{(1-\alpha) m} \mathds{1}_{\{\xi \neq 0\}}\big] \\
&+ \sum_{k=0}^{m-1} \binom{m}{k} \frac{C^k}{(1-2^{1-\alpha})^{m-k}}
  \mathds{E}\big[\xi^{(1-\alpha)k} (1- \xi^{(1-\alpha)})^{m-k}
  \mathds{1}_{\{\xi \neq 0\}}\big].
\end{align*}
Since we have assumed that
$\mathds{E}[\xi^{(1-\alpha)m}\mathds{1}_{\{\xi \neq 0\}}] < +\infty$
and $\mathds{E}[\xi^{1-\alpha}\mathds{1}_{\{\xi \neq 0 \}}] < 1$,
this implies that the sequence
$(\mathds{E}[H_\alpha([T]_n)^m])_{n \geqslant 0}$ is
bounded, concluding the proof.
\end{Proo}

\subsection[Exponential moments of \texorpdfstring{$H_\alpha$}{H-alpha} for binary Galton--Watson trees]{%
  Exponential moments of \texorpdfstring{$\boldsymbol H_{\boldsymbol \alpha}$}{H-alpha} for binary Galton--Watson trees}

Finally, let us finally recall and prove \Cref{exponentialMoment}.

\begin{Propnn}
Let $T$ be a Galton--Watson tree with offspring distribution
$\xi \sim 2\,\mathrm{Ber}(p)$. The following are equivalent:
\begin{enumerate}[label=(\roman*)]
  \item $\E[H_\alpha(T)] < +\infty$.
  \item $p < 2^{\alpha -1}$.
  \item $H_\alpha(T)$ has exponential moments. \qedhere
\end{enumerate}
\end{Propnn}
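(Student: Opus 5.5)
The plan is to prove (i)$\Leftrightarrow$(ii) directly from \Cref{expecGW}, note that (iii)$\Rightarrow$(i) is immediate, and put the real work into (ii)$\Rightarrow$(iii). For $\xi \sim 2\,\mathrm{Ber}(p)$ we have $\E[\xi^{1-\alpha}\1_{\{\xi\neq 0\}}] = p\,2^{1-\alpha}$. \Cref{expecGW} therefore says that $\E[H_\alpha(T)]<\infty$ if and only if $p\,2^{1-\alpha}<1$, that is, $p<2^{\alpha-1}$. (The case $\P(\xi=1)<1$ holds automatically, since $\xi\in\{0,2\}$.) The implication (iii)$\Rightarrow$(i) holds because $H_\alpha(T)\geq 0$ and $x\leq e^{x}$.

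For (ii)$\Rightarrow$(iii), use \Cref{HalphaBinaire}, which gives
\[
H_\alpha(T)=2^{\alpha-1}\sum_{k\geq1}2^{-\alpha k}Z_k .
\]
Set $q = p\,2^{1-\alpha}<1$, so that $\E[2^{-\alpha k}Z_k]=2^{-\alpha k}(2p)^k = q^k$. The idea is to prove that the Laplace transform $\E[e^{tH_\alpha(T)}]$ is finite for some $t>0$. It is natural to work with truncations $X_k=H_\alpha([T]_k)$ and the recursion \eqref{eqRec}, which here reads
\[
X_k \overset{d}{=} \1_{\{\xi=2\}}\bigl(1+2^{-\alpha}(X_{k-1}^{(1)}+X_{k-1}^{(2)})\bigr).
\]
Writing $\phi_k(t)=\E[e^{tX_k}]$, this gives
\[
\phi_k(t)=(1-p)+p\,e^{t}\,\phi_{k-1}(2^{-\alpha}t)^2 .
\]
The goal is to find a function $\Phi$ with $\Phi(t)\geq (1-p)+pe^{t}\Phi(2^{-\alpha}t)^2$ on some interval $[0,t_0]$. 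Induction from $\phi_0=1\leq\Phi$ then gives $\phi_k\leq\Phi$ there, and monotone convergence ($X_k\uparrow H_\alpha(T)$, \Cref{Stubsequence}) concludes.

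The natural choice of supersolution is $\Phi(t)=1+Ct$, or more robustly $\Phi(t)=e^{Ct}$ for small $t$. The constraint is linear to first order. Near $0$, the right-hand side is
\[
1+t\bigl(p+2p\,2^{-\alpha}C\bigr)+O(t^2)=1+t\bigl(p+qC\bigr)+O(t^2).
\]
It is dominated by $1+Ct+O(t^2)$ as soon as $C>p/(1-q)$, which is possible precisely because $q<1$. This is exactly where (ii) enters.

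The main obstacle is that the argument $2^{-\alpha}t$ shrinks at each step, so I expect a self-similar comparison to close. Concretely, I would take $\Phi(t)=1+Ct+Dt^2$ and check that the inequality holds on a whole interval $[0,t_0]$, not just infinitesimally. The second-order coefficient must also satisfy a condition of the form $D(1-4^{-\alpha}\cdot 2p)>\text{const}$, and since $4^{-\alpha}2p=q\,2^{-\alpha}<1$, this is fine. Taking $t_0$ small absorbs the higher-order terms of $e^{t}$. Because the induction is run only on $[0,t_0]$, and $2^{-\alpha}t\in[0,t_0]$ whenever $t\in[0,t_0]$, the recursion never leaves the interval. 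This yields a uniform bound on $\phi_k(t_0)$, hence exponential moments of $H_\alpha(T)$.
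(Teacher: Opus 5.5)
Your proof is correct. For (ii)$\Rightarrow$(iii) it takes a genuinely different route from the paper's.

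\emph{The paper's route.} The paper writes $H_\alpha(T)=2^{\alpha-1}\sum_{k\geq1}(2^{1-\alpha}p)^kW_k$ with $W_k=Z_k/(2p)^k$. It reduces the problem to bounding $\sup_n\E[W_n^r]$ via Doob's maximal inequality. It controls $\norm{W_n}_r$ by adapting Janson's proof of the Bingham--Doney theorem, using a Marcinkiewicz--Zygmund bound on $Z_{n+1}-2pZ_n$, which needs $2p>1$. It then reaches $p\leq 1/2$ by coupling with a supercritical $p'$ and using that $H_\alpha$ is monotone in $(Z_k)$ for binary trees.

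\emph{Your route.} You close the one-step recursion for $\phi_k(t)=\E[e^{tX_k}]$ with the linear supersolution $1+Ct$ on a small interval $[0,t_0]$. This is exactly where $q=2^{1-\alpha}p<1$ enters. It is shorter, needs no case split or coupling, and gives the explicit bound $\E[e^{t_0H_\alpha(T)}]\leq 1+Ct_0$.

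It also avoids tracking how constants depend on $r$, which is the delicate point of the paper's argument. Haagerup's $B_r$ is an $L^r$-norm constant, so the constant in the $r$-th moment inequality is of order $B_r^r$. Hence $C_r^{1/r}$ grows like $\sqrt r$ rather than staying bounded as the paper states. A bound $\norm{W_n}_r\leq K$ uniform in $r$ would force $W_n\leq K$ a.s., whereas $W_n=p^{-n}$ with positive probability. The corrected bound $\norm{W_n}_r=O(r)$ still gives exponential moments for small $\theta$, so the paper's conclusion stands.

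Your method is also not tied to binarity. The same supersolution argument applies to $\phi_k(t)=\E\big[e^{tf(\xi)}\phi_{k-1}(t\xi^{-\alpha})^{\xi}\big]$, with $f$ as in \eqref{eqRec}. It works for any bounded offspring distribution with $\E[\xi^{1-\alpha}\1_{\{\xi\neq0\}}]<1$, by choosing $C$ with $C\big(1-\E[\xi^{1-\alpha}\1_{\{\xi\neq0\}}]\big)>\E[f(\xi)]$. This goes beyond the binary setting to which the paper's method is restricted.

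One small correction to your write-up: the $Dt^2$ term is not needed. Take $\Phi(t)=1+Ct$ and $g(t)=\Phi(t)-(1-p)-pe^t\Phi(2^{-\alpha}t)^2$. Then $g(0)=0$ and $g'(0)=C(1-q)-p>0$, so $g\geq0$ on some $[0,t_0]$. A second-order condition on $D$ would only matter in the borderline case $C=p/(1-q)$. Likewise, the shrinking of the argument $2^{-\alpha}t$ is not an obstacle. It is precisely what keeps the induction inside $[0,t_0]$.
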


\begin{Proo}
It is already proved in \Cref{expecGW} that (i) and (ii) are equivalent,
and the implication $\text{(iii)} \implies \text{(i)}$ is trivial;
so to conclude the proof it suffices to
prove $\text{(ii)} \implies \text{(iii)}$.
Let us write $\lambda =2p$ for the expectation of $\xi$ and
$(Z_k)$ for the underlying Galton--Watson process -- that is,
for all $k \geqslant 0$, $Z_k$ is the number of vertices at height~$k$ in $T$.
 \Cref{HalphaBinaire} yields
    \begin{equation*}
        H_\alpha(T) = 2^{\alpha -1} \sum_{k \geqslant 1} 2^{-\alpha k} Z_k =  2^{\alpha -1} \sum_{k \geqslant 1} (2^{1-\alpha}p)^k W_k,
    \end{equation*}
where $W_k = Z_k / \lambda^k$ is the canonical martingale associated with the
Galton--Watson process $(Z_k)$.
    
Let us now fix an integer $r \geqslant 3$.
To prove the desired implication, we show the existence of an
``exponentially summable'' upper bound on
$\E[H_\alpha(T)^r]$ -- that is, of $u_r \geqslant \E[H_\alpha(T)^r]$ such 
that there exists
$\theta > 0$ for which $\sum_{r \geqslant 3} u_r \frac{\theta^r}{r!} < + \infty$.
Note that, since $p < 2^{\alpha -1}$, one has, for any fixed $n \geqslant 1$,
\begin{align*}
    \left( \sum_{k \geqslant 1}^n (2^{1-\alpha}p)^k W_k \right)^r &\leqslant
    \left( \sum_{k \geqslant 1}^n
    (2^{1-\alpha}p)^k \right)^r \sup_{k \leqslant n} W_k^r  \\
    &\leqslant \left( \frac{1}{1-2^{1-\alpha}p} \right)^r \sup_{k \leqslant n}
    W_k^r.
\end{align*}
Moreover, since $W_k$ is a martingale, Doob's maximal inequality yields
\begin{equation*}
    \E \left[ \sup_{k \leqslant n} W_k^r \right] \leqslant
    \left( \frac{r}{r-1}\right)^r \E[W_n^r] \,.
\end{equation*}
As a result,
\begin{equation} \label{eqUpperBoundProofExpmoments}
  \E[H_\alpha(T)^r] \leqslant \left( \frac{2^{\alpha -1}}{1-2^{1-\alpha}p}
\right)^r \left( \frac{r}{r-1}\right)^r \sup_n \E[W_n^r] \,,
\end{equation}
and, since $((\frac{r}{r-1})^r)_{r\geqslant 3}$ is bounded,
to finish the proof it suffices to find a suitable upper bound on
$\sup_n \E[W_n^r]$.

For this, we will use a slightly refined version of a proof by
\cite{jansonMOMENTSLIMITINGRANDOM} of the following classic result
of Bingham and Doney:
first, recall that if $(Z_k)$ is a supercritical Galton--Watson process, the
Kesten--Stigum theorem states that the almost sure limit of the nonnegative
martingale $W_n$ is non-degenerate if and only if the offspring distribution
$\xi$ satisfies $\E[\xi \log^+\xi] < +\infty$
\cite{kestenLimitTheoremMultidimensional1966}.
In that context, 
\cite{binghamAsymptoticPropertiesSupercritical1974}
proved that $W_n$ converges in $L^r$ if and only if $\E[\xi^r] < +\infty$
(note that since $W_n^r$ is a submartingale, we then have
$\sup_n \E[W_n^r] =  \E[W_\infty^r]$).
In the unpublished manuscript \cite{jansonMOMENTSLIMITINGRANDOM}, 
Janson gave a simple proof of
$\E[\xi^r] <+\infty \iff \E[W_\infty^r] < +\infty$; what follows is
a straightforward adaptation of his proof.

First, recall that $\lambda = 2p$ and assume that $p > 1/2$, so that we are in
the supercritical regime.
Note that, since $\xi$ is bounded, $\E[\xi^r] < +\infty$ for all $r \geqslant 1$.
Now, note that
\begin{equation*}
    Z_{n+1}-\lambda Z_n = \sum_{i =1}^{Z_n}\xi'_i,
\end{equation*}
where $(\xi'_i)_{i \geqslant 1}$ are independent copies of
$\xi' = \xi - \lambda$ and
$(\xi'_i)_{i \geqslant 1}$ is independent of $Z_n$.
Applying \cite[Chap.~3, Cor.~8.2]{gutProbabilityGraduateCourse2013}, we get
\begin{equation*}
    \E\big[|Z_{n+1}-\lambda Z_n|^r ~\big|~Z_n\big]
    \leqslant C_r \cdot Z_n^{r/2}\cdot \E\big[|\xi'|^r\big],
\end{equation*}
where $C_r$ is a constant that depends on $r$ only. Note that, 
although the statement of the corollary
in \cite{gutProbabilityGraduateCourse2013} does not say
so explicitly, it follows from its proof that one can in fact take
$C_r = 2^r B_r$, where $B_r$
is the best constant for the upper bound in Khintchine's inequality
(see \cite{haagerupBestConstantsKhintchine1981}).
Since $\E[|\xi'|^r] \leqslant 2^r$, it follows that
\begin{equation*}
    \E\big[|Z_{n+1}-\lambda Z_n|^r\big]
    \leqslant 2^r C_r  \E[Z_n^{r/2}]
    \leqslant 2^r C_r  \E[Z_n^{r}]^{1/2},
\end{equation*}
and, therefore, that
\begin{equation*}
    \norm{Z_{n+1}-\lambda Z_n}_r \leqslant 2 \, C_r^{1/r} \norm{Z_n}_r^{1/2}.
\end{equation*}
Now, notice that since $Z_n = \lambda^n W_n$, one has $W_{n+1}-W_n =
\lambda^{-n-1} (Z_{n+1}-\lambda Z_n)$, so that the last inequality can be
rewritten as
\begin{equation*}
    \norm{W_{n+1}- W_n}_r \leqslant \lambda^{-\frac{n}{2} -1}\, 2\, C_r^{1/r}
    \norm{W_n}_r^{1/2}.
\end{equation*}
So the triangle inequality gives
\begin{equation*}
    \norm{W_{n+1}}_r \leqslant \norm{W_n}_r + \left( \frac{1}{\sqrt{\lambda}}
    \right)^n \cdot \frac{2\, C_r^{1/r}}{\lambda} \norm{W_n}_r^{1/2}.
\end{equation*}
Setting $v_n = \norm{W_n}_r^{1/2}$, we see that $(v_n)_{n\geqslant 0}$
satisfies $v_0 = 1$ and
\[
  v_{n+1}^2 \leqslant v_n^2 + a q^n v_n, 
\]
where $a = 2 C_r^{1/r} / \lambda > 0$ and $q = \lambda^{-1/2} \in \;]0; 1[$.
Since $(v_n + aq^n / 2)^2 \geqslant v_n^2 + aq^n v_n$, we have
$v_{n+1} \leqslant v_n + a q^n / 2$ and, therefore,
\[
  v_n \leqslant 1 + \frac{a}{2} \sum_{k = 0}^{n-1} q^k 
  \leqslant 1 + \frac{a}{2} \frac{1}{1-q} \,.
\]
As a result,
\begin{equation*}
    \norm{W_n}_r \leqslant \left( 1 +
    \frac{C_r^{1/r}}{\lambda(1-\lambda^{-1/2})} \right)^2.
\end{equation*}
Now, recall that we have chosen $C_r = 2^r B_r$, where
$B_r$ is the optimal upper bound in Khintchine's inequality, that is:
\begin{equation*}
    B_r = \sqrt{2} \left( \Gamma \left( \frac{r+1}{2} \right) \cdot
    \frac{1}{\sqrt{\pi}}  \right)^{1/r},
\end{equation*}
where $\Gamma$ is the gamma function -- see
\cite{haagerupBestConstantsKhintchine1981}.
As a result, noting that $\Gamma((r+1)/2) \leqslant r^r$ for $r \geqslant 3$, 
we have the crude upper bound
\[
  C_r^{1/r} = 
   2^{1 + \frac{1}{2r}} \left( \Gamma \left( \frac{r+1}{2} \right) \cdot
    \frac{1}{\sqrt{\pi}}  \right)^{1/r^2}
    \leqslant  2^{1 + \frac{1}{2r}}\cdot r^{\frac{1}{r}}
    \leqslant M\,, 
\]
for some fixed constant $M$.
As a result, there exists a constant $K > 1$ such that,
for all $n\geqslant 0$ and all $r$,
$\|W_n\|_r \leqslant K$, i.e.\ $\E[W_n^r] \leqslant K^r$.
Plugging in Eq.~\eqref{eqUpperBoundProofExpmoments}, 
we see that there exists $\theta > 0$ such that 
$\theta^r \E[H_\alpha(T)^r]/r!$ is summable, i.e.\ such that 
\begin{equation*}
    \E\big[\exp(\theta H_\alpha(T))\big] < +\infty.
\end{equation*}

We have thus proved that $\text{(ii)} \implies \text{(iii)}$ when
$p > 1/2$. To conclude the proof, 
note that when $\Tilde{p} < p$, $\xi\sim 2\,\mathrm{Ber}(p)$ stochastically
dominates $\Tilde{\xi}\sim 2\,\mathrm{Ber}(\Tilde{p})$.
As a result, the associated Galton--Watson processes can be coupled in
such a way that $\Tilde{Z_k} \leqslant Z_k$ for all $k \geqslant 0$.
Thus, letting $T$ and~$\Tilde{T}$ denote the associated
Galton--Watson trees, it follows from \Cref{HalphaBinaire}
that $H_\alpha(\Tilde{T}) \leqslant H_\alpha(T)$.
As a result, the existence of exponential moments for $H_\alpha(T)$ implies the
existence of exponential moments for $H_\alpha(\Tilde{T})$,
which concludes the proof.
\end{Proo}

\begin{Rema}
As can be deduced from our remark at the end of
Section~\ref{secGW}, $\tilde{\xi} \preccurlyeq \xi$
does not imply $H_\alpha(\mathrm{GW}(\tilde{\xi})) \preccurlyeq
H_\alpha(\mathrm{GW}(\xi))$. Similarly, 
let us stress that \emph{unless $\tilde{\tau}$ and $\tau$ are binary trees},
$\tilde{\tau} \subset \tau$ does not imply
$H_\alpha(\tilde{\tau}) \leqslant H_\alpha(\tau)$.
\end{Rema}

\section{The Yule model: asymptotics of the variance of \texorpdfstring{$\boldsymbol{H_\alpha}$}{H-alpha}} \label{appYule}

In this appendix, we prove \Cref{asymptoticVariance}.
First, let us recall its statement.

\begin{Coronn}
Let $T_n$ be a Yule tree with $n$ leaves, and let
$c = -\log_{2}(1-\frac{\sqrt{2}}{2})\approx 1.7716$.
Then, for $\alpha \neq 1$, as $n\to\infty$,
\begin{equation*}
  \Var(H_{\alpha}(T_n)) \sim_{n \rightarrow + \infty}
    \begin{cases}
    Q_\alpha\, n^{2(2^{1-\alpha}-1)} & \text{if } \alpha < c, \\[1.2em]
    \dfrac{1}{\Gamma(2^{1-2\alpha})}\,\log(n)\, n^{\,2(2^{1-\alpha}-1)}
    & \text{if } \alpha = c, \\[1.2em]
    \dfrac{1}{\Gamma(2^{1-2\alpha})(1+2^{1-2\alpha}-2^{2-\alpha})}\;
      n^{\,2^{1-2\alpha}-1}
    & \text{if } \alpha > c.
    \end{cases}
\end{equation*}
where $Q_\alpha > 0$ is a constant.
\end{Coronn}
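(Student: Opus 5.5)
We work with $\beta = 1-2^{1-\alpha}$ and $X_{n}=X_{n,\alpha}$. Since $\Var(H_\alpha(T_n)) = \beta^{-2}\Var(X_n)$, it suffices to find the asymptotics of $\E[X_n^2]-\E[X_n]^2$.

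\textbf{Step 1: exact expressions.} By Eq.~\eqref{cond}, $\E[X_n]=P_{n-1}(-\beta)$, where $P_m(z)=\prod_{k=1}^m(1+z/k)$. By Eq.~\eqref{asymp}, $P_m(z)\sim m^{z}/\Gamma(1+z)$. For the second moment, Eq.~\eqref{moment2} gives
\[
  \E[X_n^2]=F_{n-1}\Big(1+\sum_{m=1}^{n-1}g_m/F_m\Big),
  \qquad F_m=\prod_{k\le m}(1-2\beta/k)=P_m(-2\beta),
\]
with $g_m=\frac{\beta^2}{m}P_{m-1}(-\beta')$ and $\beta'=1-2^{1-2\alpha}$. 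Thus $F_m\sim m^{-2\beta}/\Gamma(1-2\beta)$ and $g_m\sim \beta^2 m^{-1-\beta'}/\Gamma(1-\beta')$. The summand $g_m/F_m$ therefore behaves like a constant times $m^{-1-\beta'+2\beta}$. Note that $-\beta=2^{1-\alpha}-1$ and $-\beta'=2^{1-2\alpha}-1$.

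\textbf{Step 2: trichotomy.} The exponent $-1+2\beta-\beta'$ equals $-1$ exactly when $2\beta=\beta'$. Writing $x=2^{-\alpha}$, this reads $2(1-2x)=1-2x^2$, i.e.\ $2x^2-4x+1=0$, whence $x=1-\sqrt2/2$ and $\alpha=c$. For $\alpha<c$ we have $2\beta<\beta'$ (check at $\alpha\to0$), and the sum converges. In that case $\E[X_n^2]\sim A\,F_{n-1}\sim A' n^{-2\beta}=A'n^{2(2^{1-\alpha}-1)}$. Since $\E[X_n]^2\sim n^{-2\beta}/\Gamma(1-\beta)^2$ is of the same order, the variance is $Q_\alpha n^{2(2^{1-\alpha}-1)}$ with
\[
  Q_\alpha=\beta^{-2}\Big(\lim_n n^{2\beta}\E[X_n^2]-\Gamma(1-\beta)^{-2}\Big).
\]
For $\alpha=c$ the sum grows like $\frac{\beta^2\Gamma(1-2\beta)}{\Gamma(1-\beta')}\log n$, and $\E[X_n]^2$ is negligible. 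Using $\Gamma(1-2\beta)=\Gamma(1-\beta')$ at $\alpha=c$, the prefactors cancel to give $\frac{1}{\Gamma(1-\beta')}\log(n)\,n^{-2\beta}$. For $\alpha>c$ the sum diverges polynomially, and an Euler--Maclaurin comparison gives
\[
  \sum_{m<n} g_m/F_m \sim \frac{\beta^2\Gamma(1-2\beta)}{\Gamma(1-\beta')(2\beta-\beta')}\,n^{2\beta-\beta'}.
\]
Hence $\E[X_n^2]\sim \frac{\beta^2}{\Gamma(1-\beta')(2\beta-\beta')}n^{-\beta'}$. Since $-\beta'>-2\beta$, this term dominates $\E[X_n]^2$. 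After dividing by $\beta^2$, we use $2\beta-\beta'=1+2^{1-2\alpha}-2^{2-\alpha}$ and $1-\beta'=2^{1-2\alpha}$, which match the stated constants.

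\textbf{Main obstacle.} The delicate case is $\alpha<c$. There one must show $Q_\alpha>0$, i.e.\ that the leading orders of $\E[X_n^2]$ and $\E[X_n]^2$ do not cancel. The plan is to use the martingale $Y_n=X_n/P_n(-\beta)$ from the proposition above. It is bounded in $L^2$ precisely when $\alpha<c$, since $\E[Y_n^2]$ converges by Step 1. Then
\[
  n^{2\beta}\Var(X_n)\to \Gamma(1-\beta)^{-2}\Var(Y_\infty),
\]
and $\Var(Y_\infty)\ge\Var(Y_2)>0$ because the $L^2$ martingale has orthogonal increments and $Y_2$ is nondegenerate (or $Y_3$, if $Y_2$ is deterministic). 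This gives the required positivity. The remaining work is routine: controlling the $O(n^{-1})$ corrections in Eq.~\eqref{asymp}, uniformly enough to sum them in the other two cases.
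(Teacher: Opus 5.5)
Your argument is correct, but it is organized differently from the paper's. The paper runs the martingale argument in all three regimes:
\begin{itemize}
\item it writes $\Var(H_\alpha(T_n))$ as $\beta^{-2}\prod_{k\le n}(1-\beta/k)^2\,\Var(Y_{n,\alpha})$, and expands $\Var(Y_{n,\alpha})$ as the sum of the squared increments $\E[(Y_{k+1,\alpha}-Y_{k,\alpha})^2]$;
\item it computes each increment exactly in terms of $\E[X_{k,2\alpha}]$ and $\E[X_{k,\alpha}^2]$;
\item it uses the three-regime asymptotics of $\E[X_{k,\alpha}^2]$ from Eq.~\eqref{moment2} only to show that the $\E[X_{k,\alpha}^2]$ term is negligible;
\item it then sums increments of order $k^{2^{1-2\alpha}-2^{2-\alpha}}$, which form a convergent, harmonic, or polynomially divergent series.
\end{itemize}
You use those same second-moment asymptotics directly. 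For $\alpha\ge c$ you observe that $\E[X_n^2]$ is of strictly larger order than $\E[X_n]^2$, so no cancellation can occur and the increment identity is never needed. For $\alpha<c$ you only need the limiting constant to be nonzero, which you get softly from the fact that the variance of a square-integrable martingale is nondecreasing. The paper has to derive the asymptotics of $\E[X_{k,\alpha}^2]$ anyway, so your route is the more economical one. What the paper's route buys is a uniform treatment of the three cases, the asymptotics of the increments themselves, and $Q_\alpha$ as an explicit convergent series of nonnegative terms rather than a difference of two limits.

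A few details need repair.
\begin{itemize}
\item $Y_2$ and $Y_3$ are both deterministic, since $T_2$ is the cherry and $T_3$ is necessarily $\Cat(3)$. The first nondegenerate one is $Y_4$. With $x=2^{-\alpha}$, $X_{4,\alpha}$ equals $4x^2$ for $\mathrm{CB}(2)$ (probability $1/3$) and $x+x^2+2x^3$ for $\Cat(4)$ (probability $2/3$). These differ by $x(1-x)(1-2x)\neq 0$ for $\alpha>0$, $\alpha\neq 1$.
\item Strictly speaking, the martingale is $X_n/P_{n-1}(-\beta)$ rather than $X_n/P_n(-\beta)$. This index shift is inherited from the paper's statement and changes nothing in the limit.
\item At $\alpha=2$ one has $f_1=1-2\beta=0$, so $F_m\equiv 0$ and your closed form (like Eq.~\eqref{moment2}) is undefined. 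Solving the recursion from $n=2$ instead gives the same asymptotics.
\item No uniform control of the $O(n^{-1})$ corrections in Eq.~\eqref{asymp} is needed. For a divergent series, $u_m\sim Cm^{s-1}$ with $C\neq 0$ and $s\ge 0$ already implies $\sum_{m<n}u_m\sim C\sum_{m<n}m^{s-1}$.
\end{itemize}
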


\begin{Proo}
Recall from the main text that
$X_{n, \alpha} = \sum_{\ell \in \mathcal{L}_{T_n}} p_\ell^\alpha$ and that
$Y_{n, \alpha} = 
X_{n, \alpha} \prod_{k=1}^n \big( 1 + \frac{2^{1-\alpha} -1}{k} \big)^{-1}$
is a martingale.
Next, note that
\begin{equation}
    \Var(H_\alpha(T_n)) = \frac{1}{(1-2^{1-\alpha})^2} \prod_{k=1}^n \left( 1+ \frac{2^{1-\alpha}-1}{k} \right)^2 \Var(Y_{n, \alpha}). \label{relationHZ}
\end{equation}
Moreover, since $Y_{n, \alpha}$ is a martingale and $\Var(Y_{1, \alpha}) = 0$, 
\begin{equation*}
  \Var (Y_{n, \alpha}) =
  \sum_{k=1}^{n-1} \E\big[(Y_{k+1, \alpha} - Y_{k, \alpha})^2\big].
\end{equation*}
We thus focus on expressing $\E[(Y_{k+1, \alpha} - Y_{k, \alpha})^2]$. To ease
notation, as previously we set $\beta = 1 - 2^{1-\alpha}$.
First, note that
\begin{equation*}
    Y_{k+1, \alpha} - Y_{k, \alpha} =
    \left(X_{k+1, \alpha}- \big(1 - \tfrac{\beta}{k+1} \big)
    X_{k, \alpha}\right) \,
    \prod_{j=1}^{k+1} \left(1 - \frac{\beta}{j} \right)^{-1} \,.
\end{equation*}
Thus,
\begin{equation*}
    \E\big[(Y_{k+1, \alpha} - Y_{k, \alpha})^2\big] =
    \E \left[\left( X_{k+1, \alpha}- \big(1 - \tfrac{\beta}{k+1} \big)
    X_{k, \alpha} \right)^2 \right] \,
    \prod_{j=1}^{k+1} \left( 1 - \frac{\beta}{j} \right)^{-2} \,.
\end{equation*}
In this expression, 
\begin{equation*}
    \E \left[\left( X_{k+1, \alpha}- \big(1 - \tfrac{\beta}{k+1} \big)
    X_{k, \alpha} \right)^2 \right]
    = \E[X_{k+1, \alpha}^2] - 2 \big(1 - \tfrac{\beta}{k+1} \big)
    \E[X_{k+1, \alpha} X_{k, \alpha}] + \big(1 - \tfrac{\beta}{k+1} \big)^2
    \E[X_{k, \alpha}^2].
\end{equation*}
Moreover, by Eq.~\eqref{cond},
\begin{equation*}
    \E[X_{k+1, \alpha} X_{k, \alpha}]
    = \E\big[X_{k, \alpha} \E[X_{k+1, \alpha} ~|~T_k]\big] =
    \left(1 - \frac{\beta}{k} \right) \,\E\left[X_{k, \alpha}^2  \right],
\end{equation*}
and, by Eq.~\eqref{reccurence2moment},
\begin{equation*}
    \E[X_{k+1, \alpha}^2] =
    \left(1 - \frac{2\beta}{k} \right) \E[X_{k, \alpha}^2]  +
    \frac{\beta^2}{k} \E[X_{k, 2\alpha}].
\end{equation*}
So, using elementary algebra we get:
\begin{align*}
    \E \left[\left( X_{k+1, \alpha}- \left(1 - \tfrac{\beta}{k+1} \right)
    X_{k, \alpha} \right)^2 \right] &= - \frac{(k+2)\beta^2}{k(k+1)^2}
    \E[X_{k, \alpha}^2] + \frac{\beta^2}{k} \E[X_{k, 2\alpha}]. \\
    &=  \frac{\beta^2}{k} \left( \E[X_{k, 2\alpha}] - \frac{k+2}{(k+1)^2}
    \E[X_{k, \alpha}^2] \right).
\end{align*}
We now study the asymptotic behavior of this expression as $k\to\infty$.
First, by \Cref{expectYule} and Eq.~\eqref{asymp}, we have:
\begin{equation*}
    \E[X_{k, 2\alpha}] \sim \frac{k^{2^{1-2\alpha}-1}}{\Gamma(2^{1-2\alpha})}.
\end{equation*}
Second, we use Eq.~\eqref{moment2} to determine an asymptotic equivalent
of $\E[X_{k, \alpha}^2]$.
For this, note that, by Eq.~\eqref{asymp},
\begin{equation*}
  u_m := \frac{g_m}{\prod_{k=1}^m f_k} \sim_{m \rightarrow + \infty}
    \frac{\Gamma(2^{2-\alpha}-1) \beta^2}{\Gamma(2^{1-2\alpha})}\,
    m^{2^{1-2\alpha}-2^{2-\alpha}}.
\end{equation*}
A simple analysis of the function $x \in \mathbb{R}_+ \mapsto 2^{1-2x} - 2^{2-x}$
shows that $2^{1-2\alpha} - 2^{2-\alpha} \leqslant -1$ if and only if $\alpha
\leqslant c := - \log_2 \big( 1-\frac{\sqrt{2}}{2} \big)$. Thus,
\begin{itemize}
    \item If $\alpha > c$, the series $\sum u_m$ is divergent and
      by comparison with a $p$-series we have
    \begin{equation*}
        1+ \sum_{m=1}^{n-1} \frac{g_m}{\prod_{k=1}^m f_k} \sim_{n \rightarrow
        +\infty} \frac{\Gamma(2^{2-\alpha}-1)
        \beta^2}{\Gamma(2^{1-2\alpha})(1+2^{1-2\alpha} - 2^{2-\alpha})}\, 
        n^{1+2^{1-2\alpha}-2^{2-\alpha}}.
    \end{equation*}
    Therefore,
    \begin{equation*}
        \prod_{k=1}^{n-1} f_k \cdot \left( 1+ \sum_{m=1}^{n-1}
        \frac{g_m}{\prod_{k=1}^m f_k}  \right) \sim_{n \rightarrow +\infty}
        \frac{\beta^2}{\Gamma(2^{1-2\alpha})(1+2^{1-2\alpha} - 2^{2-\alpha})}\,
        n^{2^{1-2\alpha}-1}.
    \end{equation*}
    \item If $\alpha = c$, the series $\sum u_m$ 
      is also divergent and by equivalence with the harmonic series we have
    \begin{equation*}
         1+ \sum_{m=1}^{n-1} \frac{g_m}{\prod_{k=1}^m f_k}
         \sim_{n \rightarrow +\infty} \frac{\Gamma(2^{2-\alpha}-1)
         \beta^2}{\Gamma(2^{1-2\alpha})} \, \log(n),
    \end{equation*}
    and, therefore,
    \begin{equation*}
        \prod_{k=1}^{n-1} f_k \cdot \left( 1+ \sum_{m=1}^{n-1}
        \frac{g_m}{\prod_{k=1}^m f_k}  \right) \sim_{n \rightarrow +\infty}
        \frac{\beta^2}{\Gamma(2^{1-2\alpha})} \cdot \log(n) n^{-2\beta}.
    \end{equation*}
    \item If $\alpha < c$, the
      series $\sum u_m$ is convergent so there exists a positive constant
      $K_\alpha$ such that 
    \begin{equation*}
        \sum_{m=1}^{n-1} \frac{g_m}{\prod_{k=1}^m f_k}
        \sim_{n \rightarrow +\infty} K_\alpha.
    \end{equation*}
    Once again using Eq.~\eqref{asymp}, we thus have 
\begin{equation*}
    \prod_{k=1}^{n-1} f_k \cdot \left( 1+ \sum_{m=1}^{n-1}
    \frac{g_m}{\prod_{k=1}^m f_k}  \right)
    \sim_{n \rightarrow + \infty}
    \frac{1+ K_\alpha}{\Gamma(2^{2-\alpha}-1)} \, n^{-2\beta}.
\end{equation*}
\end{itemize}
To summarize, 
\begin{equation*}
  \E\left[X_{k,\alpha}^2\right] \sim
  \begin{cases}
  \dfrac{\beta^2}{\Gamma\left(2^{1-2\alpha}\right)
  \left(1+2^{1-2\alpha}-2^{2-\alpha}\right)} \, 
  k^{2^{1-2\alpha}-1}
  & \text{if } \alpha > c,
  \\[1.2em]
  \dfrac{\beta^2}{\Gamma\left(2^{1-2\alpha}\right)}\, 
  \log(k) \, k^{-2\beta}
  & \text{if } \alpha = c,
  \\[1.2em]
  \dfrac{1+K_\alpha}{\Gamma\left(2^{2-\alpha}-1\right)}\,
  k^{-2\beta}
  & \text{if } \alpha < c.
  \end{cases}
\end{equation*}
Since $\frac{k+2}{(k+1)^2} \sim k^{-1}$, we deduce that for any $\alpha > 0$,
as $k\to\infty$,
\begin{equation*}
     \E[X_{k, 2\alpha}] - \frac{k+2}{(k+1)^2} \E[X_{k, \alpha}^2]
     \sim \frac{k^{2^{1-2\alpha}-1}}{\Gamma(2^{1-2\alpha})},
\end{equation*}
so that 
\begin{equation*}
    \frac{\beta^2}{k} \left( \E[X_{k, 2\alpha}] - \frac{k+2}{(k+1)^2}
    \E[X_{k, \alpha}^2] \right)
    \sim \frac{\beta^2}{\Gamma(2^{1-2\alpha})}\, k^{2^{1-2\alpha}-2}.
\end{equation*}
Since,
\begin{equation*}
    \prod_{j=1}^{k+1} \left(1 - \frac{\beta}{j} \right)^{-2}
    \sim \Gamma(2^{1-\alpha})^2\, k^{2\beta}  \,, 
\end{equation*}
we deduce that
\begin{equation*}
     \E[(Y_{k+1, \alpha} - Y_{k, \alpha})^2]
     \sim \frac{\Gamma(2^{1-\alpha})^2\cdot \beta^2}{\Gamma(2^{1-2\alpha})}
     \cdot k^{2^{1-2\alpha}-2^{2-\alpha}}.
\end{equation*}
As before, by comparison with a $p$-series we have that, as $n\to\infty$, 
\begin{equation*}
    \Var(Y_{n,\alpha}) \sim
    \begin{cases}
    K'_\alpha 
    & \text{if } \alpha < c,
    \\[1.2em]
    \dfrac{\Gamma(2^{1-\alpha})^2 \cdot
      (2^{1-\alpha} - 1)^2}{\Gamma(2^{1-2\alpha})}\cdot\log(n)
    & \text{if } \alpha = c,
    \\[1.2em]
    \dfrac{\Gamma(2^{1-\alpha})^2 \cdot \beta^2}
    {\Gamma(2^{1-2\alpha})\cdot(1 + 2^{1-2\alpha} - 2^{2-\alpha})}
      \cdot n^{1+2^{1-2\alpha}-2^{2-\alpha}}
    & \text{if } \alpha > c.
    \end{cases}
\end{equation*}
where $K'_\alpha=\sum_{k=1}^{+\infty} \E[(Y_{k+1, \alpha} - Y_{k, \alpha})^2]$. 

Moreover, since 
\begin{equation*}
    \prod_{k=1}^n \left(1 - \frac{\beta}{k} \right)^2 \sim
    \frac{n^{-2\beta}}{\Gamma(2^{1-\alpha})^2},
\end{equation*}
we finally deduce that:
\begin{equation*}
    \Var(H_{n,\alpha}) \sim_{n \rightarrow + \infty}
    \begin{cases}
    \dfrac{K'_\alpha}{\Gamma(2^{1-\alpha})^2\cdot \beta^2}\, n^{-2\beta}
    & \text{if } \alpha < c,
    \\[1.2em]
    \dfrac{1}{\Gamma(2^{1-2\alpha})}\, \log(n)\,n^{-2\beta}
    & \text{if } \alpha = c,
    \\[1.2em]
    \dfrac{1}{\Gamma(2^{1-2\alpha})(1+2^{1-2\alpha}-2^{2-\alpha})}\, 
      n^{2^{1-2\alpha}-1}
    & \text{if } \alpha > c.
    \end{cases}
\end{equation*}
This concludes the proof.
\end{Proo}

\section{Bijection between leaf-pointed ordered binary trees and grand Dyck paths}
\label{appPDA}

In this appendix, we prove \Cref{lemGrandDyck} by giving a bijection between
leaf-pointed ordered binary trees and grand Dyck paths.
Let us start by recalling the definition of the latter.

\begin{Defi}
A \emph{grand Dyck path with semilength $n$} --
also known as a \emph{free Dyck path} or as a \emph{Dyck bridge} -- is a
lattice path from $(0, 0)$ to $(2n, 0)$ consisting of steps $(+1, +1)$
and $(+1, -1)$.
A point $(t, 0)$ of the path with $t > 0$ is called a 
\emph{return to the $x$-axis}.
\end{Defi}

The combinatorics of grand Dyck paths is well-studied, see e.g.\
\cite{Ferrari2010,SapounakisTsikourasTasoulasManes2012}.
In particular, the number of grand Dyck paths of semilength $n$ with
$k$ returns to the $x$-axis is
\[
  \frac{k\,2^k}{2n-k} \binom{2n-k}{n}  \,,
\]
see entry A108747 of the OEIS~\cite{OEISA108747}.
Therefore, to finish the proof of \Cref{lemGrandDyck}, it suffices to give
a bijection between the set of ordered binary trees with $n$ leaves having
a distinguished leaf at distance $k$ from the root and the set of grand Dyck
paths of semilength $n-1$ with $k$ returns to the $x$-axis.

\begin{Proo}[Bijection between leaf-pointed ordered binary trees and grand Dyck paths]

Consider an ordered binary tree $\tau$ with $n$ leaves having a distinguished
leaf $\ell$ at distance $k$ from the root, as in
Figure~\ref{TreeToEncode}.

By ``cutting'' $\tau$ at each of the $k-1$ vertices inside the path from the
root to $\ell$, we obtain $k$ ordered binary trees such that, for each of these
trees, one of the two children of the root is distinguished (namely,
the child that corresponds to the next vertex on the path from the root of
$\tau$ to $\ell$) and is a leaf: see Figure~\ref{FirstStep}, top row.
For some of these $k$ trees, the distinguished child of the root might be
its right child: in that case, ``flip'' the tree horizontally to ensure
that the distinguished child of the root is always its left child
(Figure~\ref{FirstStep}, bottom row).

Each of the $k$ ordered trees obtained as described above can be encoded
bijectively by a Dyck path: for this, traverse the edges of the tree in
depth-first order, starting with left edges and interpreting these
as $(+1, +1)$ steps and right edges as $(+1, -1)$ steps, as illustrated
in Figure~\ref{Dyckpath}.
For each of these trees, because the left child of the root is a leaf,
the corresponding Dyck path is guaranteed to hit the $x$-axis exactly
twice (i.e.\ to return to it exactly once).
Finally, to obtain a grand Dyck path:
\begin{enumerate}
  \item ``flip'' vertically each of the Dyck paths that correspond to trees
    that had to be flipped horizontally during the previous step;
  \item concatenate all the resulting paths.
\end{enumerate}
See Figure~\ref{Dyckpath} for an illustration. Note that the resulting
grand Dyck path has exactly $k$ returns to the $x$-axis: one for each
of the $k$ Dyck paths used to construct it.
Because this construction is reversible and its inverse can be applied to any
grand Dyck path, it is bijective. This concludes the proof. \qedhere

\begin{figure}[H]
  \centering
  \begin{tikzpicture}[main/.style = {draw, circle, minimum size = 7pt, inner sep =1pt}]
    \tikzstyle{dot}=[draw,circle,gray,
                  top color= white, text=gray,minimum size=7pt, inner sep = 1]
    \tikzstyle{phantom}=[opacity = 0, minimum size = 7pt]
    \tikzstyle{leaf}=[draw,rounded corners=2pt,red,
                  top color= white, text=red,minimum size=7pt]
    \node[main, label ={[font=\footnotesize] \o}] (root) at (0, 0) {};
    \node[main] (1) at (-1, -1) {};
    \node[main] (2) at (1, -1) {};
    \node[main] (21) at (0.5, -2) {};
    \node[main] (22) at (1.5, -2) {};
    \node[main] (221) at (1.25, -3) {};
    \node[main] (222) at (2, -3) {};
    \node[main] (11) at (-1.75, -2) {};
    \node[main] (12) at (-0.5, -2) {};
    \node[main] (111) at (-2.25, -3) {};
    \node[main] (112) at (-1.25, -3) {};
    \node[main] (1111) at (-2.5, -4) {};
    \node[main] (1112) at (-2, -4) {};
    \node[leaf, label ={[font=\scriptsize]below:\textcolor{red}{$\ell$}}] (1121) at (-1.5, -4) {};
    \node[main] (1122) at (-1, -4) {};
    \node[main] (11221) at (-1.33, -5) {};
    \node[main] (11222) at (-0.66, -5) {};
    \node[main] (112211) at (-1.66, -6) {};
    \node[main] (112212) at (-0.99, -6) {};
    \draw[->, line width=1.2pt] (root) -- (1);
    \draw[->] (root) -- (2);
    \draw[->] (2) -- (21);
    \draw[->] (2) -- (22);
    \draw[->] (22) -- (222);
    \draw[->] (22) -- (221);
    \draw[->, line width=1.2pt] (1) -- (11);
    \draw[->] (1) -- (12);
    \draw[->] (11) -- (111);
    \draw[->, line width=1.2pt] (11) -- (112);
    \draw[->] (111) -- (1111);
    \draw[->] (111) -- (1112);
    \draw[->, line width=1.2pt] (112) -- (1121);
    \draw[->] (112) -- (1122);
    \draw[->] (1122) -- (11221);
    \draw[->] (1122) -- (11222);
    \draw[->] (11221) -- (112211);
    \draw[->] (11221) -- (112212);
  \end{tikzpicture}
  \caption{An ordered binary tree $\tau$ with $n = 10$ leaves and a
  distinguished leaf $\ell$ at distance $k = 4$ from the root.}
  \label{TreeToEncode}
\end{figure}

\begin{figure}[H]
  \centering
  \begin{tikzpicture}[main/.style = {draw, circle, minimum size = 7pt, inner sep =1pt}]
    \tikzstyle{dot}=[draw,circle,gray,
                  top color= white, text=gray,minimum size=7pt, inner sep = 1]
    \tikzstyle{phantom}=[opacity = 0, minimum size = 7pt]
    \tikzstyle{leaf}=[draw,rounded corners=2pt,red,
                  top color= white, text=red,minimum size=7pt]
    \node[main, label ={[font=\footnotesize] \o}] (root) at (-1, 0) {};
    \node[main] (1) at (-2, -1) {};
    \node[main] (2) at (0, -1) {};
    \node[main] (21) at (-0.5, -2) {};
    \node[main] (22) at (0.5, -2) {};
    \node[main] (221) at (0.25, -3) {};
    \node[main] (222) at (1, -3) {};
    \draw[->, line width=1.2pt] (root) -- (1);
    \draw[->] (root) -- (2);
    \draw[->] (2) -- (21);
    \draw[->] (2) -- (22);
    \draw[->] (22) -- (222);
    \draw[->] (22) -- (221);
    \node[phantom] (x) at (1.5, 0) {};
    \node[phantom] (y) at (1.5, -1) {};
    \draw[] (x) -- (y);
    \node[main] (rootTree2) at (3, 0) {};
    \node[main] (1Tree2) at (2.5, -1) {};
    \node[main] (2Tree2) at (3.5, -1) {};
    \draw[->, line width=1.2pt] (rootTree2) -- (1Tree2);
    \draw[->] (rootTree2) -- (2Tree2);
    \node[phantom] (x2) at (4.5, 0) {};
    \node[phantom] (y2) at (4.5, -1) {};
    \draw[] (x2) -- (y2);
    \node[main] (rootTree3) at (6, 0) {};
    \node[main] (1Tree3) at (5.5, -1) {};
    \node[main] (2Tree3) at (6.5, -1) {};
    \node[main] (11Tree3) at (5, -2) {};
    \node[main] (12Tree3) at (6, -2) {};
    \draw[->, line width=1.2pt] (rootTree3) -- (2Tree3);
    \draw[->] (rootTree3) -- (1Tree3);
    \draw[->] (1Tree3) -- (11Tree3);
    \draw[->] (1Tree3) -- (12Tree3);
    \node[phantom] (x3) at (7.5, 0) {};
    \node[phantom] (y3) at (7.5, -1) {};
    \draw[] (x3) -- (y3);
    \node[main] (rootTree4) at (9, 0) {};
    \node[leaf, label ={[font=\scriptsize]below:\textcolor{red}{$\ell$}}] (1Tree4) at (8.5, -1) {};
    \node[main] (2Tree4) at (9.5, -1) {};
    \node[main] (21Tree4) at (9, -2) {};
    \node[main] (22Tree4) at (10, -2) {};
    \node[main] (211Tree4) at (8.5, -3) {};
    \node[main] (212Tree4) at (9.5, -3) {};
    \draw[->, line width=1.2pt] (rootTree4) -- (1Tree4);
    \draw[->] (rootTree4) -- (2Tree4);
    \draw[->] (2Tree4) -- (21Tree4);
    \draw[->] (2Tree4) -- (22Tree4);
    \draw[->] (21Tree4) -- (211Tree4);
    \draw[->] (21Tree4) -- (212Tree4);
  \end{tikzpicture}
  \\[3ex]
  \begin{tikzpicture}[main/.style = {draw, circle, minimum size = 7pt, inner sep =1pt}]
    \tikzstyle{dot}=[draw,circle,gray,
                  top color= white, text=gray,minimum size=7pt, inner sep = 1]
    \tikzstyle{phantom}=[opacity = 0, minimum size = 7pt]
    \tikzstyle{leaf}=[draw,rounded corners=2pt,red,
                  top color= white, text=red,minimum size=7pt]
    \node[main, label ={[font=\footnotesize] \o}] (root) at (-1, 0) {};
    \node[main] (1) at (-2, -1) {};
    \node[main] (2) at (0, -1) {};
    \node[main] (21) at (-0.5, -2) {};
    \node[main] (22) at (0.5, -2) {};
    \node[main] (221) at (0.25, -3) {};
    \node[main] (222) at (1, -3) {};
    \draw[->, line width=1.2pt] (root) -- (1);
    \draw[->] (root) -- (2);
    \draw[->] (2) -- (21);
    \draw[->] (2) -- (22);
    \draw[->] (22) -- (222);
    \draw[->] (22) -- (221);
    \node[phantom] (x) at (1.5, 0) {};
    \node[phantom] (y) at (1.5, -1) {};
    \draw[] (x) -- (y);
    \node[main] (rootTree2) at (3, 0) {};
    \node[main] (1Tree2) at (2.5, -1) {};
    \node[main] (2Tree2) at (3.5, -1) {};
    \draw[->, line width=1.2pt] (rootTree2) -- (1Tree2);
    \draw[->] (rootTree2) -- (2Tree2);
    \node[phantom] (x2) at (4.5, 0) {};
    \node[phantom] (y2) at (4.5, -1) {};
    \draw[] (x2) -- (y2);
    \node[main] (rootTree3) at (6, 0) {};
    \node[main] (1Tree3) at (5.5, -1) {};
    \node[main] (2Tree3) at (6.5, -1) {};
    \node[main] (21Tree3) at (6, -2) {};
    \node[main] (22Tree3) at (7, -2) {};
    \draw[->, line width=1.2pt] (rootTree3) -- (1Tree3);
    \draw[->] (rootTree3) -- (2Tree3);
    \draw[->] (2Tree3) -- (21Tree3);
    \draw[->] (2Tree3) -- (22Tree3);
    \node[phantom] (x3) at (7.5, 0) {};
    \node[phantom] (y3) at (7.5, -1) {};
    \draw[] (x3) -- (y3);
    \node[main] (rootTree4) at (9, 0) {};
    \node[leaf, label ={[font=\scriptsize]below:\textcolor{red}{$\ell$}}] (1Tree4) at (8.5, -1) {};
    \node[main] (2Tree4) at (9.5, -1) {};
    \node[main] (21Tree4) at (9, -2) {};
    \node[main] (22Tree4) at (10, -2) {};
    \node[main] (211Tree4) at (8.5, -3) {};
    \node[main] (212Tree4) at (9.5, -3) {};
    \draw[->, line width=1.2pt] (rootTree4) -- (1Tree4);
    \draw[->] (rootTree4) -- (2Tree4);
    \draw[->] (2Tree4) -- (21Tree4);
    \draw[->] (2Tree4) -- (22Tree4);
    \draw[->] (21Tree4) -- (211Tree4);
    \draw[->] (21Tree4) -- (212Tree4);
  \end{tikzpicture}

  \caption{Top row, the $k$ trees obtained by ``cutting'' $\tau$ at each of the
  $k$ internal vertices on the path from the root to $\ell$.
  Note that, for each of these trees, one child of the root is distinguished
  (materialized by the bold edge), and that this child is a leaf. However,
  this distinguished child can be the left or the right child of the root.
  Bottom row, the $k$ trees obtained after ``flipping'' some of them
  horizontally to ensure that the distinguished child of the root is always
  its left child.}
  \label{FirstStep}
\end{figure}

\begin{figure}[H]
  \centering
  \begin{tikzpicture}[main/.style = {draw, circle, minimum size = 10pt, inner sep =1pt}]
    \tikzstyle{leaf}=[draw,rectangle,red,
                  top color= white, text=red,minimum width=10pt]
    \tikzstyle{dot}=[draw,circle,gray,
                  top color= white, text=gray,minimum width=10pt]
    \tikzstyle{phantom}=[opacity = 0]
    \tikzstyle{full}=[draw, circle, fill, minimum width=2pt, inner sep =0.5pt]
    \node[full] (1) at (0,0) {};
    \node[full] (2) at (0.5,0.5) {};
    \node[full] (3) at (1,1) {};
    \node[full] (4) at (1.5,1.5) {};
    \node[full] (5) at (2,1) {};
    \node[full] (6) at (2.5,0.5) {};
    \node[full] (7) at (3,0) {};
    \draw[line width=0.66pt] (1) -- (2);
    \draw[line width=0.66pt] (2) -- (3);
    \draw[line width=0.66pt] (3) -- (4);
    \draw[line width=0.66pt] (4) -- (5);
    \draw[line width=0.66pt] (5) -- (6);
    \draw[line width=0.66pt] (6) -- (7);
    \node[phantom] (x) at (3.5, 0) {};
    \node[phantom] (y) at (3.5, 1) {};
    \draw[] (x) -- (y);
    \node[full] (1b) at (4,0) {};
    \node[full] (2b) at (4.5,0.5) {};
    \node[full] (3b) at (5,0) {};
    \draw[line width=0.66pt] (1b) -- (2b);
    \draw[line width=0.66pt] (2b) -- (3b);
    \node[phantom] (x2) at (5.5, 0) {};
    \node[phantom] (y2) at (5.5, 1) {};
    \draw[] (x2) -- (y2);
    \node[full] (1c) at (6,0) {};
    \node[full] (2c) at (6.5,0.5) {};
    \node[full] (3c) at (7,1) {};
    \node[full] (4c) at (7.5,0.5) {};
    \node[full] (5c) at (8,0) {};
    \draw[line width=0.66pt] (1c) -- (2c);
    \draw[line width=0.66pt] (2c) -- (3c);
    \draw[line width=0.66pt] (3c) -- (4c);
    \draw[line width=0.66pt] (4c) -- (5c);
    \node[phantom] (x3) at (8.5, 0) {};
    \node[phantom] (y3) at (8.5, 1) {};
    \draw[] (x3) -- (y3);
    \node[full] (1d) at (9,0) {};
    \node[full] (2d) at (9.5,0.5) {};
    \node[full] (3d) at (10,1) {};
    \node[full] (4d) at (10.5,0.5) {};
    \node[full] (5d) at (11,1) {};
    \node[full] (6d) at (11.5,0.5) {};
    \node[full] (7d) at (12,0) {};
    \draw[line width=0.66pt] (1d) -- (2d);
    \draw[line width=0.66pt] (2d) -- (3d);
    \draw[line width=0.66pt] (3d) -- (4d);
    \draw[line width=0.66pt] (4d) -- (5d);
    \draw[line width=0.66pt] (5d) -- (6d);
    \draw[line width=0.66pt] (6d) -- (7d);
  \end{tikzpicture}
  \\[3ex]
  \begin{tikzpicture}[main/.style = {draw, circle, minimum size = 10pt, inner sep =1pt}]
    \tikzstyle{leaf}=[draw,rectangle,red,
                  top color= white, text=red,minimum width=10pt]
    \tikzstyle{dot}=[draw,circle,gray,
                  top color= white, text=gray,minimum width=10pt]
    \tikzstyle{phantom}=[opacity = 0]
    \tikzstyle{full}=[draw, circle, fill, minimum width=2pt, inner sep =0.5pt]
    \node[full] (1) at (0,0) {};
    \node[full] (2) at (0.5,0.5) {};
    \node[full] (3) at (1,1) {};
    \node[full] (4) at (1.5,1.5) {};
    \node[full] (5) at (2,1) {};
    \node[full] (6) at (2.5,0.5) {};
    \node[full] (7) at (3,0) {};
    \draw[line width=0.66pt] (1) -- (2);
    \draw[line width=0.66pt] (2) -- (3);
    \draw[line width=0.66pt] (3) -- (4);
    \draw[line width=0.66pt] (4) -- (5);
    \draw[line width=0.66pt] (5) -- (6);
    \draw[line width=0.66pt] (6) -- (7);
    \node[full] (2b) at (3.5,0.5) {};
    \node[full] (3b) at (4,0) {};
    \draw[line width=0.66pt] (7) -- (2b);
    \draw[line width=0.66pt] (2b) -- (3b);
    \node[full] (2c) at (4.5,-0.5) {};
    \node[full] (3c) at (5,-1) {};
    \node[full] (4c) at (5.5,-0.5) {};
    \node[full] (5c) at (6,0) {};
    \draw[line width=0.66pt] (3b) -- (2c);
    \draw[line width=0.66pt] (2c) -- (3c);
    \draw[line width=0.66pt] (3c) -- (4c);
    \draw[line width=0.66pt] (4c) -- (5c);
    \node[full] (2d) at (6.5,0.5) {};
    \node[full] (3d) at (7,1) {};
    \node[full] (4d) at (7.5,0.5) {};
    \node[full] (5d) at (8,1) {};
    \node[full] (6d) at (8.5,0.5) {};
    \node[full] (7d) at (9,0) {};
    \draw[line width=0.66pt] (5c) -- (2d);
    \draw[line width=0.66pt] (2d) -- (3d);
    \draw[line width=0.66pt] (3d) -- (4d);
    \draw[line width=0.66pt] (4d) -- (5d);
    \draw[line width=0.66pt] (5d) -- (6d);
    \draw[line width=0.66pt] (6d) -- (7d);
    \draw[] (1) -- (7d);
  \end{tikzpicture}
  \caption{Top row, the Dyck paths associated to the trees in the bottom
  row of Figure~\ref{FirstStep}. Note that each of these Dyck paths has
  exactly one return to the $x$-axis. Bottom row, the grand Dyck path
  obtained by concatenating the Dyck paths in the top row. Note that the
  third of these Dyck paths had to be ``flipped'' below the $x$-axis to
  record the information that the corresponding tree had to be flipped
  horizontally at the previous step of the construction. Also note that
  this grand Dyck path has exactly $k$ returns to the $x$-axis.}
  \label{Dyckpath}
\end{figure}
\end{Proo}

\section{Behavior of the \texorpdfstring{$\boldsymbol{H_\alpha}$}{H-alpha} index for the local topology}
In this appendix, we will describe the behavior of the $H_\alpha$ indices under the local topology. We will begin with a discussion on measurability and semi-continuity before giving conditions for the convergence of the $H_\alpha$ index of a sequence of networks.

\subsection{First properties} \label{appNonContinuity}
\begin{Prop}
The function $H_\alpha$ is a measurable random variable for the local topology.
\end{Prop}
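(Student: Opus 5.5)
The plan is to write $H_\alpha$ as a pointwise limit of measurable functions on the space $\mathbb{G}$ of phylogenetic networks with the local topology. This mirrors the argument for $B_2$ in \cite[App.~A]{bienvenu$B_2$IndexGalled2024}. The natural approximants are the truncations: for each $k \geq 0$, set
\[
  \Phi_k(G) \;=\; H_\alpha(\mu_G \mid \pi_k(G)),
\]
where $\mu_G$ is the law of the limit $X_\infty$ of the simple random walk on $\partial G$. The partition $\pi_k(G)$ groups boundary points according to the vertex at height $k$ through which the walk is last seen; equivalently, it is the partition given by the exit distribution of the walk from $[G]_k$. Concretely, $\Phi_k(G)$ is the structural $\alpha$-entropy of a finite probability vector, namely the hitting distribution of the height-$k$ (or leaf) vertices of $[G]_k$. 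These partitions are nested and refine to the partition into singletons of $\partial G$. Hence, by \Cref{monotonicity} and \Cref{cvgSingleton}, we get $\Phi_k(G)\uparrow H_\alpha(G)$ in $[0,+\infty]$ for every $G$. It then suffices to show that each $\Phi_k$ is measurable, since a nondecreasing pointwise limit of measurable maps into $\mathbb{R}\cup\{+\infty\}$ is measurable.

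The first key step is measurability of each $\Phi_k$. The hitting probabilities of the height-$k$ vertices are computed from $[G]_{k+1}$ alone, which is a finite graph (by local finiteness) and involves only finitely many edges between vertices of height at most $k+1$. In fact $\Phi_k(G)=F_k([G]_{k+1})$ for a function $F_k$ on finite rooted graphs. The map $G\mapsto [G]_{k+1}$ is locally constant, hence continuous, into a countable discrete set. So $\Phi_k$ is locally constant on $\mathbb{G}$, thus continuous, thus Borel.

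The step I expect to be the main obstacle is justifying the identification $\lim_k \Phi_k(G)=H_\alpha(G)$. This requires the partitions $\pi_k(G)$ of the boundary to be measurable, nested, and converging to singletons in the sense of \Cref{cvgSingleton}. The subtle points are twofold. First, the walk can revisit heights, since height is a shortest-path distance and not a generation in a DAG. Second, two rays are identified in $\partial G$, so one must check that the "exit vertex" labelling is well defined on equivalence classes or, failing that, pass to an equivalent refining sequence. Here I would rely on the construction of $\partial G$ and its compact metric embedding from \cite[App.~A.2--4]{bienvenu$B_2$IndexGalled2024}, where exactly this refining sequence is used for $B_2$. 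The only $\alpha$-dependent input is \Cref{cvgSingleton} itself, stated for all $\alpha$, together with \Cref{atomicDecomposition} to cover the value $+\infty$ when $\alpha\leq 1$.
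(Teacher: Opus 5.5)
There is a genuine gap, located where you put the ``main obstacle'', and it cannot be deferred to the $B_2$ construction. Your two descriptions of $\Phi_k$ coincide for trees but not for networks.

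The law of the height-$k$ vertex visited by the walk (first or last visit) is the law of a functional of the \emph{path}, not of $X_\infty$. In a network, the height-$k$ vertex is not a function of the boundary point, so this law is not of the form $H_\alpha(\mu_G\mid\pi)$ for any partition $\pi$ of $\partial G$.
\begin{itemize}
\item For the finite diamond $\rho\to a$, $\rho\to b$, $a\to c$, $b\to c$, you get $\Phi_1=H_\alpha(1/2,1/2)=1>0=H_\alpha(G)$. So neither $\Phi_k\leq H_\alpha(G)$ nor monotonicity holds.
\item The limit itself can also be wrong. Take the infinite ladder with rails $j\to j+1$, $j'\to (j+1)'$, and cross-edges $j\to j'$ for even $j$, $j'\to j$ for odd $j$. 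A zigzag ray meets both rails infinitely often, so $\partial G$ is a single point and $H_\alpha(G)=0$. Yet the distribution of the height-$k$ vertex visited tends to $(2/3,1/3)$ or $(1/3,2/3)$ depending on the parity of $k$, so $\Phi_k(G)\to H_\alpha(1/3,2/3)>0$.
\item With the last-visit convention, locality also fails. Height is a shortest-path distance, so the walk can return to height $k$ from arbitrarily deep vertices, and the last visit is not determined by $[G]_{k+1}$.
\end{itemize}

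More fundamentally, no better single sequence can rescue this architecture. If each $\Phi_k$ were locally constant and $\Phi_k\uparrow H_\alpha$ pointwise, then $H_\alpha=\sup_k\Phi_k$ would be lower semicontinuous for the local topology. The paper's counterexample rules this out: two disjoint rays $G$, with $H_\alpha(G)=1$, are the local limit of the networks $G_n$ in which the left ray feeds into the right one from height $n$ on, with $H_\alpha(G_n)=0$. Conversely, for a genuine nested partition $\pi_k(G)$ of $\partial G$ refining to singletons, the block masses depend on connections arbitrarily far below height $k$. By the same semicontinuity argument, the maps $G\mapsto H_\alpha(\mu_G\mid\pi_k(G))$ cannot all be continuous, so your ``easy'' first step is where the argument breaks.

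The missing idea is a second index. The paper, adapting the $B_2$ proof, uses the doubly indexed functions $H_\alpha^{k,n}$, each continuous for the local topology. In these, the masses $p_{A,n}$ of the blocks attached to $[G]_k$ are only approximated through the extra parameter $n$. Letting $n\to\infty$ recovers the true block masses, and letting $k\to\infty$ then gives $H_\alpha$ via \Cref{cvgSingleton}. This yields measurability without requiring semicontinuity, which $H_\alpha$ does not have. Your argument is correct as written only for trees, where the height-$k$ ancestor of a boundary point is well defined.
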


The proof of this fact is a straightforward adaptation of \cite[Prop.~A.16]{bienvenu$B_2$IndexGalled2024}. Indeed, we can show that $H_\alpha$ is the (pointwise) limit of continuous (for the local topology) functions.
Following the notation in \cite{bienvenu$B_2$IndexGalled2024}
we consider the functions
\begin{equation*}
    H_\alpha^{k,n} : G \longmapsto \frac{1}{1-2^{1-\alpha}} \left( \sum_{A \in [G]_k} p_{A, n}(1-p_{A,n}^{\alpha-1}) \right),
\end{equation*}
instead of the functions $B_2^{k,n}$ and conclude with the same arguments.

However the $H_\alpha$ indices are not continuous for this topology so one can not state that if a sequence $(G_n)_n$ converges towards $G$ in the local topology then $(H_\alpha(G_n))_n$ would converge towards $H_\alpha(G)$. The following proposition states a more precise result.

\begin{Prop}
The $H_\alpha$ indices are not lower nor upper semi continuous for the local topology.
\end{Prop}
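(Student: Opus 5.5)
To prove the statement, I will exhibit two explicit sequences of phylogenetic networks converging for the local topology. Along the first, $H_\alpha$ jumps up in the limit, which rules out upper semicontinuity. Along the second, $H_\alpha$ jumps down, which rules out lower semicontinuity. All computations will rely on the grafting property (\cref{graftingproperty}) and its corollaries, together with the closed-form values of $H_\alpha$ for caterpillars and complete binary trees.

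\emph{Failure of upper semicontinuity.} Let $G_n = \mathrm{CB}(n)$ and let $G$ be the complete infinite binary tree. Every truncation $[G_n]_k$ agrees with $[G]_k$ as soon as $n \geq k$, so $G_n \to G$ locally. By \Cref{HalphaBinaire}, which holds for infinite binary trees as well,
\[
H_\alpha(G) = 2^{\alpha-1}\sum_{k\geq 1}2^{(1-\alpha)k}.
\]
For $\alpha \leq 1$ this sum is $+\infty$, while $H_\alpha(G_n)$ is finite. For $\alpha > 1$ the limit equals $1/(1-2^{1-\alpha})$, which is also $\lim_n H_\alpha(\mathrm{CB}(n))$, so this sequence gives nothing. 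I therefore need a different sequence for $\alpha>1$. Let $G$ be the infinite caterpillar. Its boundary consists of countably many leaves and one end; the end carries mass $0$, so $H_\alpha(G) = \sum_{j\geq1}2^{-\alpha(j-1)} = 1/(1-2^{-\alpha})$. Let $G_n$ be $\Cat(n)$ with a large complete binary tree $\mathrm{CB}(m_n)$ grafted on its deepest leaf. Then $G_n \to G$ locally, and \cref{graftingproperty} gives
\[
H_\alpha(G_n) = H_\alpha(\Cat(n)) + 2^{-\alpha(n-1)}H_\alpha(\mathrm{CB}(m_n)).
\]
For $\alpha>1$ the second term is bounded by a constant times $2^{-\alpha(n-1)}$, so it vanishes and the sequence again converges to $H_\alpha(G)$. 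The cleaner route is instead to use a sequence whose limit has larger entropy: take $G_n$ to be a long path of length $n$ with a cherry attached at its end, and take $G$ to be the infinite path. Both have value $\leq 1$ for every $n$. The remedy I will try is to replace the infinite binary tree by a network whose boundary is a single end: $G$ = the "ladder" network built by adding the edges $i\to(i+1)'$ to the pitchfork of Fig.~\ref{figInfinitePitchfork}, so that all rays are equivalent and $H_\alpha(G)=0$, while its truncations $[G]_n$ (completed so as to end in two leaves) converge locally to $G$ and have $H_\alpha$ bounded below by a positive constant, since the walk splits evenly at the last level. This second example gives $\liminf H_\alpha(G_n) > H_\alpha(G)$, and so it rules out lower semicontinuity for every $\alpha$.

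\emph{Failure of lower semicontinuity (the other direction).} For upper semicontinuity with $\alpha>1$, I will use the remark already stated in the paper, that $H_\alpha$ is discontinuous at any infinite $G$ with $H_\alpha(G)<\infty$. Concretely, take $G_n$ to be a path of length $n$ with a single leaf at its end, so that $H_\alpha(G_n)=0$. Take $G$ to be the pitchfork of Fig.~\ref{figInfinitePitchfork} with its two rays, approximated instead by $G_n$ = a root with two long paths of length $n$ whose ends are identified into a single leaf. These converge locally to the pitchfork, which has $H_\alpha(G)=1$, while $H_\alpha(G_n)=0$. This shows that $H_\alpha$ is not lower semicontinuous. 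Symmetrically, the ladder example shows that it is not upper semicontinuous: there $\lim H_\alpha(G_n)>0=H_\alpha(G)$.

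\emph{Main obstacle.} The main obstacle is checking rigorously that the boundary of the ladder is a single point and that the pitchfork has exactly two ends, so that the values $H_\alpha(G)$ are as claimed. For this I will use the ray-identification criterion recalled in Appendix~\ref{AppInfiniteNetworks}: rays are identified when some ray meets both infinitely often. I will then apply \Cref{atomicDecomposition} to evaluate $H_\alpha$ on these atomic boundary distributions.
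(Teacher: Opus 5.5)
Your argument against lower semicontinuity is correct, but the half against upper semicontinuity has a genuine gap.

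\textbf{The $\mathrm{CB}(n)$ attempt.} It proves nothing, even for $\alpha\le 1$. There $H_\alpha(\mathrm{CB}(n))\to+\infty=H_\alpha(G)$, and upper semicontinuity at a point of value $+\infty$ holds automatically.

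\textbf{The ladder.} As you define it, with only the rungs $i\to(i+1)'$, it does not produce a jump. Once the walk moves to the primed rail it never returns, so vertex $i$ is reached with probability $2^{-i}$. The two leaves $n$ and $n'$ of $[G]_n$ therefore carry masses $2^{-n}$ and $1-2^{-n}$, and
$H_\alpha([G]_n)=\bigl(1-2^{-\alpha n}-(1-2^{-n})^{\alpha}\bigr)/(1-2^{1-\alpha})\to 0=H_\alpha(G)$.
That is continuity, not a jump: the walk does \emph{not} split evenly at the last level. Your claim that all rays are equivalent is also false here, since no ray that crosses ever meets the unprimed rail again. The value $H_\alpha(G)=0$ survives only because that end has mass $0$.

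\textbf{Two repairs.} First, you can add rungs in both directions, $i\to(i+1)'$ and $i'\to i+1$. Then each leaf of $[G]_n$ gets mass exactly $1/2$, any two rays are identified by a ray interleaving them, and $H_\alpha([G]_n)=1>0=H_\alpha(G)$. Second, and more simply, you can keep the path with a cherry at its end converging to the infinite path, which you mentioned and then discarded. That is exactly the paper's first example. There $H_\alpha(G_n)=1$ for every $n$, since the cherry receives all the mass, while the infinite path has a single boundary point, so $H_\alpha(G)=0$. That both values are at most $1$ is irrelevant; what matters is $\limsup_n H_\alpha(G_n)=1>0=H_\alpha(G)$.

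\textbf{Directions of semicontinuity.} A sequence with $\limsup_n H_\alpha(G_n)>H_\alpha(G)$ violates \emph{upper} semicontinuity. One with $\liminf_n H_\alpha(G_n)<H_\alpha(G)$ violates \emph{lower} semicontinuity. Your first paragraph says the ladder ``rules out lower semicontinuity'', which is backwards. The paper's own proof also swaps the two labels, but its two examples together still cover both properties.

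\textbf{Your lower-semicontinuity example.} Two paths of length $n$ from the root merging into a single leaf is valid and slightly different from the paper's construction. Your $G_n$ is finite with one leaf, so $H_\alpha(G_n)=0$ is immediate, and $[G_n]_k=[G]_k$ for $k<n$. The pitchfork has two unidentified ends of mass $1/2$ each, so $H_\alpha(G)=1$. The paper instead uses infinite $G_n$: the pitchfork with left-to-right rungs from level $n$ on, which absorb the walk into one rail. It then has to argue via a stub sequence that $H_\alpha(G_n)=0$; your finite version avoids that boundary computation entirely.
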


\begin{Proo}
    We can consider the following counter examples.
\begin{itemize}
    \item For the non lower semi continuity, one can consider the sequence of phylogenetic networks $(G_n)_{n \geqslant 1}$ and its local limit $G$ depicted in (respectively) \Cref{seqNonLowersc,locLimitNonLowersc}.
        \begin{figure}[H]
        \centering
        \begin{subfigure}[t]{.4\textwidth}
        \centering
    \begin{tikzpicture}[main/.style = {draw, circle, minimum size = 7pt, inner sep =1pt}]
    \tikzstyle{dot}=[draw,circle,gray,
                  top color= white, text=gray,minimum size=7pt, inner sep = 1]
    \tikzstyle{phantom}=[opacity = 0, minimum size = 7pt]
    \tikzstyle{leaf}=[draw,rounded corners=2pt,red,
                  top color= white, text=red,minimum size=7pt]
    \node[main, label ={[font=\footnotesize] \o}] (root) at (0, 0) {};
    \node[main, label ={[font=\scriptsize]left:1}] (1) at (0, -1) {};
    \node[main, label ={[font=\scriptsize]left:$n$}] (2) at (0, -3) {};
    \node[main] (3) at (-1, -4) {};
    \node[main] (4) at (1, -4) {};
    \draw[->] (root) -- (1);
    \draw[dashed, ->] (1) -- (2);
    \draw[->] (2) -- (3);
    \draw[->] (2) -- (4);
\end{tikzpicture}
    \caption{Representation of the $n$-th term of the sequence $(G_n)_{n \geqslant 1}$.}
    \label{seqNonLowersc}
\end{subfigure}
~
\begin{subfigure}[t]{.4\textwidth}
\centering
    \begin{tikzpicture}[main/.style = {draw, circle, minimum size = 7pt, inner sep =1pt}]
    \tikzstyle{dot}=[draw,circle,gray,
                  top color= white, text=gray,minimum size=7pt, inner sep = 1]
    \tikzstyle{phantom}=[opacity = 0, minimum size = 7pt]
    \tikzstyle{leaf}=[draw,rounded corners=2pt,red,
                  top color= white, text=red,minimum size=7pt]
    \node[main, label ={[font=\footnotesize] \o}] (root) at (0, 0) {};
    \node[main, label ={[font=\scriptsize]left:1}] (1) at (0, -1) {};
    \node[main, label ={[font=\scriptsize]left:$n$}] (2) at (0, -3) {};
    \node[phantom] (3) at (0, -4) {};
    \draw[->] (root) -- (1);
    \draw[dashed, ->] (1) -- (2);
    \draw[dashed, ->] (2) -- (3);
\end{tikzpicture}
    \caption{Representation of $G$.}
    \label{locLimitNonLowersc}
\end{subfigure}
    \caption{A counter example to the non lower semi-continuity.}
    \label{contreExToLower}
    \end{figure}
    
One can easily compute the $H_\alpha$ indices of $G$ and of $G_n$ for all integers $n$. Indeed,
\begin{equation*}
    H_\alpha(G) = 0 \text{ and for all } n \in \mathbb{N}, H_\alpha(G_n) = 1.
\end{equation*}
Hence the non lower semi-continuity.
\item For the non upper semi continuity, one can consider the sequence of phylogenetic networks $(G_n)_{n \geqslant 1}$ and its local limit $G$ depicted in (respectively) \Cref{seqNonUppersc,locLimitNonUppersc}.
        \begin{figure}[H]
        \centering
        \begin{subfigure}[t]{.4\textwidth}
        \centering
    \begin{tikzpicture}[main/.style = {draw, circle, minimum size = 7pt, inner sep =1pt}]
    \tikzstyle{dot}=[draw,circle,gray,
                  top color= white, text=gray,minimum size=7pt, inner sep = 1]
    \tikzstyle{phantom}=[opacity = 0, minimum size = 7pt]
    \tikzstyle{leaf}=[draw,rounded corners=2pt,red,
                  top color= white, text=red,minimum size=7pt]
    \node[main, label ={[font=\footnotesize] \o}] (root) at (0, 0) {};
    \node[main, label ={[font=\scriptsize]left:1}] (1) at (-1, -1) {};
    \node[main, label ={[font=\scriptsize]right:1'}] (1bis) at (1, -1) {};
    \node[main, label ={[font=\scriptsize]left:2}] (2) at (-1, -2) {};
    \node[main, label ={[font=\scriptsize]right:2'}] (2bis) at (1, -2) {};
    \node[main, label ={[font=\scriptsize]left:$n$}] (n) at (-1, -4) {};
    \node[main, label ={[font=\scriptsize]right:$n'$}] (nbis) at (1, -4) {};
    \node[main, label ={[font=\scriptsize]left:$n+1$}] (n+1) at (-1, -5) {};
    \node[main, label ={[font=\scriptsize]right:$(n+1)'$}] (n+1bis) at (1, -5) {};
    \node[phantom] (c) at (-1, -6) {};
    \node[phantom] (cbis) at (1, -6) {};
    \node[phantom] (p) at (-1, -7) {};
    \node[phantom] (pbis) at (1, -7) {};
    \draw[->] (root) -- (1);
    \draw[->] (root) -- (1bis);
    \draw[->] (1) -- (2);
    \draw[->] (1bis) -- (2bis);
    \draw[dashed, ->] (2) -- (n);
    \draw[dashed, ->] (2bis) -- (nbis);
    \draw[->] (n) -- (nbis);
    \draw[->] (n) -- (n+1);
    \draw[->] (nbis) -- (n+1bis);
    \draw[->] (n+1) -- (n+1bis);
    \draw[dashed, ->] (n+1) -- (p);
    \draw[dashed, ->] (n+1bis) -- (pbis);
    \draw[dashed, ->] (c) -- (cbis);
\end{tikzpicture}
    \caption{Representation of the $n$-th term of the sequence $(G_n)_{n \geqslant 1}$.}
    \label{seqNonUppersc}
\end{subfigure}
~
\begin{subfigure}[t]{.4\textwidth}
\centering
    \begin{tikzpicture}[main/.style = {draw, circle, minimum size = 7pt, inner sep =1pt}]
    \tikzstyle{dot}=[draw,circle,gray,
                  top color= white, text=gray,minimum size=7pt, inner sep = 1]
    \tikzstyle{phantom}=[opacity = 0, minimum size = 7pt]
    \tikzstyle{leaf}=[draw,rounded corners=2pt,red,
                  top color= white, text=red,minimum size=7pt]
    \node[main, label ={[font=\footnotesize] \o}] (root) at (0, 0) {};
    \node[main, label ={[font=\scriptsize]left:1}] (1) at (-1, -1) {};
    \node[main, label ={[font=\scriptsize]right:1'}] (1bis) at (1, -1) {};
    \node[main, label ={[font=\scriptsize]left:2}] (2) at (-1, -2) {};
    \node[main, label ={[font=\scriptsize]right:2'}] (2bis) at (1, -2) {};
    \node[phantom] (n) at (-1, -4) {};
    \node[phantom] (nbis) at (1, -4) {};
    \draw[->] (root) -- (1);
    \draw[->] (root) -- (1bis);
    \draw[->] (1) -- (2);
    \draw[->] (1bis) -- (2bis);
    \draw[dashed, ->] (2) -- (n);
    \draw[dashed, ->] (2bis) -- (nbis);
\end{tikzpicture}
    \caption{Representation of $G$.}
    \label{locLimitNonUppersc}
\end{subfigure}
    \caption{A counter example to the non upper semi-continuity.}
    \label{contreExToUpper}
    \end{figure}
One can compute the $H_\alpha$ indices of $G$ and of $G_n$ for all integers $n$ using for instance the stub sequence $(S_{n_k})$ being $G_n$ cut at height $kn$ (see \Cref{Stubsequence}). It yields
\begin{equation*}
   H_\alpha(G) = 1 \text{ and for all } n \in \mathbb{N}, H_\alpha(G_n) = 0.
\end{equation*}
Hence the non upper semi continuity. \qedhere
\end{itemize}
\end{Proo}

\subsection[Conditions for the convergence of the \texorpdfstring{$H_\alpha$}{H-alpha} index]{%
Conditions for the convergence of the \texorpdfstring{$\boldsymbol{H_\alpha}$}{H-alpha} index} \label{appCondContinuity}

One can overcome the non-continuity difficulty in some cases. We already saw that for a tree $T$,  the $H_\alpha$ index being increasing, the sequence $(T_n)_n$ defined by $T_n = [T]_n$ (that obviously converges towards $T$ in the local topology) is a sequence that verifies $H_\alpha(T_n) \rightarrow H_\alpha(T)$ as $n$ goes to infinity. When considering networks, things are not as easy and one needs to be more careful when truncating the network. First, recall the definition of a \textit{stub sequence} from \cite{bienvenu$B_2$IndexGalled2024}.

\begin{Defi}
Let $G$ be a phylogenetic network. Select a set of cut-vertices $\Xi$ in $G$ and ungraft the networks subtended by each $v \in \Xi$. The remaining network $S$ is called a stub and is denoted by $S \sqsubset G$. An increasing sequence of stubs $S_n \sqsubset S_{n+1} \sqsubset G$ that converges toward $G$ is called a stub sequence.
\end{Defi}

\begin{Prop} \label{Stubsequence}
Let $(S_n)$ be a stub sequence of $G$. Then, 
\begin{equation*}
H_\alpha(S_n) \underset{n \rightarrow \infty}{\longrightarrow} H_\alpha(G). \qedhere
\end{equation*}
\end{Prop}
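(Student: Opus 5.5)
We prove the convergence by identifying $H_\alpha(S_n)$ with the $\alpha$-entropy of the exit measure $\mu$ of $G$ with respect to a partition $\pi_n$ of $\partial G$, and then invoking \Cref{cvgSingleton}.

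\textbf{Partition associated with a stub.} Let $X_\infty$ be the limit of the simple random walk on $G$, with law $\mu$ on $\partial G$. Fix a stub $S \sqsubset G$ obtained by ungrafting the subnetworks $G_v$, $v\in\Xi$. The leaves of $S$ are of two kinds: leaves of $G$, and vertices $v\in\Xi$. Since $v$ is a cut-vertex, every walk reaching $G_v$ must pass through $v$ and can never leave $G_v$. For each leaf $\lambda$ of $S$, let $B_\lambda$ be the set of boundary points that are reachable only through $\lambda$. For a leaf of $G$ this is the singleton $\{\lambda\}$; for $v\in\Xi$ it is $\partial G_v$. Together with the part of the boundary of $S$ coming from its own rays, these blocks form a measurable partition $\pi_S$ of $\partial G$.

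\textbf{First step: $H_\alpha(S)=H_\alpha(\mu\,|\,\pi_S)$.} The walk on $G$, observed up to the time it first hits a vertex of $\Xi$, has the same law as the walk on $S$. Hence $\mu(B_\lambda)=p_{\lambda,S}$, and on the ray part of $\partial S$ the two exit measures agree. Two sub-steps remain. For finite $S$, the identity is then immediate from \Cref{defentropy}. For infinite $S$, we must also check that refining the ray part of $\partial S$ inside $\partial G$ changes nothing. An alternative that covers both cases is to apply \Cref{graftinggen} repeatedly: fragmenting each block $B_v$ into $\partial G_v$ adds $\mu(B_v)^\alpha H_\alpha(G_v)$, which is exactly the grafting property, \Cref{graftingproperty}.

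\textbf{Second step: refinement and convergence of the partitions.} Since $S_n\sqsubset S_{n+1}$, each block of $\pi_{S_{n+1}}$ lies inside a block of $\pi_{S_n}$. Moreover $\pi_{S_n}$ is coarser than the partition of $\partial G$ into singletons, so $\pi \preccurlyeq \pi_{S_n}$ for all $n$, where $\pi$ denotes the singleton partition.

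It remains to show that $\pi_{S_n}$ converges to $\pi$. Take two distinct boundary points $x\neq y$. We need $S_n$ to eventually separate them, that is, to place them in different blocks. Here we use that $S_n \to G$ locally, so every vertex of $G$ eventually belongs to $S_n$ and is no longer hidden behind a cut-vertex of $\Xi_n$.

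\textbf{Main obstacle.} This separation step is where I expect the real work. If $x$ or $y$ is a leaf, separation is immediate. If both are ray classes, we must use the definition of the boundary: inequivalent rays are eventually disjoint in a strong sense, meaning no third ray meets both infinitely often. From this, we should derive that once $S_n$ contains a large enough ball, $x$ and $y$ cannot lie behind the same cut-vertex of $\Xi_n$. We also need them not to sit in a common ray block of $S_n$; for this I would use that the boundary of $S_n$ embeds into $\partial G$. If this becomes delicate, I would follow the argument for $B_2$ in \cite{bienvenu$B_2$IndexGalled2024}, whose proof involves only the partitions and not the entropy functional, so it transfers verbatim.

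\textbf{Conclusion.} With the two steps in place, \Cref{cvgSingleton} gives
\[
H_\alpha(S_n)=H_\alpha(\mu\,|\,\pi_{S_n})\longrightarrow H_\alpha(\mu)=H_\alpha(G).
\]
By \Cref{monotonicity}, this convergence is monotone nondecreasing.
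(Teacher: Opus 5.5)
Your proposal is correct and follows the same route as the paper. The paper likewise attaches to the stubs $S_n$ a sequence of partitions of $\partial G$ that converge to the singleton partition, deferring the details to the $B_2$ case, and then concludes with \Cref{cvgSingleton}.

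The separation step you flag as the main obstacle is in fact short. The only non-singleton blocks of $\pi_{S_n}$ are the sets $\partial G_v$ with $v\in\Xi_n$. If $x\neq y$ shared such a block $\partial G_{v_n}$ for every $n$, local convergence would force these nested cut-vertices to escape to infinity. Rays representing $x$ and $y$ would then both pass through $v_n$ for all large $n$, hence meet infinitely often, so $x\sim y$.
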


The proof of this proposition is a straightforward adaptation of
\cite[Prop.~A.20]{bienvenu$B_2$IndexGalled2024}. Indeed, we can define
a sequence of partitions associated with $(S_n)$ that converges
towards the partition in singletons of $G$ and so \Cref{cvgSingleton}
concludes the proof.

To end this section, we state a generalization of
\cite[Prop.~A.17~\&~A.18]{bienvenu$B_2$IndexGalled2024} that provides
us with a sufficient condition for the convergence of the $H_\alpha$
index of a sequence in the local topology.

\begin{Prop} \label{localcvg}
Let $(G_n)$ be a sequence of phylogenetic networks converging locally to $G$. Assume that the directed random walk on $G$ ends in a leaf almost surely. Then,
\begin{itemize}
\item[\textbullet] If $\alpha > 1$, $\lim_n H_\alpha(G_n) = H_\alpha(G)$.
\item[\textbullet] If $0 < \alpha <1$, then $\liminf_n H_\alpha(G_n) \geqslant H_\alpha$.

Moreover, if there exists a sequence $(k_n)$ such that $[G_n]_{k_n} = [G]_{k_n}$ and
\begin{equation*}
\mathds{P}(h(X_\infty) > k_n) (|\mathcal{L}_{G_n}|)^{\frac{1}{\alpha}-1} \underset{n \rightarrow \infty}{\longrightarrow} 0,
\end{equation*}
(where $h(X_\infty)$ is the height of the limit of the directed random walk on $G$) then, $\lim_n H_\alpha(G_n) = H_\alpha(G)$. \qedhere
\end{itemize}
\end{Prop}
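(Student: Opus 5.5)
The plan is to work at the level of partitions of the boundary, as in the appendix. Let $\mu$ be the law of $X_\infty$ on $\partial G$ and $\mu_n$ the law on $\partial G_n$ (the leaf distribution $(p_\ell^{(n)})$). By hypothesis $X_\infty$ is a.s.\ a leaf of $G$, so $\mu$ is purely atomic and $H_\alpha(G)=\frac{1}{1-2^{1-\alpha}}\bigl(1-\sum_{\ell}p_\ell^\alpha\bigr)$ by \cref{atomicDecomposition}. The key observation is that for every fixed $k$ the walks on $G_n$ and $G$ agree in law up to height $k$ once $[G_n]_k=[G]_k$. Hence for each leaf $\ell$ of $G$ (at height $<k$) we get $p^{(n)}_\ell\to p_\ell$: indeed $p^{(n)}_\ell$ coincides with the probability, computed on $[G]_k$, of hitting $\ell$ before height $k$, which tends to $p_\ell$ as $k\to\infty$.

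\emph{Case $\alpha>1$.} Enumerate the leaves $\ell_1,\ell_2,\dots$ of $G$. For fixed $m$, consider the partition of $\partial G_n$ (for $n$ large) into $\{\ell_1\},\dots,\{\ell_m\}$ and a remainder block $R_n$ of mass $r_n\to r_m:=1-\sum_{i\le m}p_{\ell_i}$. By monotonicity (\cref{monotonicity}) and the grafting identity (\cref{graftinggen}),
\[
H_\alpha(G_n)=H_\alpha(\mu_n\mid\pi_{n,m})+r_n^\alpha H_\alpha(\mu_{n,R_n}),
\]
where $\pi_{n,m}$ is this coarse partition, and the conditional term is at most $\frac{1}{1-2^{1-\alpha}}$ since $\alpha>1$. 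So $|H_\alpha(G_n)-H_\alpha(\mu_n\mid\pi_{n,m})|\le r_n^\alpha/(1-2^{1-\alpha})$. Also $H_\alpha(\mu_n\mid\pi_{n,m})\to H_\alpha(\mu\mid\pi_m)$ by finite-dimensional continuity (or \cref{cvgHalpha}(i)), and $H_\alpha(\mu\mid\pi_m)\to H_\alpha(G)$ as $m\to\infty$ by \cref{cvgSingleton}. Letting $n\to\infty$ and then $m\to\infty$, with $r_m\to0$ because the walk ends in a leaf a.s., gives the limit.

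\emph{Case $0<\alpha<1$, liminf.} The same decomposition applies, with the second term now nonnegative. This yields $\liminf_n H_\alpha(G_n)\ge H_\alpha(\mu\mid\pi_m)$ for each $m$, and letting $m\to\infty$ gives the bound.

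\emph{Case $0<\alpha<1$, convergence.} This is the main obstacle: the remainder can carry unbounded entropy. Split the leaves of $G_n$ into those at height $\le k_n$ (shared with $G$, with identical probabilities $p_\ell$ because $[G_n]_{k_n}=[G]_{k_n}$) and the rest. The rest has total mass $\varepsilon_n=\mathds{P}(h(X_\infty)>k_n)$, again by the coupling up to height $k_n$, and contains at most $N_n=|\mathcal{L}_{G_n}|$ leaves. Its contribution to $\sum p^\alpha$ is bounded by concavity (maximized at equal masses) by $N_n^{1-\alpha}\varepsilon_n^\alpha=\bigl(\varepsilon_n N_n^{1/\alpha-1}\bigr)^\alpha\to0$. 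Therefore
\[
\sum_{\ell\in\mathcal{L}_{G_n}}(p^{(n)}_\ell)^\alpha\le\sum_{\ell\in\mathcal{L}_G,\,h(\ell)\le k_n}p_\ell^\alpha+o(1)\le\sum_{\ell\in\mathcal{L}_G}p_\ell^\alpha+o(1).
\]
This gives $\limsup_n H_\alpha(G_n)\le H_\alpha(G)$ (the normalizing factor is negative), and the liminf bound then gives equality. If $H_\alpha(G)=\infty$, the liminf bound alone suffices.
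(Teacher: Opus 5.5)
Your proof is correct and is essentially the paper's argument: the leaves near the root carry the entropy; for $\alpha>1$ the remainder is controlled by its mass; for the $\liminf$ the remaining nonnegative contributions are dropped; and for $0<\alpha<1$ the deep leaves are handled by the same concavity (Jensen) bound $\varepsilon_n^\alpha|\mathcal{L}_{G_n}|^{1-\alpha}$. The only variation is in the first two parts, where you fix $m$ leaves, take a double limit, and need only $p^{(n)}_\ell\to p_\ell$, whereas the paper splits directly at height $k_n$ and bounds the deep part by $q_n/(1-2^{1-\alpha})$.

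One small caveat, which the paper shares: with reticulations the walk can go beyond height $k$ and still end at a low leaf. So $p^{(n)}_\ell$ does not literally coincide with the hitting probability computed on $[G]_k$. It does, however, differ from $p_\ell$ by at most the probability that the walk on $G$ reaches height $k$, and this tends to $0$ because that walk stops almost surely.
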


\begin{Rema}
  As in \Cref{cvgHalpha}, this proposition illustrates that it is ``harder''
  for the $H_\alpha$ index to converge when $\alpha$ is small, with a
  critical change in $\alpha = 1$ (i.e.\ for the $B_2$ index).
\end{Rema}

\begin{Proo}
Let us start with showing that $\liminf_n H_\alpha(G_n) \geqslant H_\alpha$. We refer the reader to \cite{bienvenu$B_2$IndexGalled2024} and rapidly recall the main ideas. By definition of local convergence we can pick $(k_n)$ a sequence verifying $[G_n]_{k_n} = G_{k_n}$. Set $\mathcal{L}_G^k$ the set of leaves of $G$ of height at most $k$ and as usual write $p_\ell$ for the probability that the random walk on $G$ ends in $\ell$. Assume that $\mathds{P}(X_\infty \in \mathcal{L}_G) = 1$, we thus have
\begin{equation*}
\frac{1}{1-2^{1-\alpha}} \sum_{\ell \in \mathcal{L}_G^k} p_\ell (1-p_\ell^{\alpha -1}) \underset{k \rightarrow \infty}{\longrightarrow} H_\alpha(G).
\end{equation*} 
Taking $b$ smaller than $H_\alpha(G)$ and $k$ such that $\frac{1}{1-2^{1-\alpha}} \sum_{\ell \in \mathcal{L}_G^k} p_\ell (1-p_\ell^{\alpha -1}) > b$ we have
\begin{equation*}
H_\alpha(G_n) \geqslant \frac{1}{1-2^{1-\alpha}} \sum_{\ell \in \mathcal{L}_k} p_\ell (1-p_\ell^{\alpha -1}) > b.
\end{equation*}
Thus, $\liminf_n H_\alpha(G_n) \geqslant b$ and letting $b \rightarrow H_\alpha(G)$ yields $\liminf_n H_\alpha(G_n) \geqslant H_\alpha$.

Assume now that $\alpha > 1$. To prove the first point, we now only have to show that $\limsup_n H_\alpha(G_n) \leqslant H_\alpha(G)$. Let $q_n = \mathds{P}(h(X_\infty) > k_n)$. By hypothesis, $q_n \underset{n \rightarrow \infty}{\longrightarrow} 0$. Denote by $\mathcal{L}_{G_n}^{k_n}$ the set of leaves of $G_n$ of height at most $k_n$ and $\mathcal{L}_{G_n}^{>k_n}$ the set of leaves of $G_n$ of height at least $k_n +1$. Since $[G_n]_{k_n} = [G]_{k_n}$, 
\begin{equation*}
\frac{1}{1-2^{1-\alpha}} \sum_{\ell \in \mathcal{L}_{G_n}^{k_n}} p_{n, \ell}(1- p_{n, \ell}^{\alpha -1}) = \frac{1}{1-2^{1-\alpha}} \sum_{\ell \in \mathcal{L}_{G}^{k_n}} p_\ell (1-p_\ell^{\alpha -1}) \underset{n \rightarrow \infty}{\longrightarrow} H_\alpha(G)
\end{equation*}
Moreover, 
\begin{equation*}
0 < \frac{1}{1-2^{1-\alpha}} \sum_{\ell \in \mathcal{L}_{G_n}^{>k_n}} p_{n, \ell} (1-p_{n, \ell}^{\alpha -1}) = \frac{1}{1-2^{1-\alpha}} \left( q_n - \sum_{\ell \in \mathcal{L}_{G_n}^{>k_n}
} p_{n, \ell}^\alpha \right) < \frac{q_n}{1-2^{1-\alpha}} \underset{n \rightarrow \infty}{\longrightarrow} 0.
\end{equation*}
Thus, $\limsup_n H_\alpha(G_n) \leqslant H_\alpha(G)$ and so $H_\alpha(G_n) \underset{n \rightarrow \infty}{\longrightarrow} H_\alpha(G)$.

Now, let us turn to point two. Assume that $0 < \alpha < 1$ and that one can pick a sequence $(k_n)$ such that $[G_n]_{k_n} = [G]_{k_n}$ and 
\begin{equation*}
\mathds{P}(h(X_\infty) > k_n) (|\mathcal{L}_{G_n}|)^{\frac{1}{\alpha}-1} \underset{n \rightarrow \infty}{\longrightarrow} 0.
\end{equation*}
We will show that under this condition $\limsup_n H_\alpha(G_n) \leqslant H_\alpha(G)$. As before, note that there exists $n_0 \in \mathbb{N}$ such that for all $n \geqslant n_0$,
\begin{equation*}
H_\alpha(G_n) = \frac{1}{1-2^{1-\alpha}} \sum_{\ell \in \mathcal{L}_G^{k_n}} p_{\ell} (1- p_{\ell}^{\alpha -1}) + \frac{1}{1-2^{1-\alpha}} \sum_{\ell \in \mathcal{L}_{G_n}^{> k_n}} p_{n, \ell} (1- p_{n, \ell}^{\alpha -1}).
\end{equation*}
Since the first term tends to $H_\alpha(G)$ as $n \rightarrow \infty$ we shall bound the second term. Note that $(p_{n, \ell} / q_n : \ell \in \mathcal{L}_{G_n}^{>k_n})$ is a probability distribution. Thus by Jensen inequality (applied to the concave function $f_\alpha : x >0 \mapsto \frac{1-x^{1-\alpha}}{1-2^{1-\alpha}}$) we have that:
\begin{eqnarray*}
\sum_{\ell \in \mathcal{L}_{G_n}^{>k_n}} \frac{p_{n, \ell}}{q_n} \cdot \frac{1-p_{n, \ell}^{\alpha -1}}{1-2^{1-\alpha}}
& \leqslant & \frac{1}{1-2^{1-\alpha}} \left( 1- \left( \sum_{\ell \in \mathcal{L}_{G_n}^{>k_n}} \frac{p_{n, \ell}}{q_n} \cdot \frac{1}{p_{n, \ell}} \right)^{1-\alpha} \right) \\
& \leqslant & \frac{1}{1-2^{1-\alpha}} - \frac{1}{1-2^{1-\alpha}} \left(\frac{1}{q_n} |\mathcal{L}_{G_n}| \right)^{1-\alpha}.
\end{eqnarray*}
Thus, the second term can be bounded by:
\begin{equation*}
\frac{1}{1-2^{1-\alpha}} \sum_{\ell \in \mathcal{L}_{G_n}^{> k_n}} p_{n, \ell} (1- p_{n, \ell}^{\alpha -1}) \leqslant \frac{q_n}{1-2^{1-\alpha}} + \frac{1}{1-2^{1-\alpha}} q_n^\alpha |\mathcal{L}_{G_n}|^{1-\alpha},
\end{equation*}
and this quantity tends to $0$ as $n \rightarrow \infty$ since we assumed that $q_n|\mathcal{L}_{G_n}|^{\frac{1}{\alpha}-1} \underset{n \rightarrow \infty}{\longrightarrow} 0$.
\end{Proo}

\section{Blowups of Galton--Watson trees: proofs} \label{appBlowups}

We end this document with the proofs of the results of
\Cref{secBlowup}, related to the blowups of Galton--Watson trees.

\subsection{The \texorpdfstring{$H_\alpha$}{H-alpha} index of Kesten trees: proof of \Cref{expecKesten}} \label{appHalphaKesten}

For the sake of clarity, let us recall the statement of \Cref{expecKesten}. Also, recall that $T$ is a Galton--Watson tree with a
critical offspring distribution $\xi$, $T_\star$ is the associated
Kesten tree and $\hat{\xi}$ is the size-biased distribution.
\begin{Theonn}
  Assume that $\mathds{E}[\xi]=1$ and $\mathds{P}(\xi = 1) < 1$.
  Then $\mathds{E}[H_\alpha(T_\star)]$ is finite if and only if
  $\mathds{E}[\hat{\xi}^{1-\alpha}] < + \infty$. In this case, we have
  the following expression:
\begin{equation*}
\mathds{E}[H_\alpha(T_\star)] = \frac{\frac{1}{1-2^{1-\alpha}}(1-\mathds{E}[\hat{\xi}^{1-\alpha}])+\mathds{E}[\hat{\xi}^{-\alpha}(\hat{\xi}-1)] \cdot \mathds{E}[H_\alpha(T)]}{1- \mathds{E}[\hat{\xi}^{-\alpha}]},
\end{equation*}
where $\mathds{E}[H_\alpha(T)]$ is explicit in view of \Cref{expecGW}.
\end{Theonn}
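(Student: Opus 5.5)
We will mirror the proof of \Cref{expecGW}: truncate the Kesten tree and write a linear recursion for the expectations. Write $u_k = \E[H_\alpha([T_\star]_k)]$ and $w_k = \E[H_\alpha([T]_k)]$, where $T$ is the critical Galton--Watson tree. The root of $T_\star$ has $\hat{\xi}\geqslant 1$ children. Hence $[T_\star]_k$ is obtained from a star tree with $\hat\xi$ leaves, whose index is $\frac{1}{1-2^{1-\alpha}}(1-\hat\xi^{1-\alpha})$. On one leaf we graft an independent copy of $[T_\star]_{k-1}$, and on the other $\hat\xi-1$ leaves we graft independent copies of $[T]_{k-1}$. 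Each leaf of the star has probability $\hat\xi^{-1}$. The grafting property (\cref{graftingproperty}), applied leaf by leaf, therefore gives the distributional identity
\begin{equation*}
H_\alpha([T_\star]_k) \overset{d}{=} \frac{1-\hat\xi^{1-\alpha}}{1-2^{1-\alpha}} + \hat\xi^{-\alpha} H_\alpha([T_\star']_{k-1}) + \hat\xi^{-\alpha}\sum_{j=1}^{\hat\xi-1} H_\alpha([T(j)]_{k-1}).
\end{equation*}
We then condition on $\hat\xi$ and use independence. This yields
\begin{equation*}
u_k = a + b\,u_{k-1} + c\, w_{k-1},
\end{equation*}
with $a=\frac{1}{1-2^{1-\alpha}}(1-\E[\hat\xi^{1-\alpha}])$, $b=\E[\hat\xi^{-\alpha}]$ and $c=\E[\hat\xi^{-\alpha}(\hat\xi-1)]$. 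Every term in the identity is nonnegative: the star index is nonnegative for any $\alpha$, because the normalizing constant and $1-\hat\xi^{1-\alpha}$ have the same sign. So no integrability issues arise in taking expectations, and all quantities lie in $[0,+\infty]$.

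Next we treat convergence. Since $\P(\xi=1)<1$ and $\E[\xi]=1$, we have $\P(\xi\geqslant 2)>0$, so $\P(\hat\xi\geqslant2)>0$ and $b<1$. By \Cref{Stubsequence} (or monotonicity, \Cref{monotonicity}), $u_k\uparrow \E[H_\alpha(T_\star)]$ and $w_k\uparrow\E[H_\alpha(T)]$ by monotone convergence. For the criticality of $T$, note $\E[\xi^{1-\alpha}\1_{\xi\neq0}] = \E[\hat\xi^{-\alpha}]=b<1$. So \cref{expecGW} gives $\E[H_\alpha(T)]<\infty$ automatically, with $w_k$ converging to it. Letting $k\to\infty$ in the recursion gives $u_\infty(1-b) = a + c\,\E[H_\alpha(T)]$ whenever everything is finite, which is exactly the announced formula.

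For the equivalence, the coefficients $b$ and $c$ are always finite, since $c\leqslant \E[\hat\xi^{1-\alpha}]$ for $\alpha\le1$ and $c\leqslant 1$ for $\alpha>1$. The key quantity is $a$, i.e.\ $\E[\hat\xi^{1-\alpha}]$, which is automatically finite when $\alpha\geqslant1$. If $\E[\hat\xi^{1-\alpha}]<\infty$, then $a,b,c,w_\infty$ are finite with $b<1$, so $u_k$ is bounded and its limit is finite. Conversely, the identity above gives $u_k\geqslant a$, so $a=+\infty$ forces $u_\infty=+\infty$. The only mild obstacle is carefully justifying the interchange of limit and expectation with possibly infinite terms (monotone convergence of the truncations to $H_\alpha(T_\star)$). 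We also need to check that the "$a$ infinite" case is the only way divergence can occur; this follows because $b<1$ and $c\,w_\infty<\infty$ always hold.
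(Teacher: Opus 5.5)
Your proof is correct and is essentially the paper's argument. It uses the same root decomposition of $[T_\star]_k$ via the grafting property, the same linear recursion $u_k = a + b\,u_{k-1} + c\,w_{k-1}$, and the same conclusion via $b<1$ and monotone convergence; your identity $\E[\xi^{1-\alpha}\1_{\{\xi\neq 0\}}]=\E[\hat\xi^{-\alpha}]$ neatly justifies the paper's remark that $\E[H_\alpha(T)]<\infty$ is automatic. One small slip is that $c$ is not always finite: since $c\geqslant\E[\hat\xi^{1-\alpha}]-1$, it is infinite exactly when $\E[\hat\xi^{1-\alpha}]=+\infty$, but that case is already settled by $a=+\infty$, so the argument is unaffected.
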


As in the proof of \Cref{momentoforderm}, we use the recursive
construction of the Kesten tree by applying \Cref{graftingproperty} to
the structure close to the root.

\begin{Proo}
  Let us fix $n\geq 2$. By construction of the Kesten tree, letting
  $\hat{\xi}$ denote the number of children of the root, we have
\begin{equation*}
  H_\alpha([T_\star]_n) \overset{d}{=}  \frac{1}{1-2^{1-\alpha}} (1- \hat{\xi}^{1-\alpha}) + \hat{\xi}^{-\alpha} \left( \sum_{k=1}^{\hat{\xi} -1} H_\alpha([T(k)]_{n-1}) + H_\alpha([T_\star']_{n-1}) \right).
\end{equation*}
where $(T(k))_{k\geq 1}$ are copies of $T$, the random tree $T_\star'$
is distributed as $T_\star$, all of these variables are independent
and independent of $\xi$. Thus, conditioning on $\hat{\xi}$ and taking
expectations we get:
\begin{equation*}
  \mathds{E}[H_\alpha([T_\star]_n)] = \frac{1}{1-2^{1-\alpha}} (1-\mathds{E}[\hat{\xi}^{1-\alpha}]) + \mathds{E}[\hat{\xi}^{-\alpha}(\hat{\xi} -1)] \cdot \mathds{E}[H_\alpha([T]_{n-1})] + \mathds{E}[\hat{\xi}^{-\alpha}] \cdot \mathds{E}[H_\alpha([T_\star]_{n-1})].
\end{equation*}
Note that if $\mathds{E}[\hat{\xi}^{1-\alpha}]=\infty$, then the above
expression is infinite for every $n$. Therefore we now assume that
this expectation is finite. Setting
$v_n = \mathds{E}[H_\alpha([T_\star]_n)]$,
$c = \mathds{E}[\hat{\xi}^{-\alpha}]$ and
$d_n = \frac{1}{1-2^{1-\alpha}} (1-\mathds{E}[\hat{\xi}^{1-\alpha}]) +
\mathds{E}[\hat{\xi}^{-\alpha}(\hat{\xi} -1)] \cdot
\mathds{E}[H_\alpha([T]_{n-1})]$, the above induction is simply
rewritten as
\[
  v_n = d_n + cv_{n-1}
\]
The term $\mathds{E}[H_\alpha([T]_{n-1})]$ converges to
$\mathds{E}[H_\alpha(T)]$ as $n$ goes to infinity, and the limit is
finite if and only if
$\mathds{E}[\xi^{1-\alpha}\1_{\{\xi\neq 0\}}] < 1$, which is
automatically satisfied under our assumptions on $\xi$, so $d_n$ tends
to a finite value. Solving the induction explicitly, we get:
\begin{equation*}
v_n = c^n \cdot \sum _{m=0}^{n-1} \frac{d_m}{c^m} = \sum_{m=1}^{n} d_m \cdot c^{n-m}
\end{equation*}
Since $d_n$ is bounded, the series converges if and only if $c <1$.
Because $\hat{\xi}\geq 1$ almost surely, we have
$c=\mathds{E}[\hat{\xi}^{-\alpha}] \leqslant 1$, and $c$ cannot be
equal to $1$ since the trivial case $\mathds{P}(\xi=1)=1$ is excluded
by our assumptions. We conclude using the monotone convergence theorem
that $v_n$ converges to $\frac{d}{1-c}$, the only fixed point of
$f : x \mapsto cx +d$, and so $\mathds{E}[H_\alpha(T_\star)]$ is
finite and is equal to $\frac{d}{1-c}$.
\end{Proo}

\subsection{Total variation distance of truncated Galton--Watson trees} \label{appPrelimBlowups}

We begin this appendix by recalling a truncation of the Kesten tree
and the matching construction for Galton--Watson trees. Once again we
refer the reader to Section~A.5 of
\cite{bienvenu$B_2$IndexGalled2024} for the technical details. We will
also recall without any proofs some results from
\cite{bienvenu$B_2$IndexGalled2024} necessary to our proof of
\Cref{theoprincipal}. Recall that $T$ is a Galton--Watson tree with a
critical offspring distribution $\xi$, $T_\star$ is the associated
Kesten tree and $\hat{\xi}$ is the size-biased distribution. Moreover
we denote by $T_n$ a Galton--Watson tree conditioned on having $n$
leaves (for all $n$ such that this event occurs with non zero
probability). The considered truncation of $T_\star$ is the following.
For all $k$, we denote by $T_\star^k$ the tree obtained from $T_\star$
by deleting all descendants of $v_k$, the $k$-th vertex on the spine.
Moreover, we keep track of this vertex and so, when referring to
$T_\star^k$, we implicitly consider the pair $(T_\star^k, v_k)$. We
now use a similar construction for $T_n$. Let $\ell_n$ be a leaf of
$T_n$ chosen uniformly at random, and recall that $\delta_{\ell_n}$
denotes the depth of $\ell_n$. For all $k \leqslant \delta_{\ell_n}$
consider $v_{n,k}$ the $k$-th vertex on the unique path between the
root and $\ell_n$. Let then $T_n^k$ be the tree obtained from $T_n$ by
deleting all descendants of $v_{n,k}$. As before we consider $T_n^k$
equipped with $v_{n,k}$. For completeness, although this is of little importance, we let $T_n^k$ be the tree reduced to a single
vertex for any other $k$. We now recall Prop.~A.21
from \cite{bienvenu$B_2$IndexGalled2024}.

\begin{Prop} \label{distanceTV}
    If $\xi$ has a finite third moment, then for any sequence of integers $(k_n)$ satisfying $k_n = o(\sqrt{n})$, we have
    \begin{equation*}
        d_{\mathrm{TV}}(T_n^{k_n}, T_\star^{k_n}) = \Theta \left( \frac{k_n}{\sqrt{n}} \right),
    \end{equation*}
    where $d_{\mathrm{TV}}$ is the distance in total variation.
\end{Prop}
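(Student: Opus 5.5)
The statement to prove is \Cref{distanceTV}: for $\xi$ with finite third moment and $k_n = o(\sqrt{n})$, $d_{\mathrm{TV}}(T_n^{k_n}, T_\star^{k_n}) = \Theta(k_n/\sqrt{n})$. The plan is to compare the two laws through their explicit densities on the space of pairs $(t,v)$, where $t$ is a finite tree and $v$ is a marked leaf at depth $k$.

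\textbf{Step 1: the law of $T_\star^k$.} This law is classical: $\P(T_\star^k = (t,v)) = \P(T^k = t)$, where $T^k$ denotes the Galton--Watson tree with the marked leaf $v$ at depth $k$ treated as a stub (no offspring factor at $v$). This follows from the spine decomposition. The factor $k$ from the size-biasing is cancelled by the uniform choice of the spine child, and criticality gives mean $1$.

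\textbf{Step 2: the law of $T_n^k$.} Choosing $\ell_n$ uniformly and cutting at $v_{n,k}$, the tree $T_n$ is $(t,v)$ with an independent Galton--Watson tree $\tau$ grafted at $v$. Let $L(t)$ be the number of leaves of $t$ other than $v$, so that $|\mathcal{L}_{T_n}| = L(t) + |\mathcal{L}_\tau|$. Conditioning on $\{|\mathcal{L}_T| = n\}$ and weighting by the uniform choice of one leaf among $n$, we get
\[
  \P(T_n^k = (t,v)) = \P(T^k = t)\,\frac{m\,\P(|\mathcal{L}_T| = m)}{n\,\P(|\mathcal{L}_T| = n)}, \qquad m = n - L(t),
\]
where the factor $m$ counts the $m$ leaves of $\tau$ that could have been $\ell_n$. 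Writing $\phi_n(j) = \frac{(n-j)\,\P(|\mathcal{L}_T| = n-j)}{n\,\P(|\mathcal{L}_T| = n)}$, this gives
\[
  d_{\mathrm{TV}} = \tfrac12\,\E\bigl[\,|1 - \phi_n(L_k)|\,\bigr],
\]
where $L_k$ is the number of non-spine leaves of $T_\star^k$. Note that $L_k$ is a sum of $k$ i.i.d.\ contributions: each one is the total leaf count of $\hat\xi - 1$ independent critical GW trees.

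\textbf{Step 3: local limit estimates.} For a critical GW tree with finite variance, the local limit theorem for the leaf count (Kortchemski, via Lukasiewicz paths) gives $\P(|\mathcal{L}_T| = n) \sim c\,n^{-3/2}$. With a finite third moment one can hope for the refinement $\P(|\mathcal{L}_T| = n-j) = c\,(n-j)^{-3/2}\bigl(1 + O(n^{-1/2})\bigr)$ uniformly in $j \le n/2$. Hence $\phi_n(j) = \bigl(1 - \tfrac{j}{n}\bigr)^{-1/2}\bigl(1 + O(n^{-1/2})\bigr)$ on that range, so $|1 - \phi_n(j)| \approx \tfrac{j}{2n} + O(n^{-1/2})$.

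\textbf{Step 4: the main obstacle, tail control of $L_k$.} This is the step I expect to be hardest. Each spine contribution has a heavy tail: $\P(\text{leaves} > x) \asymp x^{-1/2}$, so $L_k$ lies in the domain of a $1/2$-stable law and $L_k \approx k^2$ in scale. A typical value of $|1 - \phi_n(L_k)|$ is therefore of order $k^2/n$. The main contribution, however, comes from the event $\{L_k \asymp n\}$, whose probability is about $k\,n^{-1/2}$ since a single big jump dominates. On this event $|1 - \phi_n| \asymp 1$, which gives $\Theta(k/\sqrt{n})$.

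To make this rigorous I would proceed as follows.
\begin{itemize}
  \item For the upper bound, split $\E|1 - \phi_n(L_k)|$ over $\{L_k \le \varepsilon n\}$ and $\{L_k > \varepsilon n\}$.
  \item On the first event, bound the integrand by $C\,L_k/n + Cn^{-1/2}$ and use $\E[L_k \wedge \varepsilon n] \lesssim k\sqrt{n}$, which follows from the $x^{-1/2}$ tail. The error term $Cn^{-1/2}$ is where finite third moment is used. It contributes only $O(n^{-1/2}) = O(k/\sqrt{n})$ since $k \ge 1$, so a non-uniform error suffices.
  \item On the second event, use the trivial bound $1 + \phi_n$, noting that $\E[\phi_n(L_k)] \le 1$ because it is a probability ratio, and $\P(L_k > \varepsilon n) \lesssim k\,n^{-1/2}$ by a union bound over the spine.
  \item For the lower bound, restrict to $\{L_k \in [n/4, n/2]\}$, where $\phi_n \ge 1 + \delta$, and estimate the probability of this event from below via a single-big-jump argument. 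The condition $k = o(\sqrt{n})$ guarantees that the remaining contributions are $O(k^2) = o(n)$ with high probability.
\end{itemize}
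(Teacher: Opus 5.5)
The paper gives no proof of this proposition: it is recalled, without proof, from the authors' earlier work on the $B_2$ index of galled trees (Prop.~A.21 there). Your argument is therefore an independent route, and its skeleton is sound.

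Steps~1 and~2 are correct. The key observation is also the right one: on marked trees, the law of $T_n^k$ is the law of $T_\star^k$ reweighted by $\phi_n(L(t))$. Everything then reduces to the sum $L_k$ of $k$ i.i.d.\ copies of a variable $Y$ with $\mathds{P}(Y>x)\asymp x^{-1/2}$. Two of your estimates go through as stated:
\begin{itemize}
  \item the truncated first-moment bound $\mathds{E}[L_k\wedge\varepsilon n]\leq k\,\mathds{E}[\hat\xi-1]\,\mathds{E}[|\mathcal{L}_T|\wedge\varepsilon n]\lesssim k\sqrt n$;
  \item the single-big-jump lower bound, using the disjoint events $\{Y_i\in[n/4,(1/2-\eta)n],\ \sum_{j\neq i}Y_j\leq\eta n\}$, with $\mathds{P}(L_{k-1}>\eta n)\to0$ because $k=o(\sqrt n)$.
\end{itemize}

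\textbf{The gap: Step~3.} You only say one ``can hope for'' the refined local estimate, but it is not cosmetic. With the bare asymptotic $\mathds{P}(|\mathcal{L}_T|=m)\sim c\,m^{-3/2}$ you only get $|1-\phi_n(j)|\leq Cj/n+o(1)$ on $\{j\leq\varepsilon n\}$. That gives $d_{\mathrm{TV}}\leq Ck_n/\sqrt n+o(1)$, which does not yield $O(k_n/\sqrt n)$ when $k_n$ is bounded or grows slowly. You need a relative error $O(m^{-1/2})$, and this is exactly where the third moment enters. Here is one way to obtain it.
\begin{itemize}
  \item In the Lukasiewicz path, cut the steps into blocks. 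Each block is a geometric number $G$ (parameter $\mathds{P}(\xi=0)$) of nonnegative steps followed by one $-1$ step.
  \item Observed at block ends, the path is a left-continuous walk $(V_m)$ with centred i.i.d.\ steps $R=\sum_{i=1}^{G}(\xi'_i-1)-1$, where $\xi'_i\sim(\xi\mid\xi\geq1)$. The leaf count $|\mathcal{L}_T|$ is its hitting time of $-1$.
  \item Kemperman's formula gives $\mathds{P}(|\mathcal{L}_T|=m)=\frac1m\mathds{P}(V_m=-1)$.
  \item Since $R$ has a finite third moment, the lattice local limit theorem with $O(m^{-1/2})$ remainder yields $\mathds{P}(|\mathcal{L}_T|=m)=c\,m^{-3/2}\bigl(1+O(m^{-1/2})\bigr)$ for $m\equiv1\pmod h$, where $h=\gcd\{j-1:\ j\geq2,\ \mathds{P}(\xi=j)>0\}$.
  \item For every admissible $t$, $L(t)=\sum_u(c_u(t)-1)$ over the internal vertices $u$ of $t$, so $L(t)\in h\mathbb{Z}$. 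Hence $n-L_k$ always lies on the lattice, and no spurious zeros of $\phi_n$ appear.
\end{itemize}

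\textbf{Two smaller points.}
\begin{itemize}
  \item The law of $T_n^k$ also charges the trivial tree (the event $\delta_{\ell_n}<k$), which $T_\star^k$ never does. The exact identity is therefore $d_{\mathrm{TV}}=\mathds{E}[(1-\phi_n(L_k))^+]$. Since $\mathds{E}[\phi_n(L_k)]\leq1$, this gives $\frac12\mathds{E}|1-\phi_n(L_k)|\leq d_{\mathrm{TV}}\leq\mathds{E}|1-\phi_n(L_k)|$. So both your bounds survive, and the exact form bounds the contribution of $\{L_k>\varepsilon n\}$ directly by $\mathds{P}(L_k>\varepsilon n)$.
  \item That probability is not $O(k/\sqrt n)$ by a plain union bound over the spine, which only gives $k\,\mathds{P}(Y>\varepsilon n/k)\asymp k^{3/2}/\sqrt n$. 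Use Markov on the truncation instead: $\mathds{P}(L_k\geq\varepsilon n)\leq\mathds{E}[L_k\wedge\varepsilon n]/(\varepsilon n)\lesssim k/\sqrt n$.
\end{itemize}
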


We can define in an obvious way the subgraphs $G_n^k \subset G_n$ and
$G_\star^k \subset G_\star$ that are blowups of $T_n^k$
and~$T_\star^k$ with respect to some fixed family of distributions
$\nu$. Because there exists a coupling of $T_n^k$ and $T_\star^k$ such
that
$\mathds{P}(T_n^k = T_\star^k) = d_{\mathrm{TV}}(T_n^k,T_\star^k)$,
and because we can also couple the blowup procedure, we readily get,
for all $n,k\geq 1$:
\begin{equation}
  \label{eq:dTV-blowups}
  d_{\mathrm{TV}}(G_n^{k}, G_\star^{k}) \leq d_{\mathrm{TV}}(T_n^{k}, T_\star^{k}),
\end{equation}
with an equality if for all $k\geq 1$, the random networks
$\Gamma_k\sim\nu_k$ are almost surely $2$-connected. So in
\Cref{distanceTV}, we can replace $T_n$ and $T_\star$ by $G_n$ and
$G_\star$ respectively and keep the same upper bound.

Before closing this section with a reminder of Lemma~A.24
from \cite{bienvenu$B_2$IndexGalled2024}, let us recall some notation:
for any fixed phylogenetic network $G$ and node $v\in G$, we let
$p_{(G, v)}$ denote the probability that the random walk on $G$ reaches
$v$.

\begin{Lemm} \label{vanish}
With the previous notation, we have:
    \begin{enumerate}[label = (\roman*)]
        \item $\E[p_{(G_\star, v_k)}] = \P(\xi \neq 0)^k$.
        \item $p_{(G_\star, v_k)} = O( \P(\xi \neq 0)^k)$ almost surely. \qedhere
    \end{enumerate}
\end{Lemm}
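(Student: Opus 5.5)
The plan is to write $p_{(G_\star, v_k)}$ as a product of independent factors, one for each spine vertex above $v_k$. Part (i) then follows by computing the mean of one factor; part (ii) follows from the strong law of large numbers applied to the logarithm of the product.

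\textbf{Product structure.} Let $v_0 = \rho, v_1, \ldots, v_k$ be the first $k+1$ spine vertices of $T_\star$, and write $\hat\xi_i$ for the number of children of $v_i$. In the blowup $G_\star$, each internal vertex $v_i$ is replaced by $\Gamma_{v_i} \sim \nu_{\hat\xi_i}$. Since $T_\star$ is a tree, every vertex $v_{i+1}$ is a cut vertex of $G_\star$, and the only way to reach $v_{i+1}$ from $v_i$ is through $\Gamma_{v_i}$. Because $\Gamma_{v_i}$ is finite, the walk started at $v_i$ exits it almost surely through one of its leaves. Let $P_i$ be the probability, computed in $\Gamma_{v_i}$, that the walk started at its root ends at the leaf identified with $v_{i+1}$. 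The strong Markov property at the successive visits to $v_1, \ldots, v_k$ then gives
\[
p_{(G_\star, v_k)} = \prod_{i=0}^{k-1} P_i .
\]
The pairs $(\hat\xi_i, \Gamma_{v_i})$, together with the choice of the spine child, are i.i.d.\ in $i$. Hence the $P_i$ are i.i.d.

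\textbf{Proof of (i).} Condition on $\hat\xi_i = m$. The spine child is uniform among the $m$ children, and $\nu_m$ is invariant under relabeling of the leaves, so the spine leaf is uniform and independent of $\Gamma_{v_i}$. The leaf-exit probabilities of $\Gamma_{v_i}$ sum to $1$, so
\[
\E[P_i \mid \hat\xi_i = m] = \frac{1}{m}.
\]
Using the size-biased law $\P(\hat\xi = m) = m\,\P(\xi = m)$, this gives
\[
\E[P_i] = \E[1/\hat\xi] = \sum_{m \geq 1} \P(\xi = m) = \P(\xi \neq 0) =: q .
\]
Independence of the $P_i$ then yields $\E[p_{(G_\star, v_k)}] = q^k$.

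\textbf{Proof of (ii).} Since $\E[\xi] = 1$ and $\P(\xi = 1) < 1$, we have $\P(\xi = 0) > 0$, so $q < 1$. By Jensen's inequality, $\E[\log P_0] \leq \log q$, where the left-hand side may equal $-\infty$. We split into two cases.
\begin{itemize}
\item If $P_0 = q$ almost surely, then $p_{(G_\star, v_k)} = q^k$ exactly, and the claim holds.
\item Otherwise Jensen's inequality is strict, so $\E[\log P_0] < \log q$. The strong law of large numbers, in its version allowing a mean of $-\infty$ for variables bounded above by $0$, gives $\frac{1}{k}\sum_{i<k} \log P_i \to \E[\log P_0]$ almost surely. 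Hence, almost surely, for all $k$ large enough we have $\prod_{i<k} P_i \leq q^k$. Therefore $p_{(G_\star, v_k)} = O(q^k)$ almost surely, with a random constant, and in fact it is $o(q^k)$.
\end{itemize}

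\textbf{Expected obstacle.} The delicate step is justifying the product formula, namely that $v_{k}$ can only be reached through the chain of components $\Gamma_{v_0}, \ldots, \Gamma_{v_{k-1}}$, each traversed exactly once. This rests on the cut-vertex structure of blowups of trees and on the exchangeability of leaf labels under $\nu_m$. Once that is in place, the probabilistic arguments for (i) and (ii) are routine.
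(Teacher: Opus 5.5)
Your proof is correct. The paper itself gives no argument for this lemma: it quotes it as Lemma~A.24 of the earlier $B_2$/galled-trees paper, so there is no in-paper proof to compare against.

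The product decomposition $p_{(G_\star,v_k)}=\prod_{i<k}P_i$ is sound. The factors are i.i.d. because the triples (number of children $\hat\xi_i$, spine-child index, $\Gamma_{v_i}$) are i.i.d. along the spine, and each factor has mean $\E[P_i]=\E[1/\hat\xi]=\P(\xi\neq 0)$. This gives (i). For that step you only need the spine child to be uniform and independent of $\Gamma_{v_i}$; exchangeability of $\nu_m$ is not actually used.

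For (ii), once (i) is in hand there is a shorter route than strict Jensen plus the strong law with mean $-\infty$. Set $q=\P(\xi\neq 0)$. Then $M_k=p_{(G_\star,v_k)}/q^k=\prod_{i<k}(P_i/q)$ is a nonnegative martingale, being a product of independent mean-one factors. It therefore converges almost surely to a finite limit, so $\sup_k M_k<\infty$ a.s. This is exactly the claimed $O(q^k)$ bound with a random constant, and it needs no case distinction.

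Your route costs slightly more but proves more. It shows that $p_{(G_\star,v_k)}^{1/k}\to e^{\E[\log P_0]}$ almost surely. Hence $p_{(G_\star,v_k)}=o(q^k)$ whenever $P_0$ is non-degenerate, with an explicit, strictly smaller exponential rate. Either version is enough for how the lemma is used in the proof of Theorem~\ref{theoprincipal}, where one only needs $p_{(G_\star,v_{k_n})}^{\alpha}\,n^{1-\alpha}\to 0$ for $k_n\gg\log n$.
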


\subsection[Moments of \texorpdfstring{$H_\alpha$}{H-alpha} for blowups of Galton--Watson trees: proof of \Cref{theoprincipal}]{%
Moments of \texorpdfstring{$\boldsymbol{H_\alpha}$}{H-alpha} for blowups of Galton--Watson trees:\newline proof of \Cref{theoprincipal}}

This section is dedicated to the proof of \Cref{theoprincipal}, whose
statement we recall here.
\begin{Theonn}
  Assuming that the offspring distribution $\xi$ is critical and satisfies $\P(\xi = 0) > 0$, we have:
  \begin{itemize}[label = \textbullet]
    \item For all $\alpha > 1$,
    \begin{enumerate}[label = (\roman*)]
      \item $H_\alpha(G_n) \rightarrow H_\alpha(G_\star)$ in distribution and,
      \item for all $ m \geq 1, ~ \mathds{E}[H_\alpha(G_n)^m] \rightarrow \mathds{E}[H_\alpha(G_\star)^m]$, and all these moments are finite.
    \end{enumerate}
    \item For all $\alpha \leq 1$, if $\xi$ has a finite third moment,
    \begin{enumerate}[label = (\roman*)]
      \item $H_\alpha(G_n) \rightarrow H_\alpha(G_\star)$ in distribution and,
      \item for all $m \geq 1$ such that $m ( 1-\alpha) < 1/2, ~\mathds{E}[H_\alpha(G_n)^m] \rightarrow \mathds{E}[H_\alpha(G_\star)^m]$, and all these moments are finite. \qedhere
    \end{enumerate}
  \end{itemize}
\end{Theonn}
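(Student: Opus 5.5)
We would follow the strategy of \cite[Thm~3.7]{bienvenu$B_2$IndexGalled2024}, comparing $G_n$ and $G_\star$ through their truncations $G_n^{k}$ and $G_\star^{k}$ at the $k$-th spine vertex. The basic tool is the grafting property (\Cref{graftingproperty}). Writing $G_n$ as $G_n^{k}$ with the subnetwork $\tilde G_{n,k}$ subtended by $v_{n,k}$ grafted on the marked leaf, we get
\[
H_\alpha(G_n)=H_\alpha(G_n^{k})+p_{(G_n,v_{n,k})}^{\alpha}\,H_\alpha(\tilde G_{n,k}),
\]
and similarly for $G_\star$. Convergence then reduces to two facts. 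First, the ``stub'' parts $H_\alpha(G_n^{k_n})$ and $H_\alpha(G_\star^{k_n})$ can be coupled to coincide with high probability; by \Cref{distanceTV} and \eqref{eq:dTV-blowups} the failure probability is $O(k_n/\sqrt n)$. Second, the remainder term $p^\alpha H_\alpha(\tilde G)$ is negligible.

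\textbf{Case $\alpha>1$.} Here $H_\alpha\le (1-2^{1-\alpha})^{-1}$ uniformly, so every moment is automatically finite and bounded. Convergence in distribution is enough, since bounded convergence then upgrades it to convergence of all moments. To get convergence in distribution, we would couple $G_n$ and $G_\star$ so that $G_n\to G_\star$ locally almost surely (Skorokhod). We then check that the random walk on $G_\star$ ends in a leaf almost surely: the spine is reached with probability $O(\P(\xi\neq0)^k)$ by \Cref{vanish}, and the normal subtrees are finite. With that in hand, the first point of \Cref{localcvg} gives $H_\alpha(G_n)\to H_\alpha(G_\star)$ almost surely. No third-moment assumption is needed; $k_n\to\infty$ suffices.

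\textbf{Case $\alpha\le 1$.} We would pick $k_n=n^{\varepsilon}$ with $\varepsilon<1/2$, so that the stubs coincide with probability $1-O(n^{\varepsilon-1/2})$. On the coupling event, the difference $|H_\alpha(G_n)-H_\alpha(G_\star)|$ is at most the sum of the two remainder terms. For $G_\star$, \Cref{vanish}(ii) gives $p_{(G_\star,v_{k_n})}^\alpha=O(\P(\xi\neq0)^{\alpha k_n})$, and this kills $H_\alpha(\tilde G_\star)$, which is distributed as $H_\alpha(G_\star)$ and is a.s. finite by \Cref{expecKesten} applied to the blowup version. For $G_n$, we use the crude bound from \Cref{generalBounds}: $H_\alpha(\tilde G_{n,k})\le C\,n^{1-\alpha}$, since $\tilde G_{n,k}$ has at most $n$ leaves. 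This is multiplied by $p^\alpha$, which on the coupling event equals the $G_\star$ quantity and is exponentially small in $k_n=n^\varepsilon$. The product therefore tends to $0$, which gives convergence in probability and hence in distribution.

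For the moments, we need uniform integrability of $H_\alpha(G_n)^m$. We would bound $\E[H_\alpha(G_n)^{m'}]$ for some $m'>m$ with $m'(1-\alpha)<1/2$, splitting according to the coupling event. On the good event the moments are controlled by those of $G_\star$ plus the negligible remainder, with finiteness of the $G_\star$ moments coming from the same recursion as in \Cref{momentoforderm} and \Cref{expecKesten}. On the bad event, which has probability $O(k_n/\sqrt n)$, we use the deterministic bound $H_\alpha(G_n)\le C n^{1-\alpha}$. This contributes $O(n^{m'(1-\alpha)}\,k_n n^{-1/2})\to0$ precisely when $m'(1-\alpha)<1/2$, after choosing $\varepsilon$ small enough. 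This is where the constraint in the statement arises.

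\textbf{Main obstacle.} We expect the hardest step to be the finiteness and control of the moments $\E[H_\alpha(G_\star)^m]$ for blowups with $\alpha<1$. The blown-up components $\Gamma_v$ have random internal structure, so the recursion of \Cref{momentoforderm} has to be rerun with $\xi^{-\alpha}$ replaced by leaf probabilities of $\Gamma_v$. We would try to dominate these via concavity (Jensen), namely $\sum_i q_i^{\alpha}\le k^{1-\alpha}$ for the leaf distribution $(q_i)$ of $\Gamma_v\sim\nu_k$, which reduces the problem to the tree case.
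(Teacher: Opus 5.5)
Your proof is correct, and its skeleton is the paper's. You couple the spine stubs through \Cref{distanceTV} and \eqref{eq:dTV-blowups}, and control what hangs below $v_{k_n}$ by the grafting property and the crude bound of \Cref{generalBounds}. You take $k_n$ a small power of $n$, so the bad event costs $O(n^{m(1-\alpha)}k_n n^{-1/2})$, and you reduce the moment claim to $\E[H_\alpha(G_\star)^m]<\infty$.

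You depart from the paper in three sub-steps.

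First, for $\alpha>1$ you invoke \Cref{localcvg}, whereas the paper uses the sandwich \eqref{eq:proof-cvdist2} with \Cref{vanish} and \Cref{Stubsequence}; either works.

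Second, for $\alpha\le 1$ you use a maximal coupling of the stubs for each $n$ and conclude by convergence in probability. This is lighter than the paper's Skorokhod coupling combined with Borel--Cantelli along subsequences. A few small adjustments are needed:
\begin{itemize}
\item Your copy of $G_\star$ now depends on $n$, so use \Cref{vanish}(i) with Markov's inequality rather than the almost-sure bound (ii).
\item The paper has no blowup version of \Cref{expecKesten}, so the a.s.\ finiteness of $H_\alpha(G_\star)$ must come from your own moment bound. Alternatively, you can bypass it by using $H_\alpha(G_\star^{k_n})\le H_\alpha(G_n)$ on the good event together with $H_\alpha(G_\star^{k_n})\uparrow H_\alpha(G_\star)$.
\item At $\alpha=1$ the crude bound is $\log_2 n$ rather than a constant, which is harmless.
\end{itemize}

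Third, and most substantively, you prove $\E[H_\alpha(G_\star)^m]<\infty$ differently. The paper does it in one shot. It expands along the spine, applies Minkowski to the spine terms and Jensen to the off-spine terms, and bounds each Galton--Watson blowup by $H_\alpha(G)\lesssim|\mathcal L_T|^{1-\alpha}$ using $\P(|\mathcal L_T|\ge n)=\Theta(n^{-1/2})$. This needs no induction on $m$, but it only reaches $(1-\alpha)m<1/2$.

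Your recursion also goes through once you make its contraction constants explicit:
\begin{itemize}
\item For a Galton--Watson blowup the constant is $\E[\sum_i q_i^{\alpha m}]\le\E[\sum_i q_i^{\alpha}]\le\E[\xi^{1-\alpha}]<1$, by strict Jensen. The other coefficients are controlled via $\sum_i q_i^\alpha\le\xi^{1-\alpha}$.
\item For the Kesten blowup the constant is $\E[q_s^{\alpha m}]<1$ for the spine leaf $s$, since $\E[q_s]=\P(\xi\neq0)<1$. The remaining coefficients involve $\E[\hat\xi^{(1-\alpha)m}]=\E[\xi^{1+(1-\alpha)m}]$.
\end{itemize}
Run the recursion by induction on $m$ over stub truncations and pass to the limit by monotonicity. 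It avoids the leaf-count tail estimate and gives finite moments whenever $(1-\alpha)m\le 2$ under the third-moment assumption. In your version, the restriction $m(1-\alpha)<1/2$ comes solely from the bad event.
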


\begin{Proo} In the case $\alpha=1$, this result was already proven in
  \cite[Theorem~3.7]{bienvenu$B_2$IndexGalled2024}. Since we adapt
  the method of proof to our broader setting, let us first recall
  briefly the facts that we borrow from this previous work. The key is
  the existence of a coupling, via Skorokhod's representation theorem,
  of the $(G_n)_{n\geq 0}$ and of $G_\star$ such that
  $G_n \to G_\star$ almost surely in the local topology. On this
  probability space, there must exist a random sequence
  $(k_n)_{n\geq 0}$ of integers that tends to $+\infty$, such that for
  all $0\leq n$,
  \begin{equation}
    \label{eq:proof-cvdist1}
    G_{n}^{k_n} = G_{\star}^{k_n} \qquad a.s,
  \end{equation}
  where $G_{n}^{k}$ and $G_{\star}^{k}$ are the truncations considered
  in \Cref{appPrelimBlowups}. This will be enough to tackle the case
  $\alpha > 1$, but let us first finish gathering the necessary facts
  from~\cite{bienvenu$B_2$IndexGalled2024}. Under the assumption of a
  third moment on $\xi$, for any deterministic sequence
  $(k_n)_{n\geq 1}$ satisfying
  \[
    \log n \ll k_n \ll \sqrt{n},
  \]
  for any $A\subset \mathbb{N}$, by
  \Cref{distanceTV},~\eqref{eq:dTV-blowups} and a Borel--Cantelli
  argument, we can find a deterministic subset $A'\subset A$ such
  that~\eqref{eq:proof-cvdist1} holds a.s.\ for all $n\in A'$ large
  enough. The final ingredient of the proof is due to the grafting
  property of \Cref{graftingproperty}: for any $n$
  satisfying~\eqref{eq:proof-cvdist1} we have
  \begin{equation}
    \label{eq:proof-cvdist2}
    H_\alpha(G_\star^{k_{n}}) \leq H_\alpha(G_{n})
    \leq H_\alpha(G_\star^{k_{n}}) + p_{(G_\star,
      v_{k_{n}})}^\alpha \cdot \frac{1}{\abs{2^{1-\alpha}-1}} \cdot
    \begin{cases}
      1 & \text{ if } \alpha > 1, \\
      n^{(1-\alpha)} & \text{ if } \alpha < 1.
    \end{cases}
  \end{equation}
  The proof is almost over in the case $\alpha >1$, since by
  \Cref{vanish}, the term $p_{(G_\star, v_{k_{n}})}^\alpha$ vanishes
  almost surely. By \Cref{Stubsequence},
  $H_\alpha(G_\star^{k_{n}}) \rightarrow H_\alpha(G_\star)$ almost
  surely, so $H_\alpha(G_n) \rightarrow H_\alpha(G_\star)$ in our
  coupling, which implies the convergence in distribution~(i). Since
  the random variables $H_\alpha(G_n)$ are uniformly bounded, we also
  immediately get convergence of all moments.

  We now focus on the case $\alpha < 1$. Using \Cref{vanish} and the
  fact that $k_n \gg \log n$, we get
  $H_\alpha(G_n) \rightarrow H_\alpha(G_\star)$ almost surely along
  sequences taking values in $A'$. To summarize, we have shown in
  particular that for any increasing sequence of integers
  $(\phi(n))_{n\geq 1}$, there exists a subsequence
  $(\phi(\psi(n))_{n\geq 1}$ such that
  $H_\alpha(G_{\phi(\psi(n))}) \rightarrow H_\alpha(G_\star)$ in
  distribution. This implies the convergence in distribution~(i), see
  e.g.\ Theorem~2.6 in
  \cite{billingsleyConvergenceProbabilityMeasures1999}.
  
  To show~(ii) in the case $\alpha < 1$, we will show that for all
  $m < 1/(2(1-\alpha))$, the quantity $\E\left[H_\alpha(G_n)^m\right]$
  is bounded. This fact implies that for each $m < 1/(2(1-\alpha))$,
  the sequence $(H_\alpha(G_n)^m)_{n\geq 1}$ is uniform integrable,
  and since we have already proved its convergence in distribution,
  the desired result follows (see e.g.\ Lemma~5.11 in
  \cite{kallenbergFoundationsModernProbability2002}).

  Let us now fix $m<1/(2(1-\alpha))$. Recall that the bounds
  from~\eqref{eq:proof-cvdist2} holds on the event
  $G_n^{k_n} = G_\star^{k_n}$, and otherwise we can bound
  $H_\alpha(G_n)$ by $\frac{n^{1-\alpha}}{2^{1-\alpha}-1}$. Therefore
  we can always write
  \begin{equation*}
    H_\alpha(G_n)^m \leqslant \mathds{1}_{\{G_n^{k_n} \neq
      G_\star^{k_n} \}} \frac{n^{m(1-\alpha)}}{(2^{1-\alpha}-1)^m}  +
    \mathds{1}_{\{G_n^{k_n} = G_\star^{k_n} \}} \left(
      H_\alpha(G_\star^{k_n}) + p_{(G_\star, v_{k_n})}^\alpha
      \frac{n^{(1-\alpha)}}{2^{1-\alpha}-1} \right)^m. 
  \end{equation*}
  Taking expectation and using the inequality
  $(a+b)^m \leqslant 2^{m-1}(a^m+b^m)$ yields
  \begin{equation*}
    \mathds{E}[H_\alpha(G_n)^m] \leqslant \frac{1}{(2^{1-\alpha}-1)^m}
    \left( d_{\mathrm{TV}}(T_n^{k_n}, T_\star^{k_n}) n^{m(1-\alpha)} +
      2^{m-1} \left( \mathds{E}[H_\alpha(G_\star^{k_n})^m] +
        \mathds{E}[p_{(G_\star, v_{k_n})}^{\alpha m}] \cdot
        n^{m(1-\alpha)} \right) \right).
  \end{equation*}
  Fix $\varepsilon>0$ such that
  $m(1-\alpha) \leqslant 1/2 - \varepsilon$. Taking for instance
  $k_n = n^{\varepsilon/2}$, \Cref{distanceTV} implies that
  $d_{\mathrm{TV}}(T_n^{k_n}, T_\star^{k_n}) n^{m(1-\alpha)}$ vanishes
  as $n \rightarrow +\infty$.
  By \Cref{vanish}, we also have
  \[
    \mathds{E}[p_{(G_\star, v_{k_n})}^{\alpha m}] \leq
    \begin{cases}
      \mathds{E}[p_{(G_\star, v_{k_n})}]^{\alpha m} \leq
      \mathds{P}(\xi>0)^{\alpha m k_n} & \text{ if } \alpha m < 1
                                         \text{ by Jensen's inequality}, \\
      \mathds{E}[p_{(G_\star, v_{k_n})}] \leq
      \mathds{P}(\xi>0)^{k_n} & \text{ if } \alpha m \geq 1,
    \end{cases}
  \]
  which readily implies that
  $\mathds{E}[p_{(G_\star, v_{k_n})}^{\alpha m}] \cdot
  n^{m(1-\alpha)}$ vanishes as well since $k_n$ is a positive power of
  $n$. Since $H_\alpha(G_\star^{k_n}) \leqslant H_\alpha(G_\star)$ it
  remains only to prove that $\mathds{E}[H_\alpha(G_\star)^m]$ is
  finite.

  Let us recall some notation from \Cref{sec:blowup-notation} and
  introduce some new one for our current blowup of $T_\star$. We
  denote by $\Gamma_{v}$ the network that replaces $v\in T_\star$
  during the blowup procedure. We also write $\hat{\xi}_k$ for the
  number of children of $v_k$ in $T_\star$ and for
  $i \leqslant \hat{\xi}_k$, let us write $q_{k,i}$ for the
  probability that the directed random walk in $G_\star$ reaches the
  $i$-th leaf of $\Gamma_{v_k}$. By the leaf exchangeability property,
  we can assume without loss of generality that the leaves of
  $\Gamma_{v_k}$ are labeled so that
  $q_{k, 1} = p_{(G_\star, v_{k+1})}$. Finally, for $i \geqslant 2$
  (when it makes sense) $G_{k,i}$ will be the network grafted on the
  $i$-th leaf of $\Gamma_{v_k}$ in $G_\star$. Once again using
  \Cref{graftingproperty} repeatedly, we have:
  \begin{equation*}
    H_\alpha(G_\star) = \sum_{k \geqslant 0} \left( p_{(G_\star, v_k)}^\alpha H_\alpha(\Gamma_{v_k}) + \sum_{i = 2}^{\hat{\xi}_k}  q_{k, i}^\alpha H_\alpha(G_{k, i}) \right).
  \end{equation*}
  Using this expression we can bound $\E[ H_\alpha(G_\star)^m]$ by
  \begin{equation} \label{majEsp}
    \mathds{E}[H_\alpha(G_\star)^m]
    \leqslant 2^{m-1} \Bigg( \mathds{E}\bigg[\bigg(\sum_{k \geqslant
            0} p_{(G_\star, v_k)}^\alpha H_\alpha
          (\Gamma_{v_k})\bigg)^m \bigg] +
      \mathds{E}\bigg[\bigg(\sum_{k \geqslant 0} \sum_{i
            =2}^{\hat{\xi}_k} q_{k, i}^\alpha H_\alpha(G_{k,
            i})\bigg)^m \bigg] \Bigg).
  \end{equation}
  Using the independence of $p_{(G_\star, v_k)}$ and $\Gamma_{v_k}$, and
  bounding $H_\alpha(\Gamma_{v_k})$ by its maximum possible value, we obtain
  \begin{eqnarray*}
    \mathds{E}\Big[\big(p_{(G_\star, v_k)}^{\alpha} H_\alpha(\Gamma_{v_k})\big)^m\Big] &=& \mathds{E}[p_{(G_\star, v_k)}^{\alpha m}] \cdot \mathds{E}[H_\alpha(\Gamma_{u_k})^m] \\
                                                                                     & \leqslant & \mathds{E}[p_{(G_\star, v_k)}^{\alpha m}] \cdot \mathds{E}[\hat{\xi}^{(1-\alpha)m}] \frac{1}{(1-2^{1-\alpha})^m} \\
                                                                                     & \leqslant & \mathds{E}[p_{(G_\star, v_k)}^{\alpha m}] \cdot \mathds{E}[\hat{\xi}^{1/2}] \frac{1}{(1-2^{1-\alpha})^m}.
  \end{eqnarray*}
  Note that $\mathds{E}[\hat{\xi}^{1/2}]$ is finite because $\xi$ is
  assumed to have a third moment, and by
  \Cref{vanish} we can write
  $\E[p_{(G_\star, v_k)}^{\alpha m}] = O(\zeta^{ k})$, with
  $\zeta = \P(\xi \neq 0)^{1\wedge (\alpha m)} < 1$. Thus
  $\lVert p_{(G_\star, v_k)}^\alpha H_\alpha(\Gamma_{v_k})\rVert_m =
  O(\zeta^{k/m})$, so the triangle inequality yields
  \begin{equation*}
    \bigg\lVert\sum_{k \geqslant 0} p_{(G_\star, v_k)}^\alpha
    H_\alpha(\Gamma_{v_k}) \bigg\rVert_m \; \leq \; \sum_{k \geqslant
      0} \norm{ p_{(G_\star, v_k)}^\alpha
      H_\alpha(\Gamma_{v_k})}_m < + \infty,
  \end{equation*}
  meaning that the first term on the right-hand side of~\eqref{majEsp}
  is bounded. Now, let us bound the second term. Letting
  $q_\alpha = \sum_{k \geqslant 0} \sum_{i =2}^{\hat{\xi}_k} q_{k,
    i}^\alpha$, Jensen's inequality gives :
  \begin{eqnarray*}
    \mathds{E}\bigg[\bigg(\sum_{k \geqslant 0} \sum_{i =2}^{\hat{\xi}_k} q_{k, i}^\alpha H_\alpha(G_{k, i})\bigg)^m \bigg] & = & \mathds{E}\bigg[q_\alpha ^m \bigg(\sum_{k \geqslant 0} \sum_{i =2}^{\hat{\xi}_k} \frac{q_{k, i}^\alpha}{q_\alpha} H_\alpha(G_{k, i})\bigg)^m \bigg] \\
                                                                                                                             & \leqslant & \mathds{E}\bigg[q_\alpha ^{m-1} \sum_{k \geqslant 0} \sum_{i =2}^{\hat{\xi}_k} q_{k, i}^\alpha H_\alpha(G_{k, i})^m \bigg].
  \end{eqnarray*}
  Letting $G$ be a blowup of a Galton--Watson tree with offspring
  distribution $\xi$, using the independence of the $G_{k,i}$'s and
  the $q_{k,i}$'s and noticing that each $G_{k,i}$ is distributed as
  $G$, we have
  \begin{equation*}
    \mathds{E}\bigg[q_\alpha ^{m-1} \sum_{k \geqslant 0} \sum_{i
      =2}^{\hat{\xi}_k} q_{k, i}^\alpha H_\alpha(G_{k, i})^m \bigg] =
    \mathds{E}[q_\alpha ^m H_\alpha(G)^m].
  \end{equation*}
  Moreover,
  \begin{eqnarray*}
    \mathds{E}[q_\alpha^m H_\alpha(G)^m] & \leqslant & \mathds{E}[q_\alpha^m |\mathcal{L}_T|^{(1-\alpha)m} ] \frac{1}{(1-2^{1-\alpha})^m} \\
                                         & \leqslant & \mathds{E}[q_\alpha^m |\mathcal{L}_T|^{1/2 - \varepsilon} ] \frac{1}{(1-2^{1-\alpha})^m} \\
                                         & = & \mathds{E}[q_\alpha^m] \cdot \mathds{E}[|\mathcal{L}_T|^{1/2 - \varepsilon} ] \frac{1}{(1-2^{1-\alpha})^m}.
  \end{eqnarray*}
  Since $\mathds{E}[\xi] = 1$ and $\mathds{E}[\xi^2]$ is finite, it is
  classic that
  $\mathds{P}(|\mathcal{L}_T| \geqslant n) = \Theta(n^{-1/2})$, see
  e.g.\ \cite[Proposition~24]{Ald93}
  or~\cite[Theorem~3.1]{Kor12}. This implies easily
  $\mathds{E}[|\mathcal{L}_T|^{1/2 - \varepsilon} ] < + \infty$. To
  conclude, it suffices to show that $\mathds{E}[q_\alpha^m]$ is
  finite. By definition of $q_{k, i}$ we have
\begin{equation*}
\mathds{E}\bigg[\Big( \sum_{k \geqslant 0} \sum_{i=2}^{\hat{\xi}_k} q_{k, i}^\alpha \Big)^m \bigg] \leqslant \mathds{E}\bigg[ \Big( \sum_{k \geqslant 0} \hat{\xi}_k^{1-\alpha} p_{(G_\star, v_k)}^\alpha \Big) ^m\bigg],
\end{equation*}
and we can show exactly as above, using the triangle inequality, that $\mathds{E}[q_\alpha^m]$ is finite.
\end{Proo}

\end{document}